\renewcommand*{\MR}[1]{ \href{http://www.ams.org/mathscinet-getitem?mr=#1}{MR \textbf{#1}}}
\renewcommand{\PrintDOI}[1]{\href{http://dx.doi.org/\detokenize{#1}}{doi: \detokenize{#1}}}
\setlist[enumerate,1]{label=\textup{(\arabic*)}}
\tikzset{node distance=2cm, auto}
\tikzset{cd/.style=matrix of math nodes,row sep=2em,column sep=2em, text height=1.5ex, text depth=0.5ex}
\tikzset{cdar/.style=->,auto}
\tikzset{mid/.style={anchor=mid}} 
\tikzset{narrowfill/.style={inner sep=1pt, fill=white}}
\numberwithin{equation}{section}
\newtheorem{theorem}[equation]{Theorem}
\newtheorem{lemma}[equation]{Lemma}
\newtheorem{proposition}[equation]{Proposition}
\newtheorem{corollary}[equation]{Corollary}
\theoremstyle{definition}
\newtheorem{definition}[equation]{Definition}
\theoremstyle{remark}
\newtheorem{remark}[equation]{Remark}
\newtheorem{example}[equation]{Example}
\DeclareMathOperator{\Aut}{Aut}
\DeclareMathOperator{\Prime}{Prime}
\DeclareMathOperator{\Pair}{Pair}
\DeclareMathOperator{\pt}{pt}
\newcommand{\OO}{\mathcal O}
\newcommand{\TT}{\mathcal T}
\newcommand*{\nb}{\nobreakdash}
\newcommand*{\Star}{\(^*\)\nobreakdash-}
\newcommand*{\C}{\mathbb C}
\newcommand*{\Z}{\mathbb Z}
\newcommand*{\N}{\mathbb N}
\newcommand*{\T}{\mathbb T}
\newcommand*{\Ideals}{\mathbb I}
\newcommand*{\Open}{\mathbb O}
\newcommand*{\Bound}{\mathbb B}
\newcommand*{\Comp}{\mathbb K}
\newcommand*{\Mat}{\mathbb M}
\newcommand*{\red}{\lambda}
\newcommand*{\Cred}{\mathrm{C}^*_\red} 
\renewcommand*{\SS}{\mathcal S}
\newcommand*{\blank}{\textup{\textvisiblespace}}
\newcommand*{\Cst}{\textup C^*}
\newcommand*{\Mult}{\mathcal M}
\newcommand*{\Cont}{\textup C}
\newcommand*{\Contc}{\Cont_\textup c} 
\newcommand*{\prid}[1][p]{\mathfrak{#1}} 
\newcommand*{\hot}{\mathbin{\widehat\otimes}}
\newcommand{\idealin}{\mathrel{\triangleleft}} 
\newcommand*{\Id}{\textup{Id}}
\newcommand*{\Ad}{\textup{Ad}}
\newcommand*{\Hils}{\mathcal H}
\newcommand*{\U}{\mathcal U}
\newcommand*{\A}{\mathcal A}
\newcommand*{\B}{\mathcal B}
\newcommand*{\Bgr}{\mathcal B}
\newcommand*{\defeq}{\mathrel{\vcentcolon=}}
\newcommand*{\congto}{\xrightarrow\sim}
\DeclarePairedDelimiter{\ket}{\lvert}{\rangle}
\DeclarePairedDelimiter{\bra}{\langle}{\rvert}
\DeclarePairedDelimiterX{\braket}[2]{\langle}{\rangle}{#1\,\delimsize\vert\,\mathopen{}#2}
\DeclarePairedDelimiterX{\BRAKET}[2]{\langle}{\rangle}{\!\delimsize\langle#1\,\delimsize\vert\,\mathopen{}#2\delimsize\rangle\!}
\DeclarePairedDelimiterX{\setgiven}[2]{\{}{\}}{#1\,{:}\,\mathopen{}#2}
\newcommand*{\into}{\rightarrowtail}
\newcommand*{\onto}{\twoheadrightarrow}
\begin{document}
\title[Stone duality and quasi-orbit spaces for C*-inclusions]{Stone duality and quasi-orbit spaces\\for generalised C*-inclusions}

\author{Bartosz Kosma Kwa\'sniewski}
\email{bartoszk@math.uwb.edu.pl}
\address{Faculty of Mathematics\\
  University  of Bia\l ystok\\
  ul.\@ K.~Cio\l kowskiego 1M\\
  15-245 Bia\l ystok\\
  Poland}
\author{Ralf Meyer}
\email{rmeyer2@uni-goettingen.de}
\address{Ralf Meyer\\
  Mathematisches Institut\\
  Georg-August-Universit\"at G\"ottingen\\
  Bunsenstra\ss e 3--5\\
  37073 G\"ottingen\\
  Germany}

\thanks{This work was partially supported by Polish Minister of
  Science and Higher Education, internal grant number BST-150,
  University of Bia\l ystok.}

\subjclass[2010]{Primary 46L55; Secondary 06D22; 20M18; 22A22; 20G42}

\begin{abstract}
  Let~\(A\) be a \(\Cst\)\nb-subalgebra
  of the multiplier algebra~\(\Mult(B)\)
  of a \(\Cst\)\nb-algebra~\(B\).
  Exploiting the duality between sober spaces and spatial locales,
  and the adjunction between
  restriction and induction for ideals in \(A\)
  and~\(B\), we identify conditions that allow to define a
  quasi-orbit space
  and a quasi-orbit map for  \(A\subseteq \Mult(B)\).
  These objects generalise classical notions for group
  actions.  We characterise when the quasi-orbit space
  is an open quotient of the primitive ideal space
  of~\(A\)
  and when the quasi-orbit map is open and surjective.  We apply
  these results to cross section \(\Cst\)\nb-algebras
  of Fell bundles over locally compact groups, regular
  \(\Cst\)\nb-inclusions, tensor products,
  relative Cuntz--Pimsner algebras, and crossed products for actions
  of locally compact Hausdorff groupoids and quantum groups.
\end{abstract}
\maketitle

\section{Introduction}
\label{sec:introduction}

There are many different ways to build a \(\Cst\)\nb-algebra~\(B\)
as a crossed product for a \(\Cst\)\nb-algebra~\(A\)
with some kind of dynamics.  The dynamics may be, for instance, an
action of a locally compact group, groupoid, an inverse semigroup, a
semigroup or a quantum group.  The theory for each type of crossed
product aims at understanding the structure of~\(B\)
using the dynamics on~\(A\).
Here we are interested in the ideal lattice~\(\Ideals(B)\)
and the primitive ideal space~\(\check{B}\)
of~\(B\).
For a locally compact amenable group~\(G\) and a separable
\(\Cst\)\nb-dynamical
system \((A,G,\alpha)\),
the primitive ideal spaces of~\(A\)
and of the crossed product \(B\defeq A\rtimes_\alpha G\)
are linked by a quasi-orbit map
\(\varrho\colon \check{B}\to \check{A}/{\sim}\).
Its target is the quasi-orbit space~\(\check{A}/{\sim}\),
where \(\prid_1 \sim \prid_2\)
for \(\prid_1, \prid_2\in\check{A}\)
if and only if
\(\overline{G\cdot \prid_1} = \overline{G\cdot \prid_2}\).
Both \(\varrho\)
and the quotient map \(\check{A} \to \check{A}/{\sim}\)
are open, continuous and surjective.  These are well known results.
But even the existence of the quasi-orbit map is non-trivial.
Quasi-orbit spaces are a key ingredient in the Effros--Hahn
Conjecture (see \cites{Effros-Hahn:Transformation_groups,
  GootmanRosenberg.StructureOfCrossedProducts}, quasi-orbit spaces
seem somewhat implicit in the groupoid version of this conjecture
in~\cite{Renault:Ideal_structure}).  And they are interesting
objects in their own right
(compare~\cite{GootmanLazar.DualityCrossedProduct}).  Moreover, under
some freeness assumptions, the quasi-orbit map is a homeomorphism
\(\check{B}\cong \check{A}/{\sim}\) (see
\cites{Effros-Hahn:Transformation_groups, Green:Local_twisted,
  GootmanRosenberg.StructureOfCrossedProducts,
  Williams:Topology_primitive, Leung-Ng:Invariant_twisted,
  Takai:Quasi-orbit_continuous, Zeller-Meier:Produits_croises} for
the classical case of group actions,
\cite{Giordano-Sierakowski:Purely_infinite}*{Theorem~3.2} for
partial group actions,
\cite{Kwasniewski-Szymanski:Pure_infinite}*{Theorem~6.8} for Fell
bundles over discrete groups, or
\cite{Boenicke-Li:Ideal}*{Theorem~3.17} for a recent result for
groupoid \(\Cst\)\nb-algebras of étale groupoids).

In this paper, we provide a general framework for existence and
properties of quasi-orbit spaces and quasi-orbit maps that are
indispensable in the study of primitive ideal spaces of various
\(\Cst\)\nb-algebraic constructions.  Our main technical tool is
Theorem~\ref{the:open_surjective}, which characterises open and
surjective maps in terms of ideal lattices.
This is a result in pointfree topology.  It uses a version of Stone
duality or, more precisely, a pair of adjoint functors between the
categories of topological spaces and locales.  It
is interesting in its own right.
Its proof is inspired by the proof that all prime ideals in a
separable \(\Cst\)\nb-algebra~\(A\) are primitive.

Let \(\Ideals(A)\) be the lattice of ideals in \(A\).  For an ordinary
\(\Cst\)\nb-inclusion \(A\subseteq B\),
it is customary to call \(J\in \Ideals(B)\)
\emph{induced} from \(I\in \Ideals(A)\)
if \(J=BIB\)
and to call~\(I\)
the \emph{restriction} of~\(J\)
if \(I=J\cap A\).
We allow the more general situation of a \Star{}homomorphism
\(\varphi\colon A\to \Mult(B)\)
to the multiplier algebra of~\(B\).
We speak of a \emph{generalised \(\Cst\)\nb-inclusion}
if~\(\varphi\)
is injective.
Let \(I\in \Ideals(A)\), \(J\in \Ideals(B)\).
The induction map \(i\colon \Ideals(A)\to\Ideals(B)\)
is defined by \(i(I)=B\varphi(I)B\)
as expected.  The restriction
map \(r\colon \Ideals(B) \to \Ideals(A)\)
is defined so that \(r\) and~\(i\)
are \emph{adjoint}, that is, \(I\subseteq r(J)\)
holds if and only if \(i(I)\subseteq J\).
Adjunction has many useful consequences.
For instance, the partially ordered sets
\(\Ideals^B(A)\defeq r(\Ideals(B))\subseteq \Ideals(A)\)
and \(\Ideals^A(B)\defeq i(\Ideals(A))\subseteq \Ideals(B)\)
of restricted and induced ideals are complete lattices,
and the maps \(i\)
and~\(r\)
restrict to mutually inverse isomorphisms
\(\Ideals^B(A)\cong \Ideals^A(B)\).
Examples suggest that everything that can be
said about the ideal structure of \(A\)
and~\(B\)
in this generality follows from the adjunction (see
Section~\ref{sec:Galois}).

Our main results need some countability assumptions, which follow
if~\(\check{A}\) is second countable.  We assume this for the rest
of the introduction.  In Section~\ref{sec:quasi-orbit}, we
characterise when the quasi-orbit map and the quasi-orbit space
exist and when the quasi-orbit map is open and surjective.  This
involves the following lattice-theoretic conditions:
\begin{enumerate}
\item[\eqref{cond:C1}] joins of restricted ideals remain restricted;
\item[\eqref{cond:C2}] \(I\cap r\circ i(J) = r\circ i(I\cap J)\) for
  all \(I\in\Ideals^B(A)\) and \(J\in\Ideals(A)\);
\item[\eqref{cond:C3}] finite meets of induced ideals are again induced;
\item[\eqref{cond:C3'}] arbitrary meets of induced ideals are again induced;
\item[\eqref{cond:C4}] \(I\cap F(J)= F(I\cap J)\) for every
  \(I\in\Ideals^A(B)\) and \(J\in\Ideals(B)\),
  where \(F(J)\) is the meet of all induced ideals that
  contain~\(J\).
\end{enumerate}

Condition~\eqref{cond:C1} says that the inclusion
\(\Ideals^B(A) \hookrightarrow \Ideals(A)\)
is a frame homomorphism.  Then it induces a continuous map
\(\pi\colon \check{A} \to \Prime^B(A)\)
by Stone duality; here \(\Prime^B(A)\)
is the set of prime elements of \(\Ideals^B(A)\),
equipped with a canonical topology.
We define the \emph{quasi-orbit space} \(\check{A}/{\sim}\)
as the quotient space of~\(\check{A}\)
for the relation~\(\sim\)
defined by \(\prid \sim \prid[q]\)
iff \(\pi(\prid) = \pi(\prid[q])\).
Theorem~\ref{the:open_surjective}
shows that~\eqref{cond:C2} holds if and only if~\(\pi\)
is an open surjection, that is, if and only if
\(\check{A}/{\sim}\cong \Prime^B(A)\)
for an open equivalence relation on~\(\check{A}\).

We say that~\(A\) \emph{separates ideals} in~\(B\) if
\(r\colon \Ideals(B)\to \Ideals(A)\)
is injective or, equivalently, if all ideals in~\(B\)
are induced.  Then \(\Prime^B(A) \cong \Prime(B)\),
which is equal to~\(\check{B}\)
under our countability assumption.  Accordingly, \(r\)
induces a homeomorphism \(\check{B} \cong \check{A}/{\sim}\)
for an open equivalence relation~\(\sim\)
on~\(\check{A}\)
if and only if~\(A\)
separates ideals in~\(B\)
and \eqref{cond:C1} and~\eqref{cond:C2} hold.

Condition~\eqref{cond:C3} says that the
inclusion \(\Ideals^A(B) \hookrightarrow \Ideals(B)\)
is a frame homomorphism.  Then it induces a
continuous map
\(\check{B}\to \Prime^B(A)\)
by Stone duality.  If \(\check{A}/{\sim}\cong \Prime^B(A)\),
this becomes a continuous map
\(\varrho\colon \check{B}\to \check{A}/{\sim}\),
which we call the
\emph{quasi-orbit map}.
By Theorem~\ref{the:open_surjective},
conditions \eqref{cond:C3'} and~\eqref{cond:C4}
characterise when the quasi-orbit map~\(\varrho\)
is open and surjective.  We separate the conditions \eqref{cond:C3}
and~\eqref{cond:C3'} because \eqref{cond:C3} is far easier to check.
And there are crossed products for group actions for
which~\eqref{cond:C3'} fails, whereas~\eqref{cond:C3} holds.

In Section~\ref{sec:symmetric}, we identify an easily checkable
condition for \(\varphi\colon A\to \Mult(B)\) that
implies \eqref{cond:C2} and~\eqref{cond:C3}.  It also
implies~\eqref{cond:C1} for an ordinary inclusion \(A\subseteq B\).
Namely, \(\varphi\colon A\to \Mult(B)\) is called
\emph{symmetric} if every restricted ideal \(I\in\Ideals^B(A)\)
is \emph{symmetric} in the sense that
\(\varphi(I)\cdot B = B\cdot \varphi(I)\).

The remaining sections consider examples and applications.  We begin
with crossed
products for group actions, section \(\Cst\)\nb-algebras of Fell
bundles over groups, and regular
inclusions in Section~\ref{sec:group_crossed}.  These are the
prototypical examples for our theory.  All these cases lead to a
symmetric inclusion where conditions \eqref{cond:C1}, \eqref{cond:C2}
and \eqref{cond:C3} hold.  That is, the quasi-orbit space and the
quasi-orbit map exist automatically.  The conditions \eqref{cond:C3'}
and~\eqref{cond:C4} needed for the quasi-orbit map to be open and
surjective are much more subtle.  In fact, \eqref{cond:C3'} may fail
for full crossed products.
We check \eqref{cond:C3'}
and~\eqref{cond:C4} for reduced crossed
products and reduced section algebras of Fell bundles, assuming an
exactness property.  The key idea, taken
from~\cite{GootmanLazar.DualityCrossedProduct}, is to use
Imai--Takai duality~\cite{Imai-Takai:Duality} and to identify the
induced ideals in the crossed product with the restricted ideals for
the crossed product of the dual coaction.  We generalise this result
from crossed products to Fell bundles, using Morita globalisations,
which are studied in \cites{Abadie-Ferraro:Equivalence_of_Fell_Bundles,
  Abadie-Buss-Ferraro:Morita_Fell}.

In Section~\ref{sec:regular_inc}, we exploit the relationship
between regular \(\Cst\)\nb-inclusions
and gradings on \(\Cst\)\nb-algebras
by inverse semigroups (this is somewhat implicit
in~\cite{Exel:noncomm.cartan}).  This allows us to define a
\emph{groupoid dual to a regular \(\Cst\)\nb-inclusion}
\(A\subseteq B\),
which generalises the Weyl groupoid introduced by Renault
in~\cite{Renault:Cartan.Subalgebras} for a Cartan subalgebra.  The
quasi-orbit space of \(A\subseteq B\)
may be identified with the quasi-orbit space of the dual groupoid.

Section~\ref{sec:applications} treats more examples that illustrate
our theory.
In Section~\ref{sec:induction_locale_morphism}, we treat
\(\Cont_0(X)\)-\(\Cst\)-algebras,
tensor products and skew-commutative tensor products.
In Section~\ref{sec:product_systems}, we consider  relative
Cuntz--Pimsner algebras \(\OO(J,X)\)
for a \(\Cst\)\nb-corres\-pondence~\(X\) over a \(\Cst\)\nb-algebra~\(A\).
Condition~\eqref{cond:C1} usually fails for the standard homomorphism
\(A\to \OO(J,X)\).
Thus there is no quasi-orbit space for this inclusion.  There is,
however, a quasi-orbit space for the gauge action of~\(\T\)
on \(\OO(J,X)\).
We build a continuous open surjection from the primitive ideal space
of~\(\OO(J,X)\)
to the prime ideal space of the lattice of \(J\)\nb-pairs of ideals
in~\(A\).
This is a good substitute for the missing quasi-orbit map.
In Section~\ref{sec:groupoid}, we describe  induced ideals
and  quasi-orbit spaces and quasi-orbit maps for crossed
products of actions by locally compact groupoids.  Here the main
tool is Renault's Disintegration Theorem
\cites{Muhly-Williams:Renaults_equivalence,
  Renault:Representations}.  In Section~\ref{sec:quantum_group_crossed},
we consider reduced crossed products for quantum group coactions.
Here our results are incomplete because we need some technical extra
conditions to check our lattice-theoretic conditions.

We thank Pedro Resende for spotting an error in a previous version
of Lemma~\ref{lem:surjective_implies_spatial}.  We also thank the
referee for valuable remarks concerning locale-theoretical notation.

\section{Adjunction between induction and restriction of ideals}
\label{sec:Galois}

Let \(A\) be a \(\Cst\)-algebra.  Let~\(\Ideals(A)\) be the set of
all (closed, two-sided) ideals in~\(A\), equipped with the partial
order given by inclusion.  This makes~\(\Ideals(A)\) a complete
lattice.  For a family of ideals
\((I_s)_{s \in S}\subseteq \Ideals(A)\), their join
\(\bigvee_{s\in S} I_s\) in~\(\Ideals(A)\) is the closed linear span
\(\overline{\mathrm{span}}\setgiven{a}{a\in I_s,\ s\in S}\), and
their meet \(\bigwedge_{s\in S} I_s\) in~\(\Ideals(A)\) is the
intersection \(\bigcap_{s\in S} I_s\).  The complete lattice
\(\Ideals(A)\) is \emph{infinitely distributive}, that is,
\begin{equation}
  \label{eq:infinitely_distributive}
  J \wedge \bigvee_{s\in S} I_s = \bigvee_{s\in S} J \wedge I_s
\end{equation}
for any set~\(S\) and \(J, I_s\in\Ideals(A)\) for \(s \in S\).  Any
subset~\(K\) of~\(\Ideals(A)\) inherits a partial order.  It can
happen that this makes~\(K\) a lattice in such a way that joins or
meets in \(K\) and~\(\Ideals(A)\) differ.

Let~\(B\)
be a \(\Cst\)\nb-algebra
and \(\Mult(B)\)
its multiplier algebra.
Let \(\varphi\colon A\to \Mult(B)\)
be a \Star{}homomorphism.  If~\(\varphi\)
is injective, we call it a \emph{generalised inclusion}.
The following construction
from~\cite{Baaj-Skandalis:Hopf_KK} restricts ideals
along~\(\varphi\).  For any \(J\in\Ideals(B)\), let
\[
\Mult(B,J) \defeq
\setgiven{m\in \Mult(B)}{m\cdot B + B\cdot m\subseteq J}.
\]

\begin{lemma}
  \label{lem:Mult_BJ}
  Let \(J\in\Ideals(B)\).
  \begin{enumerate}
  \item \label{lem:Mult_BJ1}%
    \(\Mult(B,J)\) is the largest ideal~\(I\)
    in \(\Mult(B)\) with \(I\cap B\subseteq J\).
  \item \label{lem:Mult_BJ2}%
    A multiplier \(m\in\Mult(B)\) belongs to~\(\Mult(B,J)\) if and
    only if \(m\cdot B\subseteq J\), if and only if \(B\cdot
    m\subseteq J\), if and only if \(B\cdot m\cdot B\subseteq J\).

  \item \label{lem:Mult_BJ3}%
    \(\Mult(B,J)\) is the kernel of the canonical \Star{}homomorphism
    \(\Mult(B)\to \Mult(B/J)\).
  \end{enumerate}
\end{lemma}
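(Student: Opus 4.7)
The plan is to begin with part~\ref{lem:Mult_BJ2}, because once we know the various descriptions of \(\Mult(B,J)\) coincide, parts~\ref{lem:Mult_BJ1} and~\ref{lem:Mult_BJ3} fall out more cleanly.  Let \((u_\lambda)\) be an approximate unit for~\(B\).  The inclusions
\[
\{m\in\Mult(B):m\cdot B\subseteq J\}\supseteq \Mult(B,J)\subseteq \{m\in\Mult(B):B\cdot m\subseteq J\}
\]
are trivial, and both of the above sets are contained in \(\{m:B\cdot m\cdot B\subseteq J\}\).  To close the loop, suppose \(B\cdot m\cdot B\subseteq J\).  For any \(b\in B\), \(mb\in B\) because~\(B\) is an ideal in \(\Mult(B)\), so \(mb=\lim_\lambda u_\lambda mb\); and each \(u_\lambda m b\) lies in \(B\cdot m\cdot B\subseteq J\).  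Since~\(J\) is closed, \(mb\in J\), so \(mB\subseteq J\).  A symmetric approximate-unit argument, replacing~\(u_\lambda\) from the other side, shows \(mB\subseteq J\Longleftrightarrow Bm\subseteq J\).  This proves~\ref{lem:Mult_BJ2}.

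For~\ref{lem:Mult_BJ1}, the equivalent descriptions make it obvious that \(\Mult(B,J)\) is a \Star{}closed two-sided ideal in \(\Mult(B)\): \(mB\subseteq J\) implies \((mn)B=m(nB)\subseteq mB\subseteq J\) and \((nm)B=n(mB)\subseteq n\cdot J\subseteq J\), using again that~\(J\) is an ideal of~\(B\) and~\(B\) is an ideal of \(\Mult(B)\).  The identity \(\Mult(B,J)\cap B = J\) follows because \(m\in B\) with \(mB\subseteq J\) yields \(m=\lim m u_\lambda\in J\), while conversely \(J\cdot B\subseteq J\).  Finally, if \(I\triangleleft\Mult(B)\) with \(I\cap B\subseteq J\), then for \(m\in I\) we have \(mB\subseteq I\cap B\subseteq J\), so \(m\in\Mult(B,J)\); hence \(\Mult(B,J)\) is the largest such ideal.

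For~\ref{lem:Mult_BJ3}, one first checks that the canonical homomorphism \(\Mult(B)\to\Mult(B/J)\) is well defined: \(m\in\Mult(B)\) sends~\(J\) into itself because for \(j\in J\) one may factor \(j=b_1b_2\) with \(b_1,b_2\in J\) (approximate units / Cohen factorisation), so that \(mj=(mb_1)b_2\in B\cdot J\subseteq J\).  Hence \(m\) descends to a multiplier \(\bar m\) of \(B/J\) acting by \(\bar m(b+J)=mb+J\).  Then \(\bar m=0\) precisely when \(mB\subseteq J\), which by~\ref{lem:Mult_BJ2} is the definition of \(\Mult(B,J)\).

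The only step that is not completely mechanical is the approximate-unit argument that collapses the four candidate descriptions in~\ref{lem:Mult_BJ2}; once that is in hand, \ref{lem:Mult_BJ1} and \ref{lem:Mult_BJ3} are short formal consequences, and the standing fact that two-sided closed ideals of a \Cst-algebra are preserved by its multiplier algebra is used in the background throughout.
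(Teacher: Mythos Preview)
Your proof is correct and follows essentially the same approach as the paper: both rely on an approximate unit in~\(B\) to collapse the four descriptions in~\ref{lem:Mult_BJ2}, and then deduce~\ref{lem:Mult_BJ1} and~\ref{lem:Mult_BJ3} as formal consequences. The only cosmetic difference is that the paper proves~\ref{lem:Mult_BJ1} first without invoking~\ref{lem:Mult_BJ2}, whereas you reorder and use~\ref{lem:Mult_BJ2} throughout; your Cohen-factorisation justification that \(\Mult(B)\cdot J\subseteq J\) is a bit more explicit than the paper's one-line assertion, but the substance is identical.
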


\begin{proof}
  The subset \(\Mult(B,J)\)
  is a closed, two-sided ideal in~\(\Mult(B)\)
  because~\(J\)
  is a closed, two-sided ideal in~\(B\)
  and \(m\cdot B+B\cdot m\subseteq B\)
  for all \(m\in\Mult(B)\).
  We have \(\Mult(B,J)\cap B = \Mult(B,J)\cdot B \subseteq J\).
  Let \(I\in\Ideals(\Mult(B))\)
  satisfy \(I\cap B\subseteq J\).
  Then \(m\cdot B+B\cdot m \subseteq I\cap B\subseteq J\)
  for all \(m\in I\),
  that is, \(I\subseteq \Mult(B,J)\).
  Thus~\(\Mult(B,J)\)
  is the largest ideal in~\(\Mult(B)\) that intersects~\(B\) in~\(J\).

  Let \(m\in\Mult(B)\).
  Let~\(N\)
  be a directed set and let \((e_n)_{n\in N}\)
  be an approximate unit for~\(B\).
  If \(b\in B\),
  then \(b\cdot m = \lim b\cdot m\cdot e_n\)
  because \(B\cdot m\subseteq B\).
  Thus \(B\cdot m\cdot B \subseteq J\)
  implies \(B\cdot m\subseteq J\).
  Similarly, it implies \(m\cdot B\subseteq J\)
  and then \(m\in\Mult(B,J)\).
  Conversely, \(m\in\Mult(B,J)\)
  implies both \(m\cdot B\subseteq J\)
  and \(B\cdot m\subseteq J\),
  and these imply \(B\cdot m\cdot B\subseteq J\)
  because~\(J\) is a two-sided ideal in~\(B\).
  Conversely, \(B\cdot m\cdot B\subseteq J\)
  implies \(m\cdot B\subseteq J\)
  because \(m\cdot B\subseteq B\)
  and \(\Mult(B,J) \cap B = J\).

  If \(m\in\Mult(B)\),
  then \(m\cdot J+J\cdot m\subseteq J\),
  so that~\(m\)
  descends to a multiplier of~\(B/J\).
  This is the canonical \Star{}homomorphism \(\Mult(B)\to\Mult(B/J)\)
  in~\ref{lem:Mult_BJ3}.  Its kernel consists of those
  \(m\in\Mult(B)\)
  with \(m\cdot B\subseteq J\).
  This is \(\Mult(B,J)\) by~\ref{lem:Mult_BJ2}.
\end{proof}

\begin{definition}
  \label{def:restrict}
  The \emph{restriction} of \(J\in\Ideals(B)\) is
  \[
  r(J) \defeq \varphi^{-1}(\Mult(B,J)).
  \]
  The \emph{induced ideal} of \(I\in\Ideals(A)\) is
  \[
  i(I)\defeq B \varphi(I)  B
  = \overline{\operatorname{span}}
  \setgiven{b_1\varphi(x) b_2}{b_1,b_2\in B,\ x\in I}.
  \]
\end{definition}

\begin{remark}
  The maps \(r\)
  and~\(i\)
  also appear in \cite{Green:Local_twisted}*{Proposition~9.(i)}.  They
  coincide with the maps denoted by \(\operatorname{Res}_\varphi\)
  and \(\operatorname{Ex}_\varphi\)
  in \cite{Nilsen:Bundles}*{Lemma 1.1}.
\end{remark}

By construction,
 \(i(I)\in\Ideals(B)\)
for all \(I\in\Ideals(A)\).
Let \(J\in\Ideals(B)\).
Lemma~\ref{lem:Mult_BJ}.\ref{lem:Mult_BJ3} implies that~\(r(J)\)
is the kernel of the composite \Star{}homomorphism
\(A\to\Mult(B)\to\Mult(B/J)\).
So \(r(J)\in\Ideals(A)\),
and~\(\varphi\) induces an injective \Star{}homomorphism
\begin{equation}
  \label{eq:induced_hom_on_quotient}
  \varphi_J\colon A/r(J) \hookrightarrow \Mult(B/J).
\end{equation}
In particular, \(r(0)=\ker \varphi\),
which is~\(0\)
if and only if~\(\varphi\) is injective.

\begin{definition}
  \label{def:restricted_induced_ideals}
  Let \(\varphi\colon A\to \Mult(B)\)
  be a \Star{}homomorphism.  We call \(I\in\Ideals(A)\)
  \emph{restricted} if \(I=r(J)\)
  for some \(J\in\Ideals(B)\).
  We call \(J\in\Ideals(B)\)
  \emph{induced} if \(J=i(I)\)
  for some \(I\in\Ideals(A)\).
  Let \(\Ideals^B(A)\subseteq \Ideals(A)\)
  and \(\Ideals^A(B)\subseteq \Ideals(B)\)
  be the subsets of restricted and induced ideals.
\end{definition}

\begin{remark}
  \label{rem:pass_to_injective}
  The \Star{}homomorphism
  \(\varphi_{r(0)}\colon A/r(0)\to \Mult(B)\) as
  in~\eqref{eq:induced_hom_on_quotient}
  is injective and there are natural order isomorphisms
  \(\Ideals^B(A)\cong \Ideals^B(A/r(0))\) and \(\Ideals^A(B)\cong
  \Ideals^{A/r(0)}(B)\).
  Thus in all statements concerning only induced and restricted
  ideals one may assume that \(\varphi\colon A\to \Mult(B)\) is
  injective, that is, a generalised \(\Cst\)\nb-inclusion.
\end{remark}

\begin{lemma}
  \label{lem:Galois_connection}
  Let \(I\in\Ideals(A)\),
  \(J\in\Ideals(B)\).
  Then \(I\subseteq r(J)\) if and only if \(i(I)\subseteq J\).
\end{lemma}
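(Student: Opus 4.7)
The plan is to prove both implications by direct unwinding of the definitions, leaning on Lemma~\ref{lem:Mult_BJ} to translate between the ideal $\Mult(B,J)$ and the containment $B\cdot m\cdot B\subseteq J$.

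First I would establish the forward direction. Assume $I\subseteq r(J)$. By definition of~$r$, this means $\varphi(I)\subseteq \Mult(B,J)$. Since $\Mult(B,J)$ is a two-sided ideal of $\Mult(B)$ satisfying $\Mult(B,J)\cdot B\subseteq J$ (and $B\cdot\Mult(B,J)\subseteq J$) by Lemma~\ref{lem:Mult_BJ}.\ref{lem:Mult_BJ2}, it follows that $B\varphi(I)B\subseteq B\cdot J\subseteq J$, and taking closed linear span gives $i(I)\subseteq J$.

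Conversely, suppose $i(I)\subseteq J$. For any $x\in I$ we have $B\varphi(x)B\subseteq B\varphi(I)B=i(I)\subseteq J$. Applying the characterisation in Lemma~\ref{lem:Mult_BJ}.\ref{lem:Mult_BJ2} with $m=\varphi(x)$, the inclusion $B\cdot\varphi(x)\cdot B\subseteq J$ is equivalent to $\varphi(x)\in\Mult(B,J)$. Hence $x\in\varphi^{-1}(\Mult(B,J))=r(J)$, which gives $I\subseteq r(J)$.

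There is no serious obstacle here: the entire content of the Galois connection is already packaged in Lemma~\ref{lem:Mult_BJ}, particularly in the equivalence between the one-sided, two-sided, and ideal-theoretic characterisations of $\Mult(B,J)$. The only point worth flagging is that one should take the closed linear span at the end of the first implication, which is automatic because $J$ is a closed subspace.
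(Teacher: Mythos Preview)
Your proof is correct and follows essentially the same approach as the paper: both arguments reduce the equivalence to the characterisation of $\Mult(B,J)$ in Lemma~\ref{lem:Mult_BJ}.\ref{lem:Mult_BJ2}, namely that $m\in\Mult(B,J)$ if and only if $B\cdot m\cdot B\subseteq J$. The paper simply writes the argument as a single chain of equivalences rather than splitting it into two implications, but the content is the same.
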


\begin{proof}
  Since \(I\subseteq A\),
  the inclusion \(I\subseteq r(J)\)
  holds if and only if \(\varphi(I)\subseteq \Mult(B,J)\).
  By Lemma~\ref{lem:Mult_BJ}, this is equivalent to
  \(b_1\varphi(x) b_2\in J\)
  for all \(x\in I\),
  \(b_1,b_2\in B\).  And this is equivalent to \(i(I)\subseteq J\).
\end{proof}

The relationship between the two maps
\[
r\colon \Ideals(B)\to\Ideals(A),\qquad
i\colon \Ideals(A)\to\Ideals(B)
\]
in Lemma~\ref{lem:Galois_connection} says that they are
(contravariantly) \emph{adjoint}
(see~\cite{Johnstone:Stone_spaces}).  In other notation, they form a
(monotone) \emph{Galois connection} (see
\cite{Davey-Priestley:Lattices_order}*{Definition~7.23}).
This was already noticed by Green in
\cite{Green:Local_twisted}*{Proposition~9.(i)}.  It has several useful
consequences, which we will list in
Proposition~\ref{pro:restrict_induce_ideals}.
In particular,
the adjoint maps \(i\)
and~\(r\)
 determine each other; \(r\)
is the \emph{right adjoint} of~\(i\)
and~\(i\)
is the \emph{left adjoint} of~\(r\).
More precisely, the adjunction dictates that
\(r(J)\in\Ideals(A)\)
is the join of all \(I\in\Ideals(A)\)
with \(i(I)\subseteq J\),
whereas \(i(I)\in\Ideals(B)\)
is the meet of all \(J\in\Ideals(B)\)
with \(I\subseteq r(J)\).

\begin{proposition}
  \label{pro:restrict_induce_ideals}
  Let \(\varphi\colon A\to \Mult(B)\) be a \Star{}homomorphism.  Then
  \begin{enumerate}
  \item \label{pro:restrict_induce_ideals1}%
    the maps \(r\colon \Ideals(B) \to \Ideals(A)\)
    and \(i\colon \Ideals(A) \to \Ideals(B)\) are monotone;
  \item \label{pro:restrict_induce_ideals2}%
    if \(I\in\Ideals(A)\),
    then \(r\circ i(I)\supseteq I\)
    and \(i\circ r\circ i(I)=i(I)\);
  \item \label{pro:restrict_induce_ideals2b}%
    if \(J\in\Ideals(B)\),
    then \(i\circ r(J)\subseteq J\) and \(r\circ i\circ r(J)=r(J)\);
  \item \label{pro:restrict_induce_ideals3}%
    the maps \(i\)
    and~\(r\)
    restrict to isomorphisms of partially ordered sets
    between \(\Ideals^A(B)\) and \(\Ideals^B(A)\)
    that are inverse to each other;
  \item \label{pro:restrict_induce_ideals4}%
    the map~\(i\) preserves joins and~\(r\) preserves meets;
  \item \label{pro:restrict_induce_ideals5}%
    \(i(0)=0\) and \(r(B)=A\);
  \item \label{pro:restrict_induce_ideals6}%
    meets of restricted ideals in~\(A\)
    remain restricted and joins of induced ideals in~\(B\)
    remain induced;

  \item \label{pro:restrict_induce_ideals6.5}%
    the isomorphic partially ordered sets
    \(\Ideals^A(B)\cong \Ideals^B(A)\) are complete lattices;
  \item \label{pro:restrict_induce_ideals7}%
    the inclusion \(\Ideals^B(A) \hookrightarrow \Ideals(A)\) is right
    adjoint to
    \(r\circ i\colon \Ideals(A) \to \Ideals^B(A)\), that is,
    \(I\in\Ideals(A)\) and \(J\in\Ideals^B(A)\) satisfy
    \(I\subseteq J\) if and only if \(r \circ i(I)\subseteq J\);
    equivalently, \(r\circ i(I)\) is the smallest restricted ideal
    that contains~\(I\);
  \item \label{pro:restrict_induce_ideals8}%
    \(i\circ r\colon \Ideals(B) \to \Ideals^A(B)\) is right adjoint to
    the inclusion \(\Ideals^A(B) \hookrightarrow \Ideals(B)\), that
    is, \(I\in\Ideals^A(B)\) and \(J\in\Ideals(B)\) satisfy
    \(I\subseteq J\) if and only if \(I\subseteq i \circ r(J)\);
    equivalently, \(i\circ r(J)\) for \(J\in\Ideals(B)\) is the
    largest induced ideal contained in~\(J\).
  \end{enumerate}
\end{proposition}

\begin{proof}
  Statements
  \ref{pro:restrict_induce_ideals1}--\ref{pro:restrict_induce_ideals2b}
  are \cite{Davey-Priestley:Lattices_order}*{Lemma 7.26}
  and~\ref{pro:restrict_induce_ideals4} is
  \cite{Davey-Priestley:Lattices_order}*{Proposition 7.31}.
  The rest now follows easily (and is well known).
\end{proof}

\begin{remark}
  \label{rem:joins_restricted}
  The subset \(\Ideals^B(A) \subseteq\Ideals(A)\) need not be closed
  under joins and \(\Ideals^A(B) \subseteq\Ideals(B)\) need not be
  closed under meets.  Nevertheless, the map
  \(r\circ i\colon \Ideals(A)\to\Ideals^B(A)\) preserves joins and
  \(i\circ r\colon \Ideals(B)\to\Ideals^A(B)\) preserves meets by
  \cite{Davey-Priestley:Lattices_order}*{Proposition 7.31}.  This is
  no contradiction because here joins and meets are taken in the
  respective sublattices.  By definition, the join of
  \((I_\alpha)_{\alpha\in S}\) in \(\Ideals^B(A)\) is the smallest
  element of \(\Ideals^B(A)\) that contains~\(I_\alpha\) for all
  \(\alpha\in S\).  This is equal to
  \(r\circ i\bigl(\bigvee I_\alpha\bigr)\)
  by~\ref{pro:restrict_induce_ideals7} in
  Proposition~\ref{pro:restrict_induce_ideals}.
\end{remark}

\begin{definition}
  Let \(\varphi\colon A\to \Mult(B)\) be a \Star{}homomorphism.
  We say that~\(A\)
  \emph{detects} ideals in~\(B\)
  if \(r(J)=r(0)\)
  for \(J\in\Ideals(B)\)
  implies \(J=0\),
  and \(A\) \emph{separates} ideals in~\(B\)
  if \(r(J_1)=r(J_2)\)
  for \(J_1,J_2\in\Ideals(B)\) implies \(J_1=J_2\).
\end{definition}

\begin{proposition}
  \label{pro:separate_induced}
  Let \(\varphi\colon A\to\Mult(B)\)
  be a \Star{}homomorphism.  Then~\(A\)
  detects ideals in~\(B\)
  if and only if any non-zero ideal in~\(B\)
  contains a non-zero induced ideal.  And~\(A\)
  separates ideals in~\(B\)
  if and only if all ideals in~\(B\)
  are induced, if and only if \(A/r(J)\)
  embedded into \(\Mult(B/J)\)
  using the induced injective \Star{}homomorphism
  \(A/r(J) \hookrightarrow \Mult(B/J)\)
  in~\eqref{eq:induced_hom_on_quotient} detects ideals for all
  induced ideals \(J\in\Ideals^A(B)\).
\end{proposition}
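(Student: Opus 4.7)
The plan is to run everything through the Galois connection identities already established in Proposition~\ref{pro:restrict_induce_ideals}, especially \(r\circ i\circ r = r\), \(i\circ r\circ i = i\), and the characterisation of \(i\circ r(J)\) as the largest induced ideal contained in \(J\). A preliminary observation that will be used throughout is that \(i\circ r(0)=0\), since \(r(0)=\ker\varphi\) and so \(i(\ker\varphi)=B\varphi(\ker\varphi)B=0\).

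For the first equivalence, I would argue both implications through \(i\circ r(J)\subseteq J\). If \(A\) detects ideals and \(J\neq 0\), then \(i\circ r(J)\) cannot be zero: otherwise \(r(J)=r\circ i\circ r(J)=r(0)\) would force \(J=0\), contradiction. Hence \(i\circ r(J)\) is a non-zero induced ideal inside \(J\). Conversely, if every non-zero ideal in \(B\) contains a non-zero induced ideal and \(r(J)=r(0)\), then \(i\circ r(J)=i\circ r(0)=0\); since \(i\circ r(J)\) is the \emph{largest} induced ideal in \(J\), \(J\) contains no non-zero induced ideal, so \(J=0\).

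For the second equivalence, injectivity of \(r\) together with \(r\circ i\circ r=r\) forces \(i\circ r(J)=J\) for every \(J\in\Ideals(B)\), which says every ideal of \(B\) is induced; conversely, if every \(J\in\Ideals(B)\) is induced, then writing \(J=i(L)\) gives \(J=i\circ r\circ i(L)=i\circ r(J)\), so \(r(J_1)=r(J_2)\) implies \(J_1=i\circ r(J_1)=i\circ r(J_2)=J_2\).

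The third equivalence is the delicate part. The key preliminary step is to compute the restriction map for the quotient inclusion \(\varphi_J\colon A/r(J)\hookrightarrow \Mult(B/J)\) from~\eqref{eq:induced_hom_on_quotient}: for an ideal \(K/J\) of \(B/J\) (with \(K\in\Ideals(B)\), \(K\supseteq J\)), both \(r_J(K/J)\) and \(r(K)/r(J)\) are the kernel of the composite \(A/r(J)\to \Mult(B/J)\to\Mult(B/K)\), using Lemma~\ref{lem:Mult_BJ}.\ref{lem:Mult_BJ3} twice. Hence the assertion ``\(A/r(J)\) detects ideals in \(B/J\)'' translates into: whenever \(K\supseteq J\) and \(r(K)=r(J)\), then \(K=J\). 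With this dictionary, one direction is immediate: if \(A\) separates ideals, then \(r\) is globally injective, so the detection property holds at every induced \(J\). For the converse, given \(r(J_1)=r(J_2)\), set \(J\defeq i\circ r(J_1)=i\circ r(J_2)\), which is an induced ideal contained in both \(J_1\) and \(J_2\); applying \(r\) and using \(r\circ i\circ r=r\) yields \(r(J)=r(J_1)=r(J_2)\), and the detection property at the induced ideal \(J\) then forces \(J_1=J=J_2\). The main obstacle I anticipate is the identification \(r_J(K/J)=r(K)/r(J)\); once that is in hand, everything else is a formal manipulation of the Galois connection.
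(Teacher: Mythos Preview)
Your proof is correct and follows essentially the same route as the paper's, manipulating the Galois connection identities \(r\circ i\circ r=r\), \(i\circ r\circ i=i\), and the fact that \(i\circ r(J)\) is the largest induced ideal contained in~\(J\). You are slightly more careful than the paper in spelling out the identification \(r_J(K/J)=r(K)/r(J)\) for the quotient inclusion, which the paper uses implicitly; otherwise the arguments coincide (your converse for the third equivalence is the direct form of the paper's contrapositive).
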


\begin{proof}
  By definition, \(A\)
  detects ideals in~\(B\)
  if \(J\neq0\)
  implies \(r(J)\neq r(0)\).
  The statements \ref{pro:restrict_induce_ideals2}
  and~\ref{pro:restrict_induce_ideals2b} in
  Proposition~\ref{pro:restrict_induce_ideals} say that
  \(i\circ r(J)\subseteq J\)
  and \(r\circ i\circ r = r\).
  Assume first that~\(A\)
  detects ideals in~\(B\).
  For each \(J\neq0\),
  the induced ideal \(i\circ r(J)\)
  is contained in~\(J\),
  and it is non-zero because \(r(i \circ r(J)) = r(J) \neq r(0)\).
  Conversely, assume that any \(J\in\Ideals(B)\)
  contains a non-zero induced ideal \(i(I)\).
  Then \(r(J) \supseteq r\circ i(I)\),
  and \(r\circ i(I)\neq r(0)\)
  because \(i(r\circ i(I)) = i(I)\)
  differs from \(i\circ r(0)=0\).
  This proves the first statement about detection of ideals.

  By definition, \(A\)
  separates ideals in~\(B\)
  if and only if~\(r\)
  is injective.  If~\(r\)
  is not injective, then neither is \(i\circ r\).
  Conversely, if \(i\circ r\)
  is not injective, then neither is \(r\circ i\circ r\),
  which is equal to~\(r\)
  by
  Proposition~\ref{pro:restrict_induce_ideals}.\ref{pro:restrict_induce_ideals2b}.
  Thus~\(A\)
  separates ideals in~\(B\)
  if and only \(i\circ r\)
  is injective.  Since \(i\circ r\)
  is a retraction from \(\Ideals(B)\)
  onto \(\Ideals^A(B)\),
  it is injective if and only if \(\Ideals^A(B)=\Ideals(B)\).

  As a result, there is \(J\in\Ideals(B)\)
  with \(i\circ r(J)\neq J\)
  if (and only if)~\(A\)
  does not separate ideals in~\(B\).
  Proposition~\ref{pro:restrict_induce_ideals}.\ref{pro:restrict_induce_ideals2b}
  implies
  \(i\circ r(J)\subseteq J\)
  and \(r(J) = r(i\circ r(J))\).
  So~\(J\)
  is mapped to a non-zero ideal in \(B/i\circ r(J)\),
  which is not detected by the map
  \(A/r(J) \to \Mult(B/r\circ i(I))\).
  Conversely, if~\(A/r(J)\)
  does not detect ideals in~\(B/J\)
  for some induced ideal \(J\in\Ideals^A(B)\),
  then there is an ideal \(J'\in\Ideals(B)\)
  with \(J\subsetneq J'\)
  and \(r(J') = r(J)\).  So~\(A\) does not separate ideals in~\(B\).
\end{proof}

The following examples show that, in general, restriction
and induction of ideals do not have more properties than those in
Proposition~\ref{pro:restrict_induce_ideals}.  Examples
\ref{exa:Z2_crossed_product_i_r_1}
and~\ref{exa:Z2_crossed_product_i_r_2} are very easy cases of a
crossed product by the finite group~\(\Z/2\).
Example~\ref{exa:join_not_restricted}, which shows
a join of two restricted ideals that is no longer restricted,
is related to graph
\(\Cst\)\nb-algebras (see also Example~\ref{exa:Katsura_graph}).
Example~\ref{ex:skew-commutative} will show an example of a meet of
two induced ideals that is no longer induced.

\begin{example}
  \label{exa:Z2_crossed_product_i_r_1}
  Let \(A=\C\)
  be embedded unitally into \(B=\C\oplus \C\).
  Here \(r(J) = A\cap J\),
  and \(r(\C\oplus0) = 0 = r(0\oplus\C)\),
  but \(r(\C\oplus0 + 0\oplus\C) = r(\C\oplus\C) = \C\).
  So~\(r\) does not commute with finite joins.
\end{example}

\begin{example}
  \label{exa:Z2_crossed_product_i_r_2}
  Let \(A=\C\oplus\C\) be
  embedded diagonally into \(B=\Mat_2(\C)\).
  Here \(i(\C\oplus0) = B = i(0\oplus\C)\),
  but \(i(\C\oplus0 \cap 0\oplus\C) = i(0\oplus0) = 0\).
  So~\(i\) does not commute with finite meets.
\end{example}

\begin{example}
  \label{exa:preserve_min_max_i_r}
  Since \(r(0)=\ker \varphi\),
  the map~\(r\)
  preserves the minimal elements if and only if~\(\varphi\)
  is injective.  Similarly, \(i\)
  preserves the maximal elements if and only if \(i(A) = B\),
  that is, \(B\cdot \varphi(A)\cdot B = B\).
  This happens, for instance, if~\(\varphi\)
  is non-degenerate.
\end{example}

\begin{example}
  \label{exa:join_not_restricted}
  Let \(B\defeq \Mat_2(\C)\oplus\Mat_2(\C)\)
  and let \(A\subseteq B\)
  be the commutative \(\Cst\)\nb-subalgebra
  spanned by the orthogonal diagonal projections \((E_{00},0)\),
  \((0,E_{00})\),
  and \((E_{11},E_{11})\).
  Let \(J_1=\Mat_2(\C)\oplus0\)
  and \(J_2=0\oplus\Mat_2(\C)\).
  Then \(I_1\defeq J_1\cap A = \C\cdot (E_{00},0)\)
  and \(I_2\defeq J_2\cap A = \C\cdot (0,E_{00})\)
  are restricted ideals in \(A\cong\C^3\).
  The only other restricted ideals are \(\{0\}\)
  and~\(A\).
  So the join \(I_1+I_2\neq A\)
  of the two restricted ideals \(I_1\) and~\(I_2\) is not restricted.
\end{example}

\section{Stone duality and open surjective maps}
\label{sec:Stone}

Stone duality refers to several dualities between certain categories
of topological spaces and partially ordered sets.  We briefly recall
the duality between sober topological spaces and spatial locales
(see \cites{Johnstone:Stone_spaces, Vickers:Topology_logic}).  This
is the key to turn information about ideal lattices into information
about prime and primitive ideal spaces.

A \emph{frame} is a complete, infinitely distributive lattice
(see~\eqref{eq:infinitely_distributive}), and a \emph{frame
  homomorphism} is a map that preserves arbitrary joins and finite
meets.  The category of \emph{locales} is the opposite of the category
of frames.  So a locale is the same as a frame and a \emph{locale map}
\(g\colon L\to R\) is a frame homomorphism \(g^*\colon R\to L\).
If~\(X\) is a topological space, then the partially ordered set of
open subsets~\(\Open(X)\) is a locale.  And if \(f\colon X\to Y\) is a
continuous map, then \(f^*=f^{-1}\colon \Open(Y)\to\Open(X)\) is a
frame homomorphism.  Equivalently, it is\ a locale map
\(f_*\colon \Open(X)\to\Open(Y)\).  Thus~\(\Open\) is a covariant
functor from topological spaces to locales.  It is well known
that~\(\Open\) has an adjoint functor~\(\pt\).  That is, \(\pt\) maps
a locale~\(L\) to a topological space~\(\pt(L)\) such that continuous
maps \(X\to \pt(L)\) for a topological space~\(X\) are in natural
bijection with locale maps \(\Open(X)\to L\).

Points in~\(\pt(L)\)
are \emph{characters}, that is, unital frame homomorphisms
\(L\to \{0,1\}\);
these are equivalent to \emph{prime elements}, that is, elements
\(p\in L\setminus\{1\}\) with the property
that \(I_1\wedge I_2\le p\) for \(I_1,I_2\in L\)
implies  \(I_1\le p\)
or \(I_2\le p\).
Indeed, for a character \(\chi\colon L\to \{0,1\}\),
there is a largest element \(p_\chi\in L\)
with \(\chi(p_\chi)=0\).
Thus \(\chi(I) = 0\)
for \(I\le p_\chi\)
and \(\chi(I) = 1\)
otherwise by maximality of~\(p_\chi\).
So~\(p_\chi\)
is prime.
Conversely, if \(p\in L\)
is prime, then \(\chi_p(I) \defeq 0\)
for \(I\le p\)
and \(\chi_p(I) \defeq 1\)
otherwise defines a character~\(\chi_p\).
Thus characters are equivalent to prime elements. The topology on~\(\pt(L)\) is defined as follows.  For \(I\in L\),
let
\[
U_I\defeq \setgiven{\chi\colon L\to\{0,1\} \text{ character}}{\chi(I)=1}
\cong \setgiven{p\in L \text{ prime}}{I\not\le p}.
\]
The map \(I\mapsto U_I\)
preserves joins and finite meets by the definition of a character.
Hence \(\setgiven{U_I}{I\in L}\)
is a topology on~\(\pt(L)\).
Any locale map \(g\colon L\to R\)
between locales \(L\)
and~\(R\)
induces a continuous map \(g_*\colon \pt(L)\to \pt(R)\)
by \(g_*(\chi)\defeq \chi \circ g^*\).
Equivalently, if \(p\in L\)
is prime, then~\(g_*(p)\)
is the unique element of~\(R\)
with
\begin{equation}
  \label{eq:induced_in_prime}
  I\le g_*(p) \text{ if and only if } g^*(I)\le p.
\end{equation}
In other words, \(g_*\) is right adjoint to~\(g^*\).
Equation~\eqref{eq:induced_in_prime} says that~\(g_*(p)\)
is the largest element in~\(R\)
with \(g^*(g_*(p)) \le p\).

The functor~\(\pt\) is adjoint to~\(\Open\).  The counit of the
adjunction is the family of natural locale maps defined by the natural
frame homomorphisms \(U^L\colon L \to \Open(\pt(L))\),
\(I\mapsto U_I\), and the counit is the family of continuous maps
\(\delta^X\colon X\to \pt(\Open(X))\), \(x\mapsto \delta_x\),
where~\(\delta_x\) for \(x\in X\) is the character
\[
\delta_x\colon \Open(X) \to \{0,1\},\qquad
U\mapsto
\begin{cases}
  1&\text{if }x\in U,\\
  0&\text{if }x\notin U.
\end{cases}
\]
The character~\(\delta_x\)
corresponds to the prime element \(X\setminus\overline{\{x\}}\)
of~\(\Open(X)\).
The induced continuous map \((U^L)_*\colon \pt(\Open(\pt(L))) \to \pt(L)\)
is a homeomorphism with inverse~\(\delta^{\pt(L)}\),
and the induced frame homomorphism
\((\delta^X)^{-1}\colon \Open(\pt(\Open(X))) \to \Open(X)\)
is an isomorphism with inverse~\(U^{\Open(X)}\).
A locale map \(g\colon\Open(X) \to L\)
and its adjunct \(\pi\colon X\to \pt(L)\)
are related by \(g^*(I) = \pi^{-1}(U_I)\)
for all \(I\in L\)
and \(\pi(x) = \delta^X_x\circ g^*\) for all \(x\in X\).

The map~\(U^L\)
is always surjective.  A locale is called \emph{spatial} if~\(U^L\)
is injective.  Equivalently, the characters
separate elements of~\(L\).
Let~\(X\)
be a topological space.  The locale~\(\Open(X)\)
is spatial.  The space~\(X\)
is~\(T_0\)
if and only if~\(\delta^X\) is injective.
Since \((\delta^X)^{-1}\colon \Open(\pt(\Open(X)) \to \Open(X)\)
is a lattice isomorphism, \(\delta^X\)
is a homeomorphism once it is bijective.  The space~\(X\)
is called \emph{sober} in this case.
This holds if and only if every irreducible closed
set in~\(X\)
is the closure of a singleton.  Spaces of points of locales are always sober.

Our next goal is to characterise when the continuous map
\(\pi\colon X \to \pt(R)\) that is the adjunct of a locale map
\(g\colon L \to \Open(X)\) is surjective and open.  We will show
that the necessary conditions identified
in~\cite{Joyal-Tierney:Galois_Grothendieck} are sufficient under
some countability hypotheses.  To motivate these conditions, let
\(f\colon X\to Y\) be a continuous map and let
\(f^*=f^{-1}\colon \Open(Y)\to\Open(X)\) be the associated frame
homomorphism.  If \(f\colon X\to Y\) is open, then
\(f_!\colon \Open(X)\to\Open(Y)\), \(U\mapsto f(U)\), is a left
adjoint of~\(f^*\), that is,
\(f(U) \subseteq V\)
is equivalent to \(U\subseteq f^{-1}(V)\) for all
\(U\in \Open(X)\), \(V\in \Open(Y)\).  In addition,
\begin{equation}
  \label{eq:Frobenius_concrete}
  f(U \cap f^{-1}(V)) = f(U) \cap V.  
\end{equation}
If~\(f\) is open, then each of the
following properties are equivalent to~\(f\) being surjective:
\(f_!(X)=Y\); \(f_!\circ f^{*}(V)=V\) for all \(V\in \Open(Y)\);
\(f^{*}\) being injective.

\begin{definition}[\cite{Joyal-Tierney:Galois_Grothendieck}]
  A locale map \(g\colon L \to R\) is called \emph{open} if
  \(g^*\colon R \to L\) has a left adjoint \(g_!\colon L \to R\) such
  that the \emph{Frobenius reciprocity condition} holds:
  \begin{equation}
    \label{eq:open_surjective}%
    g_!(J \wedge g^*(I)) = g_!(J) \wedge I
    \qquad\text{for all } I\in R,\ J\in L.
  \end{equation}
  An open locale map~\(g\) is called \emph{surjective} if~\(g^*\)
  is an injective map.
\end{definition}

\begin{remark}
  \label{rem:stupid_remark}
  The discussion above shows that a continuous, open, surjective map
  \(f\colon X\to Y\) induces an open and surjective locale map
  \(\Open(X)\to\Open(Y)\).  The Frobenius reciprocity condition
  is~\eqref{eq:Frobenius_concrete}.
\end{remark}

\begin{remark}
  \label{rem:Joyal_Tierney}
  Let \(g\colon L\to R\) be an open locale map.  Then~\(g\) is
  surjective if and only if \(g_!(1)=1\), if and only if
  \(g_!\circ g^* = \Id_R\). Indeed, the adjunction between \(g_!\)
  and~\(g^*\) implies \(1 \le g^*\circ g_!(1)\).  Then
  \(1 = g^*\circ g_!(1)\), and \(g^*(1)=1\) because~\(g^*\) is
  monotone.  Therefore, if~\(g^*\) is injective, then \(g_!(1) = 1\).
  Equation~\eqref{eq:open_surjective} with \(J=1\) shows that
  \(g_!(1)=1\) is equivalent to \(g_!\circ g^* = \Id_R\), and the
  latter implies that~\(g^*\) is injective.  Thus our definition of an
  open surjection agrees with that
  in~\cite{Joyal-Tierney:Galois_Grothendieck}.
\end{remark}

Let~\(A\)
be a \(\Cst\)\nb-algebra.
Let~\(\check{A}\)
be the primitive ideal space of~\(A\).
Then \(\Ideals(A) \cong \Open(\check{A})\)
(see \cite{Pedersen:Cstar_automorphisms}*{Theorem~4.1.3}).
Hence~\(\Ideals(A)\)
is a spatial locale.  By definition, \(\pt(\Ideals(A))\)
is the set \(\Prime(A)\)
of \emph{prime ideals} in~\(A\)
with the hull-kernel topology.  The space~\(\check{A}\)
is \(T_0\)
(see \cite{Dixmier:Cstar-algebras}*{3.3.8}).  Hence
\(\check{A}\subseteq \Prime(A)\).
The inclusion \(\check{A} \subseteq \Prime(A)\) is a
continuous map whose associated locale map is the identity on
\(\Ideals(A)\cong \Open(\check{A})\cong \Open(\Prime(A))\).
Thus \(\check{A} = \Prime(A)\) if and only if~\(\check{A}\)
is sober.  A \(\Cst\)\nb-algebra
where this fails is built
in~\cite{Weaver:Prime_not_primitive}.  In this example, the
inclusion map \(\check{A} \hookrightarrow \Prime(A)\) is not
surjective although the induced map
\(\Open(\Prime(A)) \to \Open(\check{A})\)
is even a frame isomorphism.
If~\(A\)
is separable, however, then \(\check{A}=\Prime(A)\).
This was first proved by Dixmier (see
\cite{Dixmier:Sur_Cstar-algebres}*{Corollaire~1} and also
\cite{Pedersen:Cstar_automorphisms}*{Proposition~4.3.6}).
In fact, since~\(\check{A}\)
is always locally quasi-compact, \(T_0\)
and Baire (see \cite{Dixmier:Cstar-algebras}*{3.1.3, 3.4.13}), a
result of Hofmann
\cite{Hofmann:Baire_lattices}*{Proposition~1} implies
that~\(\check{A}\)
is sober if~\(\check{A}\)
is second countable or, more generally, if \(\Prime(A)\)
is first countable.  We prove the following far-reaching
generalisation of these results:

\begin{theorem}
  \label{the:open_surjective}
  Let~\(X\) be a topological space and~\(L\) a locale.  Let
  \(g\colon \Open(X) \to L\) be a locale map and
  \(\pi\colon X\to \pt(L)\) a continuous map that are adjuncts of each
  other.  If~\(\pi\) is an open surjection and~\(L\) is spatial,
  then~\(g\) is open and surjective.  Conversely, assume~\(g\)
  to be open and surjective and
  \begin{enumerate}
  \item \label{the:open_surjective_1}%
    \(\pt(L)\) to be first countable --~which holds when~\(X\) is second
    countable~-- and
  \item \label{the:open_surjective_2}%
    all closed subspaces of~\(X\) to be Baire spaces.
  \end{enumerate}
  Then~\(\pi\) is open and surjective and~\(L\) is spatial.
\end{theorem}

\begin{corollary}
  Let \(X\) and~\(Y\) be topological spaces and let
  \(\pi\colon X\to Y\) be a continuous map.  Assume~\(Y\) to be sober
  and the locale map \(\Open(X)\to \Open(Y)\) defined by~\(\pi\) to be
  open and surjective.  Also assume~\(Y\) to be first countable
  or~\(X\) to be second countable, and assume all closed subspaces
  of~\(X\) to be Baire spaces.  Then~\(\pi\) is an open surjection.
\end{corollary}

\begin{proof}
  Apply Theorem~\ref{the:open_surjective} with \(L\defeq \Open(Y)\);
  here \(Y\cong \pt(L)\) because~\(Y\) is sober.
\end{proof}

\begin{corollary}
  \label{cor:first_countable_implies_sober}
  Let~\(A\)
  be a \(\Cst\)\nb-algebra.
  If~\(\check{A}\)
  is second countable or, more generally, if \(\Prime(A)\)
  is first countable, then \(\Prime(A)=\check{A}\).
\end{corollary}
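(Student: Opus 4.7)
The plan is to apply Theorem~\ref{the:open_surjective} to the canonical lattice isomorphism $G\colon \Ideals(A)\congto \Open(\check{A})$ of \cite{Pedersen:Cstar_automorphisms}*{Theorem 4.1.3}, taking $X\defeq \check{A}$ and $L\defeq \Ideals(A)$. Under Stone duality, the adjunct of this $G$ is precisely the canonical inclusion $\pi\colon \check{A}\hookrightarrow \Prime(A)=P(L)$ that regards a primitive ideal as a prime ideal, so establishing that $\pi$ is surjective will give $\check{A}=\Prime(A)$.

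Since $G$ is a lattice isomorphism, it is trivially an upper Galois insertion with lower adjoint $F\defeq G^{-1}$, and equation~\eqref{eq:open_surjective} holds tautologically because both sides equal $G^{-1}(G(I)\cap V)$. Hence the converse direction of Theorem~\ref{the:open_surjective} applies, provided conditions \ref{the:open_surjective_1} and~\ref{the:open_surjective_2} hold. Condition~\ref{the:open_surjective_1} is exactly our hypothesis. For condition~\ref{the:open_surjective_2}, I observe that every closed subset of~$\check{A}$ is the hull of some ideal $J\in\Ideals(A)$ and hence is naturally homeomorphic to the primitive ideal space of the quotient~$A/J$. By \cite{Dixmier:Cstar-algebras}*{3.1.3, 3.4.13}, every primitive ideal space is locally quasi-compact, $T_0$ and Baire; so all closed subspaces of~$\check{A}$ are Baire spaces.

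Theorem~\ref{the:open_surjective} then yields that $\pi$ is an open surjection. In particular $\pi$ is surjective, so $\check{A}=\Prime(A)$. There is no real obstacle in the argument: the key observation is simply to recognise that the sobriety question for $\check{A}$ is the open-surjectivity question for the adjunct of the identity locale morphism $\Ideals(A)\to\Open(\check{A})$, and to note that closed subsets of~$\check{A}$ are themselves primitive ideal spaces and therefore inherit the Baire property from Dixmier's result.
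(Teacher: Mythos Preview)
Your proof is correct and follows exactly the same approach as the paper: apply the converse direction of Theorem~\ref{the:open_surjective} with \(X=\check{A}\), \(L=\Ideals(A)\), and \(G\) the canonical identification \(\Ideals(A)\cong\Open(\check{A})\). The paper's one-line proof simply omits the verification of conditions \ref{the:open_surjective_1} and~\ref{the:open_surjective_2} that you spell out; in particular, your observation that closed subsets of~\(\check{A}\) are primitive ideal spaces of quotients and hence Baire by~\cite{Dixmier:Cstar-algebras} is exactly what is needed to check~\ref{the:open_surjective_2}.
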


\begin{proof}
  Let
  \(X\defeq \check{A}\), \(L\defeq \Ideals(A)\), and \(g\defeq
  \Id_{\Ideals(A)}\).
  A closed subset of~\(X\)
  is the primitive ideal space of a quotient~\(A/I\)
  for some ideal~\(I\).
  Hence it is a Baire space.  So Theorem~\ref{the:open_surjective}
  implies the statement.
\end{proof}

The proof of  Theorem~\ref{the:open_surjective} will occupy the rest of this section.
We start with some preliminary results.

\begin{lemma}
  \label{lem:surjective_implies_spatial}
  Let \(g\colon\Open(X) \to L\) be a locale map and let
  \(\pi\colon X\to \pt(L)\) be a surjective, continuous map, which are
  adjuncts of each other.  Then \(g^*\colon L \to \Open(X)\) is
  injective if and only if~\(L\) is spatial.
\end{lemma}

\begin{proof}
  Let \(I,I'\in L\).  Then \(g^*(I)=g^*(I')\) is equivalent to
  \(U_I=U_{I'}\) because \(\pi\colon X\to \pt(L)\) is surjective and
  \(g^*(I) = \pi^{-1}(U_I)\).  Hence~\(g^*\) is
  injective if and only if \(L\to \Open(\pt(L))\), \(I\mapsto U_I\), is
  injective.  This is the definition of~\(L\) being spatial.
\end{proof}

\begin{lemma}
  \label{lem:open_surjective_properties}
  Let \(g\colon \Open(X)  \to L \)
  be an open and surjective locale map.
  \begin{enumerate}
  \item \label{enu:second_PL}%
    If~\(X\) is second countable, then so is~\(\pt(L)\).
  \item \label{enu:dense_equivalence}%
    For any \(I\in L\),
    \(g^*(I)\subseteq X\)
    is dense for the topology~\(g^*(L)\)
    if and only if it is dense for the topology~\(\Open(X)\).
  \item\label{enu:Baire_GL}
    If~\(X\) with the topology~\(\Open(X)\) is a Baire space,
    then~\(X\) with the topology~\(g^*(L)\) is Baire.
  \end{enumerate}
\end{lemma}

\begin{proof}
  \ref{enu:second_PL}.  Let \((V_n)_{n\in\N}\)
  be a countable basis for the topology on~\(X\).
  Let \(I\in L\).
  There is \(S\subseteq\N\)
  with \(g^*(I) = \bigvee_{k\in S} V_k\).
  Then
  \(g^*(I) = g^*\circ g_! \circ g^*(I) = g^*\bigl(\bigvee_{k\in S}
  g_!(V_k)\bigr)\)
  because~\(g_!\)
  preserves joins as it is a left adjoint.  Since~\(g^*\)
  is injective, this implies \(I = \bigvee_{k\in S} g_!(V_k)\).
  Thus the open subsets \(U_{g_!(V_k)}\)
  form a countable basis for~\(\pt(L)\).

  \ref{enu:dense_equivalence}.  The subset \(g^*(I)\)
  is dense for the topology~\(\Open(X)\)
  if and only if \(g^*(I)\cap V\neq\emptyset\)
  for all \(V\in\Open(X)\)
  with \(V\neq\emptyset\).
  This is clearly stronger than being dense for the
  topology~\(g^*(L)\).
  Now assume that~\(g^*(I)\)
  is not dense for~\(\Open(X)\).
  We claim that~\(g^*(I)\)
  is not dense for~\(g^*(L)\)
  either.  By assumption, there is \(V\in \Open(X)\)
  with \(V\neq \emptyset\)
  and \(g^*(I)\cap V=\emptyset\).
  Then~\eqref{eq:open_surjective} implies
  \(I\wedge g_!(V) = g_!(g^*(I) \cap V)=g_!(\emptyset)\).
  Since~\(g_!\)
  is a left adjoint, it commutes with arbitrary joins.  So
  does~\(g^*\)
  as a frame homomorphism.  Therefore, \(g^*(g_!(\emptyset)) = \emptyset\).
  Since~\(g^*\) preserves finite meets,
  \[
  g^*(I)\cap g^*\circ g_!(V)
  = g^*(I\wedge g_!(V))
  = g^*(g_!(\emptyset))
  =  \emptyset.
  \]
  Since \(g^*\circ g_!(V)\supseteq V\neq \emptyset\),
  this shows that~\(g^*(I)\) is not dense for the topology~\(g^*(L)\).

  \ref{enu:Baire_GL}.  Let \(I_n \in L\)
  for \(n\in\N\)
  be such that~\(g^*(I_n)\)
  is dense for the topology~\(g^*(L)\)
  on~\(X\).
  By the previous lemma, \(g^*(I_n)\)
  is dense for~\(\Open(X)\).
  Since~\(X\)
  is a Baire space, the
  intersection \(\bigcap g^*(I_n)\)
  is dense for~\(\Open(X)\).
  Then it is also dense for~\(g^*(L)\).
\end{proof}

The following proposition generalises Dixmier's
\cite{Dixmier:Sur_Cstar-algebres}*{Th\'eor\`eme~2}, which states
that every separable prime \(\Cst\)\nb-algebra is primitive.  It is
a key step in the proof of Theorem~\ref{the:open_surjective}.

\begin{proposition}
  \label{prop:Dixmier's}
  Let~\(X\)   be a topological space and~\(L\)
  a locale.  Let \(g\colon \Open(X)  \to L\)
  be a locale map and \(\pi\colon X\to \pt(L)\)
  a continuous map that are adjuncts of each other.
  Suppose~\(X\)
  is a Baire space, \(\pt(L)\)
  is first countable and the minimal element~\(0\)
  of~\(L\)
  is prime.  If \(g\)
  is open surjective, then
  \(0\in \pt(L)\) belongs to the image \(\pi(X)\).
\end{proposition}

\begin{proof}
  The singleton~\(\{0\}\)
  is dense in~\(\pt(L)\).
  Indeed, \(0\in U_I\)
  for all \(I\in L\setminus\{0\}\),
  that is, \(0\in U\)
  for all non-empty open subsets \(U\subseteq \pt(L)\).
  In particular, every pair of non-empty open sets in~\(\pt(L)\)
  has non-empty intersection.  Thus any non-empty open set is dense
  in~\(\pt(L)\).
  Since~\(\pt(L)\)
  is first countable, \(0\)
  has a countable neighbourhood basis.  Equivalently, there is a
  sequence \((J_n)_{n\in\N}\)
  in \(L\setminus\{0\}\)
  such that for every \(I\in L\setminus\{0\}\)
  there is some \(n\in\N\)
  with \(J_n \le I\).
  Since~\(g^*\)
  is injective and~\(U_{J_n}\)
  is dense in \(\pt(L)\), it follows that~\(g^*(J_n)\)
  is dense in~\(X\)
  for the topology~\(g^*(L)\).
  By Lemma~\ref{lem:open_surjective_properties}\ref{enu:Baire_GL}, \(X\)
  with this topology is a Baire space.  Then
  the intersection
  \[
  \bigcap g^*(J_n) = \bigcap \pi^{-1}(U_{J_n})
  = \pi^{-1}\left(\bigcap U_{J_n}\right)
  \]
  is dense and hence non-empty.  Let~\(x\)
  be an element of it.  Then \(\pi(x) \in U_{J_n}\)
  for all \(n\in\N\).
  So \(\pi(x)\in U_I\)
  for all \(I\in L\setminus\{0\}\).
  Then \(\pi(x)=0\)
  because~\(\pt(L)\)
  is~\(T_0\)
  and no open subset of~\(\pt(L)\) separates \(0\) and~\(\pi(x)\).
\end{proof}

\begin{proof}[of Theorem~\textup{\ref{the:open_surjective}}]
  The first assertion in Theorem~\ref{the:open_surjective} is
  Remark~\ref{rem:stupid_remark}.  It remains to prove the converse
  assertion.  So assume \(g\colon \Open(X)\to L\) to be open and
  surjective.
  By Lemma~\ref{lem:open_surjective_properties}.\ref{enu:second_PL},
  condition \ref{the:open_surjective_1} in the assertion is
  satisfied whenever~\(X\)
  is second countable.  Let us assume conditions
  \ref{the:open_surjective_1} and \ref{the:open_surjective_2}.  We
  first prove that \(\pi\colon X\to \pt(L)\)
  is surjective and then use this to show that~\(\pi\) is open.

  Let \(p\in L\).  We are going to
  find \(x\in X\) with \(\pi(x)=p\) by reducing the situation to
  that in Proposition~\ref{prop:Dixmier's}.  Let
  \[
  L_{\ge p} \defeq \setgiven{J\in L}{J\ge p}.
  \]
  This subset of~\(L\)
  has the minimal element~\(p\)
  and is closed under arbitrary joins and non-empty meets.
  So~\(L_{\ge p}\)
  is a locale.  An element of~\(L_{\ge p}\)
  is prime if and only if it is prime in~\(L\).
  And the prime elements in~\(L\) that do not belong to~\(L_{\ge p}\)
  are precisely those in~\(U_p\).
  Thus \(\pt(L_{\ge p})=\pt(L)\setminus U_p\).
  The topology on \(\pt(L_{\ge p})\)
  is the subspace topology from~\(\pt(L)\),
  and \(I\in L_{\ge p}\)
  corresponds to the (relatively) open subset
  \(U_I\setminus U_p \subseteq \pt(L) \setminus U_p\).
  Since \(\pi^{-1}(U_p) = g^*(p)\subseteq X\),
  the map~\(\pi\) restricts to a continuous map
  \[
  \pi_p\colon X\setminus g^*(p) \to \pt(L_{\ge p}),
  \]
  where \(X\setminus g^*(p)\subseteq X\)
  also carries the subspace topology.  This map is the adjunct of
  \[
  g_p^*\colon L_{\ge p} \to \Open(X\setminus g^*(p)),\qquad
  I\mapsto g^*(I) \setminus g^*(p),
  \]
  because
  \(\pi_p^{-1}(U_I\setminus U_p) = \pi^{-1}(U_I) \setminus g^*(p) =
  g^*(I)\setminus g^*(p)\).

  We claim that the locale map defined by~\(g^*_p\)
  is open and surjective.  Since \(g^*(p)\subseteq g^*(I)\)
  for every \(I\in  L_{\ge p}\)
  and the map \(g^*\colon L \to \Open(X)\)
  is injective, the map \(g^*_p\colon L_{\ge p} \to \Open(X\setminus g^*(p))\)
  is injective.  Recall that~\(g^*\) has the left adjoint
  \(g_!\colon \Open(X) \to L\)
  (that  satisfies~\eqref{eq:open_surjective}).  Since~\(g^*(p)\)
  is open, a subset \(V\subseteq X\setminus g^*(p)\)
  is relatively open if and only if \(V\cup g^*(p)\in\Open(X)\).
  If \(V\in \Open(X\setminus g^*(p))\),
  then \(g_!(V\cup g^*(p)) \ge g_!(g^*(p)) = p\),
  that is, \(g_!(V\cup g^*(p)) \in L_{\ge p}\).  This allows us to define
  \[
  g_{p!}\colon \Open(X\setminus g^*(p)) \to L_{\ge p},\qquad
  V\mapsto g_!(V\cup g^*(p)).
  \]
  Let \(I\in L_{\ge p}\) and \(V\in \Open(X\setminus g^*(p))\).  Then
  \[
 g_{p!}(V)=g_!(V\cup g^*(p))  \le I
  \iff
  V \cup g^*(p) \subseteq g^*(I)
  \iff
  V \subseteq g^*(I) \setminus g^*(p) = g^*_p(I).
  \]
  Hence~\(g_{p!}\)
  is a left adjoint for~\(g^*_p\).
  Moreover, \eqref{eq:open_surjective} for \(g_!\) and~\(g^*\) imply
  \begin{multline*}
    g_{p!}(V\cap g^*_p(I))
    = g_!\bigl( (V\cap g^*_p(I))\cup g^*(p) \bigr)
    = g_!\bigl((V \cup g^*(p))\cap  g^*(I) \bigr)
    \\= g_!(V \cup g^*(p)) \wedge I
    = g_{p!}(V)\wedge I,
  \end{multline*}
  which is~\eqref{eq:open_surjective} for~\(g_{p!}\)
  and \(g^*_p\).  Hence~\(g^*_p\) is open and surjective as claimed.

  Since~\(\pt(L)\) is first countable,  so is the subspace~\(\pt(L_{\ge p})\).
  And the closed subset \(X\setminus g^*(p) \subseteq X\)
  is a Baire space by the assumption~\ref{the:open_surjective_2}.  So
  Proposition~\ref{prop:Dixmier's} applies to
  \(X\setminus g^*(p)\),
  \(L_{\ge p}\),
  \(g^*_p\) and~\(\pi_p\),
  giving \(x\in X\setminus g^*(p)\)
  with \(\pi(x) = p\).
  This shows that \(\pi\colon X\to \pt(L)\) is surjective.
  Then~\(L\) is spatial by
  Lemma~\ref{lem:surjective_implies_spatial}.

  Now we show that~\(\pi\) is open.
  Let \(V\in \Open(X)\)
  and let \(g_!\colon \Open(X) \to L\)
  be the left adjoint of~\(g^*\).
  The proof will be finished once we show that
  \begin{equation}
    \label{eq:proof_open_map}
    \pi(V) = U_{g_!(V)},
  \end{equation}
  as~\(U_{g_!(V)}\)
  is an open subset of~\(\pt(L)\).
  The adjunctions between \(g_!\)
  and~\(g^*\)
  and between \(g^*\)
  and~\(\pi\)
  imply \(V \subseteq g^*(g_!(V)) = \pi^{-1}(U_{g_!(V)})\).
  Then \(\pi(V) \subseteq U_{g_!(V)}\).
  Thus~\eqref{eq:proof_open_map} follows if the
  restriction \(\pi|_V\colon V\to U_{g_!(V)}\) of~\(\pi\) is surjective.

  The subset \(L_{\le g_!(V)}\defeq \setgiven{J\in L}{J\le g_!(V)} \subseteq L\)
  has the maximal element~\(g_!(V)\)
  and is closed under joins and non-empty meets.  So it is a locale.
  If~\(p\)
  is a character on~\(L\),
  then~\(p\)
  restricted to~\(L_{\le g_!(V)}\)
  is either a character or the constant function~\(0\).
  Thus the prime elements in~\(L_{\le g_!(V)}\)
  are exactly those of the form \(p\wedge g_!(V)\)
  for a prime~\(p\) in~\(L\) with \(g_!(V) \not\le p\).  That is,
  \[
  \pt(L_{\le g_!(V)})
  \cong \setgiven{p\in \pt(L)}{g_!(V)\not\le p}
  = U_{g_!(V)}.
  \]
  We identify \(\pt(L_{\le g_!(V)})\)
  with~\(U_{g_!(V)}\).
  Let \(g^*_V\colon L_{\le g_!(V)} \to \Open(V)\)
  be the adjunct of
  \(\pi|_V\colon V\to U_{g_!(V)}=\pt(L_{\le g_!(V)})\).
  If \(I\in L_{\le g_!(V)}\), then
  \[
  g^*_V(I)=(\pi|_V)^{-1}(U_I)= \pi^{-1}(U_I) \cap V= g^*(I) \cap V.
  \]
  The left adjoint~\(g_!\)
  to~\(g^*\) restricts to \(g_{V!}\colon \Open(V) \to L_{\le g_!(V)}\).
  The latter is left adjoint to~\(g^*_V\)
  because if \(I\in L_{\le g_!(V)}\)
  and \(W\in \Open(V)\), then
  \[
  g_{V!}(W)=g_!(W)  \le I
  \iff
  W  \subseteq g^*(I)
  \iff
  W \subseteq g^*(I) \cap V =  g^*_V(I).
  \]
  The adjoint maps \(g^*_V\) and~\(g_{V!}\)
  satisfy~\eqref{eq:open_surjective}: if \(I\in L_{\le g_!(V)}\),
  \(W\in \Open(V)\), then
  \[
  g_{V!}( W \cap g^*_V(I))=
  g_!(W\cap g^*(I) \cap V)
  = g_!(W \cap g^*(I))
  = g_!(W) \wedge I= g_{V!}(W) \wedge I.
  \]
  Hence~\(g_V\) is  an open  locale map.  It is surjective
  because \(g_{V!}(V)=g_!(V)\),
  that is, \(g_{V!}\) preserves units (see
  Remark~\ref{rem:Joyal_Tierney}).

  The open subspace \(\pt(L_{\le g_!(V)}) \subseteq \pt(L)\)
  inherits first countability and the Baire property because both
  properties are hereditary for open subsets.   Thus \(V\),
  \(L_{\le g_!(V)}\),
  \(g^*_V\) and~\(\pi|_V\)
  have all the properties required in the theorem we are proving.
  Hence \(\pi|_V\colon V \to \pt(L_{\le g_!(V)}) = U_{g_!(V)}\)
  is surjective by what we have already proved.  This
  shows~\eqref{eq:proof_open_map} and finishes the proof.
\end{proof}

\section{The quasi-orbit space}
\label{sec:quasi-orbit}

Throughout this section, we fix a \Star{}homomorphism
\(\varphi\colon A\to \Mult(B)\).  Then \(\Ideals(A)\), \(\Ideals(B)\),
\(\Ideals^B(A)\) and \(\Ideals^A(B)\) are complete lattices, and
\(\Ideals^B(A)\cong\Ideals^A(B)\) are isomorphic (see
Proposition~\ref{pro:restrict_induce_ideals}).  Here \(\Ideals(A)\)
and~\(\Ideals(B)\) are spatial locales.  It is unclear whether
\(\Ideals^A(B)\cong \Ideals^B(A)\) is infinitely distributive in
general.  Extra assumptions in our theorems will ensure this, however.
And even if this failed, we may define topological spaces
\(\Prime^A(B)\cong\Prime^B(A)\) of prime elements in
\(\Ideals^A(B)\cong\Ideals^B(A)\) by copying the definition for
locales.  This gives a candidate for the ``quasi-orbit space''
of~\(\varphi\).  We shall, however, define the quasi-orbit space as a
quotient of~\(\check{A}\), following the usual practice (see
\cites{Boenicke-Li:Ideal, Effros-Hahn:Transformation_groups,
  Giordano-Sierakowski:Purely_infinite, Green:Local_twisted,
  GootmanRosenberg.StructureOfCrossedProducts,
  GootmanLazar.DualityCrossedProduct,
  Kwasniewski-Szymanski:Pure_infinite, Williams:Topology_primitive,
  Leung-Ng:Invariant_twisted, Takai:Quasi-orbit_continuous,
  Zeller-Meier:Produits_croises}).  We need the following assumption:
\begin{equation}
  \tag{\textup{JR}}
  \label{cond:C1}%
  \textit{joins of restricted ideals remain restricted.}
\end{equation}

\begin{lemma}
  \label{lem:induced_map_pi}
  Let \(\varphi\colon A\to \Mult(B)\).
  The following statements are equivalent:
  \begin{enumerate}
  \item \label{enu:induced_map_pi1}%
    condition \eqref{cond:C1} holds;
  \item \label{enu:induced_map_pi2}%
    the inclusion map \(\Ideals^B(A) \hookrightarrow \Ideals(A)\)
    is a frame homomorphism;
  \item \label{enu:induced_map_pi3}%
    \(\Ideals^B(A)\) is a spatial locale, and  there is a well defined
    continuous map \(\pi\colon \Prime(A) \to \Prime^B(A)\), where
    \begin{equation}
      \label{eq:definition_of_pi}
      \pi(\prid) \text{ is the largest restricted ideal in }
      A \text{ that is contained in }\prid.
    \end{equation}
  \end{enumerate}
  If these equivalent conditions hold, then
  \(\pi\colon \check{A} \subseteq \Prime(A) \to \Prime^B(A)\)
  is the map induced by the frame homomorphism
  \(\Ideals^B(A) \hookrightarrow \Ideals(A)\).
\end{lemma}

\begin{proof}
  By Proposition~\ref{pro:restrict_induce_ideals}.\ref{pro:restrict_induce_ideals6},  \(\Ideals^B(A) \subseteq \Ideals(A)\)
  is always closed under meets (intersections).  Hence
  \(\Ideals^B(A) \hookrightarrow \Ideals(A)\)
  is a frame homomorphism if and only if joins of restricted ideals
  remain restricted.
  Thus \ref{enu:induced_map_pi1}\(\iff\)\ref{enu:induced_map_pi2}.

  Assume that \(\Ideals^B(A) \hookrightarrow \Ideals(A)\) is a frame
  homomorphism.  Then infinite joins and meets in \(\Ideals^B(A)\)
  are the same as in \(\Ideals(A)\), so that \(\Ideals^B(A)\) is
  infinitely distributive.  And \(\Ideals^B(A)\) is complete by
  Proposition~\ref{pro:restrict_induce_ideals}.  So \(\Ideals^B(A)\)
  is a locale.  It is spatial because it embeds
  into the spatial locale~\(\Ideals(A)\).
  The induced continuous map
  \(\pi\colon \Prime(A) \to \Prime^B(A)\)
  verifies~\eqref{eq:induced_in_prime}.  That is,
  \(I\subseteq \pi(\prid)\)
  if and only if \(I\subseteq \prid\)
  for \(I\in\Ideals^B(A)\),
  \(\prid\in\Prime(A)\).
  This and \(\pi(\prid) \in \Prime^B(A) \subseteq \Ideals^B(A)\)
  imply~\eqref{eq:definition_of_pi}.  This proves
  \ref{enu:induced_map_pi2}\(\Longrightarrow\)\ref{enu:induced_map_pi3}.

  Now assume that \(\Ideals^B(A)\)
  is a spatial locale and that~\eqref{eq:definition_of_pi} defines a
  continuous map \(\pi\colon  \Prime(A) \to \Prime^B(A)\).
  Since both \(\Ideals^B(A)\) and \(\Ideals(A)\)
  are spatial locales, we may identify them with \(\Open(\Prime^B(A))\)
  and \(\Open(\Prime(A))\), respectively.  Then the frame  homomorphism
  \[
    \pi^{-1}\colon  \Open(\Prime^B(A))\to \Open(\Prime(A))
  \]
  becomes
  a map \(\pi^{-1}\colon \Ideals^B(A) \to \Ideals(A)\).
  Let \(g^*\colon \Ideals^B(A) \hookrightarrow \Ideals(A)\)
  be the inclusion.  If \(I\in\Ideals^B(A)\) and
  \(\prid\in\check{A}\), then~\eqref{eq:definition_of_pi} implies
  \begin{equation}
    \label{eq:adjunct_pi_G}
    \prid \in \pi^{-1}(U_I)
    \iff I \not\subseteq \pi(\prid)
    \iff I \not\subseteq \prid
    \iff \prid \in U_{g^*(I)}.
  \end{equation}
  Under our identifications, this means that \(\pi^{-1}=g^*\).
  Hence~\(g\) is a locale map.
  Thus
  \ref{enu:induced_map_pi3}\(\Longrightarrow\)\ref{enu:induced_map_pi2},
  and this finishes the proof.
\end{proof}

\begin{definition}
  Let \(\varphi\colon A\to \Mult(B)\) be such that~\eqref{cond:C1}
  holds.  Then the map
  \(\pi\colon\check{A} \subseteq \Prime(A)\to \Prime^B(A)\)
  in~\eqref{eq:definition_of_pi} is defined.  Let
  \(\prid\sim \prid[q]\) if and only if
  \(\pi(\prid)=\pi(\prid[q])\).  The \emph{quasi-orbit space}
  of~\(\varphi\) is~\(\check{A}/{\sim}\) with the quotient topology.
\end{definition}

Our first application of Theorem \ref{the:open_surjective} gives
necessary and sufficient conditions under which we may identify
the quasi-orbit space~\(\check{A}/{\sim}\) with~\(\Prime^A(B)\).

\begin{theorem}
  \label{the:prime_primitive}
  Let \(\varphi\colon A\to \Mult(B)\)
  be such that \eqref{cond:C1} holds.  If the continuous map
  \(\pi\colon \check{A} \to \Prime^B(A)\)
  defined in~\eqref{eq:definition_of_pi} is open and surjective, then
  \begin{equation}
    \tag{\textup{FR1}}
    \label{cond:C2}%
    r\circ i(I\cap J)=I\cap r\circ i(J)
    \qquad\text{for all }I\in\Ideals^B(A)
    \text{ and }J\in\Ideals(A).
  \end{equation}
  Conversely, if~\eqref{cond:C2} holds and \(\Prime^B(A)\)
  is first countable -- which follows if~\(\check{A}\)
  is second countable -- then the continuous map \(\pi\)
  is surjective and open and induces a homeomorphism
  \(\check{A}/{\sim} \cong \Prime^B(A)\).
\end{theorem}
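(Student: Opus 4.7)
My plan is to recognise the statement as a direct application of Theorem~\ref{the:open_surjective} with $X\defeq \check{A}$, $L\defeq \Ideals^B(A)$, and $G$ the inclusion $\Ideals^B(A)\hookrightarrow \Ideals(A)\cong \Open(\check{A})$. Under~\ref{cond:C1}, Lemma~\ref{lem:induced_map_pi} makes $G$ a locale morphism, and Proposition~\ref{pro:restrict_induce_ideals}.\ref{pro:restrict_induce_ideals7} identifies its lower adjoint with $F = r\circ i\colon \Ideals(A)\to\Ideals^B(A)$. The adjunct of $G$ is precisely the map~$\pi$ from~\eqref{eq:definition_of_pi}: for $I\in\Ideals^B(A)$ one has $I\subseteq \pi(\prid)\iff I\subseteq \prid$, so that $\pi^{-1}(U_I)$ is the open subset of $\check{A}$ corresponding to $I=G(I)$. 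Under this dictionary, equation~\eqref{eq:open_surjective} reads $I\cap r\circ i(J)=r\circ i(I\cap J)$ for $I\in\Ideals^B(A)$ and $J\in\Ideals(A)$; that is, \eqref{eq:open_surjective} and~\ref{cond:C2} coincide.

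For the forward direction I apply the first half of Theorem~\ref{the:open_surjective}. Its hypothesis that $L$ is spatial is automatic: $\Open(\check{A})$ is spatial, and since $G$ is injective the family $\{\delta_x\circ G\}_{x\in\check{A}}$ separates elements of $L$. Hence if $\pi$ is open and surjective, \eqref{eq:open_surjective} holds, i.e., \ref{cond:C2} holds.

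For the converse I verify the hypotheses of the second half of Theorem~\ref{the:open_surjective}. Condition~\ref{cond:C1} together with Proposition~\ref{pro:restrict_induce_ideals}.\ref{pro:restrict_induce_ideals7} says that $G$ is an upper Galois insertion with lower adjoint $F=r\circ i$, and~\ref{cond:C2} is exactly~\eqref{eq:open_surjective}. The first-countability assumption matches the alternative hypothesis \ref{the:open_surjective_1} of Theorem~\ref{the:open_surjective} verbatim. The remaining hypothesis \ref{the:open_surjective_2} — that every closed subspace of $\check{A}$ be a Baire space — is automatic, because any such subspace is homeomorphic to $\Prim(A/I)$ for some $I\in\Ideals(A)$, and every primitive ideal space is Baire by \cite{Dixmier:Cstar-algebras}*{3.4.13}. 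Theorem~\ref{the:open_surjective} then yields that $\pi$ is an open surjection.

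Once $\pi\colon \check{A}\to \Prime^B(A)$ is a continuous open surjection, the universal property of the quotient produces a continuous bijection $\bar{\pi}\colon \check{A}/{\sim}\to \Prime^B(A)$; writing $q$ for the quotient map, any open $V\subseteq \check{A}/{\sim}$ satisfies $\bar{\pi}(V)=\pi(q^{-1}(V))$, which is open in $\Prime^B(A)$, so $\bar{\pi}$ is open and hence a homeomorphism. Composing with the homeomorphism $\Prime^B(A)\cong \Prime^A(B)$ that dualises the locale isomorphism of Proposition~\ref{pro:restrict_induce_ideals}.\ref{pro:restrict_induce_ideals6.5} completes the identification $\check{A}/{\sim}\cong \Prime^A(B)$. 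I do not expect a real obstacle: the substantive technical content — the Baire-category construction of primitive ideals modelled on Pedersen's proof that $\check{A}=\Prime(A)$ for separable~$A$ — has already been absorbed into Theorem~\ref{the:open_surjective}, and the present argument is essentially a translation between the lattice-theoretic language of locale morphisms and that of restricted and induced ideals.
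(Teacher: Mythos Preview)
Your proof is correct and follows essentially the same route as the paper: both recognise the statement as an instance of Theorem~\ref{the:open_surjective} applied to the upper Galois insertion $\Ideals^B(A)\hookrightarrow\Ideals(A)$ with lower adjoint $r\circ i$, translate~\eqref{eq:open_surjective} into~\ref{cond:C2}, and read off the conclusion. You are in fact slightly more explicit than the paper in two places---you spell out why closed subsets of~$\check{A}$ are Baire (verifying hypothesis~\ref{the:open_surjective_2}) and why the induced bijection $\check{A}/{\sim}\to\Prime^B(A)$ is a homeomorphism---but these are just elaborations of steps the paper leaves implicit.
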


\begin{proof}
  Proposition~\ref{pro:restrict_induce_ideals}.\ref{pro:restrict_induce_ideals7}
  says that the inclusion
  \(g^*\colon \Ideals^B(A) \hookrightarrow \Ideals(A) \cong
  \Open(\check{A})\)
  has a left adjoint
  \(g_!\defeq r\circ i\colon \Open(\check{A}) \cong \Ideals(A) \to
  \Ideals^B(A)\).  Condition~\eqref{cond:C2} is the
  Frobenius reciprocity condition~\eqref{eq:open_surjective} for
  these maps.  Thus~\(g\)
  is an open surjective locale map if and only
  if~\eqref{cond:C2} holds.
  Equation~\eqref{eq:adjunct_pi_G} implies \(\pi^{-1}(U_I)=U_{g^*(I)}\)
  for all \(I\in\Ideals^B(A)\).
  Thus~\(\pi\)
  is the adjunct of~\(g\).
  So Theorem~\ref{the:open_surjective} implies all assertions except
  \(\check{A}/{\sim} \cong \Prime^B(A)\).
  This follows from the definition of the equivalence
  relation~\(\sim\) whenever~\(\pi\) is an open surjection.
\end{proof}

\begin{lemma}
  \label{lem:separate_ideals_into_primes}
  Let \(\varphi\colon A\to \Mult(B)\).
  The following are equivalent:
  \begin{enumerate}
  \item \label{enu:separate_ideals_into_primes1}%
    \(A\) separates ideals in~\(B\);
  \item \label{enu:separate_ideals_into_primes2}%
    \(\Ideals^B(A)\) is a spatial locale, and the
    map \(r \colon \Ideals(B) \to \Ideals^B(A)\) restricts to
    a homeomorphism \(r\colon \Prime(B) \to \Prime^B(A)\).
  \end{enumerate}
  Thus if \(\Prime^B(A)\)
  is first countable and the above equivalent conditions hold, then
  \(\check{B} = \Prime(B)\cong\Prime^B(A)\).
\end{lemma}

\begin{proof}
  If~\(A\) separates ideals in~\(B\), then
  \(r\colon \Ideals(B) \to \Ideals^B(A)\) is a lattice isomorphism.
  Then \(\Ideals^B(A)\) is a spatial locale because \(\Ideals(B)\) is,
  and~\(r\) restricts to a homeomorphism
  \(r\colon \Prime(B) \to \Prime^B(A)\).  Conversely, if
  \(\Ideals^B(A)\) is a spatial locale and~\(r\) is a homeomorphism
  \(\Prime(B) \congto \Prime^B(A)\), then
  \(r\colon \Ideals(B) \to \Ideals^B(A)\) is a frame isomorphism
  because \(\Ideals(B) \cong \Open(\Prime(B))\) and
  \(\Ideals^B(A) \cong \Open(\Prime^B(A))\).  Then~\(A\) separates
  ideals in~\(B\).  This proves the equivalence between
  \ref{enu:separate_ideals_into_primes1} and
  \ref{enu:separate_ideals_into_primes2}.  These conditions imply
  \(\Prime(B)\cong \Prime^B(A)\).  If this is first countable, then
  \(\check{B} = \Prime(B)\) by
  Corollary~\ref{cor:first_countable_implies_sober}.
\end{proof}

\begin{corollary}
  \label{cor:primitive_ideal_space_description}
  Let \(\varphi\colon A\to \Mult(B)\).
  Suppose that \(\Prime^B(A)\)
  is first countable.  Identify \(\Ideals(A)\)
  and~\(\Ideals(B)\)
  with \(\Open(\check{A})\)
  and~\(\Open(\check{B})\),
  respectively.  There is a continuous open surjection
  \(\pi\colon \check{A} \to \check{B}\)
  with \(r=\pi^{-1}\)
  if and only if~\(A\)
  separates ideals in~\(B\)
  and the conditions \eqref{cond:C1} and~\eqref{cond:C2} hold.  Then
  \(\check{B}\cong \Prime^B(A) \cong \check{A}/{\sim}\).
\end{corollary}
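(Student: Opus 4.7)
The plan is to derive this as a direct combination of Theorem~\ref{the:prime_primitive}, which handles the continuous open surjection \(\check{A}\to\Prime^B(A)\), and Proposition~\ref{pro:separate_ideals_into_primes}, which identifies~\(\check{B}\) with~\(\Prime^B(A)\) once ideals are separated. The link between continuous maps of topological spaces and locale morphisms of their open-set lattices is provided by the Stone-duality adjunction of Section~\ref{sec:Stone}.

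For the ``if'' direction, I would assume that~\(A\) separates ideals in~\(B\) and that \ref{cond:C1} and~\ref{cond:C2} hold. Proposition~\ref{pro:separate_ideals_into_primes} together with the first-countability hypothesis on~\(\Prime^B(A)\) gives \(\check{B}=\Prime(B)\) and exhibits \(r\colon\check{B}\to\Prime^B(A)\) as a homeomorphism. Condition~\ref{cond:C1} yields the continuous map \(\pi_0\colon\check{A}\to\Prime^B(A)\) of Lemma~\ref{lem:induced_map_pi}, and \ref{cond:C2} together with first-countability makes~\(\pi_0\) an open surjection by Theorem~\ref{the:prime_primitive}. Composing with the inverse of~\(r\) yields a continuous open surjection \(\pi\colon\check{A}\to\check{B}\). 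To check that \(\pi^{-1}=r\) under the identifications \(\Ideals(A)\cong\Open(\check{A})\) and \(\Ideals(B)\cong\Open(\check{B})\), I would observe that \(\pi^{-1}\) is, by construction, the Stone-dual adjunct of the inclusion \(\Ideals^B(A)\hookrightarrow\Ideals(A)\) precomposed with the locale isomorphism \(\Ideals(B)\cong\Ideals^B(A)\) from Proposition~\ref{pro:restrict_induce_ideals}; this composite is exactly~\(r\).

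For the converse, suppose that a continuous open surjection \(\pi\colon\check{A}\to\check{B}\) with \(\pi^{-1}=r\) is given. As the inverse image of a continuous map, \(\pi^{-1}\) preserves arbitrary joins and finite meets, hence so does~\(r\). Since~\(\pi\) is surjective, \(\pi^{-1}\), and therefore~\(r\), is injective; so~\(A\) separates ideals in~\(B\). The image of~\(r\) is \(\Ideals^B(A)\) by definition, and, as the image of a locale morphism, it is closed under joins in~\(\Ideals(A)\); this is~\ref{cond:C1}. Separation lets us invoke Proposition~\ref{pro:separate_ideals_into_primes} to identify \(\check{B}\cong\Prime^B(A)\), under which the given~\(\pi\) becomes the map~\(\pi_0\) of Lemma~\ref{lem:induced_map_pi} (both are Stone-dual adjuncts of the same locale morphism); since~\(\pi_0\) is an open surjection, Theorem~\ref{the:prime_primitive} yields~\ref{cond:C2}. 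The final chain \(\check{B}\cong\Prime^B(A)\cong\check{A}/{\sim}\) is then read directly from the two results cited.

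The only non-routine step is the bookkeeping required to match the given~\(\pi\colon\check{A}\to\check{B}\) with the~\(\pi_0\colon\check{A}\to\Prime^B(A)\) of Lemma~\ref{lem:induced_map_pi} under the homeomorphism of Proposition~\ref{pro:separate_ideals_into_primes}. This amounts to recognising two Stone-dual adjuncts of the same locale morphism and introduces no new idea.
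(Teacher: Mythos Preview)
Your proposal is correct and follows essentially the same route as the paper: both directions reduce to Lemma~\ref{lem:induced_map_pi}, Theorem~\ref{the:prime_primitive}, and Proposition~\ref{pro:separate_ideals_into_primes}, with the Stone-duality adjunction doing the bookkeeping between~\(\pi\) and~\(r\). The paper's proof is simply terser—it observes that surjectivity of~\(\pi\) forces injectivity of~\(r\), then invokes Proposition~\ref{pro:separate_ideals_into_primes} and defers everything else to Lemma~\ref{lem:induced_map_pi} and Theorem~\ref{the:prime_primitive}—whereas you spell out the verification of~\ref{cond:C1} via join-preservation of~\(r\) and the matching of~\(\pi\) with~\(\pi_0\) explicitly.
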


\begin{proof}
  If \(r=\pi^{-1}\)
  for a continuous open surjection
  \(\pi\colon \check{A} \to \check{B}\),
  then~\(r\)
  is injective, that is, \(A\)
  separates ideals in~\(B\).
  Then Lemma~\ref{lem:separate_ideals_into_primes} implies
  \(\check{B}\cong \Prime^B(A)\).
  Now all assertions follow from Lemma~\ref{lem:induced_map_pi} and
  Theorem~\ref{the:prime_primitive}.
\end{proof}

In order for~\(r\)
to restrict to a continuous map \(r\colon \Prime(B) \to \Prime^B(A)\),
we need the following condition:
\begin{equation}
  \tag{\textup{MI$_\mathrm{f}$}}
  \label{cond:C3}%
  \textit{finite meets \textup{(}intersections\textup{)} of induced
    ideals are again induced.}
\end{equation}

\begin{lemma}
  \label{lem:induced_map_rho}
  Let \(\varphi\colon A\to \Mult(B)\).  The following are equivalent:
  \begin{enumerate}
  \item \label{enu:induced_map_rho1}%
    condition \eqref{cond:C3} holds;
  \item\label{enu:induced_map_rho2}%
    the inclusion map \(\Ideals^A(B) \hookrightarrow \Ideals(B)\)
    is a frame homomorphism;
  \item \label{enu:induced_map_rho3}%
    \(\Ideals^B(A)\) is a spatial locale, and the map
    \(r\colon \Ideals(B) \to \Ideals^B(A)\)
    restricts to a continuous map
    \(r\colon \Prime(B) \to \Prime^B(A)\).
  \end{enumerate}
  If these equivalent conditions hold, then
  the continuous map \(\Prime(B) \to \Prime^A(B)\)
  induced by \(\Ideals^A(B) \hookrightarrow \Ideals(B)\)
  is equal to the restriction of
  \(i\circ r\colon \Ideals(B)\to \Ideals^A(B)\).
\end{lemma}

\begin{proof}
  The inclusion
  \(\Ideals^A(B) \hookrightarrow \Ideals(B)\)
  preserves joins by
  Proposition~\ref{pro:restrict_induce_ideals}.\ref{pro:restrict_induce_ideals6}.
  So it is a frame homomorphism if and only if it preserves finite
  meets.  Hence
  \ref{enu:induced_map_rho1}\(\iff\)\ref{enu:induced_map_rho2}.

  Assume that \(\Ideals^A(B) \hookrightarrow \Ideals(B)\) is a frame
  homomorphism.  Then \(\Ideals^A(B)\cong \Ideals^B(A)\) is a spatial
  locale as it is embedded in the spatial locale~\(\Ideals(B)\).  The
  inclusion induces a continuous map \(\Prime(B) \to \Prime^A(B)\).
  By~\eqref{eq:induced_in_prime}, it maps \(\prid\in \Prime(B)\) to
  the largest induced ideal contained in~\(\prid\).  This is exactly
  \(i\circ r(\prid)\) (see Proposition
  \ref{pro:restrict_induce_ideals}.\ref{pro:restrict_induce_ideals8}).
  Hence \(i\circ r\colon \Prime(B) \to \Prime^A(B)\)
  is a well defined continuous map.  The same holds for
  \(r\colon \Prime(B) \to \Prime^B(A)\),
  as it is a composite \(r=r\circ i\circ r\)
  of the continuous map \(i\circ r\colon \Prime(B) \to \Prime^A(B)\)
  and the homeomorphism \(r\colon \Prime^A(B)\cong \Prime^B(A)\).
  This proves
  \ref{enu:induced_map_pi2}\(\Longrightarrow\)\ref{enu:induced_map_pi3}.

  Finally, assume \ref{enu:induced_map_pi3}.  Then
  \(i\circ r\colon \Prime(B) \to \Prime^A(B)\)
  is a continuous map.  Since \(\Ideals^A(B)\cong \Ideals^B(A)\)
  is a spatial locale, we may identify \(\Ideals^A(B)\)
  and~\(\Ideals(A)\)
  with \(\Open(\Prime^B(A))\) and \(\Open(\Prime(A))\), respectively.
  Let \(J\in\Ideals^A(B)\) and \(\prid\in\check{B}\),
  and write~\(g^*\)
  for the inclusion \(\Ideals^A(B) \hookrightarrow \Ideals(B)\).
  Since \(i\circ r(\prid)\)
  is the largest induced ideal contained in~\(\prid\),
  \[
  \prid \in (i\circ r)^{-1}(U_J)
  \iff J\not\subseteq i\circ r(\prid)
  \iff J \not\subseteq \prid
  \iff \prid \in U_{g^*(J)}.
  \]
  When we identify
  \(\Open(\Prime^B(A))\) with \(\Ideals^A(B)\) and
  \(\Open(\Prime(A))\) with \(\Ideals(B)\),
  then the frame  homomorphism \((i\circ r)^{-1}\colon
  \Open(\Prime^B(A))\to \Open(\Prime(A))\)
  becomes the inclusion
  \(g^*\colon \Ideals^A(B) \hookrightarrow \Ideals(B)\).
  Thus
  \ref{enu:induced_map_pi3}\(\Longrightarrow\)\ref{enu:induced_map_pi2}.
  This finishes the proof.
\end{proof}

In summary, if conditions \eqref{cond:C1} and~\eqref{cond:C3} hold,
then there is a commutative diagram of continuous maps
\begin{equation}
  \label{eq:continuous_maps_diagram}
  \begin{tikzcd}
    \check{B} \subseteq \Prime(B) \arrow[r,"i\circ r"] \arrow[rd,   "r"  ]&
    \Prime^A(B)  \arrow[d,  "r"' ,  "\cong", shift right=.53em]   \\
    \Prime(A)   \arrow[r,"\pi"]  &
    \Prime^B(A)  \arrow[u, "i"',  shift right=.53em] \\
    \check{A} \arrow[u, hookrightarrow] \arrow[r] &
    \check{A}/{\sim} \arrow[u, hookrightarrow, "\pi_*"']
  \end{tikzcd}
\end{equation}
Here \(i\circ \pi =i\)
if and only if \(\pi=r\circ i\)
if and only if \(\Ideals(A) = \Ideals^B(A)\).
So typically there is no natural map between \(\Prime(B)\)
and \(\Prime(A)\).
The best substitute seems to be the map
\(\check{B} \to \check{A}/{\sim}\)
that exists when the map
\(\tilde{\pi}\colon \check{A}/{\sim} \to \Prime^B(A)\) is a homeomorphism:

\begin{definition}
  Let \(\varphi\colon A\to \Mult(B)\) be such that \eqref{cond:C1},
  \eqref{cond:C2}, \eqref{cond:C3} are satisfied and \(\Prime^B(A)\) is
  first countable.  Then
  \(\tilde{\pi}\colon \check{A}/{\sim} \to \Prime^B(A)\) is a
  homeomorphism by Theorem~\ref{the:prime_primitive} and
  \(r\colon \check{B} \to \Prime^B(A)\) is continuous by
  Lemma~\ref{lem:induced_map_rho}.  The continuous map
  \(\varrho\defeq \tilde{\pi}^{-1}\circ r\colon \check{B} \to
  \check{A}/{\sim}\) is called the \emph{quasi-orbit map}
  for~\(\varphi\).
\end{definition}

Theorem~\ref{the:open_surjective} implies necessary and sufficient
conditions for the quasi-orbit map to be open surjective:

\begin{lemma}
  \label{lem:galois_insertion}
  Let \(\varphi\colon A\to \Mult(B)\)
  be such that~\eqref{cond:C3} holds.  The inclusion map
  \(\Ideals^A(B) \hookrightarrow \Ideals(B)\)
  has a left adjoint \(F\colon \Ideals(B) \to \Ideals^A(B)\)
  if and only if
  \begin{equation}
    \tag{\textup{MI}}
    \label{cond:C3'}%
    \textit{arbitrary meets of induced ideals are again induced.}
  \end{equation}
  If this holds and \(J\in \Ideals(B)\), then
  \begin{equation}
    \label{eq:definition_of_F}
    F(J) \text{ is the meet of all induced ideals that contain  } J.
  \end{equation}
\end{lemma}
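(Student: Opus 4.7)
The plan is to apply the standard adjoint functor theorem for complete lattices: a monotone map between complete lattices admits a lower adjoint if and only if it preserves arbitrary meets. I would apply this to the inclusion \(G\colon \Ideals^A(B) \hookrightarrow \Ideals(B)\), which is a map between complete lattices by Proposition~\ref{pro:restrict_induce_ideals}.\ref{pro:restrict_induce_ideals6.5}.

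For the forward direction, suppose \(G\) admits a lower adjoint. Then as an upper adjoint \(G\) preserves arbitrary meets. Since \(G\) is the set-theoretic inclusion, this means that, for any family \((J_\alpha)\) in \(\Ideals^A(B)\), the meet computed in \(\Ideals^A(B)\) coincides with \(\bigcap_\alpha J_\alpha\) taken in \(\Ideals(B)\). In particular, this common meet is an induced ideal, which is exactly~\ref{cond:C3'}.

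For the converse, assume \ref{cond:C3'} and define
\[
F(J) \defeq \bigcap \setgiven{I \in \Ideals^A(B)}{J \subseteq I}
\]
for \(J \in \Ideals(B)\). By \ref{cond:C3'}, this intersection lies in \(\Ideals^A(B)\), and \(J \subseteq F(J)\) is immediate. Verifying the Galois adjunction \(F(J) \subseteq I \iff J \subseteq G(I)\) for \(I \in \Ideals^A(B)\) is routine: if \(F(J) \subseteq I\), then \(J \subseteq F(J) \subseteq I\); conversely, if \(J \subseteq I\), then \(I\) lies in the family whose intersection defines \(F(J)\), so \(F(J) \subseteq I\). Formula~\eqref{eq:definition_of_F} is just the definition of \(F\). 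There is no real obstacle: the only content is recognising that meet-preservation by the inclusion is precisely~\ref{cond:C3'}, everything else being a textbook application of the adjoint functor theorem for complete lattices.
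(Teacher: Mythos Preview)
Your proof is correct and takes essentially the same approach as the paper: both directions hinge on the standard fact (Davey--Priestley, Proposition~7.31 in the paper's references) that an order embedding has a lower adjoint if and only if it preserves arbitrary meets, and both identify the lower adjoint \(F(J)\) as the meet of all induced ideals containing~\(J\). The only difference is cosmetic: you verify the Galois adjunction for~\(F\) explicitly, whereas the paper simply observes that the existence of a least induced ideal above each~\(J\) is the defining property of an upper Galois insertion.
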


\begin{proof}
  The inclusion \(\Ideals^A(B) \hookrightarrow \Ideals(B)\)
  has a left adjoint if and only if for every
  \(J\in \Ideals(B)\)
  there is a least induced ideal \(F(J)\)
  with \(J\subseteq F(J)\).
  If~\eqref{cond:C3'} holds, then \(F(J)\)
  is the meet of all induced ideals that contain \(J\in \Ideals(B)\).
  Conversely, assume that the inclusion
  \(\Ideals^A(B) \hookrightarrow \Ideals(B)\)
  has a left adjoint
  \(F\colon \Ideals(B) \to\Ideals^A(B)\).
  Then the inclusion preserves all meets by
  \cite{Davey-Priestley:Lattices_order}*{Proposition 7.31}.
  Equivalently, the meet of a family of induced ideals is induced.
\end{proof}

\begin{theorem}
  \label{the:prime_primitive2}
  Let \(\varphi\colon A\to \Mult(B)\)
  be a \Star{}homomorphism. If
    \(r\colon \Ideals(B) \to \Ideals^B(A)\)
    restricts to a well defined continuous, open and surjective map
    \(r\colon \Prime(B) \to \Prime^B(A)\),
  then~\eqref{cond:C3'} holds and the map
  \(F\colon \Ideals(B) \to \Ideals^A(B)\)
  given by~\eqref{eq:definition_of_F} satisfies
  \begin{equation}
    \tag{\textup{FR2}}
    \label{cond:C4}%
    F(I\cap J)=I\cap F(J)
    \qquad \text{for every }I\in\Ideals^A(B)
    \text{ and }J\in\Ideals(B).
  \end{equation}
  Conversely, if \eqref{cond:C3'} and~\eqref{cond:C4} hold and
  \(\Prime^B(A)\)
  is first countable -- which follows if~\(\check{B}\)
  is second countable -- then
  \(r\colon \check{B} \subseteq \Prime(B) \to \Prime^B(A)\)
  is a well defined continuous, open surjection.
\end{theorem}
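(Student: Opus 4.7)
The plan is to apply Theorem~\ref{the:open_surjective} to the locale inclusion
\[
G\colon \Ideals^A(B)\hookrightarrow \Ideals(B)\cong\Open(X),
\]
taking $X=\Prime(B)$ for the forward direction and $X=\check{B}$ for the converse. The locale isomorphism $r\colon \Ideals^A(B)\congto\Ideals^B(A)$ from Proposition~\ref{pro:restrict_induce_ideals}.\ref{pro:restrict_induce_ideals3} induces a homeomorphism of point spaces $P(\Ideals^A(B))\cong\Prime^B(A)$. Under this identification the adjunct of~$G$ coincides with~$r$: the adjunct sends $\prid\in X$ to the largest induced ideal contained in~$\prid$, which is $i\circ r(\prid)$, and applying the isomorphism~$r$ yields $r\circ i\circ r(\prid)=r(\prid)$ by Proposition~\ref{pro:restrict_induce_ideals}.\ref{pro:restrict_induce_ideals2b}.

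Next I translate the two lattice-theoretic hypotheses of Theorem~\ref{the:open_surjective}. By Lemma~\ref{lem:galois_insertion}, $G$ is an upper Galois insertion exactly when~\ref{cond:C3'} holds, and in that case its lower adjoint is the map $F$ of~\eqref{eq:definition_of_F}. Since~\ref{cond:C3'} (and, in the forward direction, already the well-definedness and continuity of the map $r\colon\Prime(B)\to\Prime^B(A)$, via Proposition~\ref{pro:induced_map_rho}) implies~\ref{cond:C3}, meets in $\Ideals^A(B)$ coincide with intersections in $\Ideals(B)$, so the identity~\eqref{eq:open_surjective} for $(G,F)$ is precisely~\ref{cond:C4}. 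The locale $L=\Ideals^A(B)$ is automatically spatial because $G$ is an injective locale morphism into the spatial locale $\Open(X)$, as observed in the proof of Theorem~\ref{the:open_surjective}.

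With these identifications in place, the forward implication is the first assertion of Theorem~\ref{the:open_surjective}. For the converse I must verify its two extra hypotheses~\ref{the:open_surjective_1} and~\ref{the:open_surjective_2}: first countability of $P(L)\cong\Prime^B(A)$ is part of the assumption, and is implied by second countability of~$\check{A}$ because a countable basis of~$\check{A}$ pushes forward through $r\circ i$ to a countable basis of $\Prime^B(A)$; the Baire condition for every closed subset of $X=\check{B}$ holds because each such closed subset is homeomorphic to the primitive ideal space of a quotient $B/J$ and is therefore locally quasi\nb-compact, $T_0$ and Baire by \cite{Dixmier:Cstar-algebras}*{3.1.3, 3.4.13}. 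Theorem~\ref{the:open_surjective} then yields that $r\colon \check{B}\to\Prime^B(A)$ is a continuous open surjection. The main technical point requiring care is the identification of the adjunct of~$G$ with~$r$ through the locale isomorphism $\Ideals^A(B)\cong\Ideals^B(A)$; once this is in hand, the theorem reduces to a direct application of Theorem~\ref{the:open_surjective}.
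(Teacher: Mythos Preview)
Your proof is correct and follows essentially the same approach as the paper: both apply Theorem~\ref{the:open_surjective} to the inclusion $G\colon\Ideals^A(B)\hookrightarrow\Ideals(B)$, identify its adjunct with~$r$ via the isomorphism $\Ideals^A(B)\cong\Ideals^B(A)$, and translate the upper-Galois-insertion property and~\eqref{eq:open_surjective} into~\ref{cond:C3'} and~\ref{cond:C4} through Lemma~\ref{lem:galois_insertion}. One small remark: Proposition~\ref{pro:induced_map_rho} as stated only gives \ref{cond:C3}~$\Rightarrow$~($r$ well defined and continuous), not the converse you invoke; the paper instead argues directly that the existence of the continuous map $i\circ r$ on primes forces the inclusion to be a locale morphism (whence~\ref{cond:C3}), but the conclusion is the same.
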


\begin{proof}
  Assume first that \(r\colon \Prime(B) \to \Prime^B(A)\) is a well
  defined continuous open surjection.  Then
  \(i\circ r\colon \Prime(B) \to \Prime^A(B)\) is a continuous open
  surjection.  By Lemma~\ref{lem:induced_map_rho}, the inclusion
  \(\Ideals^A(B) \hookrightarrow \Ideals(B)\) is a frame homomorphism
  whose adjunct continuous map may be identified with
  \(i\circ r\colon \Prime(B) \to \Prime^A(B)\).  Hence by the first
  part of Theorem~\ref{the:open_surjective}, the inclusion
  \(\Ideals^A(B) \hookrightarrow \Ideals(B)\) has a left adjoint
  \(F\colon \Ideals(B)\to \Ideals^A(B)\) which
  satisfies~\eqref{cond:C4}.  Lemma~\ref{lem:galois_insertion} shows
  that~\(F\) is given by~\eqref{eq:definition_of_F} and
  that~\eqref{cond:C3'} holds.  Conversely, assume \eqref{cond:C3'}
  and~\eqref{cond:C4} and that \(\Prime^B(A)\) is first countable
  or~\(\check{B}\) is second countable.  By
  Lemma~\ref{lem:galois_insertion},
  \(F\colon \Ideals(B) \to \Ideals^A(B)\) is left adjoint to the
  inclusion \(\Ideals^A(B) \hookrightarrow \Ideals(B)\).  The second
  part of Theorem~\ref{the:open_surjective} applied to the inclusion
  \(\Ideals^A(B) \hookrightarrow \Ideals(B)=\Open(\check{B})\) shows
  that the induced continuous map \(\check{B} \to \Prime^A(B)\) is
  surjective and open.  This gives the assertion by
  Lemma~\ref{lem:induced_map_rho}.
\end{proof}

\begin{corollary}
  \label{cor:prime_primitive2}
  Suppose that  \eqref{cond:C1}, \eqref{cond:C2} and~\eqref{cond:C3} are
  satisfied and \(\Prime^B(A)\)
  is first countable.  The quasi-orbit map
  \(\varrho\colon \check{B} \to \check{A}/{\sim}\)
  is open and surjective if and only if \eqref{cond:C3'}
  and~\eqref{cond:C4} are satisfied.
\end{corollary}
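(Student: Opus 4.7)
The plan is to reduce the biconditional to Theorem~\ref{the:prime_primitive2} via the definition \(\varrho = \pi_*^{-1}\circ r\). First I would invoke Theorem~\ref{the:prime_primitive} — which applies thanks to \ref{cond:C1}, \ref{cond:C2} and first countability of~\(\Prime^B(A)\) — to obtain the homeomorphism \(\pi_*\colon \check{A}/{\sim} \congto \Prime^B(A)\). Composing it away, \(\varrho\) is open (respectively surjective) if and only if the continuous map \(r\colon \check{B}\to \Prime^B(A)\) from Proposition~\ref{pro:induced_map_rho} is.

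For the ``if'' direction, I would simply cite the converse half of Theorem~\ref{the:prime_primitive2}: \ref{cond:C3'}, \ref{cond:C4} and first countability of \(\Prime^B(A)\) immediately yield an open surjection \(r\colon \check{B}\to \Prime^B(A)\), whence \(\varrho\) is an open surjection.

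For the ``only if'' direction, the locale isomorphism \(i\colon \Ideals^B(A)\congto \Ideals^A(B)\) of Proposition~\ref{pro:restrict_induce_ideals}.\ref{pro:restrict_induce_ideals3} identifies \(r\colon \check{B}\to \Prime^B(A)\) with \(i\circ r\colon \check{B}\to \Prime^A(B)\), which by Proposition~\ref{pro:induced_map_rho} is precisely the continuous map adjunct to the inclusion \(G\colon \Ideals^A(B)\hookrightarrow \Ideals(B)\cong \Open(\check{B})\). This inclusion is a locale morphism by~\ref{cond:C3}, and \(\Ideals^A(B)\) is spatial because restricting characters of the spatial locale~\(\Ideals(B)\) along~\(G\) preserves joins and finite meets and still separates elements. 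The first (easy, assumption-free) half of Theorem~\ref{the:open_surjective} then applies and yields that~\(G\) is an upper Galois insertion whose lower adjoint \(F\) satisfies~\eqref{eq:open_surjective}. Translating back to ideals via \(\Open(\check{B})\cong \Ideals(B)\), condition~\eqref{eq:open_surjective} becomes~\ref{cond:C4}, and Lemma~\ref{lem:galois_insertion} reads off~\ref{cond:C3'} from the upper-Galois-insertion property.

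The main technical wrinkle is that Theorem~\ref{the:prime_primitive2}'s forward half is stated using \(\Prime(B)\) rather than~\(\check{B}\); one circumvents this by going directly through the first half of Theorem~\ref{the:open_surjective} with \(X=\check{B}\), for which the identification \(\Open(\check{B}) = \Ideals(B)\) is all that is needed.
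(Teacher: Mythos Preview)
Your proposal is correct and follows essentially the approach the paper intends: the corollary is stated without proof precisely because it is meant to be read off from Theorem~\ref{the:prime_primitive} (giving the homeomorphism~\(\pi_*\)) and Theorem~\ref{the:prime_primitive2} (characterising openness and surjectivity of~\(r\)). Your observation that the forward half of Theorem~\ref{the:prime_primitive2} is phrased with~\(\Prime(B)\) rather than~\(\check{B}\), and your workaround of invoking the first half of Theorem~\ref{the:open_surjective} directly with \(X=\check{B}\), is exactly right; since \(\Open(\check{B})\cong\Ideals(B)\) regardless of sobriety, the argument in the proof of Theorem~\ref{the:prime_primitive2} goes through verbatim with~\(\check{B}\) in place of~\(\Prime(B)\).
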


\section{Symmetric ideals}
\label{sec:symmetric}

Our next task is to verify the assumptions \eqref{cond:C1},
\eqref{cond:C2} and~\eqref{cond:C3} in interesting cases.  Then the
quasi-orbit map exists.  As we shall see, restricted ideals often
satisfy the equivalent conditions in the following lemma:

\begin{lemma}
  \label{lem:symmetric_induced_ideal}
  Let \(\varphi\colon A\to \Mult(B)\)
  and \(I\in\Ideals(A)\).  The following are equivalent:
  \begin{enumerate}
  \item \label{lem:symmetric_induced_ideal1}%
    the map \(I\to\Mult(i(I))\)
    induced by \(\varphi\colon A\to \Mult(B)\) is non-degenerate;
   \item \label{lem:symmetric_induced_ideal2}%
    \(i(I)=\varphi(I)B\varphi(I)\);
  \item \label{lem:symmetric_induced_ideal3}%
    \(i(I)=\varphi(I)B\);
  \item \label{lem:symmetric_induced_ideal4}%
    \(i(I)=B\varphi(I)\);
  \item \label{lem:symmetric_induced_ideal5}%
    \(\varphi(I) B = B \varphi(I)\).
  \end{enumerate}
  If \(\varphi\colon A\to B\)
  is injective and the above equivalent conditions hold, then
  \(I\in\Ideals^B(A)\).
\end{lemma}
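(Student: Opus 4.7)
The plan is to establish the cyclic chain (1)$\Rightarrow$(2)$\Rightarrow$(3)$\Leftrightarrow$(4)$\Rightarrow$(1) among the first four conditions, and then fold in~(5) via the equivalence (5)$\Leftrightarrow$((3) and~(4)). Throughout, all products $\varphi(I)B$, $B\varphi(I)$, $\varphi(I)B\varphi(I)$ are read as closed linear spans; the three trivial inclusions of these spans into $i(I)=[B\varphi(I)B]$ always hold (the last one obtained by sandwiching $\varphi(a_1)b\varphi(a_2)$ with an approximate unit of~$B$), and the only analytic tools are an approximate unit $(e_\lambda)$ of~$I$ and $(f_\mu)$ of~$B$.

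I would first dispatch (3)$\Leftrightarrow$(4) by taking adjoints: $i(I)$, $\varphi(I)$ and $B$ are all self\nb-adjoint, so $[\varphi(I)B]^{*}=[B\varphi(I)]$. The implication (2)$\Rightarrow$(3) is immediate from $[\varphi(I)B\varphi(I)]\subseteq[\varphi(I)B]$. For (3)$\Rightarrow$(1), given $x\in i(I)=[\varphi(I)B]$, approximate $x$ within $\varepsilon/3$ by a finite sum $\sum_{k}\varphi(a_k)b_k$ with $a_k\in I$, $b_k\in B$; since $e_\lambda a_k\to a_k$ for each~$k$, an $\varepsilon/3$-argument using $\|\varphi(e_\lambda)\|\le 1$ yields $\varphi(e_\lambda)x\to x$. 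This is left-non-degeneracy of $\varphi|_I\colon I\to\Mult(i(I))$; right-non-degeneracy follows by passing to adjoints and invoking (4), so~(1) holds. Finally, for (1)$\Rightarrow$(2), strict convergence gives $\varphi(e_\lambda)x\varphi(e_\mu)\to x$ for every $x\in i(I)$ along a diagonal, and each $\varphi(e_\lambda)x\varphi(e_\mu)$ lies in $\varphi(I)\cdot B\cdot\varphi(I)$ since the middle factor~$x$ lies in~$B$.

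For the role of~(5): (3)+(4)$\Rightarrow$(5) is trivial because both spans equal~$i(I)$. Conversely, to prove (5)$\Rightarrow$(3), I would pick $x=b_1\varphi(a)b_2\in i(I)$, observe that $b_1\varphi(a)\in[B\varphi(I)]=[\varphi(I)B]$ by~(5), write $b_1\varphi(a)=\lim_n\sum_k\varphi(a_{n,k})b'_{n,k}$, and multiply on the right by~$b_2$ to obtain $x\in[\varphi(I)B]$; hence $i(I)\subseteq[\varphi(I)B]$.

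For the final statement, assuming $\varphi\colon A\to B$ is injective and the equivalent conditions hold, set $J\defeq i(I)$ and verify $r(J)=I$. The inclusion $I\subseteq r\circ i(I)$ is Proposition~\ref{pro:restrict_induce_ideals}.\ref{pro:restrict_induce_ideals2}. For the reverse, take $a\in r(i(I))$, so $\varphi(a)\cdot B\subseteq i(I)$. Since $\varphi(a)\in B$, norm convergence $\varphi(a)f_\mu\to\varphi(a)$ forces $\varphi(a)\in i(I)=[\varphi(I)B]$ by~(3). The same left-multiplication argument used in (3)$\Rightarrow$(1) then gives $\varphi(e_\lambda a)=\varphi(e_\lambda)\varphi(a)\to\varphi(a)$; as~$\varphi$ is an injective $*$\nb-homomorphism between $\Cst$\nb-algebras it is isometric, so $e_\lambda a\to a$ in~$A$, and closedness of~$I$ yields $a\in I$. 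The main obstacle throughout is the bookkeeping of the approximate-unit arguments, most delicately the conversion in (3)$\Rightarrow$(1) of the factorisation $x=\lim\sum\varphi(a_k)b_k$ into the norm convergence $\varphi(e_\lambda)x\to x$ that actually witnesses non-degeneracy.
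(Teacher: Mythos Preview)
Your proof is correct and follows essentially the same approach as the paper's: the same cycle through (1)--(5) via approximate-unit and adjoint arguments, and the same final-step argument that $\varphi(a)\in i(I)$ forces $e_\lambda a\to a$ by isometry of~$\varphi$. The only cosmetic difference is in how~(5) enters the cycle: the paper closes via (5)$\Rightarrow$(1) in one line using the algebraic identity $i(I)=B\varphi(I)\varphi(I)B=\varphi(I)B\cdot\varphi(I)B=\varphi(I)i(I)$, whereas you route through (5)$\Rightarrow$(3) and a separate (3)$\Rightarrow$(1); both are equally valid.
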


\begin{proof}
  If \(I\to\Mult(i(I))\)
  is non-degenerate, then
  \(i(I)= \varphi(I) i(I) \varphi(I)\subseteq \varphi(I) B
  \varphi(I)\).
  We claim that the reverse inclusion
  \(\varphi(I) B \varphi(I)\subseteq B \varphi(I) B = i(I)\)
  always holds.  Let \((e_n)_{n\in N}\)
  be an approximate unit in~\(B\).
  Then \(\lim e_n\cdot x\cdot e_n = x\)
  for all \(x\in \varphi(I) B \varphi(I)\)
  because \(\varphi(I) B \varphi(I) \subseteq B\).
  Thus~\(x\)
  belongs to the norm closure of
  \(B \varphi(I) B \varphi(I) B \subseteq B \varphi(I) B\)
  as asserted.  Thus~\ref{lem:symmetric_induced_ideal1}
  implies~\ref{lem:symmetric_induced_ideal2}.

  If \(i(I)=\varphi(I) B \varphi(I)\),
  then \(i(I)\subseteq \varphi(I) B\).
  The reverse inclusion
  \(\varphi(I) B\subseteq B \varphi(I) B = i(I)\)
  always holds, the proof is the same approximate unit argument as
  above.  Thus~\ref{lem:symmetric_induced_ideal2}
  implies~\ref{lem:symmetric_induced_ideal3}.  Since \(i(I)\)
  is self-adjoint, \ref{lem:symmetric_induced_ideal3}
  and~\ref{lem:symmetric_induced_ideal4} are equivalent.  Clearly, the
  equivalent conditions \ref{lem:symmetric_induced_ideal3}
  and~\ref{lem:symmetric_induced_ideal4}
  imply~\ref{lem:symmetric_induced_ideal5}.  If
  \(\varphi(I) B = B \varphi(I)\),
  then
  \(i(I) = B \varphi(I) B = B \varphi(I) \varphi(I) B = \varphi(I) B
  \varphi(I) B = \varphi(I) i(I)\).
  Hence~\ref{lem:symmetric_induced_ideal5}
  implies~\ref{lem:symmetric_induced_ideal1}.  This proves the first
  part of the assertion.

  Now assume that \(\varphi\colon A\to B\)
  is injective and let \(I\in\Ideals(A)\)
  satisfy the equivalent conditions
  \ref{lem:symmetric_induced_ideal1}--\ref{lem:symmetric_induced_ideal5}.
  The inclusion \(I\subseteq r (i(I))\)
  always holds by
  Proposition~\ref{pro:restrict_induce_ideals}.\ref{pro:restrict_induce_ideals2}.
  We show the reverse inclusion.  Let \((e_n)_{n\in N}\)
  be an approximate unit for~\(I\).
  By assumption, \(i(I) = \varphi(I) B\).
  So \(\lim \varphi(e_n) x = x\)
  holds for all \(x\in i(I)\).
  Since~\(\varphi\)
  is injective and \(\varphi(r\circ i(I)) \subseteq i(I)\),
  this implies \(\lim e_n x = x\)
  for all \(x\in r (i(I))\).
  Thus \(r (i(I))\subseteq I\).
\end{proof}

\begin{definition}
  \label{def:symmetric_ideal_inclusion}
  We call an ideal \(I\in\Ideals(A)\)
  \emph{symmetric} for \(\varphi\colon A\to \Mult(B)\)
  if it satisfies the equivalent conditions in
  Lemma~\ref{lem:symmetric_induced_ideal}.  We call the
  \Star{}homomorphism \(\varphi\colon A\to \Mult(B)\)
  \emph{symmetric} if all restricted ideals are symmetric.
\end{definition}

The name ``symmetric'' is suggested by
condition~\ref{lem:symmetric_induced_ideal5} in
Lemma~\ref{lem:symmetric_induced_ideal}.

\begin{remark}
  \label{rem:generalised_symmetric_inclusions}
  For a usual inclusion \(A\subseteq B\),
  all symmetric ideals are restricted, by
  Lemma~\ref{lem:symmetric_induced_ideal}.  There are, however,
  important symmetric generalised \(\Cst\)\nb-inclusions
  \(A\subseteq \Mult(B)\)
  that admit symmetric ideals that are not restricted (see
  Example~\ref{exa:symmetric_not_invariant}).
\end{remark}

\begin{lemma}
  \label{lem:symmetric_restricted_intersection}
  If \(I,J\in\Ideals(A)\)
  and~\(I\)
  is symmetric, then \(i(I\cap J) = i(I) \cap i(J)\)
  and \(r\circ i(I\cap J) = r\circ i(I) \cap r\circ i(J)\).
  This is equal to \(I\cap r\circ i(J)\) if \(I\in\Ideals^B(A)\).

  The set of symmetric ideals is closed under joins.
\end{lemma}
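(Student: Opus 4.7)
The plan is to break the lemma into three claims: (a) $i(I\cap J)=i(I)\cap i(J)$ when $I$ is symmetric, (b) the two consequences on $r\circ i$, and (c) closure of symmetric ideals under joins. Claims (b) and (c) will be essentially formal, so the real work sits in (a), for which I would use the characterisations of symmetry in Lemma~\ref{lem:symmetric_induced_ideal} together with an approximate unit argument of the same flavour as in its proof.

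For (a), the inclusion $i(I\cap J)\subseteq i(I)\cap i(J)$ is immediate by monotonicity of~$i$. For the reverse inclusion I would fix $x\in i(I)\cap i(J)$ and an approximate unit $(e_\lambda)_{\lambda\in N}$ for~$I$. Since $I$ is symmetric, Lemma~\ref{lem:symmetric_induced_ideal} gives $i(I)=\varphi(I)B$, and the argument in the proof of Lemma~\ref{lem:symmetric_induced_ideal} shows $\varphi(e_\lambda) x\to x$. Using $x\in i(J)=B\varphi(J)B$, the element $\varphi(e_\lambda)x$ lies in $\varphi(I)B\varphi(J)B$. Now the symmetry $\varphi(I)B=B\varphi(I)$ lets me move $B$ past~$\varphi(I)$, yielding $\varphi(I)B\varphi(J)B=B\varphi(I)\varphi(J)B\subseteq B\varphi(I\cap J)B=i(I\cap J)$, where I use that in a \(\Cst\)\nb-algebra $\overline{\varphi(I)\varphi(J)}=\varphi(\overline{IJ})=\varphi(I\cap J)$. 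Passing to the limit gives $x\in i(I\cap J)$, which is the main obstacle in the whole proof; everything else flows from it.

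For (b), applying the restriction~$r$, which preserves meets by Proposition~\ref{pro:restrict_induce_ideals}.\ref{pro:restrict_induce_ideals4}, to the identity of (a) immediately yields
\[
r\circ i(I\cap J)=r(i(I)\cap i(J))=r\circ i(I)\cap r\circ i(J).
\]
If moreover $I\in\Ideals^B(A)$, then Proposition~\ref{pro:restrict_induce_ideals}.\ref{pro:restrict_induce_ideals3} gives $r\circ i(I)=I$, so the right hand side becomes $I\cap r\circ i(J)$, as claimed.

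For (c), let $(I_s)_{s\in S}$ be a family of symmetric ideals and $I\defeq\bigvee_s I_s=\overline{\sum_s I_s}$. A standard density argument shows
\[
\varphi(I)B=\overline{\textstyle\sum_s \varphi(I_s)B},\qquad B\varphi(I)=\overline{\textstyle\sum_s B\varphi(I_s)};
\]
indeed, each $a\in I$ is the norm limit of finite sums $\sum_k a_{s_k}$ with $a_{s_k}\in I_{s_k}$, and continuity of~$\varphi$ and of multiplication propagate this to $\varphi(I)B$ (and symmetrically to $B\varphi(I)$). Since the two sums inside the closures agree term-by-term by the symmetry of each~$I_s$, the closures coincide, and $I$ is symmetric. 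This completes the proof.
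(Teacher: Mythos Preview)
Your proof is correct and follows essentially the same route as the paper. The only difference is cosmetic: for the inclusion \(i(I)\cap i(J)\subseteq i(I\cap J)\) the paper simply uses the \(\Cst\)\nb-algebra identity \(i(I)\cap i(J)=i(I)\cdot i(J)\) and then computes
\[
i(I)\cdot i(J)=(\varphi(I)B)(B\varphi(J)B)=\varphi(I)B\varphi(J)B=B\varphi(I)\varphi(J)B=B\varphi(I\cap J)B,
\]
whereas you obtain the same chain via an approximate unit for~\(I\); your approximate unit argument is exactly a proof that \(x\in i(I)\cap i(J)\) implies \(x\in i(I)\cdot i(J)\). Parts (b) and~(c) are handled identically in both proofs.
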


\begin{proof}
  If \(\varphi(I)B = B\varphi(I)\),
  then
  \begin{align*}
    i(I) \cap i(J)
    &= i(I) i(J)
    = (\varphi(I) B) (B \varphi(J) B)
    = \varphi(I) B \varphi(J) B
    \\
    &= B \varphi(I) \varphi(J) B
    = B\varphi(I\cap J)B
    = i(I\cap J).
  \end{align*}
  Then \(r\circ i(I\cap J) = r\circ i(I) \cap r\circ i(J)\)
  because~\(r\)
  preserves meets by
  Proposition~\ref{pro:restrict_induce_ideals}.\ref{pro:restrict_induce_ideals4}.

  Let \((I_\alpha)_{\alpha \in S}\)
  be symmetric ideals.  Then \(\varphi(\bigvee I_\alpha)B\)
  is the closed linear span of \(\varphi(I_\alpha)B = B\varphi(I_\alpha)\),
  and this is equal to \(B\varphi(\bigvee I_\alpha)\).
  So \(\bigvee I_\alpha\) is symmetric as well.
\end{proof}

\begin{corollary}
  \label{cor:symmetric_nice}
  If \(A\subseteq B\)
  is a symmetric inclusion, then conditions \eqref{cond:C1},
  \eqref{cond:C2} and~\eqref{cond:C3} hold.  If
  \(\varphi\colon A\to \Mult(B)\)
  is a symmetric \Star{}homomorphism, then conditions \eqref{cond:C2}
  and~\eqref{cond:C3} hold.
\end{corollary}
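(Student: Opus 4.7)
The plan is to extract all three conditions directly from the two lemmas on symmetric ideals that precede the corollary, without any new computation. The key facts are: (a) every restricted ideal is symmetric, by hypothesis; (b) symmetric ideals are closed under joins and induction commutes with meets whenever one factor is symmetric (Lemma~\ref{lem:symmetric_restricted_intersection}); and (c) in the case of an ordinary inclusion, every symmetric ideal is itself restricted (last sentence of Lemma~\ref{lem:symmetric_induced_ideal}).

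Condition~\ref{cond:C2} follows immediately under the weaker hypothesis that $\varphi$ is symmetric: any $I\in\Ideals^B(A)$ is symmetric by the definition of a symmetric \Star{}homomorphism, so Lemma~\ref{lem:symmetric_restricted_intersection} yields $I\cap r\circ i(J) = r\circ i(I\cap J)$ for every $J\in\Ideals(A)$. For~\ref{cond:C3}, take induced ideals $J_1,J_2\in\Ideals^A(B)$. Using Proposition~\ref{pro:restrict_induce_ideals}.\ref{pro:restrict_induce_ideals3}, write $J_k=i(I_k)$ with $I_k=r(J_k)\in\Ideals^B(A)$; each~$I_k$ is symmetric by hypothesis, so Lemma~\ref{lem:symmetric_restricted_intersection} gives
\[
J_1\cap J_2 = i(I_1)\cap i(I_2) = i(I_1\cap I_2),
\]
which is induced. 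A trivial induction extends this to any finite meet.

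For~\ref{cond:C1} one needs the stronger hypothesis that $A\subseteq B$ is an ordinary inclusion. Let $(I_\alpha)_{\alpha\in S}$ be a family of restricted ideals; each is symmetric by the definition of a symmetric inclusion, and their join $\bigvee I_\alpha$ is again symmetric by the last part of Lemma~\ref{lem:symmetric_restricted_intersection}. Since $\varphi$ is the inclusion map, it is injective, so the final statement of Lemma~\ref{lem:symmetric_induced_ideal} applies and $\bigvee I_\alpha$ is restricted. The only subtle point in the whole argument is precisely this last step: in the generalised setting $A\subseteq \Mult(B)$, symmetric ideals need not be restricted (cf.\ Remark~\ref{rem:generalised_symmetric_inclusions}), which is exactly why~\ref{cond:C1} is claimed only for ordinary inclusions. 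No other obstacles appear.
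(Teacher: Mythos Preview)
Your proof is correct and follows essentially the same approach as the paper's own proof: both derive \ref{cond:C2} and~\ref{cond:C3} from Lemma~\ref{lem:symmetric_restricted_intersection} applied to restricted (hence symmetric) ideals, and both obtain~\ref{cond:C1} for ordinary inclusions by combining the closure of symmetric ideals under joins with the fact that symmetric ideals are restricted when \(\varphi\colon A\to B\) is injective. The only cosmetic difference is that the paper cites Proposition~\ref{pro:restrict_induce_ideals}.\ref{pro:restrict_induce_ideals2} rather than~\ref{pro:restrict_induce_ideals3} to write each induced ideal as induced from a restricted one.
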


\begin{proof}
  Any induced ideal is induced by a restricted ideal by
  Proposition~\ref{pro:restrict_induce_ideals}.\ref{pro:restrict_induce_ideals2}.
		Therefore, if all restricted ideals are symmetric, then
  Lemma~\ref{lem:symmetric_restricted_intersection} implies
  conditions \eqref{cond:C2} and~\eqref{cond:C3}.  If the sets of
  restricted and symmetric ideals are equal, then the last part of
  Lemma~\ref{lem:symmetric_restricted_intersection} also
  yields~\eqref{cond:C1}.  The latter is satisfied when \(A\subseteq B\)
  by the last part of Lemma~\ref{lem:symmetric_induced_ideal}.
\end{proof}

It is unclear whether all symmetric \emph{generalised}
\(\Cst\)\nb-inclusions
satisfy~\eqref{cond:C1} because there may be symmetric ideals that are
not restricted (see
Remark~\ref{rem:generalised_symmetric_inclusions}).
Condition~\eqref{cond:C3'} may fail for symmetric inclusions (see
Example~\ref{exa:Cstar_over_X} below).

\section{Group actions and regular inclusions}
\label{sec:group_crossed}

The theory above suggests the following programme to study the
ideal structure for any type of
\(\Cst\)\nb-inclusion
\(A\hookrightarrow \Mult(B)\).
First verify the locale-theoretic conditions \eqref{cond:C1},
\eqref{cond:C2} and~\eqref{cond:C3}; a good Ansatz for this is showing
that the inclusion is symmetric.  This requires a good characteristion
of the restricted ideals.  Usually, they are ``invariant'' in a
suitable sense.  Then the assumptions needed for
Theorem~\ref{the:prime_primitive} and the existence of the quasi-orbit
map \(\varrho\colon \check{B} \to \check{A}/{\sim}\)
are in place, except for the first or second countability assumptions,
which remain assumptions in all the following theorems.  Secondly,
one may try
to verify the conditions \eqref{cond:C3'} and~\eqref{cond:C4}, which are
then equivalent to openness and surjectivity of the quasi-orbit map
(see Corollary~\ref{cor:prime_primitive2}).  This in turn requires a good
understanding of induced ideals, which are usually ``invariant'' in
some dual sense.

In this section, we apply the above programme to some prototypical
examples.  We begin with crossed products for actions of locally
compact groups by automorphisms.  Here we recover classical
results that played a crucial role in the study of the Effros--Hahn
Conjecture and related problems.  We generalise these results to Fell
bundles over locally compact groups.  We finish this section with the
case of regular inclusions, which we treat by relating them to Fell
bundles over inverse semigroups.

\subsection{Crossed products for actions of locally compact groups}
\label{sec:crossed_groups}

Let~\(G\)
be a locally compact group and let \(\alpha\colon G\to\Aut(A)\)
be a continuous group action.  A \emph{crossed product} is a
\(\Cst\)\nb-algebra~\(B\)
with surjective maps
\(A\rtimes_\alpha G \onto B \onto A\rtimes_{\alpha,\red} G\)
whose composition is the regular representation
\(\lambda\colon A\rtimes_\alpha G \onto A\rtimes_{\alpha,\red} G\).
The canonical \Star{}homomorphism
\(A\to \Mult(A\rtimes_\alpha G)\)
gives a generalised \(\Cst\)\nb-inclusion
\(\varphi\colon A\to \Mult(B)\)
because the canonical \Star{}homomorphism
\(A\to \Mult(A\rtimes_{\alpha,\red} G)\)
is injective.

\begin{proposition}
  \label{pro:restricted_group_crossed}
  Let \(\varphi\colon A\to \Mult(B)\)
  be the canonical generalised \(\Cst\)\nb-inclusion
  of~\(A\) into a crossed product~\(B\).
  \begin{enumerate}
  \item \label{pro:restricted_group_crossed1}%
    \(\Ideals^B(A)=\Ideals^\alpha(A)\)
    is the lattice of \(\alpha\)\nb-invariant ideals in~\(A\);
  \item \label{pro:restricted_group_crossed2}%
    an ideal \(J\idealin B\)
    is induced if and only if it is the image of
    \(I\rtimes_{\alpha|_I} G\)
    in~\(B\) for some \(\alpha\)\nb-invariant ideal \(I\idealin A\).
  \end{enumerate}
   The \(\Cst\)\nb-inclusion
  \(\varphi\colon A\to \Mult(B)\)
  is symmetric and conditions \eqref{cond:C1}, \eqref{cond:C2}
  and~\eqref{cond:C3} hold.
 \end{proposition}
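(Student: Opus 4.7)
The plan is to give explicit descriptions of the restricted and induced ideals, verify symmetry, and then invoke Corollary~\ref{cor:symmetric_nice}. The key structural input is the existence of canonical unitary multipliers $u_g \in \U\Mult(B)$, $g \in G$, satisfying $u_g \varphi(a) u_g^{-1} = \varphi(\alpha_g(a))$ for all $a\in A$. These arise from the universal covariant representation defining $A \rtimes G$ and descend to any of its $\Cst$\nb-algebraic quotients, in particular to~$B$.

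For part~(1), the forward direction uses Lemma~\ref{lem:Mult_BJ}: for $J \idealin B$ the ideal $\Mult(B,J) \idealin \Mult(B)$ is invariant under conjugation by any unitary $u \in \U\Mult(B)$, because $u m u^{-1} \cdot B = u m B \subseteq u J \subseteq J$ whenever $m B \subseteq J$. Applying this with $u = u_g$ gives $\alpha_g(r(J)) \subseteq r(J)$, so $r(J)$ is $\alpha$\nb-invariant. Conversely, given an $\alpha$\nb-invariant $I \idealin A$, the ideal $I \rtimes G \idealin A \rtimes G$ descends to an ideal $J_I \idealin B$. The inclusion $I \subseteq r(J_I)$ follows by approximating any $b \in B$ by finite sums $\sum_k \varphi(a_k) u_{g_k}$ and observing $\varphi(x) \varphi(a_k) u_{g_k} = \varphi(x a_k) u_{g_k} \in J_I$ for $x \in I$. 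The reverse inclusion $r(J_I) \subseteq I$ is obtained by identifying the composite $A \to \Mult(B) \to \Mult(B/J_I)$ with the composition of $A \onto A/I$ and the canonical injective embedding $A/I \hookrightarrow \Mult(B/J_I)$; here $B/J_I$ arises as a crossed-product-type quotient attached to $(A/I, G, \tilde\alpha)$, whose existence follows from the functoriality of the universal covariant representation applied to the $G$\nb-equivariant quotient $A \onto A/I$.

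For part~(2), Proposition~\ref{pro:restrict_induce_ideals}.\ref{pro:restrict_induce_ideals2} shows that every induced ideal is $i(I)$ for some $I \in \Ideals^B(A)$, hence $\alpha$\nb-invariant by~(1); and $i(I) = B \varphi(I) B$ agrees with $J_I$ because $I \rtimes G$ is the closed two-sided ideal of $A \rtimes G$ generated by the image of $I \subseteq A$, so its image $J_I$ under $A\rtimes G \onto B$ is the ideal of~$B$ generated by $\varphi(I)$, namely $B\varphi(I)B$. For symmetry, when $I$ is $\alpha$\nb-invariant one has $u_g \varphi(I) = \varphi(\alpha_g(I)) u_g = \varphi(I) u_g$ for every~$g$; combined with density in~$B$ of the closed linear span of $\setgiven{\varphi(a) u_g}{a\in A,\, g\in G}$, this yields $\varphi(I) B = B \varphi(I)$. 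Thus every restricted ideal is symmetric, and Corollary~\ref{cor:symmetric_nice} then supplies~\ref{cond:C2} and~\ref{cond:C3}. Condition~\ref{cond:C1} is immediate from~(1), since the closed linear span of $\alpha$\nb-invariant ideals is $\alpha$\nb-invariant.

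The main obstacle I foresee is the reverse inclusion $r(J_I) \subseteq I$ in~(1): one must control $B/J_I$ well enough to see that $A/I \to \Mult(B/J_I)$ remains injective. For $B = A \rtimes G$ this is immediate from $(A \rtimes G)/(I \rtimes G) \cong (A/I) \rtimes G$, but for a general exotic completion~$B$ the compatibility of the surjection $A \rtimes G \onto B$ with a surjection $(A/I) \rtimes G \onto B/J_I$ at the level of universal covariant representations requires a careful but routine check.
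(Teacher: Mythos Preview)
Your overall strategy matches the paper's (which actually defers the proof to the Fell bundle case, Proposition~\ref{pro:restricted_group_Fell}), but there is a genuine gap: the claim that the closed linear span of \(\setgiven{\varphi(a) u_g}{a\in A,\ g\in G}\) is dense in~\(B\) fails whenever~\(G\) is not discrete. For non-discrete~\(G\) the elements \(\varphi(a)u_g\) lie in~\(\Mult(B)\) but not in~\(B\), and their norm-closed span does not recover~\(B\); already for \(A=\C\), \(G=\R\) one has \(u_g\notin \Cst(\R)\). This invalidates both your approximation argument for \(I\subseteq r(J_I)\) and your symmetry argument as written.

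The fix is to work with the dense \Star{}subalgebra \(C_c(G,A)\subseteq B\) instead. Left and right multiplication by \(x\in A\) act on \(f\in C_c(G,A)\) by \((x\cdot f)(g)=x\, f(g)\) and \((f\cdot x)(g)=f(g)\,\alpha_g(x)\); when~\(I\) is \(\alpha\)\nb-invariant, both \(\varphi(I)\cdot C_c(G,A)\) and \(C_c(G,A)\cdot\varphi(I)\) equal \(C_c(G,I)\), and taking closures yields \(\varphi(I)B=B\varphi(I)\). This is exactly the paper's argument in the Fell bundle generality. Your conjugation argument for ``restricted implies invariant'' via the unitaries~\(u_g\) is correct and, for group actions, more direct than the paper's Hilbert-bimodule computation. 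The injectivity of \(A/I\to\Mult(B/J_I)\) that you flag as the main obstacle is indeed the substantive point; it holds because the surjection \(B\onto A\rtimes_{\red} G\onto (A/I)\rtimes_{\red} G\) kills~\(J_I\), so that \(B/J_I\) really is a crossed product in your sense.
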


\begin{proof}
  Statements \ref{pro:restricted_group_crossed1}
  and~\ref{pro:restricted_group_crossed2} are well known to experts,
  and we prove them in greater generality in the proof of
  Proposition~\ref{pro:restricted_group_Fell} below.  Since the
  canonical homomorphisms \(I\to \Mult(I\rtimes_\alpha G)\)
  are non-degenerate, statements~\ref{pro:restricted_group_crossed1}
  and~\ref{pro:restricted_group_crossed2} imply that~\(\varphi\)
  is symmetric.  Thus conditions \eqref{cond:C2} and~\eqref{cond:C3} follow
  from Corollary~\ref{cor:symmetric_nice}.
  The closed linear span of a family of
  \(\alpha\)\nb-invariant
  ideals is again \(\alpha\)\nb-invariant.
  So~\ref{pro:restricted_group_crossed1} implies
  condition~\eqref{cond:C1}.
\end{proof}

If~\(G\) is discrete, then all symmetric ideals are restricted by
Lemma~\ref{lem:symmetric_induced_ideal}.  This fails for locally
compact~\(G\):

\begin{example}
  \label{exa:symmetric_not_invariant}
  Let~\(G\)
  be a locally compact group that is not discrete.  Let
  \(A=\Cont_0(G)\)
  and let~\(\alpha\)
  be the translation action.  So
  \(B\defeq A\rtimes G \cong \Comp(L^2 (G))\).
  Let \(e\in G\)
  be the unit element.  The ideal
  \(I\defeq \Cont_0(G\setminus\{e\})\)
  is not invariant and not restricted.  The closed right ideal
  \(I\cdot B\)
  in \(B=\Comp(L^2 (G))\)
  consists of those compact operators whose image is contained in the
  closure of \(I\cdot L^2(G)\).
  Now \(I\cdot L^2 (G)\)
  is dense in~\(L^2 (G)\)
  because \(\{e\}\)
  is a set of measure zero.  Thus \(I\cdot B= B\).
  This implies \(B\cdot I = B\)
  by taking adjoints.  So~\(I\)
  is a symmetric ideal that is not restricted.
\end{example}

Let \(\varphi\colon A \to \Mult(B)\)
be the canonical generalised inclusion into a crossed
product~\(B\).
Proposition~\ref{pro:restricted_group_crossed}
and Lemma~\ref{lem:induced_map_pi} yield a continuous map
\[
\pi\colon \Prime(A) \to \Prime(\Ideals^\alpha(A))
\]
that maps \(\prid\in \Prime(A)\)
to the largest invariant ideal contained in~\(\prid\), that is,
\[
\pi(\prid)=\bigcap_{g\in G} \alpha_g(\prid).
\]
The open subset of~\(\Prime(A)\)
corresponding to this intersection is the complement of the closure of
the orbit \(G\cdot \prid=\setgiven{\alpha_g(\prid)}{g\in G}\).
Thus \(\prid_1 \sim \prid_2\)
for \(\prid_1, \prid_2\in\Prime(A)\)
if and only if
\(\overline{G\cdot \prid_1} = \overline{G\cdot \prid_2}\).
This is the usual definition of the quasi-orbit space for a group
action (compare \cite{Takai:Quasi-orbit_continuous}); when~\(A\)
is commutative or separable, we may replace \(\Prime(A)\)
by~\(\check{A}\) by
Corollary~\ref{cor:first_countable_implies_sober}.
So our definition of the quasi-orbit space for a generalised
\(\Cst\)\nb-inclusion
extends the usual definition for group actions.  In particular, our
results yield the canonical quasi-orbit map
\(\check{B} \to \check{A}/{\sim}\)
(see, for instance, \cite{Takai:Quasi-orbit_continuous}*{(3.2)},
\cite{Green:Local_twisted}*{pp.~221--223},
\cite{GootmanRosenberg.StructureOfCrossedProducts}*{p.~290}, or
\cite{GootmanLazar.DualityCrossedProduct}*{p.~620}).

\begin{theorem}
  \label{thm:group_restricted_ideals_nice}
  Let \(\varphi\colon A\to \Mult(B)\)
  be the canonical generalised \(\Cst\)\nb-inclusion
  of~\(A\)
  into a crossed product~\(B\).
  The quasi-orbit space for~\(\varphi\)
  exists and coincides with the usual quasi-orbit space for the
  action~\(\alpha\).
  If\/ \(\Prime(\Ideals^\alpha(A))\)
  is first countable or~\(\check{A}\)
  is second countable, then the quasi-orbit map exists:
  \begin{enumerate}
  \item The map \(\pi\colon \check{A}\to\Prime(\Ideals^\alpha(A))\),
    \(\prid\mapsto \bigcap_{g\in G} \alpha_g(\prid)\),
    is continuous, open and surjective.  It descends to a
    homeomorphism
    \(\widetilde{\pi}\colon
    \check{A}/{\sim}\to\Prime(\Ideals^\alpha(A))\).
    The quasi-orbit space~\(\check{A}/{\sim}\)
    is a quotient of~\(\check{A}\) by an open equivalence relation.
  \item There is a continuous map
    \(\varrho\colon \check{B} \to \check{A}/{\sim}\),
    \(\prid\mapsto \widetilde{\pi}^{-1}(r(\prid))\).
    It identifies \(\check{B}\cong \check{A}/{\sim}\)
    if and only if~\(A\)
    separates ideals in~\(B\).
    And then \(B=A\rtimes_{\alpha,\red} G\).
  \end{enumerate}
\end{theorem}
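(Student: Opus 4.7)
The plan is to apply the general machinery of Sections~\ref{sec:Stone}--\ref{sec:symmetric} once its abstract ingredients are identified concretely in the crossed product setting. Proposition~\ref{pro:restricted_group_crossed} already supplies the lattice conditions \ref{cond:C1}, \ref{cond:C2}, \ref{cond:C3} and identifies $\Ideals^B(A)$ with the lattice $\Ideals^\alpha(A)$ of $\alpha$\nb-invariant ideals, so the space $\Prime^B(A)$ appearing in Theorem~\ref{the:prime_primitive} is literally $\Prime(\Ideals^\alpha(A))$. By Lemma~\ref{lem:induced_map_pi} and~\eqref{eq:definition_of_pi}, the map~$\pi$ sends $\prid\in\check{A}$ to the largest $\alpha$\nb-invariant ideal contained in~$\prid$, namely $\bigcap_{g\in G}\alpha_g(\prid)$; the relation $\pi(\prid_1)=\pi(\prid_2)$ therefore translates into $\overline{G\cdot\prid_1}=\overline{G\cdot\prid_2}$, so $\check{A}/{\sim}$ coincides with the usual quasi-orbit space, giving the first claim.

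I would next dispatch assertion~(1) by Theorem~\ref{the:prime_primitive}: under the countability hypothesis,~$\pi$ is a continuous open surjection and descends to a homeomorphism $\widetilde\pi\colon\check{A}/{\sim}\congto\Prime(\Ideals^\alpha(A))$. Openness of~$\pi$ means precisely that the quotient map $\check{A}\to\check{A}/{\sim}$ is open, i.e.\ that the equivalence relation $\sim$ is open.

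For assertion~(2), I would apply Corollary~\ref{cor:prime_primitive3}: since \ref{cond:C1} and \ref{cond:C2} hold and $\Prime^B(A)$ is first countable, the quasi-orbit map $\varrho=\widetilde\pi^{-1}\circ r\colon \check{B}\to\check{A}/{\sim}$ exists and is a homeomorphism exactly when~$A$ separates ideals in~$B$. The only part not handed to us by the general framework is the final clause: if~$A$ separates ideals in~$B$, then $B\cong A\rtimes_{\alpha,\red}G$. Here I would let~$K$ denote the kernel of the canonical surjection $B\onto A\rtimes_{\alpha,\red}G$; because the composite $A\to\Mult(B)\to\Mult(A\rtimes_{\alpha,\red}G)$ is injective by construction, $r(K)=0=r(0)$, and the separation hypothesis forces $K=0$.

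The main obstacle has already been handled upstream, in Proposition~\ref{pro:restricted_group_crossed}: identifying restricted ideals with $\alpha$\nb-invariant ones and induced ideals with images of $I\rtimes_{\alpha|_I}G$. Once that is in place, Corollary~\ref{cor:symmetric_nice} delivers the three lattice conditions, and the present theorem reduces to the bookkeeping above.
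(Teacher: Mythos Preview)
Your proof is correct and follows essentially the same route as the paper: invoke Proposition~\ref{pro:restricted_group_crossed} for \ref{cond:C1}, \ref{cond:C2}, \ref{cond:C3} and the identification $\Ideals^B(A)=\Ideals^\alpha(A)$, then apply Theorem~\ref{the:prime_primitive} for part~(1), and finish with the kernel argument for $B=A\rtimes_{\alpha,\red}G$. One small imprecision: for part~(2) you cite only Corollary~\ref{cor:prime_primitive3}, whose statement packages \emph{existence} of~$\varrho$ together with it being a homeomorphism, both conditional on separation of ideals; but the theorem asserts that~$\varrho$ exists unconditionally. Since you have already established~\ref{cond:C3}, the existence follows directly from the definition of the quasi-orbit map (equivalently, from Proposition~\ref{pro:induced_map_rho}, which is what the paper cites), and then Corollary~\ref{cor:prime_primitive3} or Lemma~\ref{lem:separation_without_openness} handles the ``if and only if'' for the homeomorphism.
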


\begin{proof}
  The assertions follow from Theorem~\ref{the:prime_primitive},
  Lemma \ref{lem:induced_map_rho} and the discussion above.  For
  the last part, compare with
  Corollary~\ref{cor:primitive_ideal_space_description}, and use that
  if~\(A\)
  detects ideals in~\(B\),
  then the kernel of the surjection
  \(B \onto A\rtimes_{\alpha,\red} G\) has to be trivial.
\end{proof}

The existence of the quasi-orbit map
\(\varrho\colon \check{B} \to \check{A}/{\sim}\)
is established in \cites{Green:Local_twisted,
  GootmanRosenberg.StructureOfCrossedProducts,
  GootmanLazar.DualityCrossedProduct, Takai:Quasi-orbit_continuous}
if~\(A\)
is separable and~\(G\)
is second countable and amenable.  In fact, under these
assumptions Gootman and Lazar prove that~\(\varrho\)
is open and surjective (see
\cite{GootmanLazar.DualityCrossedProduct}*{Theorem~4.8}).
As we show in Example~\ref{exa:MI_fails_tensor} below (see also
Corollary~\ref{cor:amenable_group_characterization}),
the map~\(\varrho\) may fail to be open when~\(G\) is not amenable
and~\(B\) is the full crossed product.
Nevertheless, we manage to improve \cite{GootmanLazar.DualityCrossedProduct}*{Theorem~4.8}
 by applying  Corollary~\ref{cor:prime_primitive2}
to the reduced
crossed product \(B=A\rtimes_{\alpha,\red} G\).
Thus our next goal is to prove  conditions \eqref{cond:C3'}
and~\eqref{cond:C4} for the inclusion \(A\to \Mult(B)\).
Here we use Imai--Takai
Duality~\cite{Imai-Takai:Duality}.
This allows us to translate conditions \eqref{cond:C3'}
and~\eqref{cond:C4} for \(A\to \Mult(B)\) to conditions
\eqref{cond:C1} and~\eqref{cond:C2} for
\(B\to \Mult(B\rtimes_{\widehat{\alpha}} \hat{G})\),
which are easy to check.

Recall that \(B=A\rtimes_{\alpha,\red} G\)
is equipped with a (reduced) coaction
\(\widehat{\alpha}\colon B \to B \otimes \Cred(G)\),
where~\(\otimes\)
denotes the minimal \(\Cst\)\nb-tensor
product.  This coaction generates a crossed product
\(B\rtimes_{\widehat{\alpha}} \hat{G}\),
which comes with a morphism
\(B\to \Mult(B\rtimes_{\widehat{\alpha}} \hat{G})\),
and Imai--Takai Duality~\cite{Imai-Takai:Duality} identifies
\[
B\rtimes_{\widehat{\alpha}}  \hat{G} \cong A\otimes \Comp(L^2 (G)).
\]
Following
\cite{Sierakowski:IdealStructureCrossedProducts}*{Definition~1.5} we
call the action~\(\alpha\)
\emph{exact} if every \(I\in \Ideals^\alpha(A)\)
induces a short exact sequence
\[
0\to I\rtimes_{\alpha|_I,\red} G \to A\rtimes_{\alpha,\red} G
\to A/I\rtimes_{\alpha|_{A/I},\red} G \to 0.
\]
If~\(G\)
is an exact group, then all actions of~\(G\) are exact.  Any
action~\(\alpha\)
with \(A\rtimes_{\alpha,\red} G=A\rtimes_{\alpha} G\)
is exact.  So all actions of amenable groups are exact.

\begin{theorem}
  \label{thm:Imai-Takai_duality}
  Let~\(\alpha\) be an exact action.  Let
  \(B\defeq A\rtimes_{\alpha,\red} G\) and
  \(\Comp\defeq \Comp(L^2(G))\).  Build induction and restriction
  maps for the canonical \Star{}homomorphisms
  \[
    \varphi\colon A\to \Mult(B),\qquad
    \psi\colon B\to \Mult(B\rtimes_{\widehat{\alpha}} \hat{G})
    \cong \Mult(A\otimes \Comp).
  \]
  Then \(\Ideals^A(B) = \Ideals^{A\otimes \Comp}(B)\) and the
  following diagram commutes:
  \begin{equation}
    \label{eq:diagram_imai_takai}
    \begin{tikzcd}
      \Ideals^A(B) \arrow[d, "r"', "\cong", shift right=.53em]
      \arrow[r, equal] &
      \Ideals^{A\otimes \Comp}(B) \arrow[d, "i"', "\cong", shift right=.53em] \\
      \Ideals^B(A) \arrow[r, "\otimes \Comp", "\cong"' ]
      \arrow[u, "i"',  shift right=.53em] &
      \Ideals^B(A\otimes \Comp). \arrow[u, "r"', shift right=.53em]
    \end{tikzcd}
  \end{equation}
  And \(A\to \Mult(B)\)
  satisfies \eqref{cond:C3'} and~\eqref{cond:C4}.  Thus, if
  \(\check{A}\)
  or~\(\check{B}\)
  is second countable or, more generally, \(\Prime(\Ideals^\alpha(A))\)
  is first countable, then the quasi-orbit map
  \(\varrho\colon \check{B} \to \check{A}/{\sim}\)
  is open and surjective,  the quasi-orbit space
  \(\check{B}/{\sim}\)
  for \(B\to \Mult(B\rtimes_{\widehat{\alpha}}  \hat{G})\)
  exists, and there is a homeomorphism
  \(\check{A}/{\sim} \cong \check{B}/{\sim}\)
  such that the map
  \(\check{B} \xrightarrow{\varrho} \check{A}/{\sim}\cong
  \check{B}/{\sim}\)
  is the quotient map \(\check{B} \to \check{B}/{\sim}\).
\end{theorem}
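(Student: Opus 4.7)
The plan is to translate the required conditions \ref{cond:C3'} and \ref{cond:C4} for \(\varphi\colon A\to\Mult(B)\) into the far more tractable conditions \ref{cond:C1} and \ref{cond:C2} for the Imai--Takai dual inclusion \(\psi\colon B\to\Mult(B\rtimes_{\widehat{\alpha}}\hat G)\cong\Mult(A\otimes\Comp)\), using the fact (to be established first) that \(\Ideals^A(B)=\Ideals^{A\otimes\Comp}(B)\). Once this equality and the commuting square are in place, everything else is driven by Proposition~\ref{pro:restrict_induce_ideals} and Corollary~\ref{cor:symmetric_nice}.

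First I would establish the top-row equality. By Proposition~\ref{pro:restricted_group_crossed}, \(\Ideals^A(B)\) consists of the ideals \(I\rtimes_{\alpha|_I,\red}G\) with \(I\in\Ideals^\alpha(A)\); exactness of \(\alpha\) makes this assignment an order isomorphism onto the lattice of \(\widehat{\alpha}\)-invariant ideals of~\(B\). Applying the coaction analog of Proposition~\ref{pro:restricted_group_crossed} to~\(\psi\) simultaneously identifies \(\Ideals^{A\otimes\Comp}(B)\) with the \(\widehat{\alpha}\)-invariant ideals of~\(B\) and shows that~\(\psi\) is symmetric. These two descriptions match, giving the top equality. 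Commutativity of the square then follows from Proposition~\ref{pro:restrict_induce_ideals}.\ref{pro:restrict_induce_ideals3}, which provides mutually inverse isomorphisms in each column, together with the concrete form of the Imai--Takai isomorphism sending \(I\rtimes_{\alpha|_I,\red}G\) to the ideal \(I\otimes\Comp\) of \(A\otimes\Comp\).

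With the square available, \ref{cond:C3'} and \ref{cond:C4} for~\(\varphi\) come for free. For \ref{cond:C3'}: the lattice \(\Ideals^A(B)=\Ideals^{A\otimes\Comp}(B)\) is a lattice of \emph{restricted} ideals for~\(\psi\), hence closed under arbitrary meets by Proposition~\ref{pro:restrict_induce_ideals}.\ref{pro:restrict_induce_ideals6}. For \ref{cond:C4}: under the identification of the two top lattices, the map~\(F\) from~\eqref{eq:definition_of_F} for~\(\varphi\) coincides with the retraction \(r\circ i\) for~\(\psi\), since both produce the smallest member of the common lattice containing a given ideal of~\(B\). Therefore \ref{cond:C4} for~\(\varphi\) is literally condition~\ref{cond:C2} for~\(\psi\), and this holds by Corollary~\ref{cor:symmetric_nice} because~\(\psi\) is symmetric. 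Combining these with the conditions already supplied by Theorem~\ref{thm:group_restricted_ideals_nice}, Corollary~\ref{cor:prime_primitive2} delivers that~\(\varrho\) is open and surjective. The remaining homeomorphism \(\check A/{\sim}\cong\check B/{\sim}\) intertwining the quotient maps is obtained by passing to prime elements in the top equality of the square, giving \(\Prime^{A\otimes\Comp}(B)=\Prime^A(B)\cong\Prime^B(A)\), and chasing the definition of the two quasi-orbit maps through this identification.

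The main obstacle is the coaction analog of Proposition~\ref{pro:restricted_group_crossed} used in the first step, namely the identification of \(\Ideals^{A\otimes\Comp}(B)\) with the \(\widehat{\alpha}\)-invariant ideals of~\(B\) together with the computation that the ideal of \(A\otimes\Comp\) induced from a \(\widehat{\alpha}\)-invariant ideal \(I\rtimes_{\alpha|_I,\red}G\) under~\(\psi\) is exactly \(I\otimes\Comp\). This is where exactness of~\(\alpha\) is used essentially and requires working explicitly with the Imai--Takai isomorphism and the non-degeneracy of the canonical embedding \(B\hookrightarrow\Mult(B\rtimes_{\widehat{\alpha}}\hat G)\).
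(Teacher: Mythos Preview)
Your proposal is correct and follows essentially the same route as the paper: identify \(\Ideals^A(B)\) with \(\Ideals^{A\otimes\Comp}(B)\), observe that~\(\psi\) is symmetric, and then read off \ref{cond:C3'} and~\ref{cond:C4} for~\(\varphi\) as the automatic conditions \ref{cond:C1} and~\ref{cond:C2} for~\(\psi\). The only packaging difference is that where you invoke a ``coaction analog of Proposition~\ref{pro:restricted_group_crossed}'' as a single black box, the paper splits this into two concrete lemmas: one showing (without exactness) that the ideals of \(A\otimes\Comp\) induced from~\(B\) are exactly the \(I\otimes\Comp\) with \(I\in\Ideals^\alpha(A)\), and a second using exactness to show that restricting such an \(I\otimes\Comp\) back to~\(B\) gives \(I\rtimes_{\alpha|_I,\red}G\), hence that~\(\psi\) is symmetric and that restricted-for-\(\psi\) equals induced-for-\(\varphi\). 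Your identification of this as the main technical obstacle is accurate.
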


\begin{remark}
  It is known that a diagram similar to~\eqref{eq:diagram_imai_takai}
  commutes when~\(i\)
  is replaced by the map \(\operatorname{Ind}\)
  given by kernels of the corresponding induced representations (see
  \cite{GootmanLazar.DualityCrossedProduct}*{Remarks 2.8} or
  \cite{Nilsen:DualityCrossedProducts}*{Propositions 2.7, 2.8}).
  Thus we need to show that
  \(\operatorname{Ind}\) and~\(i\) coincide on restricted ideals.
  It is readily seen that \(\operatorname{Ind}\)
  and~\(i\)
  coincide on \(\Ideals^B(A)=\Ideals^\alpha(A)\) if and only if the
  action~\(\alpha\) is exact.
  Hence this assumption is necessary for our proof to work. That
 \(\operatorname{Ind}\)
  restricted to \(\Ideals^{B\rtimes_{\widehat{\alpha}} \hat{G}}(B)\)
  coincides with~\(i\) is proved in
  \cite{GootmanLazar.DualityCrossedProduct}*{Proposition 3.14(iii)}
  if~\(G\) amenable (see also
  \cite{Nilsen:DualityCrossedProducts}*{Proposition 3.1(iii)}, where
  the full crossed products are considered).
\end{remark}

The proof of Theorem~\ref{thm:Imai-Takai_duality} is based on the
following two lemmas.

\begin{lemma}
  \label{lem:induced_in_double_crossed}
  An ideal in \(A \otimes \Comp\)
  is induced from~\(B\)
  if and only if it is of the form \(I\otimes \Comp\)
  for an invariant ideal \(I\in \Ideals^\alpha(A)\),
  if and only if it is induced from an ideal in~\(A\)
  along \(\psi\circ\varphi\colon A\to \Mult(A \otimes \Comp)\).
  \textup{(}This holds for any action \(\alpha\).\textup{)}
\end{lemma}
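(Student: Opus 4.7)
The plan is to exploit the Imai--Takai isomorphism \(B \rtimes_{\hat\alpha} \hat G \cong A \otimes \Comp\) to analyse simultaneously the composite \(\rho \defeq \psi \circ \varphi \colon A \to \Mult(A\otimes\Comp)\) and the inclusion \(\psi \colon B \to \Mult(A\otimes\Comp)\).  Under a standard realisation on \(L^2(G,H)\), the multiplier \(\rho(a)\) corresponds to the bounded function \(g \mapsto \alpha_{g^{-1}}(a)\), so that \(\rho\) factors through \(\Contb(G, A) \subseteq \Mult(A \otimes \Cont_0(G)) \subseteq \Mult(A\otimes\Comp)\).

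First, I would establish the equivalence between the second and third conditions.  For any \(I \in \Ideals(A)\), sandwiching \(\rho(x)\), \(x \in I\), between elementary tensors \(1 \otimes k_j\) with \(k_j \in \Comp\) of rank one extracts all translates \(\alpha_{g^{-1}}(x) \otimes k_1 k_2\) as \(g\) ranges over~\(G\); passing to the closed linear span yields \(i_\rho(I) = \langle I\rangle_\alpha \otimes \Comp\), where \(\langle I\rangle_\alpha\) denotes the smallest \(\alpha\)\nb-invariant ideal of~\(A\) containing~\(I\).  Hence \(i_\rho(\Ideals(A)) = \{J \otimes \Comp : J \in \Ideals^\alpha(A)\}\).

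Next, I would prove the equivalence between the first and third conditions.  Since \(\psi\) is non-degenerate, a direct computation using \(\psi(B)(A\otimes\Comp) = A\otimes\Comp\) gives the composition law \(i_\psi \circ i_\varphi = i_\rho\); combined with the previous step, this shows that every \(J \otimes \Comp\) with \(J \in \Ideals^\alpha(A)\) is \(\psi\)\nb-induced.  For the converse, Proposition~\ref{pro:restrict_induce_ideals}.\ref{pro:restrict_induce_ideals3} gives \(K = i_\psi(r_\psi(K))\) for every \(\psi\)\nb-induced ideal \(K\), with \(r_\psi(K) \in \Ideals(B)\) \(\psi\)\nb-restricted.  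It therefore suffices to show that every \(\psi\)\nb-restricted ideal of~\(B\) is \(\varphi\)\nb-induced: for then \(r_\psi(K) = i_\varphi(I)\) for some \(I \in \Ideals^\alpha(A)\), and \(K = i_\psi(i_\varphi(I)) = i_\rho(I) = I \otimes \Comp\).

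The hard part is this last claim, which amounts to showing that the \(\hat\alpha\)\nb-invariant ideals of \(B = A \rtimes_{\alpha,\red} G\) (equivalently, the ideals of~\(B\) that are \(\psi\)\nb-restricted) are precisely the induced ideals \(i_\varphi(I) = I \rtimes_{\alpha|_I,\red} G\) for \(I \in \Ideals^\alpha(A)\).  One inclusion is immediate, since such ideals are visibly \(\hat\alpha\)\nb-invariant.  For the converse, I would translate \(\hat\alpha\)\nb-invariance of \(J \subseteq B\) into invariance of the corresponding ideal in \(B \rtimes_{\hat\alpha}\hat G\) under the bidual action of~\(G\); via Imai--Takai this action is conjugate to \(\alpha \otimes \Ad\rho\) on \(A \otimes \Comp(L^2 G)\), and a direct verification shows that its invariant ideals are exactly \(I \otimes \Comp\) with \(I \in \Ideals^\alpha(A)\).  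Importantly, this argument stays within the reduced crossed product, so it works for every action~\(\alpha\), matching the scope of the lemma.
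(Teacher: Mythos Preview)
Your reduction in the third paragraph is where the argument breaks.  You claim it suffices to show that every \(\psi\)\nb-restricted ideal of~\(B\) is \(\varphi\)\nb-induced, and you sketch an argument via the bidual action.  But this intermediate claim is exactly the content of the paper's Lemma~\ref{pro:induced_ideals_crossed}, and there it \emph{requires exactness} of~\(\alpha\).  Concretely: a \(\psi\)\nb-restricted ideal has the form \(r_\psi(I\otimes\Comp)=\ker\bigl(A\rtimes_\lambda G \to (A/I)\rtimes_\lambda G\bigr)\) for some \(I\in\Ideals^\alpha(A)\), and this kernel equals \(i_\varphi(I)=I\rtimes_\lambda G\) if and only if the sequence \(0\to I\rtimes_\lambda G\to A\rtimes_\lambda G\to (A/I)\rtimes_\lambda G\to 0\) is exact.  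Since the present lemma is asserted for \emph{any} action, your route cannot close.  Your bidual-action sketch only yields \(i_\psi(J)=I\otimes\Comp\) for some invariant~\(I\); it does not, by itself, force \(J=i_\varphi(I)\) without the exactness identification just described.

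The paper sidesteps this by never passing through \(r_\psi\).  It takes an arbitrary ideal \(J\idealin B\) (not a \(\psi\)\nb-restricted one) and computes \(i_\psi(J)\) directly: writing \(C=B\rtimes\hat G\) with \(C=\Cont_0(G)\cdot B=B\cdot\Cont_0(G)\), one has \(i_\psi(J)=C\cdot J\cdot C=\Cont_0(G)\cdot J\cdot\Cont_0(G)\), and the dual \(G\)\nb-action~\(\gamma\) acts on such products by \(\gamma_g(f_1\cdot x\cdot f_2)=\lambda_g(f_1)\cdot x\cdot\lambda_g(f_2)\), visibly preserving \(\Cont_0(G)\cdot J\cdot\Cont_0(G)\).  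Hence \(i_\psi(J)\) is \(\gamma\)\nb-invariant, so of the form \(I\otimes\Comp\), and one is done.  The moral is that the lemma only concerns the \emph{image} of~\(i_\psi\); your detour through the \emph{fibres} of~\(i_\psi\) over \(\psi\)\nb-restricted ideals imports an exactness hypothesis the statement does not allow.
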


\begin{proof}
  Let \(I\in \Ideals^\alpha(A)\).
  The ideal in~\(B\)
  induced by~\(I\)
  is \(I\rtimes_{\lambda} G\)
  (see Proposition~\ref{pro:restricted_group_crossed}).
  Inducing further to an ideal in \(A \otimes \Comp\)
  gives the double crossed product ideal
  \((I\rtimes_\lambda G) \rtimes \hat{G} \cong I\otimes \Comp\).
  So ideals of this form are induced.
  We claim that any ideal in \(A \otimes \Comp\)
  induced from~\(A\)
  along \(\psi\circ\varphi\)
  is of this form.  Indeed, if \(I\idealin A\)
  is arbitrary, then induction along \(\psi\circ\varphi\)
  has the same effect as first inducing along~\(\varphi\)
  and then along~\(\psi\).
  When we induce along~\(\varphi\),
  we get \(i(I) = i(\bar{I})\),
  where \(\bar{I}\idealin A\)
  is the \(\alpha\)\nb-invariant
  ideal generated by~\(I\).
  So \(I\)
  and~\(\bar{I}\)
  induce the same ideal also along \(\psi\circ\varphi\).

  Now let \(J\idealin B\)
  be any ideal.  It remains to prove that the induced ideal in
  \(A \otimes \Comp\)
  is of the form \(I\otimes \Comp\)
  for an \(\alpha\)\nb-invariant ideal \(I\idealin A\).
  The \Star{}homomorphism~\(\psi\)
  first maps~\(J\)
  to its image in~\(A\rtimes_\lambda G\).
  The coaction crossed product
  \(C \defeq B \rtimes \hat{G}\)
  comes with non-degenerate, injective \Star{}homomorphisms
  \(B \to \Mult(C) \leftarrow \Cont_0(G)\)
  such that
  \[
  B \cdot \Cont_0(G)
  = \Cont_0(G) \cdot  B
  = C.
  \]
  And there is a canonical dual action \(\gamma\colon G\to\Aut(C)\).
  The Imai--Takai isomorphism is \(G\)\nb-equivariant,
  that is, it intertwines the action~\(\gamma\)
  and the action \(\alpha\otimes \Ad(\lambda_g)\)
  on \(A\otimes \Comp=A\otimes \Comp(L^2 (G))\).
  So the ideal in~\(C\)
  induced by \(J\idealin A\rtimes_\lambda G\)
  has the desired form if and only if it is \(\gamma\)\nb-invariant.
  The dual action~\(\gamma\)
  is built as follows.  If \(x\in A\rtimes_\lambda G\),
  \(f\in \Cont_0(G)\),
  then \(\gamma_g(x\cdot f) = x\cdot \lambda_g(f)\)
  with the automorphism \(\lambda_g\in\Aut(G)\)
  defined by the left regular representation,
  \((\lambda_g f)(x) = f(g^{-1} x)\).
  The ideal in~\(C\) induced by~\(J\) is
  \[
  C \cdot J \cdot C
  = \Cont_0(G) \cdot  B\cdot J\cdot B \cdot \Cont_0(G)
  = \Cont_0(G) \cdot J \cdot \Cont_0(G).
  \]
  If \(f_1,f_2\in\Cont_0(G)\),
  \(x\in J\),
  then
  \(\gamma_g(f_1 \cdot x \cdot f_2) = \lambda_g(f_1) \cdot x \cdot
  \lambda_g(f_2)\)
  again belongs to \(C\cdot J \cdot C\).
  Thus \(C \cdot J \cdot C\) is invariant.
\end{proof}

\begin{lemma}
  \label{pro:induced_ideals_crossed}
  Suppose that the action~\(\alpha\) is exact.  An ideal
  in \(B= A\rtimes_\lambda G\)
  is induced from~\(A\)
  if and only if it is restricted from \(A\otimes \Comp\).
  And the generalised inclusion
  \(\psi\colon B\to \Mult(A\otimes \Comp)\)
  is symmetric.
\end{lemma}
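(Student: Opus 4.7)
The plan is to combine Lemma~\ref{lem:induced_in_double_crossed} with the Galois bijection of Proposition~\ref{pro:restrict_induce_ideals}.\ref{pro:restrict_induce_ideals3}. By Lemma~\ref{lem:induced_in_double_crossed}, the induced ideals in \(C\defeq A\otimes\Comp\) along~\(\psi\) are exactly \(\setgiven{I\otimes\Comp}{I\in\Ideals^\alpha(A)}\), so every restricted ideal in~\(B\) for~\(\psi\) is of the form \(r_\psi(I\otimes\Comp)\) for a unique \(I\in\Ideals^\alpha(A)\). To prove the first assertion it therefore suffices to identify \(r_\psi(I\otimes\Comp)=I\rtimes_\lambda G\), since by Proposition~\ref{pro:restricted_group_crossed} the right-hand side is exactly the ideal of~\(B\) induced from~\(I\) via~\(\varphi\).

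First I would compute \(r_\psi(I\otimes\Comp)\) as the kernel of the composition \(B\xrightarrow{\psi}\Mult(C)\to\Mult(C/(I\otimes\Comp))\). The quotient equals \((A/I)\otimes\Comp\), and by exactness of~\(\alpha\) the quotient \(B/(I\rtimes_\lambda G)\) is canonically \((A/I)\rtimes_\lambda G\); the induced map is the canonical injective embedding \((A/I)\rtimes_\lambda G\hookrightarrow\Mult((A/I)\otimes\Comp)\), so its kernel is precisely \(I\rtimes_\lambda G\). This proves the first claim, and exactness enters only at this point.

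For symmetry of~\(\psi\), I would verify condition~\ref{lem:symmetric_induced_ideal3} of Lemma~\ref{lem:symmetric_induced_ideal}: \(\psi(J)\cdot C=i_\psi(J)=I\otimes\Comp\) for each restricted \(J=I\rtimes_\lambda G\). Symmetry of~\(\varphi\) (Proposition~\ref{pro:restricted_group_crossed}) gives \(J=\overline{\varphi(I)\cdot B}\), and non-degeneracy of~\(\psi\) (which allows strict extension to \(\Mult(B)\) and yields \(\overline{\psi(B)\cdot C}=C\)) gives
\[
\psi(J)\cdot C=\overline{(\psi\circ\varphi)(I)\cdot\psi(B)\cdot C}=\overline{(\psi\circ\varphi)(I)\cdot C},
\]
reducing the task to \(\overline{(\psi\circ\varphi)(I)\cdot C}=I\otimes\Comp\).

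The hard step is this last equality, which I would establish using the concrete Imai--Takai representation. Identify \(C\cong\Comp(A\otimes L^2(G))\) with the compact adjointable operators on the Hilbert \(A\)-module \(A\otimes L^2(G)\); under this, \((\psi\circ\varphi)(a)=\pi(a)\) acts by \((\pi(a)\xi)(g)=\alpha_{g^{-1}}(a)\xi(g)\). Invariance of~\(I\) then implies \(\overline{\pi(I)\cdot(A\otimes L^2(G))}=I\otimes L^2(G)\) as a Hilbert \(A\)-submodule; rewriting \(C\) via rank-one ``ket-bra'' operators identifies \(\overline{\pi(I)\cdot C}\) with the closed linear span of \(\ket{\chi}\bra{\eta}\) for \(\chi\in I\otimes L^2(G)\) and \(\eta\in A\otimes L^2(G)\), which is exactly \(I\otimes\Comp\). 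The main obstacle is precisely this step: symmetry of \(\psi\circ\varphi\) on an invariant ideal does not follow from abstract non-degeneracy alone, and the explicit pointwise-multiplication form of~\(\pi\) provided by Imai--Takai seems essential.
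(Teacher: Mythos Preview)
Your argument for the equivalence ``induced from~\(A\) \(\Leftrightarrow\) restricted from \(A\otimes\Comp\)'' is essentially the paper's: identify \(r_\psi(I\otimes\Comp)\) as the kernel of \(B\to\Mult((A/I)\otimes\Comp)\), factor through the injective map \((A/I)\rtimes_\lambda G\hookrightarrow\Mult((A/I)\otimes\Comp)\), and invoke exactness to get \(I\rtimes_\lambda G\). This part matches.

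For symmetry, your route is correct but unnecessarily laborious. After your reduction to \(\overline{(\psi\circ\varphi)(I)\cdot C}=I\otimes\Comp\), you invoke the concrete Imai--Takai picture and a Hilbert-module computation with rank-one operators; this works (the uniform-in-\(g\) convergence needed for \(\overline{\pi(I)\cdot(A\otimes L^2(G))}=I\otimes L^2(G)\) follows from compactness of \(\{\alpha_g(x):g\in K\}\) for \(x\in I\) and compact~\(K\)). The paper bypasses all of this with a single structural observation: the restriction of~\(\psi\) to \(J=I\rtimes_\lambda G\) is nothing but the canonical non-degenerate map of \(I\rtimes_\lambda G\) into the multiplier algebra of its own double dual crossed product \((I\rtimes_\lambda G)\rtimes_{\widehat\alpha}\hat G\cong I\otimes\Comp\). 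Non-degeneracy gives \((I\rtimes_\lambda G)\cdot(I\otimes\Comp)=I\otimes\Comp\), whence
\[
(I\rtimes_\lambda G)\cdot(A\otimes\Comp)\supseteq(I\rtimes_\lambda G)\cdot(I\otimes\Comp)=I\otimes\Comp=i_\psi(J)\supseteq(I\rtimes_\lambda G)\cdot(A\otimes\Comp),
\]
so equality holds and condition~\ref{lem:symmetric_induced_ideal1} of Lemma~\ref{lem:symmetric_induced_ideal} is verified directly. This avoids your ``hard step'' entirely: no explicit formula for~\(\pi\) is needed, only that Imai--Takai duality is natural in~\(A\) (applied once to~\(A\) and once to the invariant ideal~\(I\)).
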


\begin{proof}
  Let \(J\idealin B\)
  be restricted from \(A\otimes \Comp\).
  Then it is the restriction of an induced ideal by
  Proposition~\ref{pro:restrict_induce_ideals}.\ref{pro:restrict_induce_ideals2b}.
  By Lemma~\ref{lem:induced_in_double_crossed}, it is the restriction of
  \(I\otimes \Comp\)
  for an invariant ideal \(I\in\Ideals^\alpha(A)\).
  This is equal to the kernel of the canonical \Star{}homomorphism
  \[
  B=A\rtimes_\lambda G
  \to \Mult(A\otimes \Comp)
  \to \Mult\bigl(A\otimes \Comp\bigm/ I\otimes \Comp\bigr).
  \]
  This factors through the canonical injective \Star{}homomorphism
  \[
  (A/I)\rtimes_\lambda G \hookrightarrow
  \Mult\bigl((A/I)\otimes \Comp\bigr)
  = \Mult\bigl(A\otimes \Comp\bigm/ I\otimes \Comp\bigr).
  \]
  So the restricted ideal from \(I\otimes \Comp\)
  is the kernel of the \Star{}homomorphism
  \(A\rtimes_\lambda G \to (A/I)\rtimes_\lambda G\).
  Since we assume that the action~is exact, this is equal to
  \(I\rtimes_\lambda G\).
  The ideals of this form for \(I\in\Ideals^\alpha(A)\)
  are exactly the induced ideals by
  Proposition~\ref{pro:restricted_group_crossed}.  Thus an ideal
  in~\(A\rtimes_\lambda G\)
  is induced from~\(A\)
  if and only if it is restricted from \(A\otimes \Comp\).
  It follows that the generalised inclusion~\(\psi\)
  is symmetric because
  \((I\rtimes_\lambda G) \cdot (A\otimes \Comp)
    = I \otimes \Comp
    = (A\otimes \Comp) \cdot (I\rtimes_\lambda G)\).
\end{proof}

\begin{proof}[of Theorem~\textup{\ref{thm:Imai-Takai_duality}}]
  Lemmas \ref{lem:induced_in_double_crossed}
  and~\ref{pro:induced_ideals_crossed} show that the
  diagram~\eqref{eq:diagram_imai_takai} commutes.
  Hence the restricted ideals for~\(\psi\)
  are the same as the induced ideals for~\(\varphi\).
  So conditions \eqref{cond:C3'} and~\eqref{cond:C4} for~\(\varphi\)
  are equivalent to conditions \eqref{cond:C1} and~\eqref{cond:C2}
  for~\(\psi\).
  Moreover, \(\psi\) satisfies condition~\eqref{cond:C1} because
  joins of induced ideals remain induced by
  Proposition~\ref{pro:restrict_induce_ideals}.\ref{pro:restrict_induce_ideals6},
  and~\(\psi\)
  satisfies condition~\eqref{cond:C2} because~\(\psi\)
  is symmetric (see Lemma~\ref{pro:induced_ideals_crossed} and
  Corollary~\ref{cor:symmetric_nice}).

  Accordingly, the second parts of both Theorems
  \ref{the:prime_primitive} and~\ref{the:prime_primitive2}
  apply to~\(\varphi\).
  So~\(\varrho\)
  is open and surjective
  if \(\Prime(\Ideals^\alpha(A))=\Prime^B(A)\)
  is first countable.  And this follows if \(\check{A}\)
  or~\(\check{B}\)
  is second countable.
  By Lemma~\ref{lem:induced_map_pi}, the quasi-orbit
  space~\(\check{B}/{\sim}\)
  for~\(\psi\)
  exists, and \(\prid_1\sim \prid_2\)
  for \(\prid_1,\prid_2\in \check{B}\)
  if and only if the largest restricted-along-\(\psi\) ideals
  contained in \(\prid_1\) and~\(\prid_2\) coincide.
  By~\eqref{eq:diagram_imai_takai},
  this holds if and only if the largest induced-along-\(\varphi\)
  ideals contained in \(\prid_1\) and~\(\prid_2\) coincide.
  Thus
  \[
  \varrho(\prid_1) = \varrho( \prid_2)
  \iff
  r(\prid_1)=r(\prid_2)
  \iff
  (i\circ r)(\prid_1)=(i\circ r)(\prid_2)
  \iff
  \prid_1 \sim \prid_2.
  \]
  So the continuous open surjection
  \(\varrho\colon \check{B}\to \check{A}/{\sim}\)
  induces a homeomorphism
 \(\check{B}/{\sim}\cong \check{A}/{\sim}\).
\end{proof}

\subsection{Fell bundles over locally compact groups}
\label{sec:Fell_group}

Fell bundles over a locally compact group~\(G\),
introduced in~\cite{Fell:Extension_Mackey}, are the most general kinds
of ``actions'' of~\(G\)
on \(\Cst\)\nb-algebras.
These contain twisted partial actions
(see~\cite{Exel:TwistedPartialActions}) as a special case.  We may
even allow measurable twists for global actions
by~\cite{Exel-Laca:Continuous_Fell}, but this result seems not to have
been extended to partial actions with a measurable twist yet.  Twisted
partial actions contain both partial actions and twisted actions and
thus ordinary group actions by automorphisms.  The full and reduced
crossed products for a (twisted, partial) action are naturally
isomorphic to the full and reduced section \(\Cst\)\nb-algebras
of the corresponding Fell bundle.

We now  explain how to generalise our results above to a Fell
bundle \(\A = (A_g)_{g\in G}\)
over~\(G\);
it comes with multiplication maps \(A_g \times A_h \to A_{g h}\)
for \(g,h\in G\),
involutions \(A_g \to A_{g^{-1}}\)
for \(g\in G\)
with certain properties, and a topology on \(\bigsqcup_{g\in G} A_g\)
(see \cite{Exel:TwistedPartialActions}*{Definitions 2.2 and~3.9} or
\cites{Doran-Fell:Representations, Doran-Fell:Representations_2}).  In
particular, \(A\defeq A_e\)
is a \(\Cst\)\nb-algebra
and each~\(A_g\)
becomes a Hilbert \(A\)\nb-bimodule.
The set \(\Contc(G,\A)\) of continuous, compactly supported
sections of the bundle \(\A=(A_g)_{g\in G}\)
over~\(G\)
carries a \Star{}algebra structure.  The full section
\(\Cst\)\nb-algebra \(\Cst(\A)\)
of~\(\A\)
is defined as a completion of \(\Contc(G,\A)\)
in the maximal continuous \(\Cst\)\nb-norm.  For locally compact~\(G\),
the reduced section \(\Cst\)\nb-algebra
\(\Cred(\A)\)
is defined in~\cite{ExelNg:ApproximationProperty}
as the range of a regular representation
\(\lambda\colon \Cst(\A) \to
\Bound(L^2_{e}(\A))\),
where \(L^2_{e}(\A)\)
is a canonical Hilbert \(A_e\)\nb-module.
A \emph{cross section \(\Cst\)\nb-algebra} is any
\(\Cst\)\nb-algebra~\(B\)
with surjective \Star{}homomorphisms
\(\Cst(\A) \onto B \onto \Cred(\A)\)
whose composition is the regular representation
\(\lambda\colon \Cst(\A) \onto \Cred(\A)\).

An ideal \(I\idealin A\)
is called \emph{\(\A\)\nb-invariant}
if it is invariant for every Hilbert bimodule~\(A_g\),
\(g\in G\),
that is, if \(I\cdot A_g = A_g\cdot I\)
for all \(g\in G\).
This is equivalent to \(A_g \cdot I\cdot A_{g^{-1}} \subseteq I\)
for all \(g\in G\).
Let~\(I\)
be \(\A\)\nb-invariant.
The Fell bundle structure
on~\((A_g)_{g\in G}\)
restricts to one on \(I_g \defeq I\cdot A_g = A_g\cdot I\).
And it induces a Fell bundle structure on the quotients
\((A_g/I_g)_{g\in G}\)
as well.  We denote these induced Fell bundles  by \(\A|_I\)
and~\(\A|_{A/I}\), respectively.

\begin{proposition}
  \label{pro:restricted_group_Fell}
  Let \(\varphi\colon A\to \Mult(B)\)
  be the canonical generalised \(\Cst\)\nb-inclusion
  of~\(A\)
  into a cross section algebra~\(B\)
  of a Fell bundle \(\A = (A_g)_{g\in G}\).
  \begin{enumerate}
  \item \label{pro:restricted_group_bundle1}%
    \(\Ideals^B(A)\)
    is the subset \(\Ideals^\A(A)\) of
    \(\A\)\nb-invariant ideals in~\(A\);
  \item \label{pro:restricted_group_bundle2}%
    an ideal \(J\idealin B\)
    is induced if and only if it is the image of
    \(\Cst(\A|_I)\)
    in~\(B\) for some \(\alpha\)\nb-invariant ideal \(I\idealin A\).
  \end{enumerate}
  The \(\Cst\)\nb-inclusion
  \(\varphi\colon A\to \Mult(B)\)
  is symmetric, and \eqref{cond:C1}, \eqref{cond:C2} and \eqref{cond:C3} hold.
\end{proposition}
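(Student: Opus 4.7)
The plan is to follow the template of Proposition~\ref{pro:restricted_group_crossed}, with the Fell bundle structure playing the role of the action by automorphisms. The key structural fact is that each fiber~\(A_g\) embeds canonically into \(\Mult(B)\) (via left/right convolution with compactly supported sections, i.e., multiplication by \(A_g\) preserves \(C_c(G,\A)\) and extends to \(B\)), and the algebraic Fell bundle operations on \(\bigsqcup_g A_g\) are realised by the multiplication and involution of \(\Mult(B)\). In particular, \(A_g\cdot A\cdot A_{g^{-1}}\subseteq A_e = A\) inside \(\Mult(B)\), which is the crucial bundle axiom for our purposes.

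For the inclusion \(\Ideals^B(A)\subseteq \Ideals^\A(A)\) in~\ref{pro:restricted_group_bundle1}: if \(I = r(J)\) then \(\varphi(I)\subseteq \Mult(B,J)\), which is an ideal of \(\Mult(B)\) by Lemma~\ref{lem:Mult_BJ}. For \(g\in G\) and \(a\in A_g\), the product \(a\,\varphi(I)\,a^*\) lies in \(\Mult(B,J)\) and in \(\varphi(A)\), so \(aIa^*\subseteq r(J) = I\) and \(I\) is \(\A\)\nobreakdash-invariant. For the reverse inclusion, given \(\A\)\nobreakdash-invariant \(I\), the restricted Fell bundle \(\A|_I = (I\cdot A_g)_{g\in G}\) determines an ideal \(J\idealin B\), namely the image of \(\Cst(\A|_I)\) under \(\Cst(\A)\onto B\). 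Then \(\varphi(I)\subseteq J\); moreover the surjections \(\Cst(\A)\onto B\onto \Cred(\A)\) descend to surjections \(\Cst(\A|_{A/I})\onto B/J\onto \Cred(\A|_{A/I})\), so \(B/J\) is itself a cross section algebra of the quotient Fell bundle \(\A|_{A/I}\). Hence the induced map \(A/I \hookrightarrow \Mult(B/J)\) is injective, which yields \(r(J) = I\) and identifies \(I\) as restricted. This simultaneously exhibits the concrete form of \(J\) required in part~\ref{pro:restricted_group_bundle2}; any induced ideal \(i(K)\) equals \(i(r(i(K))) = i(\bar K)\) by Proposition~\ref{pro:restrict_induce_ideals}.\ref{pro:restrict_induce_ideals2}, where \(\bar K\) is the \(\A\)\nobreakdash-invariant ideal generated by~\(K\), and \(i(\bar K)\) is the image of \(\Cst(\A|_{\bar K})\).

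For symmetry, fix \(\A\)\nobreakdash-invariant~\(I\) and observe that on the dense subalgebra \(C_c(G,\A)\subseteq B\) one has \(\varphi(I)\cdot C_c(G,\A) = C_c(G,\A)\cdot \varphi(I)\), as the values on each fiber lie in \(I\cdot A_g = A_g\cdot I\); passing to the closure gives \(\varphi(I)\cdot B = B\cdot \varphi(I)\), so \(\varphi\) is symmetric. Corollary~\ref{cor:symmetric_nice} then supplies~\ref{cond:C2} and~\ref{cond:C3}. Condition~\ref{cond:C1} follows from part~\ref{pro:restricted_group_bundle1} together with the observation that the closed linear span of any family of \(\A\)\nobreakdash-invariant ideals is again \(\A\)\nobreakdash-invariant, since multiplication by each \(A_g\) is continuous and commutes with taking closed spans.

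The main obstacle is the injectivity \(A/I \hookrightarrow \Mult(B/J)\) in the reverse inclusion of~\ref{pro:restricted_group_bundle1}: this depends on identifying \(B/J\) as a cross section algebra of the quotient bundle \(\A|_{A/I}\). For the full and reduced section algebras the isomorphisms \(\Cst(\A)/\Cst(\A|_I)\cong \Cst(\A|_{A/I})\) and their reduced analogue are classical; for a general cross section algebra one composes the quotient map with the surjection onto \(\Cred(\A|_{A/I})\), for which injectivity of the canonical map \(A/I\to \Mult(\Cred(\A|_{A/I}))\) is part of the construction of the reduced section algebra (cf.\ \cite{ExelNg:ApproximationProperty}).
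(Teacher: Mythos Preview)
Your proof follows essentially the same route as the paper's: restricted implies invariant via the ideal property of \(\Mult(B,J)\), invariant implies symmetric by computing on \(C_c(G,\A)\), and invariant implies restricted by passing to \(\Cred(\A|_{A/I})\) where the unit fibre embeds faithfully. Your argument that \(aIa^*\subseteq \Mult(B,J)\cap\varphi(A)=\varphi(I)\) for \(a\in A_g\) is in fact a bit more direct than the paper's detour through \(x^*x\) and \(xx^*\) and the Rieffel correspondence.

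One imprecision to fix: the claimed equality \(\varphi(I)\cdot C_c(G,\A) = C_c(G,\A)\cdot\varphi(I)\) is not justified by the observation that both sides have fibre values in \(I\cdot A_g = A_g\cdot I\); that only gives containment of each side in \(C_c(G,\A|_I)\). To conclude \(\varphi(I)\cdot B = B\cdot\varphi(I)\) you need the additional fact that \(C_c(G,\A|_I)\) is non-degenerate as a left and as a right \(I\)\nobreakdash-module (via an approximate unit for~\(I\)), so that the closure of \(C_c(G,\A|_I)\) in~\(B\) equals both \(I\cdot B\) and \(B\cdot I\). The paper makes exactly this point. With that amendment your argument is complete.
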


\begin{proof}
  First we show that restricted ideals are \(\A\)\nb-invariant.
  To this end, let \(J\in\Ideals(B)\) and put \(I\defeq r(J)\).
  We use the canonical maps from the spaces~\(A_g\)
  to the multiplier algebra of~\(\Cst(\A)\).
  First, \(x\in A_g\)
  defines a multiplier on the \Star{}algebra \(\Contc(G,\A)\)
  by \((x\cdot f)(h) = x\cdot (f(g^{-1} h))\)
  and \((f\cdot x)(h) = (f(h g^{-1}))\cdot x\)
  for \(h\in G\) and \(f\in \Contc(G,\A)\).
  This extends to a multiplier on the \(\Cst\)\nb-completion (see
  \cite{ExelNg:ApproximationProperty}*{Lemma 1.1}).
  The maps \(A_g \to \Mult(\Cst(\A)) \to \Mult(B)\)
  form a Fell bundle representation.  Since \(\Mult(B,J)\)
  is an ideal in~\(\Mult(B)\),
  an element \(x\in A_g\)
  belongs to \(\Mult(B,J)\)
  if and only if \(x^* x \in \Mult(B,J)\)
  if and only if \(x x^* \in \Mult(B,J)\).
  Now \(x^* x\)
  and~\(x x^*\)
  belong to \(A_g^* \cdot A_g = A_{g^{-1}} \cdot A_g \subseteq A_e\)
  and \(A_g \cdot A_g^* = A_g \cdot A_{g^{-1}} \subseteq A_e\).
  And an element of \(A_e=A\)
  is in \(\Mult(B,J)\)
  if and only if it belongs to~\(r(J)\).
  So \(x^* x\in r(J)\)
  if and only if \(x x^* \in r(J)\)
  for all \(x\in A_g\).
  During the proof of the Rieffel correspondence for the Hilbert
  bimodule~\(A_g\),
  it is shown that \(x x^* \in I\)
  if and only if \(x\in I\cdot A_g\)
  and that \(x^* x \in I\)
  if and only if \(x\in A_g\cdot I\).
  So \(I\cdot A_g = A_g\cdot I\)
  for all \(g\in G\).
  This finishes the proof that restricted ideals in~\(A\)
  are invariant.

  Now let~\(I\idealin A_e\)
  be \(\A\)\nb-invariant.
  We claim that~\(I\)
  is symmetric.  Indeed, the standard formula for the convolution
  in \(\Contc(G,\A)\)
  shows that \(I\cdot \Contc(G,\A)\)
  and \(\Contc(G,\A)\cdot I\)
  are contained in \(\Contc(G,\A|_I)\),
  the \Star{}algebra
  of continuous, compactly supported sections of~\(\A|_I\).
  Moreover, \(\Contc(G,\A|_I)\)
  is non-degenerate as a left or right \(I\)\nb-module.
  Since \(\Contc(G,\A)\) is a dense \Star{}subalgebra
  in \(\Cst(\A)\),
  we may view it also as a dense \Star{}subalgebra of~\(B\).
  Thus taking the closures in the crossed product~\(B\),
  we see that both \(I\cdot B\)
  and \(B\cdot I\)
  are equal to the closure of \(\Contc(G,\A|_I)\)
  in~\(B\).
  Thus \(I\cdot B = B\cdot I\),
  that is, \(I\)
  is symmetric.

  Next we show that any \(I\in\Ideals^\A(A)\)
  is restricted.  We need to prove that \(r\circ i(I) = I\).
  The inclusion \(r\circ i(I) \supseteq I\)
  is already contained in
  Proposition~\ref{pro:restrict_induce_ideals}.\ref{pro:restrict_induce_ideals2}.
  To see the reverse
  inclusion, note that the canonical map from \(\Cst(\A)\)
  to \(\Cred(\A|_{A/I})\)
  annihilates \(\Contc(G,\A|_I)\),
  which is dense in~\(i(I)\)
  by the proof above that~\(I\)
  is symmetric.  Therefore, every element \(a\in r\circ i(I)\)
  induces the zero multiplier on \(\Cred(\A|_{A/I})\).
  The canonical map from~\(A/I\)
  to the multiplier algebra of \(\Cred(\A|_{A/I})\)
  is injective.  So \(a\in r\circ i(I)\)
  is mapped to~\(0\) in~\(A/I\), that is, \(a\in I\).
  Hence \(r\circ i(I) \subseteq I\), and~\(I\) is restricted.

  This finishes the proof of~\ref{pro:restricted_group_bundle1}
  and shows that~\(\varphi\)
  is symmetric.  In particular, conditions \eqref{cond:C2}
  and~\eqref{cond:C3} follow from Corollary~\ref{cor:symmetric_nice}.
  It is readily seen that the closed linear span of a family of
  \(\A\)\nb-invariant
  ideals is again \(\A\)\nb-invariant.
  So \ref{pro:restricted_group_crossed1} implies
  condition~\eqref{cond:C1}.

  Any induced ideal in~\(B\)
  is obtained by inducing a restricted ideal because
  \(i(I) = i(r\circ i(I))\)
  for all \(I\in\Ideals(A)\).
  Restricted ideals are invariant.  The proof above that invariant
  ideals are symmetric also shows that the ideal in~\(B\)
  induced by an invariant ideal \(I\in\Ideals^\A(A)\)
  is the closure of the image of \(\Contc(G,\A|_I)\)
  in~\(B\).
  This closure of \(\Contc(G,\A|_I)\)
  in the full cross section algebra~\(\Cst(\A)\)
  is isomorphic to \(\Cst(\A|_I)\).
  Thus~\(i(I)\)
  is the image of
  \(\Cst(\A|_I)\idealin \Cst(\A)\)
  in~\(B\).  This proves~\ref{pro:restricted_group_crossed2}.
\end{proof}

Let \(\A=(A_g)_{g\in G}\)
be a Fell bundle and recall that each~\(A_g\),
\(g\in G\), is a Hilbert \(A\)\nb-bimodule over \(A\defeq A_e\).
By the Rieffel correspondence, \(A_g\)
induces a homeomorphism~\(\check{A}_g\)
between two open subsets of~\(\Prime(A)\),
which we view as a partial homeomorphism of~\(\check{A}\).
The associativity of the multiplication in~\(\A\)
implies that \((\check{A}_g)_{g\in G}\)
is a partial action of~\(G\)
on \(\check{A}\) or \(\Prime(A)\)
(see \cites{Kwasniewski-Szymanski:Pure_infinite,
  Abadie-Abadie:Ideals}).
This action is continuous by
\cite{Abadie-Buss-Ferraro:Morita_Fell}*{Proposition 5.5}.
The orbit of \(\prid \in \check{A}\)
under this action is the set~\(G\cdot\prid\)
of those \(\prid' \in \check{A}\)
which lie in the domain of~\(\check{A}_g\) for some \(g\in G\)
with \(\check{A}_g(\prid')=\prid\).

\begin{theorem}
  \label{the:Fell_bundle_group}
  Let \(\A = (A_g)_{g\in G}\)
  be a Fell bundle over a locally compact group~\(G\).
  Let \(\varphi\colon A\to \Mult(B)\)
  be the canonical generalised \(\Cst\)\nb-inclusion
  of \(A\defeq A_e\)
  into a cross section algebra~\(B\) of~\(\A\).
  The relation~\(\sim\)
  that defines the quasi-orbit space~\(\check{A}/{\sim}\)
  for~\(\varphi\)
  is \(\prid_1\sim \prid_2\)
  if and only if
  \(\overline{G\cdot \prid_1} = \overline{G\cdot \prid_2}\)
  as closed subsets of~\(\check{A}\).
  Assume \(\Prime(\Ideals^\A(A))\)
  to be first countable or~\(\check{A}\)
  to be second countable.  Then the quasi-orbit map exists.  More
  precisely:
  \begin{enumerate}
  \item The map \(\pi\colon \check{A}\to\Prime(\Ideals^\A(A))\)
    that sends~\(\prid\)
    to the largest \(\A\)\nb-invariant
    ideal contained in~\(\prid\)
    is continuous, open and surjective.  It descends to a
    homeomorphism
    \(\widetilde{\pi}\colon
    \check{A}/{\sim}\to\Prime(\Ideals^\A(A))\).
    The quasi-orbit space~\(\check{A}/{\sim}\)
    is a quotient of~\(\check{A}\) by an open equivalence relation.
  \item There is a continuous map
    \(\varrho\colon \check{B} \to \check{A}/{\sim}\),
    \(\prid\mapsto \widetilde{\pi}^{-1}(r(\prid))\).
    It identifies \(\check{B}\cong \check{A}/{\sim}\)
    if and only if~\(A\)
    separates ideals in~\(B\).
    And then \(B=A\rtimes_{\alpha,\red} G\).
  \end{enumerate}
\end{theorem}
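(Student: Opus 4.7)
By Proposition~\ref{pro:restricted_group_Fell}, the inclusion \(\varphi\colon A\hookrightarrow \Mult(B)\) is symmetric, the restricted ideals coincide with the \(\A\)\nb-invariant ones, and conditions \ref{cond:C1}, \ref{cond:C2} and~\ref{cond:C3} hold; in particular \(\Prime^B(A)=\Prime(\Ideals^\A(A))\). Given the first countability assumption, Theorem~\ref{the:prime_primitive} yields that the map \(\pi\colon \check{A}\to \Prime^B(A)\), sending~\(\prid\) to the largest restricted ideal contained in~\(\prid\), is a continuous open surjection descending to a homeomorphism \(\widetilde{\pi}\colon \check{A}/{\sim}\congto \Prime(\Ideals^\A(A))\). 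Combining this with the continuous map \(r\colon \check{B}\to \Prime^B(A)\) supplied by Proposition~\ref{pro:induced_map_rho} produces the quasi-orbit map \(\varrho\defeq \widetilde{\pi}^{-1}\circ r\).

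The main task is to identify the equivalence relation~\(\sim\) with the orbit-closure relation. Under the Rieffel correspondence, each Hilbert \(A\)\nb-bimodule~\(A_g\) induces the partial homeomorphism~\(\check{A}_g\) of~\(\check{A}\), and the identity \(I\cdot A_g=A_g\cdot I\) for all \(g\in G\) translates precisely to the condition that the open set \(U_I=\setgiven{\prid\in\check{A}}{I\not\subseteq \prid}\) is invariant under the partial action \((\check{A}_g)_{g\in G}\). Hence the ideal \(\pi(\prid)\)---the largest \(\A\)\nb-invariant ideal contained in~\(\prid\)---corresponds to the largest \(G\)\nb-invariant open subset \(U\subseteq \check{A}\) with \(\prid\notin U\), and its closed complement is the smallest closed \(G\)\nb-invariant subset of \(\check{A}\) containing~\(\prid\). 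Since the partial action on~\(\check{A}\) is continuous by \cite{Abadie-Buss-Ferraro:Morita_Fell}*{Proposition~5.5}, the closure \(\overline{G\cdot \prid}\) is \(G\)\nb-invariant; it is clearly the smallest such closed subset containing~\(\prid\). Thus \(\pi(\prid_1)=\pi(\prid_2)\) if and only if \(\overline{G\cdot \prid_1}=\overline{G\cdot \prid_2}\), as required.

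With~\(\sim\) identified, statement~(1) is immediate from the preceding paragraph. For~(2), Corollary~\ref{cor:prime_primitive3} gives that~\(\varrho\) is a homeomorphism precisely when~\(A\) separates ideals in~\(B\). In that case~\(A\) also detects ideals in~\(B\); since the composition \(A\to \Mult(B)\to \Mult(\Cred(\A))\) is injective, the kernel of the regular representation \(B\onto \Cred(\A)\) has trivial restriction to~\(A\) and must therefore vanish, giving that~\(B\) coincides with the reduced cross section \(\Cst\)\nb-algebra. The main conceptual obstacle is the Rieffel-correspondence translation between \(\A\)\nb-invariance of ideals and \(G\)\nb-invariance of open subsets of~\(\check{A}\); everything else is a bookkeeping application of the machinery from Sections~\ref{sec:Stone} and~\ref{sec:quasi-orbit}.
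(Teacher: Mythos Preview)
Your proof is correct and follows essentially the same route as the paper's: invoke Proposition~\ref{pro:restricted_group_Fell} to identify restricted ideals with \(\A\)\nb-invariant ones and verify \ref{cond:C1}--\ref{cond:C3}, apply Theorem~\ref{the:prime_primitive} for the open surjection~\(\pi\), then identify~\(\sim\) via the Rieffel correspondence by recognising that the open set corresponding to \(\pi(\prid)\) is the complement of \(\overline{G\cdot\prid}\). You are more explicit than the paper in justifying why \(\overline{G\cdot\prid}\) is \(G\)\nb-invariant (via continuity of the partial action) and in spelling out the detection-of-ideals argument for \(B=\Cred(\A)\); the paper simply cites Corollary~\ref{cor:primitive_ideal_space_description} where you use Corollary~\ref{cor:prime_primitive3}, but these are interchangeable here.
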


\begin{proof}
  Let \(\prid \in \check{A}\).
  Let~\(\pi(\prid)\)
  be the largest \(\A\)\nb-invariant ideal contained in~\(\prid\).
  The open subset of~\(\Prime(A)\)
  corresponding to~\(\pi(\prid)\)
  is the largest invariant open subset that does not
  contain~\(\prid\).
  So it is the complement of the closure of the orbit of~\(\prid\).
  Thus \(\prid_1 \sim \prid_2\)
  for \(\prid_1, \prid_2\in\Prime(A)\)
  if and only if
  \(\overline{G\cdot \prid_1} = \overline{G\cdot \prid_2}\).
  The remaining assertions follow from
  Proposition~\ref{pro:restricted_group_Fell},
  Theorem~\ref{the:prime_primitive}, and
  Corollary~\ref{cor:primitive_ideal_space_description}.
\end{proof}

A Morita enveloping action for a Fell bundle is an ordinary group
action that is equivalent to the Fell bundle in a suitable sense.
Then the full and reduced \(\Cst\)\nb-algebras are Morita equivalent,
respectively.  For partial actions, Morita enveloping actions have
been studied by Abadie~\cite{Abadie:Enveloping}.  For Fell bundles,
they are built by Abadie, Buss and Ferraro
\cites{Abadie-Ferraro:Equivalence_of_Fell_Bundles,
  Abadie-Buss-Ferraro:Morita_Fell}.  They may be used for another
proof of Theorem~\ref{the:Fell_bundle_group}, by showing that the
properties that we are interested in are preserved by Morita
globalisations.  We use this technique to generalise
Theorem~\ref{thm:Imai-Takai_duality}.  We call a Fell bundle
\(\A=(A_g)_{g\in G}\) \emph{exact} if for every \(I\in \Ideals^\A(A)\)
the induced sequence
\begin{equation}
  \label{eq:exact_Fell_bundles}
  0\to \Cred(\A|_I) \to \Cred(\A)
  \to \Cred(\A|_{A/I})\to 0
\end{equation}
is exact.  For instance, Fell bundles with the approximation
property introduced in~\cite{ExelNg:ApproximationProperty} are
exact.

\begin{proposition}
  \label{pro:equivalent_Fell_bundles}
  Let \(\A = (A_g)_{g\in G}\)
  and \(\B = (B_g)_{g\in G}\)
  be Fell bundles that are \textup{(}weakly\textup{)} equivalent as
  in \cite{Abadie-Ferraro:Equivalence_of_Fell_Bundles}*{Definition 2.6}
  or \cite{Abadie-Buss-Ferraro:Morita_Fell}*{Definition 2.6}.
  \begin{enumerate}
  \item \label{pro:equivalent_Fell_bundles1}%
    The \(\Cst\)\nb-algebras \(\Cred(\A)\) and~\(\Cred(\B)\) are Morita
    equivalent.
  \item \label{pro:equivalent_Fell_bundles2}%
    The induced Rieffel correspondence
    \(\Ideals(\Cred(\A))\cong \Ideals(\Cred(\B))\)
    restricts to an isomorphism
    \(\Ideals^{A_e}(\Cred(\A) )\cong \Ideals^{B_e}(\Cred(\B))\)
    between the lattices of \(G\)\nb-graded ideals.

  \item \label{pro:equivalent_Fell_bundles3}%
    The inclusion \(A_e\subseteq \Mult(\Cred(\A))\)
    satisfies conditions \eqref{cond:C3'} and~\eqref{cond:C4} if and
    only if \(B_e\subseteq \Mult(\Cred(\B))\) satisfies them.
  \item \label{pro:equivalent_Fell_bundles4}%
    \(\A\) is exact if and only if~\(\B\) is exact.
  \end{enumerate}
\end{proposition}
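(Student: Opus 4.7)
The plan is to reduce everything to the equivalence theorem for Fell bundles proved in \cite{Abadie-Ferraro:Equivalence_of_Fell_Bundles} and \cite{Abadie-Buss-Ferraro:Morita_Fell}. Given a (weak) equivalence between $\A$ and $\B$, those references produce a linking Fell bundle $\mathcal{L}=(L_g)_{g\in G}$ whose unit fibre $L_e$ is a linking $\Cst$-algebra between $A_e$ and $B_e$, and which contains $\A$ and $\B$ as complementary Fell subbundles. Part~\ref{pro:equivalent_Fell_bundles1} follows directly: $\Cred(\mathcal{L})$ is a linking $\Cst$-algebra between $\Cred(\A)$ and $\Cred(\B)$, exhibiting the Morita equivalence, and the $A_e$-$B_e$ imprimitivity bimodule is just the off-diagonal fibre $X$ of~$L_e$.

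For part~\ref{pro:equivalent_Fell_bundles2}, I would first invoke Proposition~\ref{pro:restricted_group_Fell} to identify $\Ideals^{A_e}(\Cred(\A))$ with the lattice $\Ideals^{\A}(A_e)$ of $\A$-invariant ideals in $A_e$, and likewise for $\B$. The Rieffel correspondence induced by~$X$ is a lattice isomorphism $\Ideals(A_e)\cong \Ideals(B_e)$, and I would verify that it restricts to a bijection $\Ideals^{\A}(A_e)\cong \Ideals^{\B}(B_e)$ by testing $\A$-invariance against $\mathcal{L}$-invariance of the associated linking ideal in $L_e$: using the identities $L_g=A_g\oplus X_g\oplus X_g^*\oplus B_g$ inherent in the linking Fell bundle, the conditions $A_g\cdot I\cdot A_{g^{-1}}\subseteq I$ and $B_g\cdot I'\cdot B_{g^{-1}}\subseteq I'$ are both equivalent to $\mathcal{L}$-invariance of the corresponding ideal in $L_e$. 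Finally I would check that this downstairs bijection is compatible, via the two applications of Proposition~\ref{pro:restricted_group_Fell}, with the Rieffel correspondence $\Ideals(\Cred(\A))\cong \Ideals(\Cred(\B))$ coming from $\Cred(\mathcal{L})$.

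Part~\ref{pro:equivalent_Fell_bundles3} is then essentially formal: conditions \ref{cond:C3'} and~\ref{cond:C4} are expressed purely in terms of the lattices $\Ideals(\Cred(\A))$, the sublattice $\Ideals^{A_e}(\Cred(\A))$, and its lower Galois adjoint~$F$; by~\ref{pro:equivalent_Fell_bundles2} the Rieffel correspondence is a lattice isomorphism that carries this sublattice onto the $\B$-analogue, so these conditions transfer. For part~\ref{pro:equivalent_Fell_bundles4}, given $I\in \Ideals^{\A}(A_e)$ with corresponding $I'\in \Ideals^{\B}(B_e)$ from~\ref{pro:equivalent_Fell_bundles2}, restricting $\mathcal{L}$ to the $\mathcal{L}$-invariant linking ideal of $L_e$ determined by the pair $(I,I')$ yields an equivalence between $\A|_I$ and $\B|_{I'}$; passing to the quotient yields an equivalence between $\A|_{A_e/I}$ and $\B|_{B_e/I'}$. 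Hence $\Cred(\A|_I)\sim_M \Cred(\B|_{I'})$ and $\Cred(\A|_{A_e/I})\sim_M\Cred(\B|_{B_e/I'})$ by~\ref{pro:equivalent_Fell_bundles1} applied to these restricted/quotient bundles. Since Morita equivalence preserves exactness of short sequences of $\Cst$-algebras, the sequence~\eqref{eq:exact_Fell_bundles} for $\A$ at $I$ is exact if and only if the analogue for $\B$ at $I'$ is exact.

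The main technical obstacle is the precise verification in~\ref{pro:equivalent_Fell_bundles2} that the Rieffel correspondence induced by the linking $\Cst$-algebra $\Cred(\mathcal{L})$ agrees, after restriction to $G$-graded ideals, with the ``downstairs'' bijection between $\A$-invariant ideals of $A_e$ and $\B$-invariant ideals of $B_e$. This is a compatibility check that relies on the $G$-grading of $\Cred(\mathcal{L})$ being compatible with the decomposition $L_g=A_g\oplus X_g\oplus X_g^*\oplus B_g$; it is essentially a diagram-chase through the identifications in Proposition~\ref{pro:restricted_group_Fell} applied to $\A$, $\B$ and $\mathcal{L}$, but care is needed to see that the equivalence bimodule of a restricted Fell bundle is indeed the restriction of the original equivalence bimodule.
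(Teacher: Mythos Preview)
Your approach is correct but takes a genuinely different route from the paper. The paper works directly with the Hilbert \(\A\)-\(\B\)-bundle \(\mathcal{X}=(X_g)_{g\in G}\) rather than the linking Fell bundle, and proves part~\ref{pro:equivalent_Fell_bundles2} by an explicit inner-product computation: given an \(\A\)-invariant ideal \(I\), it defines \(J\defeq\overline{\operatorname{span}}\setgiven{\braket{X_r}{IX_r}_\B}{r\in G}\), verifies by hand that \(J\) is \(\B\)-invariant and that \(I\) is recoverable from \(J\), and then shows directly that the Rieffel correspondence sends \(i(I)=\Cred(\A|_I)\) to \(i(J)=\Cred(\B|_J)\) by manipulating inner products and the identity \(X_r\braket{X_s}{X_s}_\B={}_\A\braket{X_r}{X_s}X_s\). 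Your linking-bundle argument is more structural: you encode the bijection between invariant ideals as \(\mathcal{L}\)-invariance of linking ideals in \(L_e\), and the compatibility check you flag as the main obstacle is handled in the paper by those explicit computations rather than by a diagram chase. The paper's approach is more self-contained and avoids appealing to the full linking-bundle machinery; yours is cleaner conceptually and makes the symmetry between \(\A\) and \(\B\) manifest from the start. For part~\ref{pro:equivalent_Fell_bundles4}, the paper argues via primitive ideal spaces (exactness of the sequence is equivalent to \(\check{\Cred(\A)}\) being the union of the two pieces, a property preserved by the Rieffel correspondence), whereas you invoke preservation of short-exact-sequence exactness under compatible Morita equivalences; both are fine, and both need the observation---which you note and the paper states explicitly---that the restricted and quotient imprimitivity bimodules are obtained by restricting the original one.
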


\begin{proof}
  Let \(\mathcal{X}=(X_g)_{g\in G}\) be a Hilbert
  \(\A\)\nb-\(\B\)-bundle.
  By \cite{Abadie-Ferraro:Equivalence_of_Fell_Bundles}*{Proposition 4.13},
  it yields a \(\Cred(\A)\)-\(\Cred(\B)\)-equivalence bimodule
  \(\Cred(\mathcal{X})\).
  This gives~\ref{pro:equivalent_Fell_bundles1}.
  Statement~\ref{pro:equivalent_Fell_bundles2}  follows in essence
  from \cite{Abadie-Buss-Ferraro:Morita_Fell}*{Corollary 4.3}, as
  \(G\)\nb-graded ideals are invariant with respect to dual
  coactions.  In fact, \ref{pro:equivalent_Fell_bundles2}  can be
  checked directly as follows.
  \cite{Abadie-Ferraro:Equivalence_of_Fell_Bundles}*{Lemma 2.7(6)}
  implies
  \(B_g=\overline{\operatorname{span}}
  \setgiven{\braket{X_r}{X_{rg}}_{\B}}{r\in G}\)
  for all \(g\in G\).
  Let~\(I\) be an \(\A\)\nb-invariant ideal in~\(A_e\).  Let
  \begin{equation}
    \label{eq:define_J_from_I}
    J\defeq \overline{\operatorname{span}}
    \setgiven{\braket{X_r}{IX_{r}}_{\B}} {r\in G}
  \end{equation}
  Let \(r,g \in G\).
  The basic properties of Hilbert bundles imply
  \begin{align*}
    B_{g}\cdot \braket{X_r}{IX_{r}}_{\B} \cdot B_{g^{-1}}
    &= \braket{X_r B_{g^{-1}}}{IX_{r}B_{g^{-1}}}_{\B}
    \subseteq \braket{X_{rg^{-1}}}{IX_{rg^{-1}}}_{\B}
    \subseteq J.
  \end{align*}
  This implies that \(J\) is a \(\B\)\nb-invariant ideal in~\(B_e\).
  Since~\(I\)
  is \(\A\)\nb-invariant,
  we may recover~\(I\)
  from~\(J\).
  Indeed, for \(r,s \in G\),
  we get
  \begin{multline*}
    \braket{X_r}{IX_{r}}_{\B}\cdot  \braket{X_s }{X_{s}}_{\B}
    = \braket{X_r}{IX_{r} \braket{X_s }{X_{s}}_{\B} }_{\B}
    = \braket{X_r}{I \cdot {{}_\A\braket{X_{r} }{X_{s}}}X_{s}}_{\B}
    \\ \subseteq \braket{X_r }{I A_{rs^{-1}}X_{s}}_{\B}
    = \braket{X_r }{ A_{rs^{-1}}IX_{s}}_{\B}
    = \braket{A_{sr^{-1}}X_r }{ IX_{s}}_{\B}
    \subseteq \braket{X_s }{IX_{s}}_{\B}.
  \end{multline*}
  Hence \(J\cdot \braket{X_s}{X_s}_{\B} = \braket{X_s}{ IX_s}_{\B}\).
  By the symmetry of our assumptions, this implies
  \(I\cdot \braket{X_s}{X_s}_{\A} = \braket{X_s J}{X_s}_{\A}\).
  Then
  \(I=\overline{\operatorname{span}} \setgiven{{}_\A\braket{X_s
      J}{X_s}} {s\in G}\).  Let \(r,s\in G\).  Then
  \begin{multline*}
    \braket{X_r }{IX_{s}}_{\B}
    = \braket{X_r }{I \cdot {}_\A\braket{X_s}{X_{s}} X_{s}}_{\B}
    = \braket{X_r }{{}_\A\braket{X_s\cdot  J}{X_{s}} X_{s}}_{\B}
    \\= \braket{X_r }{X_s\cdot  J \braket{X_{s}}{X_{s}}_{\B} }_{\B}
    = \braket{X_r }{X_s }_{\B}  \cdot  J \braket{X_{s}}{X_{s}}_{\B}
    \subseteq B_{rs^{-1}} J.
  \end{multline*}
  The latter implies that the Rieffel correspondence
  \(R\colon\Ideals(\Cred(\A) )\congto \Ideals(\Cred(\B))\)
  maps \(\Cred(\A|_I)=i(I)\)
  to an ideal \(R(i(I))\)
  contained in  \(\Cred(\B|_J)=i(J)\).
  By symmetry, we get \(R^{-1}(i(J))\subseteq i(I)\).
  Hence \(R(i(I))=i(J)\).
  This proves~\ref{pro:equivalent_Fell_bundles2}.

  Statement~\ref{pro:equivalent_Fell_bundles3} follows immediately
  from~\ref{pro:equivalent_Fell_bundles2} because conditions
  \eqref{cond:C3'} and~\eqref{cond:C4} are phrased in terms of induced
  ideals.

  To see~\ref{pro:equivalent_Fell_bundles4}, let \(\Cred(\A|_I)\)
  be the \(G\)\nb-graded
  ideal in~\(\Cred(\A)\)
  corresponding to an \(\A\)\nb-invariant ideal~\(I\).
  By~\ref{pro:equivalent_Fell_bundles2}, the Rieffel correspondence
  maps \(\Cred(\A|_I)\) to \(\Cred(\B|_J)\) for the
  \(\B\)\nb-invariant ideal~\(J\)
  defined in~\eqref{eq:define_J_from_I}.
  The Hilbert bundle~\(\mathcal{X}\)
  between \(\A\) and~\(\B\)
  restricts to a Hilbert bundle~\(\mathcal{X}|_I\)
  between \(\A|_I\) and~\(\B|_J\)
  with \((\mathcal{X}|_I)_g = I\cdot X_g\).
  And it induces a Hilbert bundle~\(\mathcal{X}|_{A/I}\)
  between \(\A|_{A/I}\) and~\(\B|_{B/J}\)
  with \((\mathcal{X}|_{A/I})_g = X_g/I\cdot X_g\).
  The restricted Hilbert bundles induce Morita equivalences
  \(\Cred(\A|_I) \sim \Cred(\B|_J)\)
  and \(\Cred(\A|_{A/I}) \sim \Cred(\B|_{B/J})\).
  These are obtained by restricting the imprimitivity bimodule
  between \(\Cred(\A)\) and~\(\Cred(\B)\).
  The sequence~\eqref{eq:exact_Fell_bundles} is exact if and only if
  the primitive ideal space of \(\Cred(\A)\)
  is the union of the primitive ideal spaces of \(\Cred(\A|_I)\)
  and \(\Cred(\A|_{A/I})\).
  Since Morita equivalence implies an isomorphism between the ideal
  lattices, the Morita equivalence transfers this property
  from~\(\A\) to~\(\B\).
  This proves~\ref{pro:equivalent_Fell_bundles4}.
\end{proof}

\begin{theorem}
  \label{thm:open_orbit_map_bundles}
  Suppose that \(\A = (A_g)_{g\in G}\)
  is an exact Fell bundle over a locally compact group~\(G\).
  Let \(\varphi\colon A\to \Mult(B)\)
  be the canonical generalised \(\Cst\)\nb-inclusion
  of \(A\defeq A_e\)
  into the reduced cross section algebra \(B\defeq \Cred(\A)\) of~\(\A\).
  If \(\check{A}\) or~\(\check{B}\) is second countable or, more
  generally, \(\Prime(\Ideals^\A(A))\)
  is first countable, then the quasi-orbit map
  \(\varrho\colon \check{B} \to \check{A}/{\sim}\)
  is open and surjective.
\end{theorem}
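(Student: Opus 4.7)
The plan is to reduce the theorem to Theorem~\ref{thm:Imai-Takai_duality} for ordinary group crossed products, using a Morita globalisation to replace the Fell bundle~\(\A\) by an equivalent bundle coming from a genuine action.

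First, Proposition~\ref{pro:restricted_group_Fell} shows that \(\varphi\) is symmetric and that \ref{cond:C1}, \ref{cond:C2} and~\ref{cond:C3} already hold.  By Corollary~\ref{cor:prime_primitive2}, to conclude that the quasi-orbit map is open and surjective it suffices to verify \ref{cond:C3'} and~\ref{cond:C4} for the inclusion \(A\subseteq \Mult(B)\).

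Next I invoke the Morita globalisation of~\(\A\) constructed in \cites{Abadie-Ferraro:Equivalence_of_Fell_Bundles, Abadie-Buss-Ferraro:Morita_Fell}: there exists a Fell bundle \(\B=(B_g)_{g\in G}\) equivalent to~\(\A\) such that \(\B\) is the semidirect-product bundle of an honest action \(\beta\colon G\to \Aut(B_e)\).  Proposition~\ref{pro:equivalent_Fell_bundles}\ref{pro:equivalent_Fell_bundles4} transports exactness from~\(\A\) to~\(\B\), hence the action~\(\beta\) is exact in the sense of Section~\ref{sec:crossed_groups}.  Moreover, Proposition~\ref{pro:equivalent_Fell_bundles}\ref{pro:equivalent_Fell_bundles2} yields a lattice isomorphism \(\Ideals^\A(A) \cong \Ideals^\B(B_e) = \Ideals^\beta(B_e)\), so \(\Prime(\Ideals^\beta(B_e)) \cong \Prime(\Ideals^\A(A))\) is first countable under our hypothesis.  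Therefore Theorem~\ref{thm:Imai-Takai_duality} applies to the inclusion \(B_e \subseteq \Mult(\Cred(\B)) = \Mult(B_e\rtimes_{\beta,\red} G)\), yielding \ref{cond:C3'} and~\ref{cond:C4} for this inclusion.

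Finally, Proposition~\ref{pro:equivalent_Fell_bundles}\ref{pro:equivalent_Fell_bundles3} transfers \ref{cond:C3'} and~\ref{cond:C4} back from \(B_e \subseteq \Mult(\Cred(\B))\) to \(A\subseteq \Mult(B)\).  By Corollary~\ref{cor:prime_primitive2}, the quasi-orbit map \(\varrho\colon \check{B}\to \check{A}/{\sim}\) is open and surjective.  The main conceptual step is the passage through Morita globalisation; the one point that must be checked with some care is that the countability assumption is compatible with this reduction, but since the space \(\Prime(\Ideals^\A(A))\) is a Morita invariant of the bundle, first countability is preserved and no further hypothesis on~\(B_e\) itself (such as separability of \(\check{B_e}\), which may fail for a globalisation) is needed.
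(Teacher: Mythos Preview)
Your proof is correct and follows essentially the same route as the paper: reduce to an ordinary group action via a Morita globalisation, use Theorem~\ref{thm:Imai-Takai_duality} to obtain \ref{cond:C3'} and~\ref{cond:C4} for the globalised inclusion, transfer these back via Proposition~\ref{pro:equivalent_Fell_bundles}\ref{pro:equivalent_Fell_bundles3}, and conclude with Corollary~\ref{cor:prime_primitive2}. One small remark: the countability discussion in your final paragraph is unnecessary, since the part of Theorem~\ref{thm:Imai-Takai_duality} you actually use---that \ref{cond:C3'} and~\ref{cond:C4} hold for exact actions---is unconditional; the first-countability hypothesis enters only at the very end, when you apply Corollary~\ref{cor:prime_primitive2} to the original inclusion \(A\subseteq \Mult(B)\), where it is assumed directly.
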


\begin{proof}
  By \cite{Abadie-Buss-Ferraro:Morita_Fell}*{Theorem 3.4}, \(\A\)
  is equivalent to a Fell bundle~\(\B\) associated to an
  action \(\gamma\colon C\to \Aut(C)\)
  on a \(\Cst\)\nb-algebra~\(C\).  The Fell bundle~\(\B\) is exact
  by
  Proposition~\ref{pro:equivalent_Fell_bundles}.\ref{pro:equivalent_Fell_bundles4}.
  Hence the inclusion \(C\to \Mult(C\rtimes_{\alpha,\red} G)\)
  satisfies \eqref{cond:C3'} and~\eqref{cond:C4} by
  Theorem~\ref{thm:Imai-Takai_duality}.
  Thus \(A_e\to \Mult(\Cred(\A))\) satisfies \eqref{cond:C3'}
  and~\eqref{cond:C4} by
  Proposition~\ref{pro:equivalent_Fell_bundles}.\ref{pro:equivalent_Fell_bundles3}.
  Hence the claims follow from Corollary~\ref{cor:prime_primitive2}.
  Here we may still describe the relation~\(\sim\)
  as in Theorem~\ref{the:Fell_bundle_group}.
\end{proof}

\subsection{Regular inclusions and
  \texorpdfstring{$\Cst$}{C*}-algebras graded by inverse semigroups}
\label{sec:regular_inc}

The interest in regular \(\Cst\)\nb-inclusions
started with the study of Cartan \(\Cst\)\nb-subalgebras
(see \cites{Renault:Groupoid_Cstar, Kumjian:Diagonals,
  Renault:Cartan.Subalgebras, Exel:noncomm.cartan}).  They model a large
class of examples, including crossed products for topologically free
actions of discrete groups, inverse semigroups, or étale groupoids.
In order to apply our programme to these inclusions, we first
introduce and discuss \(\Cst\)\nb-algebras
graded by inverse semigroups.  Then we translate the corresponding
results to regular inclusions by showing that they are naturally
graded by certain inverse semigroups.

\begin{definition}
  \label{exa:regular_inclusion}
  Let~\(S\)
  be an inverse semigroup with unit element \(e\in S\).
  An \emph{\(S\)\nb-graded
    \(\Cst\)\nb-algebra}
  is a \(\Cst\)\nb-algebra~\(B\)
  with a family of closed linear subspaces \((B_t)_{t\in S}\)
  such that \(B_g^* = B_{g^*}\),
  \(B_g\cdot B_h \subseteq B_{g h}\)
  for all \(g,h\in S\)
  and \(B_g \subseteq B_h\)
  if \(g \le h\)
  in~\(S\),
  and \(\sum B_t\)
  is dense in~\(B\).
  We call \(A\defeq B_e\subseteq B\)
  the \emph{unit fibre} of the \(S\)\nb-grading.
\end{definition}

\begin{example}
  \label{exa:regular_group}
  Any discrete group \(G\)
  may be viewed as an inverse semigroup with \(g^* \defeq g^{-1}\)
  for all \(g\in S\).
  Then \(g\le h\)
  for \(g,h\in G\)
  only happens for \(g=h\).
  Thus in this case our notion of a \(G\)\nb-graded
  \(\Cst\)\nb-algebra
  reduces to the standard one (see
  \cite{Exel:Partial_dynamical}*{Definition~16.2}).  Crossed products
  for (partial or twisted) \(G\)\nb-actions
  have an obvious \(G\)\nb-grading, and so do the section
  \(\Cst\)\nb-algebras
  of Fell bundles over~\(G\).
  Here we may complete the \Star{}algebra of sections of a Fell bundle
  in \emph{any} \(\Cst\)\nb-seminorm
  for which the map from the unit fibre to the Hausdorff completion
  remains injective.
\end{example}

\begin{example}
  \label{exa:regular_groupoid}
  Let~\(G\)
  be an \'etale groupoid with locally compact Hausdorff space of
  units~\(G^0\).
  A subset \(U\subseteq G\)
  is called a \emph{bisection} if it is open and the restrictions of
  the source and range maps to~\(U\)
  are injective.  Bisections of \(G\),
  with multiplication and inverse inherited from~\(G\),
  form an inverse semigroup~\(\SS(G)\).
  Let \(\A=(A_\gamma)_{\gamma\in G}\)
  be a Fell bundle over~\(G\)
  as in
  \cite{BussExel:Fell.Bundle.and.Twisted.Groupoids}*{Definition~2.6}.
  The section \(\Cst\)\nb-algebra
  \(B=\Cst(\A)\)
  of~\(\A\)
  is the completion of a certain convolution \Star{}algebra~\(B_0\).
  For \(U\in \SS(G)\),
  let~\(A_U\)
  be the space of \(\Cont_0\)\nb-sections of~\(\A\)
  on~\(U\).
  Extend functions in~\(A_U\)
  by~\(0\) outside~\(0\) and let~\(B_0\)
  be the linear span of~\(A_U\) for all
  \(U\in \SS(G)\).
  This is a dense \Star{}subalgebra of~\(B\).
  The subspaces~\(A_U\) are closed in~\(B\) because the
  \(\Cst\)\nb-norm
  on~\(B\)
  restricts to the supremum norm on~\(A_U\).
  And they satisfy \(A_U^* = A_{U^*}\),
  \(A_U\cdot A_V \subseteq A_{UV}\)
  for all \(U,V\in \SS(G)\)
  and \(A_U \subseteq A_V\)
  for \(U \subseteq V\)
  (see
  \cite{BussExel:Fell.Bundle.and.Twisted.Groupoids}*{Example~2.11}).
  Thus \((A_U)_{U\in \SS(G)}\)
  forms a grading of~\(B\)
  whose unit fibre~\(A\)
  is the \(\Cont_0(G^0)\)\nb-algebra~\(A_{G^0}\)
  of sections of the bundle \((A_x)_{x\in G^0}\).
  In fact, \(B\)
  is graded by \((A_U)_{U\in S}\)
  for any (unital) inverse subsemigroup \(S\subseteq \SS(G)\)
  such that the sets in~\(S\)
  form a basis of the topology in~\(G\).
  As in Example~\ref{exa:regular_group}, the choice of the
  \(\Cst\)\nb-seminorm
  on~\(B_0\)
  is immaterial: if~\(B\)
  is any quotient of the full section \(\Cst\)\nb-algebra,
  such that the unit fibre \(A=A_{G^0}\hookrightarrow \Cst(\A)\)
  embeds into~\(B\)
  under the quotient homomorphism, then~\(B\) is
  graded by the images of~\(A_U\)
  for \(U\in S\).
\end{example}

The conditions \(B_g^* = B_{g^*}\),
\(B_g\cdot B_h \subseteq B_{g h}\)
and \(B_g \subseteq B_h\)
for \(g \le h\)
in Definition~\ref{exa:regular_inclusion} say that the spaces~\(B_g\)
with the multiplication, involution, and norm inherited from~\(B\)
form a \emph{Fell bundle over}~\(S\)
as in \cite{Exel:noncomm.cartan}*{Definition~2.1}.  These conditions
imply \(B_g B_g^* = B_g B_{g^*} = B_{g g^*} \subseteq B_e=A\)
because \(g g^* \le e\).
Since \(A B_g+ B_g A\subseteq B_g\),
we see that~\(B_g\)
for \(g\in S\)
is naturally a Hilbert \(A\)\nb-bimodule with the inner products
\[
\BRAKET{a}{b} \defeq a\cdot b^* \in A
\quad \text{ and  } \quad
\braket{a}{b}\defeq a^* \cdot b\in A\quad
\text{ for } a,b\in B_g.
\]
The Fell bundle \((B_t)_{t\in S}\)
is \emph{saturated} if \(B_g\cdot B_h = B_{g h}\)
for every \(g,h\in S\).
Then \((B_t)_{t\in S}\)
forms an action of~\(S\)
on~\(A\)
by Hilbert bimodules as in
\cite{Buss-Meyer:Actions_groupoids}*{Definition~4.7}.

\begin{definition}
  Let \(\Bgr=(B_t)_{t\in S}\)
  be an \(S\)\nb-grading
  of a \(\Cst\)\nb-algebra~\(B\).
  We say that an ideal \(I\in \Ideals(A)\)
  is \emph{\(\Bgr\)\nb-invariant}
  if \(B_g I B_g^*\subseteq I\)
  for all \(g\in S\)
  (see \cites{Kwasniewski-Szymanski:Pure_infinite,
    Abadie-Abadie:Ideals}).  Let \(\Ideals^\B(A)\)
  denote the set of \(\Bgr\)\nb-invariant ideals.
\end{definition}

\begin{proposition}
  \label{prop:invariant_ideals_in_graded}
  Let \(A\defeq B_e\subseteq B\)
  be the unit fibre of an \(S\)\nb-grading
  \((B_t)_{t\in S}\).
  For every \(I\in \Ideals(A)\), the following are equivalent:
  \begin{enumerate}
  \item \label{enu:invariant_ideals_in_graded_invariant}%
    \(I\) is restricted;
  \item \label{enu:invariant_ideals_in_graded1}%
    \(I\) is \(\Bgr\)-invariant;
  \item \label{enu:invariant_ideals_in_graded3}%
    \(I\)
    is \(B_g\)-invariant for all \(g\in S\), that is, \(I B_g=B_g I\)
    for all \(g\in S\);
  \item \label{enu:invariant_ideals_in_graded4}%
    \(\overline{\sum_{g\in S} IB_g}\) is a two-sided ideal in~\(B\);
  \item \label{enu:invariant_ideals_in_graded_symmetric}%
    \(I\) is symmetric.
  \end{enumerate}
  Thus \(\Ideals^B(A)=\Ideals^\B(A)\),
  \(A\subseteq B\)
  is symmetric and conditions \eqref{cond:C1}, \eqref{cond:C2}
  and~\eqref{cond:C3} hold.
\end{proposition}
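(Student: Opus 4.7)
My plan is to prove the five conditions equivalent via the cyclic chain
\ref{enu:invariant_ideals_in_graded_invariant} $\Rightarrow$
\ref{enu:invariant_ideals_in_graded1} $\Rightarrow$
\ref{enu:invariant_ideals_in_graded3} $\Rightarrow$
\ref{enu:invariant_ideals_in_graded4} $\Rightarrow$
\ref{enu:invariant_ideals_in_graded_symmetric} $\Rightarrow$
\ref{enu:invariant_ideals_in_graded_invariant},
since then the final ``Thus'' clause will drop out: the equivalence
\ref{enu:invariant_ideals_in_graded_invariant} $\Leftrightarrow$
\ref{enu:invariant_ideals_in_graded1} gives
\(\Ideals^B(A)=\Ideals^\B(A)\); the implication
\ref{enu:invariant_ideals_in_graded_invariant} $\Rightarrow$
\ref{enu:invariant_ideals_in_graded_symmetric} shows that the
inclusion \(A\subseteq B\) is symmetric; and then
Corollary~\ref{cor:symmetric_nice} delivers \ref{cond:C1}, \ref{cond:C2}
and~\ref{cond:C3}. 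The only implication that requires a genuine
argument is
\ref{enu:invariant_ideals_in_graded1} $\Rightarrow$
\ref{enu:invariant_ideals_in_graded3}; everything else is a short
bookkeeping computation inside the grading.

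The routine implications go as follows. For
\ref{enu:invariant_ideals_in_graded_invariant} $\Rightarrow$
\ref{enu:invariant_ideals_in_graded1}, since \(A\subseteq B\) is an
ordinary inclusion one has \(r(J)=A\cap J\); then
\(B_g(A\cap J)B_g^*\) lies in \(BJB\subseteq J\) and also in
\(B_gB_g^*\subseteq B_{gg^*}\subseteq B_e=A\) because \(gg^*\le e\),
hence in \(A\cap J\). For
\ref{enu:invariant_ideals_in_graded3} $\Rightarrow$
\ref{enu:invariant_ideals_in_graded4}, the equality \(IB_g=B_gI\)
makes \(J\defeq\overline{\sum_g IB_g}=\overline{\sum_g B_gI}\) closed
under both left and right multiplication by each \(B_h\); density
of~\(\sum_h B_h\) in~\(B\) then makes \(J\) a two-sided ideal. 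For
\ref{enu:invariant_ideals_in_graded4} $\Rightarrow$
\ref{enu:invariant_ideals_in_graded_symmetric}, the same density
identifies \(\overline{\sum_g IB_g}\) with \(\overline{\varphi(I)B}\),
and since any \(\Cst\)\nb-ideal is self-adjoint this equals
\(\overline{B\varphi(I)}\), which is
\ref{lem:symmetric_induced_ideal5} of
Lemma~\ref{lem:symmetric_induced_ideal}. Finally,
\ref{enu:invariant_ideals_in_graded_symmetric} $\Rightarrow$
\ref{enu:invariant_ideals_in_graded_invariant} is exactly the last
sentence of Lemma~\ref{lem:symmetric_induced_ideal}, as
\(A\hookrightarrow B\) is injective.

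The main step is
\ref{enu:invariant_ideals_in_graded1} $\Rightarrow$
\ref{enu:invariant_ideals_in_graded3}. I will use the standard Hilbert
bimodule fact that for \(x\in B_g\),
\(xx^*\in I\) if and only if \(x\in \overline{IB_g}\); the nontrivial
direction follows from the functional calculus approximation
\(x=\lim_{n\to\infty}(xx^*)^{1/n}x\), since
\((xx^*)^{1/n}\in I\) when \(xx^*\in I\). Assuming
\ref{enu:invariant_ideals_in_graded1}, any element of \(B_gI\) is a
limit of terms \(xa\) with \(x\in B_g\), \(a\in I\), and satisfies
\((xa)(xa)^* = xaa^*x^* \in B_gIB_g^*\subseteq I\); hence
\(xa\in\overline{IB_g}=IB_g\), giving \(B_gI\subseteq IB_g\). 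Applying
the same reasoning to \(g^*\in S\) and taking adjoints yields the
reverse inclusion \(IB_g\subseteq B_gI\). I expect this Hilbert
bimodule step to be the only place where care is needed; the rest of
the proof is structural and follows the template already established
for group actions in Proposition~\ref{pro:restricted_group_crossed}
and for Fell bundles in Proposition~\ref{pro:restricted_group_Fell}.
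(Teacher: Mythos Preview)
Your proof is correct and follows exactly the same cyclic chain
\ref{enu:invariant_ideals_in_graded_invariant}$\Rightarrow$\ref{enu:invariant_ideals_in_graded1}$\Rightarrow$\ref{enu:invariant_ideals_in_graded3}$\Rightarrow$\ref{enu:invariant_ideals_in_graded4}$\Rightarrow$\ref{enu:invariant_ideals_in_graded_symmetric}$\Rightarrow$\ref{enu:invariant_ideals_in_graded_invariant}
as the paper, with the final clause deduced from Corollary~\ref{cor:symmetric_nice} in the same way. The only tactical difference is in the step \ref{enu:invariant_ideals_in_graded1}$\Rightarrow$\ref{enu:invariant_ideals_in_graded3}: the paper uses the one-line algebraic computation
\(B_g I = B_gB_g^*B_g I = B_g(B_g^*B_g\cap I) = B_gIB_g^*B_g \subseteq I B_g\)
(exploiting that \(B_g^*B_g\idealin A\)), whereas you invoke the Rieffel-correspondence fact \(xx^*\in I\Leftrightarrow x\in IB_g\) via functional calculus; both are standard and equally short.
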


\begin{proof}
  Assume \ref{enu:invariant_ideals_in_graded_invariant}, so that
  \(I = r\circ i(I) = B I B\cap A\).
  If \(g\in S\), then
  \(B_g I B_g^*\subseteq B I B\cap A=I\).
  So~\ref{enu:invariant_ideals_in_graded_invariant}
  implies~\ref{enu:invariant_ideals_in_graded1}.
  Assume~\ref{enu:invariant_ideals_in_graded1}.  Since
  \(B_g^*B_g= \braket{B_g}{B_g}\)
  is a closed two-sided ideal in~\(A\), we get
  \[
  B_g \cdot I
  = B_g B_g^* B_g I
  = B_g (B_g^*B_g\cap I)
  = B_g I B_g^*B_g
  \subseteq I \cdot B_g.
  \]
  The same computation for~\(B_g^*\) gives
  \(I\cdot B_g \subseteq B_g\cdot I\).  Thus \(I\) is
  \(B_g\)\nb-invariant.  Hence~\ref{enu:invariant_ideals_in_graded1}
  implies~\ref{enu:invariant_ideals_in_graded3}.  To see
  that~\ref{enu:invariant_ideals_in_graded3}
  implies~\ref{enu:invariant_ideals_in_graded4}, note that
  \(\overline{\sum_{g\in S} I B_g}\) is a right ideal in~\(B\).
  In the presence of~\ref{enu:invariant_ideals_in_graded3} it is
  also a left ideal.  If~\ref{enu:invariant_ideals_in_graded4}
  holds, then \(I\subseteq i(I)=\overline{\sum_{g\in S} I B_g}\) is
  non-degenerate.  Thus~\ref{enu:invariant_ideals_in_graded4}
  implies~\ref{enu:invariant_ideals_in_graded_symmetric}.
  That~\ref{enu:invariant_ideals_in_graded_symmetric}
  implies~\ref{enu:invariant_ideals_in_graded_invariant} follows
  from Lemma~\ref{lem:symmetric_induced_ideal}.  Conditions
  \eqref{cond:C1}, \eqref{cond:C2} and~\eqref{cond:C3} follow from
  Corollary~\ref{cor:symmetric_nice}.
\end{proof}

\begin{proposition}
  \label{pro:graded_induced}
  Let \(A\defeq B_e\subseteq B\)
  be the unit fibre of an \(S\)\nb-grading
  \((B_t)_{t\in S}\).
  Let \(J\in \Ideals(B)\)
  and put \(J_g\defeq J\cap B_g\)
  for \(g\in S\).
  The spaces \((J_t)_{t\in S}\)
  form an \(S\)\nb-grading
  of \(i\circ r(J)=\overline{\sum_{g\in S} J_g}\).
  In particular, \(J\)
  is induced if and only if \(J=\overline{\sum_{g\in S} J_g}\)
  is \(S\)\nb-graded.
\end{proposition}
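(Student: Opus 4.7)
The plan is to verify that the subspaces $J_g \defeq J\cap B_g$ satisfy the axioms of an $S$\nb-grading of $i\circ r(J)$, and then to deduce the ``in particular'' statement. The grading axioms for $(J_t)_{t\in S}$ are immediate from those for $(B_t)_{t\in S}$ combined with the fact that $J$ is a self-adjoint closed two-sided ideal: the $J_g$ are closed, $J_g^* = J^*\cap B_g^* = J_{g^*}$, $J_g J_h \subseteq J \cap B_g B_h \subseteq J_{gh}$, and $J_g\subseteq J_h$ whenever $g\le h$. Since $r(J) = J\cap A = J\cap B_e = J_e$, it remains to identify $\overline{\sum_g J_g}$ with $i(J_e)$.

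For the inclusion $i(J_e) \subseteq \overline{\sum_g J_g}$, I use density of $\sum_t B_t$ in $B$ together with the grading rule $B_g B_e B_h \subseteq B_{gh}$: every element of $B\varphi(J_e)B$ is a norm limit of sums of elements of the form $b_1 x b_2$ with $b_1\in B_g$, $x\in J_e$, $b_2\in B_h$, which lie in $B_{gh}\cap J = J_{gh}$. For the opposite inclusion it suffices to show $J_g\subseteq i(J_e)$ for every $g\in S$. Fix $x\in J_g$. Because $gg^*\le e$ in~$S$, the element $xx^*\in B_g B_{g^*} = B_{gg^*}$ already lies in $B_e = A$, and since $x\in J$ we have $xx^*\in J\cap A = J_e$. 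Pick an approximate unit $(e_\alpha)$ for the \(\Cst\)\nb-algebra $J_e$. Then $e_\alpha xx^* \to xx^*$ in norm, and the identity
\[
\|e_\alpha x - x\|^2 = \|(1-e_\alpha)xx^*(1-e_\alpha)\| \le \|(1-e_\alpha)xx^*\|
\]
forces $e_\alpha x\to x$; since $e_\alpha x\in J_e\cdot B \subseteq i(J_e)$, this gives $x\in i(J_e)$.

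The ``in particular'' statement now follows at once from the identity $i\circ r\circ i = i$ (Proposition~\ref{pro:restrict_induce_ideals}.\ref{pro:restrict_induce_ideals2}): an ideal $J\in\Ideals(B)$ is induced precisely when $J = i\circ r(J)$, which by the displayed equality is the same as $J = \overline{\sum_g J_g}$, that is, $J$ is $S$\nb-graded by the subspaces $J\cap B_g$. The only substantive point in the argument is the approximate-unit step for the inclusion $J_g \subseteq i(J_e)$; it relies crucially on the inverse semigroup relation $gg^*\le e$ together with monotonicity of the grading, which together force $xx^*$ into the restricted ideal $J_e$.
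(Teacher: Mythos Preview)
Your proof is correct and follows essentially the same route as the paper's. The paper packages your approximate-unit step as the Hilbert-bimodule fact $J_g = J_e\cdot J_g = J_e\cdot B_g$ (each $J_g$ being a Hilbert $J_e$-bimodule because $gg^*\le e$ and $g^*g\le e$) and then cites Proposition~\ref{prop:invariant_ideals_in_graded} to identify $\overline{\sum_g J_e B_g}$ with $i(J_e)$; your version simply unpacks these two steps directly.
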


\begin{proof}
  Since \(B_h J_{g}\subseteq J_{hg}\)
  and \(J_h B_{g}\subseteq J_{hg}\)
  for \(h,g\in S\),
  we conclude that \(\overline{\sum_{g\in S} J_g}\)
  is an ideal in~\(B\).
  It is easy to see that \((J_t)_{t\in S}\)
  is an \(S\)\nb-grading
  of \(\overline{\sum_{g\in S} J_g}\).
  In particular, every~\(J_g\)
  is a Hilbert bimodule over \(I\defeq J_e=J\cap A\).
  Then \(J_g= I J_g=I B_g\).
  Thus
  \(\overline{\sum_{g\in S} J_g}=\overline{\sum_{g\in S} IB_g}=i\circ
  r(J)\) (see Proposition~\ref{prop:invariant_ideals_in_graded}).
\end{proof}

Let \(\Bgr=(B_t)_{t\in S}\)
be a saturated \(S\)\nb-grading
on~\(B\).
Each~\(B_t\) is a Hilbert \(B\)\nb-module
and thus induces a partial homeomorphism of~\(\check{A}\)
by the Rieffel correspondence.  More precisely, for \(t\in S\),
\(D_t\defeq B_t^* B_t\)
is an ideal in~\(A\),
which corresponds to an open subset~\(\check{D}_t\)
of~\(\check{A}\),
and the Rieffel correspondence gives a homeomorphism
\(\check{B}_t\colon \check{D}_t\congto \check{D}_{t^*}\).
These partial homeomorphisms for \(t\in S\)
form an action of~\(S\)
on~\(\check{A}\)
by partial homeomorphisms
(see \cite{Buss-Meyer:Actions_groupoids}*{Lemma~6.12}).
As in~\cite{Buss-Exel-Meyer:Reduced}, this inverse semigroup
action has a transformation groupoid~\(\check{A}\rtimes S\)
(compare \cite{Exel:Inverse_combinatorial}*{Section~4}, where this
groupoid is called the groupoid of germs).  This is an \'etale
topological groupoid with object space~\(\check{A}\).
Its arrows are equivalence classes of pairs~\((t,\prid)\)
for \(t\in S\)
and \(\prid \in \check{D}_{t}\subseteq \check{A}\),
where two pairs \((t,\prid)\)
and~\((t',\prid')\)
are equivalent if \(\prid =\prid'\)
and there is \(v\in S\)
with \(v\le t, t'\)
and \(\prid\in \check{D}_{v}\).
There is a unique topology on \(\check{A}\rtimes S\)
for which the source map \([t,\prid]\mapsto \prid\)
is a partial homeomorphism onto~\(\check{D}_{t}\)
for each \(t\in S\).
The subsets
\(U_t \defeq \setgiven{[t,\prid]}{\prid \in \check{D}_t}\)
form an open covering of \(\check{A}\rtimes S\)
by bisections (see
\cite{Exel:Inverse_combinatorial}*{Corollary~4.16}).  Then the range
map \([t,\prid]\mapsto \check{B}_t(\prid)\)
is also a local homeomorphism.

\begin{remark}
  \label{rem:dual_groupoid}
  An ideal \(I\in\Ideals(A)\)
  is \(\B\)\nb-invariant
  if and only if the corresponding open subset
  \(\check{I} \subseteq \check{A}\)
  is invariant for the action of~\(S\),
  if and only if~\(\check{I}\)
  is invariant for the groupoid \(\check{A}\rtimes S\); this follows
  from \cite{Kwasniewski-Szymanski:Pure_infinite}*{Lemma~3.22}
  and the equivalence between \ref{enu:invariant_ideals_in_graded1}
  and~\ref{enu:invariant_ideals_in_graded3} in
  Proposition~\ref{prop:invariant_ideals_in_graded}.
\end{remark}

\begin{theorem}
  \label{thm:inverse_semigroup_graded}
  Let \(A\defeq B_e\subseteq B\)
  be the unit fibre of an \(S\)\nb-grading
  \(\Bgr=(B_t)_{t\in S}\).
  The quasi-orbit space~\(\check{A}/{\sim}\)
  of \(A\subseteq B\)
  exists, and \(\prid_1, \prid_2\in\check{A}\)
  satisfy \(\prid_1 \sim \prid_2\) if and only if
  \(\overline{(\check{A}\rtimes S)\cdot \prid_1}
  = \overline{(\check{A}\rtimes S)\cdot \prid_2}\).
  So~\(\check{A}/{\sim}\)
  is the quasi-orbit space of the transformation groupoid
  \(\check{A}\rtimes S\).
  If \(\Prime(\Ideals^\Bgr(A))\)
  is first countable or~\(\check{A}\) is second countable, then
  \begin{enumerate}
  \item there is a continuous, open and surjective map
    \(\pi\colon \check{A}\to\Prime(\Ideals^\Bgr(A))\),
    which maps~\(\prid\) to the largest \(\Bgr\)\nb-invariant
    ideal contained in it.
    It descends to a homeomorphism
    \(\widetilde{\pi}\colon
    \check{A}/{\sim}\to\Prime(\Ideals^{\Bgr}(A))\).
  \item There is a continuous map
    \(\varrho\colon \check{B} \to \check{A}/{\sim}\),
    \(\prid\mapsto \widetilde{\pi}^{-1}(r(\prid))\).
    It identifies \(\check{B}\cong \check{A}/{\sim}\)
    if and only if~\(A\)
    separates ideals in~\(B\).
  \end{enumerate}
\end{theorem}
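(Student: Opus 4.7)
The strategy is essentially to bundle together the earlier infrastructure: Proposition \ref{prop:invariant_ideals_in_graded} has already done the substantive work by showing that the inclusion $A = B_e \subseteq B$ is symmetric, that $\Ideals^B(A) = \Ideals^{\Bgr}(A)$, and that conditions \ref{cond:C1}, \ref{cond:C2} and \ref{cond:C3} all hold. So the present theorem is really a packaging of the general results in Section~\ref{sec:quasi-orbit} for this particular setup, and the proof should cite those results rather than reprove anything.

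First I would note that, thanks to $\Ideals^B(A) = \Ideals^{\Bgr}(A)$ and condition \ref{cond:C1}, Lemma~\ref{lem:induced_map_pi} applies and yields a well defined continuous map $\pi\colon \check{A} \to \Prime^B(A) = \Prime(\Ideals^{\Bgr}(A))$ sending a primitive ideal $\prid$ to the largest $\Bgr$\nb-invariant ideal contained in it. This in turn provides the quasi-orbit space $\check{A}/{\sim}$, proving the unconditional part of the statement.

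For the first numbered conclusion under the countability hypothesis, I would invoke Theorem~\ref{the:prime_primitive}: since~\ref{cond:C2} holds and either $\Prime^B(A) = \Prime(\Ideals^{\Bgr}(A))$ is first countable or $\check{A}$ is second countable, that theorem delivers openness and surjectivity of $\pi$ and the homeomorphism $\widetilde{\pi}\colon \check{A}/{\sim} \xrightarrow{\sim} \Prime(\Ideals^{\Bgr}(A))$. For the second numbered conclusion, condition~\ref{cond:C3} combined with Proposition~\ref{pro:induced_map_rho} gives a well defined continuous map $r\colon \check{B} \subseteq \Prime(B) \to \Prime^B(A)$; composing with $\widetilde{\pi}^{-1}$ yields the quasi-orbit map $\varrho\colon \check{B} \to \check{A}/{\sim}$ described in the statement. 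The characterisation of when $\varrho$ is a homeomorphism follows immediately from Corollary~\ref{cor:prime_primitive3}: $\varrho$ is a homeomorphism if and only if $A$ separates ideals in $B$.

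There is no real obstacle here; the conceptual work lives in Proposition~\ref{prop:invariant_ideals_in_graded}, where the symmetry of restricted ideals for an $S$\nb-graded inclusion was established. Once that symmetry is in hand, the lattice-theoretic hypotheses of the general theorems of Section~\ref{sec:quasi-orbit} are automatically satisfied via Corollary~\ref{cor:symmetric_nice}, and the present theorem is a direct specialisation. The only mild care needed is to make the identification $\Prime^B(A) = \Prime(\Ideals^{\Bgr}(A))$ explicit when quoting the countability hypothesis, so that the reader sees that the first countability assumption stated here is literally the one assumed in Theorem~\ref{the:prime_primitive}.
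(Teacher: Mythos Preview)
Your proposal is correct and follows essentially the same route as the paper's own proof, which is the terse one-liner ``This follows from Theorem~\ref{the:prime_primitive} and Propositions \ref{pro:induced_map_rho} and~\ref{prop:invariant_ideals_in_graded}.'' You have simply unpacked this citation chain in more detail, making explicit the roles of Lemma~\ref{lem:induced_map_pi} and Corollary~\ref{cor:prime_primitive3}, both of which are implicit in the paper's compressed reference.
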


\begin{proof}
  Most of the claims follow from our abstract results in
  Theorem~\ref{the:prime_primitive} and
  Lemma~\ref{lem:induced_map_rho}; these apply here because of
  Proposition~\ref{prop:invariant_ideals_in_graded}.  As in the
  proof of Theorem~\ref{the:Fell_bundle_group}, restricted ideals
  in~\(A\) correspond to those open subsets of~\(\check{A}\) that
  are invariant for~\(\check{A}\rtimes S\) (see
  Remark~\ref{rem:dual_groupoid}); and this implies that the
  quasi-orbit space of the inclusion \(A\subseteq B\) is the same as
  for the groupoid~\(\check{A}\rtimes S\).
\end{proof}

The quasi-orbit space in Theorem~\ref{thm:inverse_semigroup_graded}
is homeomorphic to the quasi-orbit space
in~\cite{Boenicke-Li:Ideal}.

\begin{remark}
  By Corollary~\ref{cor:prime_primitive2} and
  Proposition~\ref{pro:graded_induced}, the quasi-orbit map
  \(\varrho\colon \check{B} \to \check{A}/{\sim}\)
  defined above is open and surjective if and only if intersections of
  \(S\)\nb-graded
  ideals are \(S\)\nb-graded
  and the intersection of any \(S\)\nb-graded
  ideal \(I\in \Ideals(B)\)
  with the smallest \(S\)\nb-graded
  ideal containing an ideal \(J\in \Ideals(B)\)
  is the smallest \(S\)\nb-graded ideal containing \(I\cap J\).
\end{remark}

Now we turn to regular \(\Cst\)\nb-inclusions.
In the context of von Neumann algebras such inclusions were introduced
by Dixmier in 1954.

\begin{definition}
  Let \(A\subseteq B\)
  be a \(\Cst\)\nb-subalgebra.
  An element \(b\in B\)
  \emph{normalises}~\(A\)
  if \(b A b^*\subseteq A\)
  and \(b^* A b\subseteq A\).
  Then we also call~\(b\)
  a \emph{normaliser} of~\(A\).
  Let~\(N(A)\)
  be the subset of normalisers (see~\cite{Kumjian:Diagonals}).  The
  inclusion \(A\subseteq B\)
  is \emph{regular} if it is non-degenerate, that is, \(AB=B\),
  and \(N(A)\)
  generates~\(B\)
  as a \(\Cst\)\nb-algebra (see~\cite{Renault:Cartan.Subalgebras}).
\end{definition}

\begin{lemma}
  \label{lem:graded_to_regular}
  Let \(A\defeq B_e\subseteq B\)
  be the unit fibre of an \(S\)\nb-grading
  \(\Bgr=(B_t)_{t\in S}\).
  The \(\Cst\)\nb-inclusion
  \(A\subseteq B\)
  is regular.  In fact, \(B_g\subseteq N(A)\) for every \(g\in S\).
\end{lemma}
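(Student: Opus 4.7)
My plan is to establish the stronger statement \(B_g \subseteq N(A)\) first, from which two of the three requirements of regularity follow immediately: each~\(B_g\) is contained in \(N(A)\), and \(\sum_{t \in S} B_t\) is norm-dense in~\(B\) by the grading hypothesis, so the closed \(\Cst\)\nobreakdash-subalgebra generated by \(N(A)\) is already all of~\(B\). Only the non-degeneracy \(AB = B\) needs a further, brief argument; I handle it by verifying that every approximate unit of~\(A\) is an approximate unit of~\(B\).

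For \(B_g \subseteq N(A)\), I would argue as follows. Fix \(b \in B_g\). Then \(b^* \in B_g^* = B_{g^*}\), so by the grading axioms
\[
b\,A\,b^* \subseteq B_g \cdot B_e \cdot B_{g^*} \subseteq B_{g \cdot e \cdot g^*} = B_{gg^*}.
\]
Since \(gg^* \in S\) is an idempotent and~\(e\) is the unit of~\(S\), we have \(gg^* = (gg^*)\cdot e\), that is, \(gg^* \le e\) in the natural partial order on~\(S\). Hence \(B_{gg^*} \subseteq B_e = A\), so \(bAb^* \subseteq A\). The same reasoning applied to \(b^* \in B_{g^*}\) gives \(b^*Ab \subseteq A\). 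Thus \(b \in N(A)\). This is essentially the observation already recorded in the excerpt before the lemma; I am just repackaging it.

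It remains to show \(AB = B\). Let \((u_\lambda)\) be a contractive approximate unit for~\(A\). For \(b \in B_g\), the computation above gives \(bb^* \in B_{gg^*} \subseteq A\), so \(u_\lambda\, bb^* \to bb^*\) in norm. Consequently
\[
\|b - u_\lambda b\|^2 = \|(1 - u_\lambda)\, bb^*\, (1 - u_\lambda)\| \longrightarrow 0,
\]
so \(u_\lambda b \to b\) for every \(b \in B_g\) and every \(g \in S\). Since \(\sum_{t \in S} B_t\) is dense in~\(B\) and \((u_\lambda)\) is uniformly bounded, a standard \(\varepsilon/3\) approximation shows \(u_\lambda c \to c\) for every \(c \in B\); hence \(A\) contains an approximate unit for~\(B\), which is equivalent to \(AB = B\). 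The main ``difficulty'' is genuinely minor -- essentially the observation that \(gg^* \le e\) in any inverse semigroup with unit~\(e\) -- and everything else is routine grading bookkeeping together with an approximate-unit estimate.
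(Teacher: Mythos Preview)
Your proof is correct and follows essentially the same approach as the paper. The argument that \(B_g\subseteq N(A)\) via \(gg^*\le e\) is identical to the paper's; the only difference is in the non-degeneracy step, where the paper simply invokes Proposition~\ref{prop:invariant_ideals_in_graded} with \(I=A\), while you give the direct approximate-unit computation that underlies that citation.
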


\begin{proof}
  That \(A\subseteq B\)
  is non-degenerate follows from
  Proposition~\ref{prop:invariant_ideals_in_graded} applied to
  \(I=A\).
  For every \(g\in S\),
  \(g e g^* \le e\)
  and so \(B_g B_e B_g^* \subseteq B_{g e g^*} \subseteq B_e\).
  Thus \(B_g\subseteq N(A)\).
\end{proof}

The spaces \(B_g\subseteq N(A)\)
in the above lemma have the special feature that they are bimodules
over \(A\).
Exel calls such subspaces of~\(N(A)\)
\emph{slices} in~\cite{Exel:noncomm.cartan} and proves a number of
facts that allow us to show the converse to
Lemma~\ref{lem:graded_to_regular}:

\begin{proposition}
  \label{prop:regular_vs_inverse_semigroups}
  Let \(A\subseteq B\)
  be a non-degenerate \(\Cst\)\nb-subalgebra.  Let
  \[
  \SS(A)\defeq \setgiven{M \subseteq N(A)}
  {M \text{ is a closed linear space and } AM\subseteq M,\ MA\subseteq M}
  \]
  and define
  \[
  M\cdot N\defeq \overline{\operatorname{span}}
  \setgiven{m n}{m \in M, n\in N},\quad
  M^*\defeq \setgiven{m^*}{m \in M}
  \]
  for \(M,N\in \SS(A)\).
  These operations turn~\(\SS(A)\)
  into an inverse semigroup with unit \(A\in \SS(A)\).
  And \((M)_{M\in \SS(A)}\)
  is a saturated \(\SS(A)\)-grading
  on the \(\Cst\)\nb-algebra
  \(\overline{\sum_{M\in \SS(A)} M}\).
  The \(\Cst\)\nb-inclusion
  \(A\subseteq B\)
  is regular if and only if \(B=\overline{\sum_{M\in \SS(A)} M}\).
\end{proposition}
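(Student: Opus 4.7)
The plan is to verify in order that the operations stay inside $\SS(A)$, that $\SS(A)$ is an inverse semigroup, that the slices form a saturated $\SS(A)$-grading of $\overline{\sum_M M}$, and that regularity is equivalent to $B=\overline{\sum_M M}$.

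For closure, the normaliser condition is symmetric under $b\leftrightarrow b^*$, so $M^*\subseteq N(A)$, and $AM^*=(MA)^*\subseteq M^*$ with the mirror identity on the right. For $M\cdot N$, the bimodule property follows from $A(MN)=(AM)N\subseteq MN$ and symmetrically, while
\[
(mn)A(mn)^* = m(nAn^*)m^* \subseteq mAm^*\subseteq A,
\]
together with its adjoint and linearity/continuity, puts $MN$ inside $N(A)$. Associativity and $(MN)^*=N^*M^*$ are immediate.

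The technical heart is the inverse semigroup axiom $MM^*M=M$. Non-degeneracy of $A\subseteq B$ makes any approximate unit $(u_\alpha)\subseteq A$ a bounded approximate unit on $B$, so for $m\in M$ the elements $m^*u_\alpha m\in A$ converge to $m^*m$, which is therefore in $A$; the same argument gives $mm^*\in A$, so $M^*M, MM^*\subseteq A$ are closed ideals. Continuous functional calculus on $mm^*\in A$ produces $e_n\in MM^*$ with $e_n m\to m$, using the identity $f_n(mm^*)m = m\,f_n(m^*m)$ for suitable $f_n$ with $f_n(0)=0$; hence $M\subseteq \overline{MM^*M}$, and the reverse inclusion is trivial, so $M=MM^*M$. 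The same functional calculus yields $M=AM=MA$, and thus the Hilbert $A$-bimodule structure on every slice. The idempotents of $\SS(A)$ are exactly the closed ideals of $A$ (an idempotent $E=E^*=E^2$ satisfies $E=EE^*\subseteq A$ and absorbs $A$ on both sides), and these commute under multiplication, so $\SS(A)$ is an inverse semigroup with unit $A$.

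For the grading, involution and product match by construction, saturation $M\cdot N = M\cdot N$ is tautological, and the natural order on $\SS(A)$ coincides with inclusion: if $M\subseteq N$, then $MM^*N\subseteq M$ by writing $m_1 m_2^* n = m_1(m_2^* n)$ with $m_2^* n\in N^*N\subseteq A$, while $M=MM^*M\subseteq MM^*N$ from $M\subseteq N$, so $M=MM^*N\le N$. Thus $(M)_{M\in\SS(A)}$ is a saturated $\SS(A)$-grading of the $\Cst$-subalgebra $\overline{\sum_{M\in\SS(A)} M}\subseteq B$. For the final equivalence, if $A\subseteq B$ is regular then for every $n\in N(A)$ the space $\overline{AnA}$ is a slice (a closed $A$-bimodule inside $N(A)$, since $(a_1 n a_2)A(a_1 n a_2)^*\subseteq a_1(nAn^*)a_1^*\subseteq A$ together with its adjoint), and an approximate unit argument shows $n\in\overline{AnA}$, so $N(A)\subseteq\overline{\sum_M M}$ and therefore $B=\Cst(N(A))\subseteq\overline{\sum_M M}\subseteq B$. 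Conversely, if $B=\overline{\sum_M M}$, then each slice sits inside $N(A)$, so $N(A)$ spans a dense subset of $B$ and thus generates $B$; non-degeneracy is already assumed, so $A\subseteq B$ is regular. The main obstacle is the passage from the abstract normaliser inclusion $b^*Ab\subseteq A$ to the concrete Hilbert $A$-bimodule features $b^*b,bb^*\in A$ and $M=AM=MA$, which underpin both the inverse semigroup identity $MM^*M=M$ and the natural order computation; once these are established, everything else is bookkeeping parallel to Exel's framework in \cite{Exel:noncomm.cartan}.
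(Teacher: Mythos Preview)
Your argument is correct in outline and follows the same route as the paper; the only difference is that where the paper delegates the three technical facts (that $M\cdot N\in\SS(A)$, that $M^*M,MM^*\idealin A$, and that every normaliser sits in some slice) to Exel's Propositions~13.1, 10.2 and~10.5 in \cite{Exel:noncomm.cartan}, you supply direct arguments via approximate units and functional calculus. That is a perfectly good, more self-contained substitute.

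There is one genuine gap. In proving that $\overline{AnA}$ is a slice for $n\in N(A)$, you only check that each single product $a_1na_2$ normalises~$A$. But $N(A)$ is \emph{not} closed under addition in general, so this does not by itself put linear combinations, let alone their closure, inside $N(A)$. What saves the situation here is that all generators share the same~$n$: for $b=a_1na_2$ and $c=a_3na_4$ the cross term
\[
bAc^* = a_1na_2A a_4^*n^*a_3^* \subseteq a_1(nAn^*)a_3^* \subseteq A
\]
also lands in~$A$, and symmetrically $b^*Ac\subseteq A$. Bilinearity and continuity then push this to $\overline{AnA}$. You should make this explicit; it is exactly the content of Exel's Proposition~10.5 that the paper cites. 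A minor second point: for the grading axiom you need $M\le N\Rightarrow M\subseteq N$, whereas you prove the converse; the needed direction is the trivial $M=MM^*N\subseteq AN=N$, so just add that line.
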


\begin{proof}
  Let \(M,N\in \SS(A)\).
  We have \(M\cdot N\in \SS(A)\)
  by \cite{Exel:noncomm.cartan}*{Proposition 13.1}.  Clearly, the
  multiplication~\(\cdot\)
  is associative and \(M^*\in \SS(A)\).
  \cite{Exel:noncomm.cartan}*{Proposition 10.2} implies
  \(M^*\cdot M \triangleleft A\)
  and \(M\cdot M^* \triangleleft A\).
  Hence~\(M\)
  is naturally a Hilbert \(A\)\nb-bimodule.
  So \(M\cdot M^*\cdot M=M\)
  and \(A\cdot M=M\cdot A= M\).
  Thus~\(\SS(A)\)
  is an inverse semigroup and~\(A\)
  is a unit in~\(\SS(A)\).
  This implies that \(\sum_{M\in \SS(A)} M\)
  is a \Star{}subalgebra of~\(B\).
  Then \(\overline{\sum_{M\in \SS(A)} M}\)
  is a \(\Cst\)\nb-subalgebra.
  If \(M \le N\)
  in~\(\SS(A)\),
  then \(M=M\cdot M^* N\subseteq A\cdot N \subseteq N\).
  So \((M)_{M\in \SS(A)}\)
  is a grading of \(\overline{\sum_{M\in \SS(A)} M}\).
  The Fell bundle \((M)_{M\in \SS(A)}\)
  is saturated by definition.  By
  \cite{Exel:noncomm.cartan}*{Proposition 10.5}, every \(a\in N(A)\)
  lies in some \(M\in \SS(A)\).
  Thus \(B=\overline{\sum_{M\in \SS(A)} M}\)
  if and only if~\(B\)
  is generated as a \(\Cst\)\nb-algebra
  by~\(N(A)\) if and only if \(A\subseteq B\) is regular.
\end{proof}

\begin{corollary}
  \label{cor:non_commutative_cartans}
  A \(\Cst\)\nb-inclusion
  \(A\subseteq B\)
  is regular if and only if
  there are an inverse semigroup~\(S\)
  and an \(S\)\nb-grading
  on~\(B\)
  with unit fibre~\(A\).
  Moreover, the grading may be chosen to be saturated.
\end{corollary}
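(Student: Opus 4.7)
The plan is to assemble the corollary directly from the two results immediately preceding it. The statement is a biconditional, and both directions are essentially already in hand; the work is just to observe that they combine in the right way.

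First I would address the ``if'' direction. Suppose $B$ carries an $S$-grading $(B_t)_{t\in S}$ with $A=B_e$. Lemma~\ref{lem:graded_to_regular} gives both non-degeneracy of $A\subseteq B$ (via Proposition~\ref{prop:invariant_ideals_in_graded} applied to $I=A$, which is automatically $\B$-invariant) and the inclusion $B_g \subseteq N(A)$ for every $g\in S$. Since $\sum_{t\in S} B_t$ is dense in $B$ by the definition of a grading, the normalisers generate $B$ as a \(\Cst\)-algebra, so $A\subseteq B$ is regular.

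For the converse, given a regular inclusion $A\subseteq B$, I would take $S\defeq \SS(A)$, the inverse semigroup of slices from Proposition~\ref{prop:regular_vs_inverse_semigroups}. That proposition does all the work: it shows $\SS(A)$ is an inverse semigroup with unit $A$, that $(M)_{M\in \SS(A)}$ is a saturated $\SS(A)$-grading of the \(\Cst\)-subalgebra $\overline{\sum_{M\in \SS(A)} M}$, and that regularity of $A\subseteq B$ is precisely the condition $B=\overline{\sum_{M\in \SS(A)} M}$. Since the unit of $\SS(A)$ is $A$ itself, $A$ is the unit fibre of this grading, and the grading is saturated. This simultaneously establishes the second sentence of the corollary, namely that the grading may be chosen saturated.

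There is essentially no obstacle: the corollary is a packaging statement. The only point to be a little careful about is verifying that the unit fibre of the $\SS(A)$-grading produced by Proposition~\ref{prop:regular_vs_inverse_semigroups} is literally $A$ and not merely isomorphic to it, but this is immediate because $A\in \SS(A)$ is the unit element of the inverse semigroup and appears as its own slice. Thus the proof is a two-line citation of Lemma~\ref{lem:graded_to_regular} for one direction and Proposition~\ref{prop:regular_vs_inverse_semigroups} for the other.
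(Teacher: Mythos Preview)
Your proposal is correct and follows exactly the paper's approach: the paper's proof is the single sentence ``Combine Lemma~\ref{lem:graded_to_regular} and Proposition~\ref{prop:regular_vs_inverse_semigroups},'' and you have accurately unpacked how these two results give the two directions and the saturation claim.
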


\begin{proof}
  Combine Lemma~\ref{lem:graded_to_regular} and
  Proposition~\ref{prop:regular_vs_inverse_semigroups}.
\end{proof}

The above result has two advantages.  Firstly, by passing
to a larger inverse semigroup, every graded \(\Cst\)\nb-algebra
may be viewed as a \(\Cst\)\nb-algebra
with a saturated grading (see
also~\cite{BussExel:Regular.Fell.Bundle}).  Secondly, every regular
\(\Cst\)\nb-inclusion
\(A\subseteq B\)
may be studied as a graded \(\Cst\)\nb-algebra
by choosing any inverse subsemigroup \(S\subseteq \SS(A)\)
with \(\overline{\sum_{M\in S}M}=B\).
In fact, in certain cases we may even drop the assumption that~\(S\)
be a semigroup:

\begin{proposition}
  \label{prop:invariant_ideals_in_regular}
  Let \(A\subseteq B\)
  be a regular \(\Cst\)\nb-subalgebra
  and let \(S\subseteq \SS(A)\)
  be any subset such that
  \(\sum_{M\in S} M\) is dense in~\(B\).
  \begin{enumerate}
  \item \label{enu:invariant_ideals_in_regular_invariant}%
    \(I\in \Ideals(A)\)
    is restricted if and only if~\(I\)
    is \(M\)\nb-invariant, that is, \(IM=MI\) for all \(M\in S\);
  \item \label{enu:invariant_ideals_in_regular3}%
    \(J\in \Ideals(B)\)
    is induced if and only if \(J=\overline{\sum_{M\in S} J\cap M}\).
  \end{enumerate}
\end{proposition}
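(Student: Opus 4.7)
The plan is to reduce both parts to Propositions \ref{prop:invariant_ideals_in_graded} and \ref{pro:graded_induced} by enlarging $S$ to an inverse subsemigroup. Let $\tilde S \subseteq \SS(A)$ be the inverse subsemigroup generated by $S \cup \{A\}$. Since $\overline{\sum_{M \in S} M} = B$ and $S \subseteq \tilde S$, the family $(M)_{M \in \tilde S}$ is a saturated $\tilde S$\nb-grading of $B$ with unit fibre $A$. By the cited propositions, restricted ideals in $A$ are exactly the $\tilde S$\nb-invariant ones, and induced ideals in $B$ are exactly those of the form $\overline{\sum_{M \in \tilde S} J \cap M}$. The work consists in bridging between $S$ and $\tilde S$ in each of these two characterisations.

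For part~(1), the forward direction is immediate from Proposition \ref{prop:invariant_ideals_in_graded}, since $S \subseteq \tilde S$. For the converse I will propagate invariance through the semigroup operations: if $IM = MI$, then $IM^* = (MI)^* = (IM)^* = M^*I$, using that $I$ is self\nb-adjoint; and if $IM = MI$ and $IN = NI$, then $I(MN) = M(IN) = M(NI) = (MN)I$. Closures of linear spans preserve these equalities, so $S$\nb-invariance of $I$ propagates to $\tilde S$\nb-invariance, and Proposition \ref{prop:invariant_ideals_in_graded} concludes.

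For part~(2), the reverse implication is a short Hilbert\nb-bimodule argument. Given $J = \overline{\sum_{M \in S} J \cap M}$, put $I \defeq r(J) = J \cap A$; for any $m \in J \cap M$ the element $m^*m$ lies in $I$, and applying an approximate unit $(e_n)$ of $I$ yields $m = \lim m e_n \in \overline{M I} \subseteq i(I)$, so $J \subseteq i(I) \subseteq J$. The forward implication is the subtlest step. Assuming $J$ is induced, set $I \defeq r(J)$; by part~(1) we have $IM = MI$ for every $M \in S$. Density of $\sum_{M \in S} M$ in $B$ combined with invariance yields $IB \subseteq \overline{\sum_{M \in S} MI} \subseteq \overline{BI}$; hence $\overline{BI}$ is a two\nb-sided ideal in $B$. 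Since it contains $I$, it must equal $i(I) = J$. Rewriting $\overline{BI} = \overline{\sum_{N \in S} N I}$ and noting that $NI \subseteq N$ and $NI \subseteq BI \subseteq J$, so $NI \subseteq J \cap N$, finishes the proof.

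The main obstacle I anticipate is the step showing $\overline{BI}$ is a right ideal: this is the single place where the algebraic invariance $IM = MI$ must be combined with the analytic density of the slices from $S$ in $B$, and is what makes the characterisation in terms of the arbitrary subset $S$ (as opposed to the generated inverse subsemigroup $\tilde S$) work. Everything else reduces to standard manipulations with Hilbert bimodules, approximate units and ideal products.
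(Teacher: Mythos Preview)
Your proof is correct and follows essentially the same route as the paper: reduce to the inverse subsemigroup generated by~\(S\) and invoke Propositions~\ref{prop:invariant_ideals_in_graded} and~\ref{pro:graded_induced}. The paper is terser---it simply asserts that \(S\)-invariance propagates to the generated semigroup, and for the forward direction of~(2) cites directly that restricted ideals are symmetric (Proposition~\ref{prop:invariant_ideals_in_graded}) to obtain \(J=IB=\overline{\sum_{M\in S} IM}\)---whereas you spell out the propagation through products and adjoints and re-derive symmetry from \(S\)-invariance and density; your reverse direction of~(2) via approximate units is also a valid direct alternative to the paper's one-line reduction to Proposition~\ref{pro:graded_induced}.
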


\begin{proof}
  Let~\(\overline{S}\)
  be the inverse semigroup generated by \(S\subseteq \SS(A)\).
  An ideal \(I\in \Ideals(A)\)
  is \(M\)\nb-invariant
  for every \(M\in S\)
  if and only if it
  is \(M\)\nb-invariant
  for every \(M\in \overline{S}\).
  Hence~\ref{enu:invariant_ideals_in_regular_invariant} follows from
  Proposition~\ref{prop:invariant_ideals_in_graded} applied
  to~\(\overline{S}\).
  Now let \(J\in \Ideals(B)\).
  Clearly, \(J=\overline{\sum_{M\in S} J\cap M}\)
  implies \(J=\overline{\sum_{M\in \overline{S}} J\cap M}\).
  Hence~\(J\)
  is induced by Proposition~\ref{pro:graded_induced}.  Conversely,
  if~\(J\)
  is induced, then \(J= I B\)
  for \(I\defeq r(J)=J\cap A\)
  because~\(I\)
  is symmetric by Proposition~\ref{prop:invariant_ideals_in_graded}.
  Hence~\(J\)
  is equal to the closed linear span of \(I\cdot M\subseteq J\cap M\)
  for \(M\in S\).
\end{proof}

\section{Further applications and examples}
\label{sec:applications}

We now apply our theory to a few more
situations, namely, commutative and skew-commutative tensor products
and \(\Cont_0(X)\)-\(\Cst\)-algebras
in Section~\ref{sec:induction_locale_morphism}, relative
Cuntz--Pimsner algebras of \(\Cst\)\nb-\hspace{0pt}correspondences in
Section~\ref{sec:product_systems}, crossed products for
groupoid actions in Section~\ref{sec:groupoid}, and crossed products for quantum
group coactions in Section~\ref{sec:quantum_group_crossed}.

\subsection{Tensor products and
  \texorpdfstring{$\Cont_0(X)$-$\Cst$}{C₀(X)-C*}-algebras}
\label{sec:induction_locale_morphism}

If all ideals in~\(A\)
are symmetric, then
Lemma~\ref{lem:symmetric_restricted_intersection} implies that~\(i\)
commutes with finite intersections.  So~\(i\) is a frame homomorphism
and induces a continuous map
\(\Prime(B) \to \Prime(A)\).
We will exhibit two well known cases where this happens.

\begin{example}
  \label{exa:tensor}
  Let \(B=A\otimes D\)
  be some \(\Cst\)\nb-tensor
  product with a \(\Cst\)\nb-algebra~\(D\)
  and let \(\varphi\colon A\to\Mult(B)\)
  be the canonical non-degenerate \Star{}homomorphism.  If
  \(I\in\Ideals(A)\),
  then \(i(I)\)
  is the closure of the algebraic tensor product \(I\odot D\)
  in \(A\otimes D\),
  which we also denote by \(I\otimes D\).
  Hence any ideal \(I\in\Ideals(A)\)
  is symmetric.
  So~\(i\)
  is a frame homomorphism by
  Lemma~\ref{lem:symmetric_restricted_intersection} and
  Proposition~\ref{pro:restrict_induce_ideals}.\ref{pro:restrict_induce_ideals4}.
  So it induces a continuous map
  \(\Prime(B) \to \Prime(A)\).

  We claim that any ideal \(I\in \Ideals(A)\)
  is restricted.
  Let \(J\in\Ideals(B)\)
  be the kernel of the canonical map from \(A\otimes D\)
  to the minimal \(\Cst\)\nb-tensor product
  \(A/I \otimes_\mathrm{min} D\).
  Then \(r(J)=I\)
  because the map from~\(A\)
  to the multiplier algebra of
  \(A/I \otimes_\mathrm{min} D\)
  vanishes exactly on~\(I\).
  Thus the quasi-orbit space is just the primitive ideal
  space~\(\check{A}\).
  The conditions \eqref{cond:C1}
  and~\eqref{cond:C2} are trivial in this case, and~\eqref{cond:C3}
  holds because~\(i\)
  commutes with finite meets.
  The quasi-orbit map \(\check{B} \to \check{A}\)
  may be constructed directly: it is the restriction
  of~\(r\)
  to \(\check{B}\subseteq \Ideals(B)\).
  Indeed, a representation~\(\pi\)
  of \(A\otimes D\)
  on a Hilbert space~\(\Hils\)
  is described by commuting representations \(\pi_A\)
  and~\(\pi_D\)
  of \(A\)
  and~\(D\)
  on~\(\Hils\),
  respectively.
  Then \(r(\ker \pi) = \ker \pi_A\).
  A subspace of~\(\Hils\)
  that is \(A\)\nb-invariant
  is \(B\)\nb-invariant
  as well.  So~\(\pi_A\)
  is irreducible if~\(\pi\) is.  If \(\prid\in \check{B}\)
  is a primitive ideal, that is, the kernel of an irreducible
  representation, then \(r(\prid) \in\Ideals(A)\)
  is primitive as well.  The restriction
  \(r\colon \check{B} \to \check{A}\)
  of~\(r\) is the quasi-orbit map.
  This map is continuous, and
  \(r^{-1}\colon \Open(\check{A}) \to \Open(\check{B})\)
  becomes the map~\(i\)
  when we identify \(\Open(\check{A}) \cong \Ideals(A)\)
  and \(\Open(\check{B}) \cong \Ideals(B)\).
\end{example}

\begin{example}
  \label{exa:Cstar_over_X}
  Let~\(X\)
  be a locally compact space.  A \emph{\(\Cst\)\nb-algebra
    over~\(X\)}
  or \(\Cont_0(X)\)-\(\Cst\)-algebra
  is a \(\Cst\)\nb-algebra~\(B\)
  with a non-degenerate \Star{}homomorphism from \(A\defeq \Cont_0(X)\)
  to the centre of the multiplier algebra of~\(B\).
  Ideals in~\(A\)
  are of the form \(\Cont_0(U)\)
  for open subsets \(U\subseteq X\),
  and \(i(\Cont_0(U)) = \Cont_0(U)\cdot B = B\cdot \Cont_0(U)\)
  because the image of~\(A\)
  in~\(\Mult(B)\)
  is central.  That is, all ideals in~\(A\)
  are symmetric.  So~\(i\)
  is a frame homomorphism by
  Lemma~\ref{lem:symmetric_restricted_intersection} and
  Proposition~\ref{pro:restrict_induce_ideals}.\ref{pro:restrict_induce_ideals4}.
  Hence~\(i\) induces a continuous map
  \(\Prime(B) \to \Prime(A) \cong X\).
  It restricts to a continuous map \(\pi\colon \check{B} \to X\)
  (see also \cite{Nilsen:Bundles}*{Proposition~2.1}).  Conversely, any
  continuous map \(\check{B} \to X\)
  comes from a \(\Cont_0(X)\)-\(\Cst\)-algebra
  structure on~\(B\) by the Dauns--Hofmann Theorem.

  Let us identify ideals in $A$ and~$B$ with open subsets in $X$
  and~$\check{B}$, respectively.  Then the induction and restriction
  maps have the form \(i(U) = \pi^{-1}(U)\) and
  \(r(V) = \big(X\setminus \pi(\check{B}\setminus V)\big)^\circ\), for
  \(U\in \Open(X)\), \(V\in\Open(\check{B})\), where~\(^\circ\) stands
  for the interior of a given set.  Since~\(r\) preserves meets by
  Proposition~\ref{pro:restrict_induce_ideals}.\ref{pro:restrict_induce_ideals4}
  and \(i\colon \Ideals^B(A)\to \Ideals^A(B)\) is an isomorphism, we
  see that~\eqref{cond:C3'} holds if and only if~\(i\) preserves meets.
  This happens if and only if~\(\pi\) is open (see
  \cite{Meyer-Nest:Bootstrap}*{Lemma~2.9}).  Thus~\eqref{cond:C3'} holds
  if and only if~\(A\) is a \emph{continuous
    \(\Cont_0(X)\)-\(\Cst\)\nb-algebra}.  In particular,
  if~\eqref{cond:C3'} holds, then
  \(\pi^{-1}\colon \Open(X) \to \Open(\check{B})\) is right adjoint to
  \(V\mapsto \pi(V)\), and condition~\eqref{eq:open_surjective} is
  satisfied.  The map~\(i\) is injective if and only if~\(\pi\) is
  surjective, which agrees with Theorem~\ref{the:open_surjective}.
\end{example}

As we have seen, condition~\eqref{cond:C3'} fails in
Example~\ref{exa:Cstar_over_X} for every
\(\Cont_0(X)\)-\(\Cst\)\nb-algebra which is not continuous.  Now we show
that~\eqref{cond:C3'} may fail also in the situation of
Example~\ref{exa:tensor}, for maximal tensor products.

\begin{example}
  \label{exa:MI_fails_tensor}
  Suppose first that~\(D\)
  is exact.  We claim that for every \(\Cst\)\nb-algebra~\(A\),
  the inclusion \(A\to \Mult(A\otimes_{\mathrm{min}} D)\)
  satisfies~\eqref{cond:C3'}.  Indeed, let
  \((I_\lambda)_{\lambda\in\Lambda}\)
  be a family of ideals in~\(A\).
  Clearly, \((\bigcap_{\lambda\in \Lambda}
  I_\lambda)\otimes_{\mathrm{min}} D
  \subseteq \bigcap_{\lambda\in \Lambda} (I_\lambda\otimes_{\mathrm{min}} D)\).
  Let \(R_\psi\colon A\otimes_{\mathrm{min}} D\to A\) denote the
  slice map corresponding to a functional~\(\psi\)
  on~\(D\).  Then
  \[
  R_\psi \biggl(\bigcap_{\lambda\in \Lambda}
  (I_\lambda\otimes_{\mathrm{min}} D)\biggr)
  \subseteq \bigcap_{\lambda\in \Lambda} R_\psi
  (I_\lambda\otimes_{\mathrm{min}} D)
  = \bigcap_{\lambda\in \Lambda} I_\lambda.
  \]
  Hence \(\bigcap_{\lambda\in \Lambda}
  (I_\lambda\otimes_{\mathrm{min}} D)  \subseteq
  (\bigcap_{\lambda\in \Lambda} I_\lambda)\otimes_{\mathrm{min}} D\)
  by \cite{Kirchberg:Fubini_exact}*{Theorem 1.1}.
  Thus
  \[
  \blank\otimes_{\mathrm{min}} D\colon \Ideals(A)\to
  \Ideals(A\otimes_{\mathrm{min}} D)
  \]
  commutes with intersections, that is, \eqref{cond:C3'} holds.

  Now let us characterise condition~\eqref{cond:C3'} for certain
  tensor products in terms of condition~\eqref{cond:C3'} for
  \(\Cont_0(X)\)-\(\Cst\)\nb-algebras.
  Let~\(A\) be a continuous \(\Cont_0(X)\)-\(\Cst\)\nb-algebra and let
  \(B=A\otimes D\)
  be a \(\Cst\)\nb-tensor
  product with some~\(D\).
  The canonical homomorphism \(\varphi\colon A\to\Mult(B)\) extends
  uniquely to a homomorphism \(\Mult(A)\to\Mult(B)\), which maps the
  centre of \(\Mult(A)\) to the centre of \(\Mult(B)\).  Thus the
  composite \(\Cont_0(X)\to \Mult(A)\to \Mult(B)\) gives a homomorphism
  \(\Cont_0(X)\to \Mult(B)\) that gives~\(B\) a
  \(\Cont_0(X)\)-\(\Cst\)\nb-algebra structure.
  Since~\(A\)
  is a continuous \(\Cont_0(X)\)-algebra,
  \eqref{cond:C3'} holds for \(\Cont_0(X)\to \Mult(A)\).
  If it holds for \(A\to \Mult(B)\)
  as well, then it holds also for \(\Cont_0(X)\to \Mult(B)\),
  that is, \(B\)
  is a continuous \(\Cont_0(X)\)-\(\Cst\)\nb-algebra.

  Now suppose that~\(D\)
  is a non-nuclear \(\Cst\)\nb-algebra.   Let \(X=\N^+\)
  be the one-point compactification of~\(\N\).
  By \cite{Kirchberg-Wassermann:Operations}*{Theorem~C},
  there is a separable continuous \(\Cont(\N^+)\)-\(\Cst\)-algebra~\(A\)
  such that the \(\Cont(\N^+)\)-\(\Cst\)-algebra \(A\otimes_{\mathrm{max}} D\)
  is not continuous.  The discussion above shows that
  \(A\to \Mult(A\otimes_{\mathrm{max}} D)\)
  does not satisfy~\eqref{cond:C3'}.  The quasi-orbit map
  \(\varrho\colon \check{B} \to \check{A}\)
  exists but is not open.  In particular, if \(D=\Cst(G)\)
  for a non-amenable (discrete) group, then
  \(B= A \otimes_{\mathrm{max}} D=A\rtimes_\mathrm{triv} G\),
  where \(\mathrm{triv}\) stands for the trivial action.
  Hence one cannot hope for counterparts of Theorems
  \ref{thm:Imai-Takai_duality} and~\ref{thm:open_orbit_map_bundles}
  for full crossed products.
\end{example}

We state two corollaries of Example~\ref{exa:MI_fails_tensor} (and
Theorem~\ref{thm:Imai-Takai_duality}).

\begin{corollary}
  \label{cor:nuclear_characterization}
  A \(\Cst\)-algebra~\(D\) is nuclear if and only if for every
  \(\Cst\)\nb-algebra \(\A\) the generalised \(\Cst\)\nb-inclusion
  \(A\to \Mult(A\otimes_{\mathrm{max}} D)\)
  satisfies~\eqref{cond:C3'}.
\end{corollary}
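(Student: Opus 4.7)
The plan is to prove both directions by leveraging the detailed analysis already carried out in Example~\ref{exa:MI_fails_tensor}. The ``only if'' direction is essentially immediate, while the ``if'' direction relies on a counter-example due to Kirchberg and Wassermann.

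For the forward direction (\(D\) nuclear \(\Rightarrow\) \ref{cond:C3'} holds for every~\(A\)), I would argue as follows. If~\(D\) is nuclear, then \(A\otimes_{\mathrm{max}} D = A\otimes_{\mathrm{min}} D\) as \(\Cst\)\nb-algebras, for every \(\Cst\)\nb-algebra~\(A\). Since nuclearity implies exactness, the first part of Example~\ref{exa:MI_fails_tensor} (which uses Kirchberg's slice-map/Fubini theorem, \cite{Kirchberg:Fubini_exact}*{Theorem~1.1}) shows that the induction map \(I\mapsto I\otimes_{\mathrm{min}} D\) from \(\Ideals(A)\) to \(\Ideals(A\otimes_{\mathrm{min}} D)\) preserves arbitrary intersections, so~\ref{cond:C3'} holds for \(\varphi\colon A\to \Mult(A\otimes_{\mathrm{max}} D)\).

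For the converse direction, I would prove the contrapositive: assuming~\(D\) is not nuclear, I will exhibit a specific \(\Cst\)\nb-algebra~\(A\) for which~\ref{cond:C3'} fails. Take \(X=\N^+\), the one-point compactification of~\(\N\). By \cite{Kirchberg-Wassermann:Operations}*{Theorem~C}, non-nuclearity of~\(D\) produces a separable continuous \(\Cont(\N^+)\)-\(\Cst\)\nb-algebra~\(A\) such that \(B\defeq A\otimes_{\mathrm{max}} D\), equipped with its induced \(\Cont(\N^+)\)\nobreakdash-structure coming from the central homomorphism \(\Cont(\N^+)\to Z\Mult(A)\to Z\Mult(B)\), is not continuous over~\(\N^+\).

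The key step is now to translate this discontinuity into a failure of~\ref{cond:C3'}, and this is exactly what the middle part of Example~\ref{exa:MI_fails_tensor} accomplishes. Since~\(A\) is continuous over~\(X\), condition~\ref{cond:C3'} for \(\Cont_0(X)\to\Mult(A)\) holds (by Example~\ref{exa:Cstar_over_X}), and a short diagram-chase in locales shows that~\ref{cond:C3'} for \(A\to \Mult(B)\) is equivalent to~\ref{cond:C3'} for \(\Cont_0(X)\to \Mult(B)\), which by Example~\ref{exa:Cstar_over_X} is in turn equivalent to \(B\) being continuous as a \(\Cont(\N^+)\)\nobreakdash-\(\Cst\)\nb-algebra. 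Since we arranged that~\(B\) fails to be continuous, condition~\ref{cond:C3'} must fail for \(A\to \Mult(B)\), completing the proof.

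The main obstacle is not a proof obstacle at all, since the heavy lifting is done by the Kirchberg--Wassermann construction invoked as a black box; the work consists only of recognising that the discontinuity phenomenon they produce is precisely the lattice-theoretic failure captured by~\ref{cond:C3'}, which was already unpacked in Example~\ref{exa:MI_fails_tensor}.
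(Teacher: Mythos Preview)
Your proof is correct and follows precisely the route the paper takes: the corollary is stated there as an immediate consequence of Example~\ref{exa:MI_fails_tensor}, using the slice-map argument for the nuclear (hence exact, and \(\otimes_{\mathrm{max}}=\otimes_{\mathrm{min}}\)) direction and the Kirchberg--Wassermann discontinuous bundle for the converse. There is nothing to add.
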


\begin{corollary}
  \label{cor:amenable_group_characterization}
  A discrete group~\(G\) is amenable if and only if for every
  \(G\)\nb-action on a separable \(\Cst\)\nb-algebra~\(A\) the
  quasi-orbit map \(\varrho\colon \check{B} \to \check{A}/{\sim}\)
  for the full crossed product \(B\defeq A\rtimes_\alpha G\) is open
  and surjective.
\end{corollary}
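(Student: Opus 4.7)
The plan is to establish the two implications independently, with the forward direction handled by Theorem \ref{thm:Imai-Takai_duality} and the converse by importing the counterexample from Example \ref{exa:MI_fails_tensor}.

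For the forward implication, I assume $G$ is amenable. Then for every action \(\alpha\) the full and reduced crossed products agree, \(A\rtimes_\alpha G = A\rtimes_{\alpha,\red} G\), so \(\alpha\) is automatically exact in the sense of Section \ref{sec:crossed_groups}. Separability of~\(A\) makes \(\check{A}\) second countable, verifying the countability hypothesis of Theorem \ref{thm:Imai-Takai_duality}. Invoking that theorem immediately yields that the quasi-orbit map \(\varrho\colon \check{B} \to \check{A}/{\sim}\) is open and surjective.

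For the converse, I would show the contrapositive: if~\(G\) is not amenable I produce an action for which~\(\varrho\) fails to be open and surjective. Here \(D \defeq \Cst(G)\) is a separable non-nuclear \(\Cst\)-algebra, so \cite{Kirchberg-Wassermann:Operations}*{Theorem~C}, with \(X = \N^+\) the one-point compactification of~\(\N\), supplies a separable continuous \(\Cont(\N^+)\)-\(\Cst\)-algebra~\(A\) such that \(A\otimes_{\mathrm{max}} D\) is not a continuous \(\Cont(\N^+)\)-\(\Cst\)-algebra. Equipping~\(A\) with the trivial \(G\)-action gives \(B \defeq A\rtimes_{\mathrm{triv}} G \cong A\otimes_{\mathrm{max}} \Cst(G)\). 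Example \ref{exa:MI_fails_tensor} then shows that the inclusion \(A \hookrightarrow \Mult(B)\) fails condition~\ref{cond:C3'}. Conditions \ref{cond:C1}, \ref{cond:C2} and \ref{cond:C3} nevertheless hold by Proposition \ref{pro:restricted_group_crossed}, and \(\Prime^B(A)\) is first countable because \(\check{A}\) is second countable. Corollary \ref{cor:prime_primitive2} therefore applies and forces the quasi-orbit map \(\varrho\colon \check{B} \to \check{A}/{\sim}\), which does exist, not to be both open and surjective.

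The genuinely hard step is the converse direction, and its difficulty is outsourced to the Kirchberg--Wassermann Theorem~C, which produces the required discontinuous maximal tensor product; the surrounding argument is essentially a packaging of the lattice-theoretic criteria assembled earlier. The only cosmetic point to verify is the identification \(A\rtimes_{\mathrm{triv}} G \cong A\otimes_{\mathrm{max}} \Cst(G)\), which is immediate from the universal property of the full crossed product for a trivial action.
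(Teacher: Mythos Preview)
Your proof is correct and follows exactly the route the paper intends: the forward implication via Theorem~\ref{thm:Imai-Takai_duality} (amenability gives exactness and equality of full and reduced crossed products), and the converse via the construction in Example~\ref{exa:MI_fails_tensor} applied to \(D=\Cst(G)\) with the trivial action. One cosmetic point: you call \(D=\Cst(G)\) \emph{separable}, which is only true for countable~\(G\) and is not actually needed---the paper's invocation of \cite{Kirchberg-Wassermann:Operations}*{Theorem~C} in Example~\ref{exa:MI_fails_tensor} uses only that~\(D\) is non-nuclear, and the resulting~\(A\) is separable regardless.
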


Tensor products of \(\Cst\)\nb-algebras may be modified so that the
tensor factors no longer commute.  A rather general such construction
using quantum group coactions on the tensor factors is introduced
in~\cite{Meyer-Roy-Woronowicz:Twisted_tensor}.  Here we examine the
simplest case -- the skew-commutative tensor product of
\(\Z/2\)\nb-graded \(\Cst\)\nb-algebras where odd elements in the
tensor factors anticommute (see \cite{Kasparov:Operator_K}*{§2.6}).
Our results are rather negative already in this case: nothing beyond
the Galois adjunction between \(i\) and~\(r\) seems to hold in
general.

\begin{example}
  \label{ex:skew-commutative}
  Let \(A\)
  and~\(D\)
  be \(\Z/2\)\nb-graded
  \(\Cst\)\nb-algebras
  with grading involutions \(\alpha,\delta\)
  and let \(A=A_+\oplus A_-\)
  and \(D=D_+\oplus D_-\)
  be the decompositions into even and odd elements, that is, the
  eigenspaces for \(\alpha\) and~\(\delta\).

  The skew-commutative tensor product \(A \hot D\)
  is a variant of the (minimal) \(\Cst\)\nb-tensor
  product where elements in \(A_-\)
  and~\(D_-\)
  anti-commute.  It is defined
  as a \(\Cst\)\nb-completion
  of the algebraic tensor product \(A\odot D\).
  It comes with injective morphisms
  \(A \hookrightarrow \Mult(A\hot D) \hookleftarrow D\).

  Let \(I\in\Ideals(A)\).
  Then \(I\cdot (A\hot D)\)
  is the closure of \(I\odot D\)
  in \(A\hot D\).
  When we multiply on the left with \(A\hot D\), we get
  \[
  (A\odot D) \cdot (I\odot D)
  = I\odot D + \alpha(I) \odot D_-\cdot D
  \]
  because \(d\cdot (i\otimes d_2) = \alpha(i)\otimes d d_2\)
  if \(d\in D_-\).
  So if~\(D_-\neq0\),
  then~\(I\) is symmetric if and only if \(\alpha(I)=I\).

  If the ideal in~\(D_+\)
  generated by \(D_-\cdot D_-\)
  is equal to~\(D_+\),
  then \((A\hot D)\cdot I\cdot (A\hot D)\)
  is equal to the closure of \((I+ \alpha(I))\odot D\).
  So \(i(I) = i(I+ \alpha(I))\).
  Then \(r\circ i(I) = I+\alpha(I)\),
  which is the smallest \(\Z/2\)\nb-invariant
  ideal containing~\(I\).
  Therefore, the restricted ideals are exactly the
  \(\Z/2\)\nb-invariant
  ideals, and these are also the same as the symmetric ideals.  So our
  theory applies in this case, regardless whether~\(D\)
  is unital or not.

  If the ideal in~\(D_+\)
  generated by \(D_-\cdot D_-\)
  is not equal to~\(D_+\),
  however, then all this breaks down.  Then \(r\circ i(I) = I\)
  for all ideals \(I\in\Ideals(A)\),
  that is, all ideals in~\(A\)
  are restricted.  But not all ideals are symmetric, unless \(D_-=0\).
  So if~\(D_-\)
  is non-zero but not full as a \(D_+\)\nb-module,
  then \(i\colon A\to A\hot D\)
  is not a frame homomorphism, unlike in the situation of commutative
  \(\Cst\)\nb-tensor products in Example~\ref{exa:tensor}.

  An elementary case where this happens is
  \(D = \C \oplus \widehat{\Mat_2}\),
  where~\(\C\)
  is trivially graded and \(\widehat{\Mat_2}\)
  carries the usual inner grading where the off-diagonal entries are
  odd.  Here the ideal generated by~\(D_-\cdot D_-\)
  misses the first summand~\(\C\).
  Now take \(A=\C\oplus\C\)
  with the flip grading.  Then \(A\hot D \cong D\rtimes_\delta \Z/2\)
  for any~\(D\).  In our case,
  \[
  A\hot D \cong \Cst(\Z/2)\otimes D \cong \C\oplus \C\oplus \Mat_2 \oplus \Mat_2
  \]
  because the \(\Z/2\)\nb-action
  on~\(D\)
  is inner.  Here each ideal in~\(A\)
  is restricted from \(A\hot D\).
  But only the invariant ideals \(0\) and~\(A\)
  are symmetric.  Here \(A\subseteq A\hot D\)
  because~\(D\)
  is unital.  In this example, the ideals \(i(\C\oplus0)\)
  and \(i(0\oplus\C)\)
  in \(A\hot D\)
  are \(\C\oplus 0\oplus \Mat_2 \oplus \Mat_2\)
  and \(0\oplus \C\oplus \Mat_2 \oplus \Mat_2\),
  respectively.  Their intersection
  \(0\oplus 0\oplus \Mat_2 \oplus \Mat_2\)
  is not induced, although it is the intersection of two induced
  ideals.
\end{example}

\subsection{Relative Cuntz--Pimsner algebras}
\label{sec:product_systems}

Let~\(X\)
be a \(\Cst\)\nb-correspondence
over a \(\Cst\)\nb-algebra~\(A\).
That is, \(X\)
is a right Hilbert \(A\)\nb-module
with a \Star{}homomorphism \(\varphi_X\colon A\to \Bound(X)\)
which defines a left action of~\(A\)
on~\(X\)
by adjointable operators.  We write \(a x\defeq \varphi_X(a)x\)
for \(a\in A\), \(x\in X\).

A \emph{representation} of the \(\Cst\)\nb-correspondence~\(X\)
in a \(\Cst\)\nb-algebra~\(B\)
is a pair of maps \((\psi_0, \psi_1)\)
where \(\psi_0\colon A\to B\)
is a \Star{}homomorphism and \(\psi_1\colon X\to B\)
is linear and \(\psi_1(x)^* \psi_1(y) = \psi_0(\braket{x}{y}_A)\)
and \(\psi_0(a)\psi_1(x)=\psi_1(ax)\)
for all \(a\in A\) and \(x,y\in X\).
The formula \(\psi^{(1)}\bigl(\ket{x}\bra{y}\bigr) = \psi_1(x)\psi_1(y)^*\)
for \(x,y \in X\)
defines a \Star{}homomorphism \(\psi^{(1)}\colon\Comp(X) \to B\)
on the \(\Cst\)\nb-algebra
of compact operators on~\(X\).
Let
\[
J(X)\defeq \psi^{-1}_0(\Comp(X))\quad \text{ and }\quad J_X\defeq J(X)\cap (\ker\varphi_X)^\bot.
\]
The representation \((\psi_0,\psi_1)\)
is called \emph{covariant on} \(J\idealin J(X)\)
if \(\psi^{(1)}(\psi_X(a))=\psi_0(a)\) for all \(a\in J\).
Let~\(J\)
be an ideal in~\(J(X)\).
There is a universal \(\Cst\)\nb-algebra
\(\OO(J,X)\)
generated by a representation \((j_0,j_1)\)
that is covariant on~\(J\).
We call~\(\OO(J,X)\)
the \emph{Cuntz--Pimsner algebra relative to~\(J\)}.
The homomorphism \(j_0\colon A\to \OO(J,X)\)
is injective if and only if
\(J\subseteq J_X\).
\emph{Katsura's Cuntz--Pimsner algebra} of \(X\)
is \(\OO_X\defeq  \OO(J_X,X)\).
The \emph{Toeplitz algebra} of~\(X\) is \(\TT_X\defeq \OO(0,X)\).
There is a gauge action \(\gamma\colon \T \to \Aut(\OO(J,X))\)
of the circle group, defined by
\(\gamma_z(j_0(a))= j_0(a)\) and \(\gamma_z(j_1(x) =z\cdot j_1(x)\)
for all \(z\in \T\), \(a\in A\), \(x\in X\).

Let~\(I\) be an ideal in~\(A\).
Define
\begin{align*}
  X(I) &\defeq \braket{X}{\varphi_X(I)X}_A =
  \overline{\operatorname{span}}
  \setgiven{\braket{x}{a\cdot y}_A}{a\in I,\ x,y\in X},\\
  X^{-1}(I) &\defeq \setgiven{a\in A}{\braket{x}{a\cdot y}_A \in I
  \text{ for all } x,y\in X}.
\end{align*}
We call~\(I\)
\emph{positively invariant} if \(X(I)\subseteq I\).  Given an
ideal~\(J\)
in~\(A\),
we call~\(I\)
\emph{\(J\)\nb-negatively invariant} or \(J\)\nb-saturated
if \(X^{-1}(I) \cap J \subseteq I\).
We call~\(I\)
\emph{\(J\)\nb-invariant}
if it is positively invariant and \(J\)\nb-negatively invariant.
The \(J_X\)\nb-invariant ideals are called just invariant ideals
in~\cite{Katsura:Ideal_structure_correspondences}.

\begin{proposition}
  \label{pro:restricted_Cuntz_Pismner}
  Let \(B\defeq \OO(J,X)\)
  for an ideal \(J\subseteq J(X)\).
  Let \(\varphi=j_0\colon A\to B\)
  be the canonical \Star{}homomorphism.
  \begin{enumerate}
  \item \label{pro:restricted_Cuntz_Pismner1}%
    \(\Ideals^B(A)\)
    is equal to the lattice~\(\Ideals^X_J(A)\)
    of \(J\)\nb-invariant ideals in~\(A\).
  \item \label{pro:restricted_Cuntz_Pismner2}%
    An induced ideal \(K\in \Ideals^A(B)\),
    generated by \(I\defeq r(K)\),
    is naturally isomorphic to the relative Doplicher--Roberts algebra
    \(\OO_X(I, J\cap I)\)
    introduced in
    \cite{Kwasniewski:Cuntz-Pimsner-Doplicher}*{Definition 7.17}.  The
    ideal~\(K\)
    is gauge-invariant and Morita equivalent to the Cuntz--Pimsner
    algebra \(\OO(J\cap I, XI)\).
    Moreover, \(\OO_X(I, J\cap I)=\OO(J\cap I, XI)\)
    if and only if~\(I\) is symmetric for~\(\varphi\).
  \item \label{pro:restricted_Cuntz_Pismner4}%
    Finite meets of induced ideals are induced, that is,
    condition~\eqref{cond:C3} holds.
  \item \label{pro:restricted_Cuntz_Pismner3}%
    If \(J+\ker(\varphi_X)=A\)
    or~\(X\)
    is a Hilbert bimodule and \(J= J_X\),
    then \(\Ideals^A(B)\)
    coincides with the lattice \(\Ideals^\gamma(B)\)
    of gauge-invariant ideals in~\(B\).
  \item \label{pro:restricted_Cuntz_Pismner5}%
    If \(\Ideals^A(B)=\Ideals^\gamma(B)\)
    then conditions \eqref{cond:C3'} and~\eqref{cond:C4} hold.
  \end{enumerate}
\end{proposition}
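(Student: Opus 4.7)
My proof plan handles the five claims in turn. For (1), I would first unfold the forward direction: if $I = r(K) = j_0^{-1}(K)$, positive invariance follows from $\braket{x}{a y}_A = j_1(x)^* j_0(a) j_1(y)$, which lies in $K$ whenever $j_0(a)$ does, while $J$\nb-negative invariance uses the covariance relation $j_0(a) = j^{(1)}(\varphi_X(a))$ for $a \in J$ together with approximation of $\varphi_X(a)$ by rank-one operators $\ket{x}\bra{y}$. For the converse, starting from a $J$\nb-invariant ideal $I$, I would form the quotient correspondence $(X/XI, A/I)$ (well-defined by positive invariance) and use $J$\nb-negative invariance to ensure that the image of $J$ lands in $J(X/XI)$ and that the induced quotient map $\OO(J,X) \to \OO((J+I)/I, X/XI)$ has kernel $K$ with $r(K) = I$.

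For (2), I would invoke the construction of relative Doplicher--Roberts algebras from~\cite{Kwasniewski:Cuntz-Pimsner-Doplicher}, which provides a concrete $\Z$\nb-graded description of $B j_0(I) B$ via spectral subspaces for the gauge action; the graded pieces match those of $\OO_X(I, J \cap I)$. The Morita equivalence with $\OO(J \cap I, XI)$ uses that $XI$ is a $\Cst$\nb-correspondence over $I$ whose Cuntz--Pimsner algebra sits as a full corner inside $\OO_X(I, J \cap I)$, becoming the whole algebra precisely when $j_0(I) B = B j_0(I)$, i.e.\ when $I$ is symmetric. For (3), I would combine this graded description with the identity $r(i(I_1) \cap i(I_2)) = I_1 \cap I_2$ (which follows from (1) and meet-preservation of $r$): comparing graded pieces yields $i(I_1) \cap i(I_2) \subseteq i(I_1 \cap I_2)$, and the reverse inclusion is monotonicity.

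For (4), $\Ideals^A(B) \subseteq \Ideals^\gamma(B)$ is immediate because $j_0(I)$ is pointwise $\gamma$-fixed; the reverse inclusion under either hypothesis rests on the gauge-invariant uniqueness theorem for relative Cuntz--Pimsner algebras, which implies $K = i(r(K))$ for every gauge-invariant $K$. For (5), assuming $\Ideals^A(B) = \Ideals^\gamma(B)$, arbitrary intersections of $\gamma$-invariant ideals are $\gamma$-invariant, giving~\ref{cond:C3'}; for~\ref{cond:C4}, the lower adjoint is $F(K) = \bigvee_{z \in \T} \gamma_z(K)$, and frame distributivity of $\Ideals(B)$ together with $\gamma$-invariance of $I$ yields
\[
I \cap F(K) = \bigvee_{z \in \T} (I \cap \gamma_z(K)) = \bigvee_{z \in \T} \gamma_z(I \cap K) = F(I \cap K).
\]
The hardest step is (4): the gauge-invariant uniqueness theorem in the possibly non-injective setting $J \subseteq J(X)$ is not entirely standard and requires the Hilbert bimodule or $J + \ker\varphi_X = A$ hypothesis to anchor a conditional expectation argument. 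Part~(3) also hinges on the explicit Doplicher--Roberts description from~(2), which is itself a substantial input from the cited paper.
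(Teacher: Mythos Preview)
Your proposal is essentially correct and aligns closely with the paper's proof. Parts (1), (2), and (4) are handled in the paper by citing \cite{Kwasniewski:Cuntz-Pimsner-Doplicher}*{Theorems 6.20 and 7.11}, which is exactly what you invoke (your ``gauge-invariant uniqueness'' is the content of those theorems). Part (5) is argued identically: the paper takes \(F(J)=\overline{\sum_{z\in\T}\gamma_z(J)}\) and checks \ref{cond:C4} via the computation \(I\cap F(J)=I\,\overline{\sum_z\gamma_z(J)}=\overline{\sum_z\gamma_z(IJ)}=F(I\cap J)\), which matches your frame-distributivity argument.

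The one place where the paper is more concrete is part (3). Your sketch says ``comparing graded pieces yields \(i(I_1)\cap i(I_2)\subseteq i(I_1\cap I_2)\)'' but does not name the mechanism. The paper carries this out by the explicit spanning-set computation: elements of \(i(I_1)\cdot i(I_2)\) are spanned by products \(j_n(X^{\otimes n})j_0(I_1)j_m(X^{\otimes m})^*\cdot j_l(X^{\otimes l})j_0(I_2)j_k(X^{\otimes k})^*\); assuming \(m\ge l\) and using that \(I_2\) is \emph{positively invariant}, one slides \(j_0(I_2)\) to the left past \(j_{m-l}(X^{\otimes m-l})^*\) to obtain an element of \(j_n(X^{\otimes n})j_0(I_1)j_0(I_2)j_{k+m-l}(X^{\otimes k+m-l})^*\subseteq i(I_1\cap I_2)\). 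Note that your preliminary identity \(r(i(I_1)\cap i(I_2))=I_1\cap I_2\) only recovers the easy inclusion \(i(I_1\cap I_2)=i\circ r(i(I_1)\cap i(I_2))\subseteq i(I_1)\cap i(I_2)\); the nontrivial direction genuinely needs positive invariance, which your plan does not mention. This is not a fatal gap---your graded-piece comparison would inevitably reduce to the same computation---but the paper's argument is what you would write once you unfolded it.
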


\begin{proof}
  Statements \ref{pro:restricted_Cuntz_Pismner1}
  and~\ref{pro:restricted_Cuntz_Pismner2} follow from
  \cite{Kwasniewski:Cuntz-Pimsner-Doplicher}*{Theorem 6.20}.  In
  particular, the positive invariance of a restricted ideal is
  straightforward and well known.  Moreover, if \(I\in \Ideals^B(A)\)
  is \(J\)\nb-invariant
  and we view \(\OO_X(I, J\cap I)\)
  and \(\OO(J\cap I, XI)\)
  as \(\Cst\)\nb-subalgebras
  of \(\OO(J,X)\),
  then
  \begin{align*}
    \OO_X(I, J\cap I) &= \overline{\operatorname{span}}
    \setgiven{j_{n}(X^{\otimes n})j_0(I)j_{m}(X^{\otimes m})^*}
    {n,m\in \N},\\
    \OO(J\cap I, XI) &= \overline{\operatorname{span}}
    \setgiven{j_0(I)j_{n}(X^{\otimes n})j_{m}(X^{\otimes m})^*j_0(I)}
    {n,m\in \N},
  \end{align*}
  where \((j_0,j_n)\)
  is the representation of \((A,X^{\otimes n})\)
  induced by \((j_0,j_1)\).
  This implies \(\OO(J\cap I, XI)\subseteq \OO_X(I, J\cap I)\),
  and equality holds here if and only if~\(j_0(I)\)
  is a non-degenerate subalgebra of \(\OO_X(I, J\cap I)\),
  if and only if~\(I\) is symmetric.

  Statement~\ref{pro:restricted_Cuntz_Pismner3} follows from
  \cite{Kwasniewski:Cuntz-Pimsner-Doplicher}*{Theorem~7.11}.  We
  prove~\ref{pro:restricted_Cuntz_Pismner5}.  Assume
  \(\Ideals^A(B)=\Ideals^\gamma(B)\).
  Let \(F(J)\defeq\overline{\sum_{z\in \T} \gamma_z(J)}\)
  for \(J\in \Ideals(B)\).
  Then \(F\colon \Ideals(B) \hookrightarrow \Ideals^A(B)\)
  satisfies~\eqref{eq:definition_of_F}.  Thus \eqref{cond:C3'} holds by
  Lemma~\ref{lem:galois_insertion}.  If \(I\in\Ideals^A(B)\)
  and \(J\in\Ideals(B)\),
  then \(I \gamma_z(J)= \gamma_z(I) \gamma_z(J)=\gamma_z(IJ)\)
  for all \(z\in \T\).
  Thus
  \(I\cap F(J) = I \cdot \overline{\sum_{z\in \T} \gamma_z(J)} =
  \overline{\sum_{z\in \T} \gamma_z(I J)} =F(I\cap J)\).
  That is, \eqref{cond:C4} holds.

  We prove~\ref{pro:restricted_Cuntz_Pismner4}.  Let
  \(I_1, I_2\in \Ideals^B(A)\).
  Then \(i(I_1\cap I_2)\subseteq i(I_1)\cap i(I_2)\)
  because~\(i\)
  is monotone.  To see the reverse inclusion, recall that
  \(i(I_1)\cap i(I_2)=i(I_1)i(I_2)\)
  is spanned by elements in the sets
  \[
  \Bigl(j_{n}(X^{\otimes n})j_0(I_1)j_{m}(X^{\otimes m})^*\Bigr)
  \Bigl(j_{l}(X^{\otimes l})j_0(I_2)j_{k}(X^{\otimes k})^*\Bigr),
  \]
  where \(n,m,l,k \in \N\).
  Assuming, for instance, that \(m\ge l\)
  and using properties of the maps \(j_m\)
  and~\(j_l\), we see that the above set is contained in
  \[
  j_{n}(X^{\otimes n})j_0(I_1)j_{m-l}(X^{\otimes m-l})^*j_0(I_2)j_{k}(X^{\otimes k})^*.
  \]
  This is contained in
  \(j_{n}(X^{\otimes n})j_0(I_1) j_0(I_2) j_{k+m-l}(X^{\otimes
    k+m-l})^* \subseteq i(I_1\cap I_2)\)
  because~\(I_2\)
  is positively invariant.  Thus
  \(i(I_1)\cap i(I_2)\subseteq i(I_1\cap I_2)\).
\end{proof}

\begin{remark}
  Since arbitrary meets of restricted ideals are restricted by
  Proposition~\ref{pro:restrict_induce_ideals}.\ref{pro:restrict_induce_ideals6},
  statement~\ref{pro:restricted_Cuntz_Pismner1} implies and
  strengthens \cite{Katsura:Ideal_structure_correspondences}*{first
    part of Proposition 4.10 and Corollary 4.11}.
\end{remark}

It seems that the homomorphism \(j_0\colon A\to \OO(J,X)\)
is never symmetric, unless~\(j_1(X)\)
is a Hilbert bimodule over~\(j_0(A)\),
when we are dealing with a Fell bundle over~\(\Z\).
Even if~\(A\)
separates ideals in~\(\OO_X\)
condition~\eqref{cond:C1} usually fails:

\begin{example}[\cite{Katsura:Ideal_structure_correspondences}*{Example 4.12}]
  \label{exa:Katsura_graph}
  Let~\(X\)
  be the \(\Cst\)\nb-correspondence
  over \(A=\C^3\)
  built from the directed graph
  \begin{tikzcd}
    \bullet &\bullet \arrow[l] \arrow[r] & \bullet
  \end{tikzcd}.
  Then \(\OO_X=M_2(\C)\oplus M_2(\C)\)
  and \(j_0(a,b,c)= \begin{pmatrix} a & 0 \\
    0 & c \end{pmatrix} \oplus
  \begin{pmatrix}
    b & 0 \\
    0 & c
  \end{pmatrix}\).
  We have already met this inclusion in
  Example~\ref{exa:join_not_restricted}.  The join of the
  restricted ideals \(I_1=\C\oplus 0 \oplus 0\)
  and \(I_2=0\oplus \C \oplus 0\)
  is not restricted.  Note that \(A\)
  separates ideals in \(B=\OO_X\)
  and, in agreement with
  Lemma~\ref{lem:separate_ideals_into_primes}, there is a
  continuous map
  \(r\colon \check{B}\congto
  \Prime(\Ideals^B(A))\subsetneq \check{A}\).
  There is, however, no surjective map from~\(\check{A}\)
  onto~\(\check{B}\).
  This is no contradiction with
  Corollary~\ref{cor:primitive_ideal_space_description}
  because~\eqref{cond:C1} is not satisfied.
\end{example}

The above discussion shows the following.  In the context of
Cuntz--Pimsner algebras, apart from the adjunction, the only
fact we get from our general theory is that
\(r\colon \Prime(\OO(J,X)) \to \Prime^B(A)=\Prime(\Ideals^X_J(A))\)
is a well defined continuous map (combine Proposition
\ref{pro:restricted_Cuntz_Pismner} and Lemma~\ref{lem:induced_map_rho}).
When we are not in the situation of
statement~\ref{pro:restricted_Cuntz_Pismner3} in
Proposition~\ref{pro:restricted_Cuntz_Pismner}, there are
gauge-invariant ideals in \(\OO(J,X)\)
which are not induced, and then we cannot even apply
Theorem~\ref{the:prime_primitive2} directly.
Nevertheless, we may overcome this issue by using the following
Lemma~\ref{lem:from_A_to_A_tilde}.

For any positively invariant ideal~\(I\)
there is a natural quotient \(\Cst\)\nb-correspondence
\(X_I\defeq X/XI\)
over~\(A/I\).  Let \(q_I\colon A\to A/I\) be the quotient map and let
\[
J_X(I)\defeq\setgiven{a\in A}{\varphi_{X_I}(q_I(a))\in \Comp(X_I), \ aX^{-1}(I) \subseteq I}.
\]
Let \(J\idealin J(X)\).
A \emph{\(J\)\nb-pair}
of~\(X\)
is a pair \((I,I')\)
of ideals \(I\),
\(I'\)
of~\(A\)
such that~\(I\)
is positively invariant and \(J+I\subseteq I'\subseteq J_X(I)\).
Let \(\Pair^X_J(A)\)
be the set of \(J\)\nb-pairs.
Equip it with the natural pre-order coming from inclusion.  Then the
map \(\Ideals^X_J(A) \hookrightarrow\Pair^X_J(A)\),
\(I \mapsto (I, I+J)\),
is an order-preserving embedding.  It is an isomorphism, for instance,
in the situation of statement~\ref{pro:restricted_Cuntz_Pismner3} in
Proposition~\ref{pro:restricted_Cuntz_Pismner}.

\begin{lemma}
  \label{lem:from_A_to_A_tilde}
  Let \(B\defeq \OO(J,X)\)
  for an ideal \(J\subseteq J(X)\).
  Put \(\widetilde{A}\defeq j_0(A)+ j^{(1)}(\Comp(X))\)
  and consider the inclusion \(\widetilde{A}\subseteq B\).
  \begin{enumerate}
  \item \label{lem:from_A_to_A_tilde1} We have a lattice isomorphism
    \(\Ideals^{B}(\widetilde{A})\cong \Pair^X_J(A)\),
    where a restricted ideal~\(I\)
    in~\(\widetilde{A}\)
    is mapped onto the \(J\)\nb-pair
    \(\left(j_0^{-1}(I), j_0^{-1} \bigl(I+
      j^{(1)}(\Comp(X))\bigr)\right)\) of~\(X\);
  \item \label{lem:from_A_to_A_tilde2}
    \(\Ideals^{\widetilde{A}}(B)=\Ideals^\gamma(B)\),
    that is, induced and gauge-invariant ideals coincide.
  \end{enumerate}
\end{lemma}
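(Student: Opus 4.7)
The strategy is to combine a direct Fell-bundle computation inside~\(B\) for part~\ref{lem:from_A_to_A_tilde1} with the established classification of gauge-invariant ideals of~\(\OO(J,X)\) by \(J\)\nb-pairs from \cite{Kwasniewski:Cuntz-Pimsner-Doplicher} for part~\ref{lem:from_A_to_A_tilde2}.

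For part~\ref{lem:from_A_to_A_tilde1}, I would first check that conjugation by~\(j_1(X)\) maps~\(\widetilde{A}\) into \(j_0(A)\): the identities
\[
j_1(x)^* j_0(a) j_1(y) = j_0(\braket{x}{ay}_A), \qquad
j_1(z)^* j_1(x) j_1(y)^* j_1(w) = j_0(\braket{z}{x}_A\braket{y}{w}_A),
\]
show \(j_1(X)^* \widetilde{A}\, j_1(X)\subseteq j_0(A)\). Define \(\Phi(I)\defeq(I_0,I_1)\) with \(I_0\defeq j_0^{-1}(I)\) and \(I_1\defeq j_0^{-1}(I+j^{(1)}(\Comp(X)))\). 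Writing \(I = K\cap\widetilde{A}\) for \(K\in\Ideals(B)\), the inclusion \(j_1(X)^* I j_1(X)\subseteq K\cap j_0(A) = j_0(I_0)\) yields \(X(I_0)\subseteq I_0\); covariance of \((j_0,j_1)\) on~\(J\) gives \(j_0(J)\subseteq j^{(1)}(\Comp(X))\) and hence \(J+I_0\subseteq I_1\); the saturation \(I_1\subseteq J_X(I_0)\) would then be obtained by passing to the quotient \(B/i(I_0)\), where~\(j_0\) factors through an injection into the multiplier algebra of the Cuntz--Pimsner algebra \(\OO(J\cap I_0,XI_0)\) via Proposition~\ref{pro:restricted_Cuntz_Pismner}.\ref{pro:restricted_Cuntz_Pismner2}, and the compact-operator term encoded by \(j^{(1)}(\Comp(X))\) recovers exactly the Katsura condition modulo~\(I_0\).

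For bijectivity, given a \(J\)\nb-pair \((I_0,I_1)\), the universal property of~\(\OO(J,X)\) produces a surjection \(B\onto\OO(q_{I_0}(I_1),X_{I_0})\), whose kernel~\(K\) is gauge-invariant; set \(\Psi(I_0,I_1)\defeq K\cap\widetilde{A}\). The verifications \(\Phi\circ\Psi=\Id\) and \(\Psi\circ\Phi=\Id\) are determined on the two generating pieces \(j_0(A)\) and \(j^{(1)}(\Comp(X))\) of~\(\widetilde{A}\), and lattice-preservation is transparent from the defining formulas. A key observation is that for a gauge-invariant \(K\in\Ideals(B)\) the two sums agree,
\[
(K\cap\widetilde{A}) + j^{(1)}(\Comp(X)) = \bigl(K+j^{(1)}(\Comp(X))\bigr)\cap\widetilde{A},
\]
so the formula for \(\Phi\) on the restriction \(K\cap\widetilde{A}\) computes the same \(J\)\nb-pair as the classical formula applied to~\(K\).

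For part~\ref{lem:from_A_to_A_tilde2}, the inclusion \(\Ideals^{\widetilde{A}}(B)\subseteq \Ideals^\gamma(B)\) is immediate since \(\gamma_z\) fixes both \(j_0(A)\) and the elements \(j_1(x)j_1(y)^*\), placing \(\widetilde{A}\) inside the fixed-point algebra~\(B^\gamma\); consequently, any ideal generated by a subset of \(\widetilde{A}\) is gauge-invariant. For the reverse inclusion I would invoke the classification of \cite{Kwasniewski:Cuntz-Pimsner-Doplicher}, which provides an injective map \(\Ideals^\gamma(B)\hookrightarrow\Pair^X_J(A)\) via the formula \(K\mapsto(j_0^{-1}(K),j_0^{-1}(K+j^{(1)}(\Comp(X))))\); combined with part~\ref{lem:from_A_to_A_tilde1} and the Galois isomorphism \(i\colon\Ideals^B(\widetilde{A})\congto\Ideals^{\widetilde{A}}(B)\) from Proposition~\ref{pro:restrict_induce_ideals}.\ref{pro:restrict_induce_ideals3}, both lattices biject with~\(\Pair^X_J(A)\) through formulas that agree by the identity above, forcing equality as sublattices of~\(\Ideals(B)\). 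The main obstacle is the saturation condition \(I_1\subseteq J_X(I_0)\) in part~\ref{lem:from_A_to_A_tilde1}, which must be extracted from the ambient restriction \(I = K\cap\widetilde{A}\) by a careful quotient argument; all other steps reduce to book-keeping with the Fell-bundle structure of~\(B\) over~\(\Z\) and the universal property of the relative Cuntz--Pimsner algebra.
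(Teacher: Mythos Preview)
Your approach is correct but takes a longer route than the paper. The paper's proof is essentially a two-line deduction: it invokes Katsura's result (\cite{Katsura:Ideal_structure_correspondences}*{Proposition~11.9}) that the map \(K\mapsto\bigl(j_0^{-1}(K),\,j_0^{-1}(K+j^{(1)}(\Comp(X)))\bigr)\) is already a lattice isomorphism \(\Ideals^\gamma(B)\cong\Pair^X_J(A)\), observes that this formula depends only on \(K\cap\widetilde{A}\) (via exactly the modular-law identity you wrote down), and concludes that restriction is injective on \(\Ideals^\gamma(B)\). Combined with \(\widetilde{A}\subseteq B^\gamma\), this forces \(i\circ r(K)=K\) for every gauge-invariant~\(K\), giving~\ref{lem:from_A_to_A_tilde2} first; part~\ref{lem:from_A_to_A_tilde1} then drops out for free from the Galois isomorphism \(\Ideals^B(\widetilde{A})\cong\Ideals^{\widetilde{A}}(B)=\Ideals^\gamma(B)\cong\Pair^X_J(A)\).

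You reverse the order: you attack~\ref{lem:from_A_to_A_tilde1} directly by verifying the \(J\)\nb-pair axioms by hand (positive invariance from \(j_1(X)^* I\,j_1(X)\subseteq j_0(I_0)\), the inclusion \(J+I_0\subseteq I_1\) from covariance on~\(J\), and the harder saturation bound \(I_1\subseteq J_X(I_0)\) by a quotient argument), and you build the inverse via the universal property. This is more illuminating about where each condition in the definition of a \(J\)\nb-pair comes from, but notice that your bijectivity check for~\(\Psi\circ\Phi\) and your argument for~\ref{lem:from_A_to_A_tilde2} both still lean on the cited classification---so the hand verification buys insight rather than logical independence. The paper's order, \ref{lem:from_A_to_A_tilde2} then~\ref{lem:from_A_to_A_tilde1}, is the more economical one, and in particular it sidesteps the saturation obstacle you flag, since once every restricted ideal is known to be the restriction of a gauge-invariant ideal, the \(J\)\nb-pair property is inherited from Katsura's theorem rather than proved from scratch.
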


\begin{proof}
  The map \(\Ideals^\gamma(B) \ni I \mapsto \left(j_0^{-1}(I),
    j_0^{-1}\bigl(I+ j^{(1)}(\Comp(X))\bigr)\right) \in
  \Pair^X_J(A)\) is a lattice isomorphism by
  \cite{Katsura:Ideal_structure_correspondences}*{Proposition~11.9}
  (see also
  \cite{Kwasniewski:Cuntz-Pimsner-Doplicher}*{Theorem~7.17}).  This
  readily implies~\ref{lem:from_A_to_A_tilde2} and hence
  also~\ref{lem:from_A_to_A_tilde1}.
\end{proof}

We take the opportunity to use a recent result
of~\cite{Carlsen-Kwasniewski-Ortega:Free_correspondence}.
If \(J\idealin J(X)\) then the kernel~\(J_\infty\) of \(j_0\colon A\to \OO(J,X)\)
is the smallest \(J\)\nb-invariant ideal containing~\(0\).
There is a canonical isomorphism
\(\OO(J,X)\cong\OO(q_{J_{\infty}}(J), X_{J_{\infty}})\),
where \(q_{J_{\infty}}\colon A\to A/J_{\infty}\)
is the quotient map and \(X_{J_{\infty}}=X/XJ_{\infty}\)
is the quotient \(\Cst\)\nb-correspondence
over~\(A/J_{\infty}\) (see
\cite{Kwasniewski:Cuntz-Pimsner-Doplicher}*{Theorem~6.23}).

\begin{definition}
  Let~\(X\)
  be a \(\Cst\)\nb-correspondence
  over a liminal \(\Cst\)\nb-algebra~\(A\)
  and let \(J\idealin J(X)\).
  We call~\(X\) \emph{topologically free on~\(J\)}
  if the graph dual to~\(X_{J_{\infty}}\)
  is topologically free on the set corresponding to~\(q_{J_{\infty}}(J)\)
  (see
  \cite{Carlsen-Kwasniewski-Ortega:Free_correspondence}*{Defitinions
    5.1 and~3.2}).  We call~\(X\)
  \emph{residually topologically free on~\(J\)}
  if, for every \((I,I')\in \Pair^X_J(A)\),
  the \(\Cst\)\nb-correspondence~\(X_I\)
  is topologically free on~\(q_I(I')\).
\end{definition}

\begin{theorem}
  \label{thm:Cuntz-Pimsner_algebras}
  Let \(B\defeq \OO(J,X)\)
  for an ideal \(J\subseteq J(X)\).
  Suppose that~\(\check{B}\)
  is second countable
  or that \(\Prime(\Pair^X_J(A))\)
  is first countable.  The formula
  \[
  r(\prid)\defeq
  \left(j_0^{-1}(\prid), j_0^{-1}\bigl(\prid+ j^{(1)}(\Comp(X))\bigr)\right)
  \]
  defines a continuous open and surjective map
  \(r\colon \check{B}\to \Prime(\Pair^X_J(A))\).
  The following conditions are equivalent:
  \begin{enumerate}
  \item \label{thm:Cuntz-Pimsner_algebras1}%
    \(r\) is a homeomorphism;
  \item \label{thm:Cuntz-Pimsner_algebras2}%
    all ideals in~\(B\) are gauge-invariant;
  \item \label{thm:Cuntz-Pimsner_algebras3}%
    \(\widetilde{A}\defeq j_0(A)+ j^{(1)}(\Comp(X))\)
    separates ideals in \(\OO(J,X)\).
  \end{enumerate}
  If~\(A\) is liminal, the above are further equivalent to
  \begin{enumerate}[resume]
  \item \label{thm:Cuntz-Pimsner_algebras4}%
    \(X\)~is residually topologically free on~\(J\).
  \end{enumerate}
\end{theorem}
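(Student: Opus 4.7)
The plan is to transfer the problem to the ordinary \(\Cst\)\nb-inclusion \(\widetilde{A} \subseteq B\) via Lemma~\ref{lem:from_A_to_A_tilde} and then apply Theorem~\ref{the:prime_primitive2} together with its consequences. By Lemma~\ref{lem:from_A_to_A_tilde}.\ref{lem:from_A_to_A_tilde1}, the lattices \(\Ideals^B(\widetilde{A})\) and \(\Pair^X_J(A)\) are isomorphic, and the lattice isomorphism sends the restricted ideal \(I\in\Ideals^B(\widetilde{A})\) to the \(J\)\nb-pair \(\bigl(j_0^{-1}(I),\, j_0^{-1}(I + j^{(1)}(\Comp(X)))\bigr)\); this gives the displayed formula for~\(r\) as soon as continuity and the isomorphism \(\Prime^B(\widetilde{A}) \cong \Prime(\Pair^X_J(A))\) are established.

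To obtain that \(r\) is a continuous open surjection, I would verify conditions \ref{cond:C3'} and~\ref{cond:C4} for the inclusion \(\widetilde{A}\subseteq B\). By Lemma~\ref{lem:from_A_to_A_tilde}.\ref{lem:from_A_to_A_tilde2}, induced ideals for this inclusion coincide with gauge-invariant ideals, so setting \(F(K)\defeq \overline{\sum_{z\in\T}\gamma_z(K)}\) produces the smallest induced ideal containing \(K\in\Ideals(B)\); arbitrary intersections of such ideals are again gauge-invariant, giving~\ref{cond:C3'}, and the identity \(I\cdot \gamma_z(K) = \gamma_z(I\cap K)\) for gauge-invariant~\(I\) gives~\ref{cond:C4} exactly as in the proof of Proposition~\ref{pro:restricted_Cuntz_Pismner}.\ref{pro:restricted_Cuntz_Pismner5}. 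Under the countability hypothesis, \(\Prime^B(\widetilde{A}) \cong \Prime(\Pair^X_J(A))\) is first countable, so Theorem~\ref{the:prime_primitive2} applies, yielding that \(r\colon \check{B}\to \Prime^B(\widetilde{A})\) is a continuous open surjection. By Corollary~\ref{cor:first_countable_implies_sober}, \(\check{B}=\Prime(B)\).

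Next I would deduce the equivalences \ref{thm:Cuntz-Pimsner_algebras1}\(\Leftrightarrow\)\ref{thm:Cuntz-Pimsner_algebras2}\(\Leftrightarrow\)\ref{thm:Cuntz-Pimsner_algebras3}. For \ref{thm:Cuntz-Pimsner_algebras1}\(\Leftrightarrow\)\ref{thm:Cuntz-Pimsner_algebras3}, apply Lemma~\ref{lem:separation_without_openness}: \(r\) is a bijection (hence a homeomorphism, as it is already continuous and open) if and only if \(\widetilde{A}\) separates ideals in~\(B\). For \ref{thm:Cuntz-Pimsner_algebras2}\(\Leftrightarrow\)\ref{thm:Cuntz-Pimsner_algebras3}, by Proposition~\ref{pro:separate_induced}, separation of ideals by \(\widetilde{A}\) is equivalent to \(\Ideals^{\widetilde{A}}(B) = \Ideals(B)\), which by Lemma~\ref{lem:from_A_to_A_tilde}.\ref{lem:from_A_to_A_tilde2} means exactly \(\Ideals^\gamma(B)=\Ideals(B)\).

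The main obstacle is the equivalence with~\ref{thm:Cuntz-Pimsner_algebras4} in the liminal case. Here I would use the second characterisation in Proposition~\ref{pro:separate_induced}: \(\widetilde{A}\) separates ideals in~\(B\) if and only if \(\widetilde{A}/r(K)\) detects ideals in \(B/K\) for every induced \(K\in\Ideals^{\widetilde{A}}(B)\). Each such~\(K\) corresponds to a \(J\)\nb-pair \((I,I')\), and the quotient \(B/K\) is naturally isomorphic to the relative Cuntz--Pimsner algebra \(\OO(q_I(I'), X_I)\) over \(A/I\), with \(\widetilde{A}/r(K)\) identified with \(q_I(A/I) + j^{(1)}(\Comp(X_I))\). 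Under liminality of~\(A\), detection of ideals in \(\OO(q_I(I'), X_I)\) by this subalgebra is precisely what is characterised by the topological freeness of~\(X_I\) on \(q_I(I')\), using \cite{Carlsen-Kwasniewski-Ortega:Free_correspondence}*{Theorem in Section~5}. The delicate point is verifying that the ``residual'' quantifier over all \(J\)\nb-pairs corresponds exactly to detection on every quotient by an induced ideal, and that passing to the quotient by~\(J_\infty\) (which is harmless because it appears in the definition of topological freeness on~\(J\)) is compatible with the \(J\)\nb-pair parametrisation of gauge-invariant ideals.
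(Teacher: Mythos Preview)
Your proposal is correct and follows essentially the same route as the paper: both reduce to the inclusion \(\widetilde{A}\subseteq B\) via Lemma~\ref{lem:from_A_to_A_tilde}, verify \ref{cond:C3'} and~\ref{cond:C4} through the gauge action (exactly as in Proposition~\ref{pro:restricted_Cuntz_Pismner}.\ref{pro:restricted_Cuntz_Pismner5}), apply Theorem~\ref{the:prime_primitive2}, and handle the liminal equivalence via Proposition~\ref{pro:separate_induced} together with the detection criterion from~\cite{Carlsen-Kwasniewski-Ortega:Free_correspondence}. The paper's proof is simply terser and cites \cite{Kwasniewski:Cuntz-Pimsner-Doplicher}*{Theorem~7.17} explicitly for the quotient identification \(B/K\cong \OO(q_I(I'),X_I)\), which resolves the ``delicate point'' you flag at the end.
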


\begin{proof}
  Since gauge-invariant ideals are closed under arbitrary meets,
  Lemma~\ref{lem:from_A_to_A_tilde} implies that the
  \(\Cst\)\nb-inclusion
  \(\widetilde{A}\subseteq B\)
  satisfies the assumptions of the second part of
  Theorem~\ref{the:prime_primitive2}.  This theorem implies the first
  part of the present assertion.  Since
  \(\Ideals^{\widetilde{A}}(B)=\Ideals^\gamma(B)\),
  we readily see the equivalence between
  \ref{thm:Cuntz-Pimsner_algebras1}--\ref{thm:Cuntz-Pimsner_algebras3}.

  Assume now that~\(A\)
  is liminal.  By
  \cite{Carlsen-Kwasniewski-Ortega:Free_correspondence}*{Theorem~8.3}
  (see also
  \cite{Carlsen-Kwasniewski-Ortega:Free_correspondence}*{Lemma~6.2}),
  the algebra~\(\widetilde{A}\)
  detects ideals in~\(B\)
  if and only if~\(X\)
  is topologically free on~\(J\).
  For any \(J\in \Ideals^{\widetilde{A}}(B)=\Ideals^\gamma(B)\),
  the quotient~\(B/J\)
  is naturally isomorphic to the relative Cuntz--Pimsner algebra
  \(\OO(q_I(I'),X_I)\)
  (see \cite{Kwasniewski:Cuntz-Pimsner-Doplicher}*{Theorem~7.17}).
  Under this isomorphism, the algebra~\(\widetilde{A}\)
  is mapped onto its counterpart in \(\OO(q_I(I'),X_I)\).
  Hence Proposition~\ref{pro:separate_induced} implies the equivalence
  between \ref{thm:Cuntz-Pimsner_algebras3}
  and~\ref{thm:Cuntz-Pimsner_algebras4}.
\end{proof}

\begin{corollary}
  Let \(B\defeq \OO(J,X)\)
  for an ideal \(J\subseteq J(X)\)
  with \(J+\ker(\varphi_X)=A\).
  Suppose that \(\Prime(\Ideals^X_J(A))\)
  is first countable or~\(\check{B}\)
  is second countable.  Then \(r(\prid)\defeq j_0^{-1}(\prid)\)
  defines a continuous, open and surjective map
  \(r\colon \check{B}\to \Prime(\Ideals^X_J(A))\),
  and~\(r\)
  is a homeomorphism if and only if~\(A\) separates ideals in~\(B\).

  If~\(A\)
  is liminal, then~\(r\)
  induces a homeomorphism \(\check{B}\cong\Prime(\Ideals^X_J(A))\)
  if and only if for every \(J\)\nb-invariant
  ideal~\(I\)
  the graph dual to~\(X_I\)
  is topologically free on
  \(\widehat{q_I(J)}\cong\widehat{J}\setminus \widehat{I}\).
\end{corollary}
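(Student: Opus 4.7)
The plan is to derive this from Theorem~\ref{thm:Cuntz-Pimsner_algebras} by showing that the extra hypothesis \(J+\ker(\varphi_X)=A\) collapses the \(J\)\nb-pair description to the simpler invariant-ideal description. First I would invoke Proposition~\ref{pro:restricted_Cuntz_Pismner}.\ref{pro:restricted_Cuntz_Pismner3}, which under \(J+\ker(\varphi_X)=A\) gives \(\Ideals^A(B)=\Ideals^\gamma(B)\), so induced-from-\(A\) and gauge-invariant ideals already coincide. Next I would observe that the map \(\Ideals^X_J(A) \hookrightarrow\Pair^X_J(A)\), \(I \mapsto (I, I+J)\), is a lattice isomorphism in precisely this case (this is the ``for instance'' comment made right after the definition of \(\Pair^X_J(A)\) in the excerpt), and therefore restricts to a homeomorphism \(\Prime(\Ideals^X_J(A)) \cong \Prime(\Pair^X_J(A))\).

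Under this identification the map \(r\colon\check{B}\to\Prime(\Pair^X_J(A))\) of Theorem~\ref{thm:Cuntz-Pimsner_algebras}, which a priori sends \(\prid\) to the pair \(\bigl(j_0^{-1}(\prid),\,j_0^{-1}(\prid+j^{(1)}(\Comp(X)))\bigr)\), collapses to \(\prid \mapsto j_0^{-1}(\prid) \in \Prime(\Ideals^X_J(A))\), since the second coordinate is forced to equal \(j_0^{-1}(\prid)+J\) by the isomorphism. Theorem~\ref{thm:Cuntz-Pimsner_algebras} then directly supplies the continuity, openness and surjectivity of~\(r\), given the first-countability/second-countability hypothesis (which transports across the homeomorphism \(\Prime(\Ideals^X_J(A)) \cong \Prime(\Pair^X_J(A))\)).

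For the homeomorphism criterion I would argue as follows. By Theorem~\ref{thm:Cuntz-Pimsner_algebras}, \(r\) is a homeomorphism if and only if every ideal of~\(B\) is gauge-invariant, equivalently if and only if \(\widetilde{A}\) separates ideals in~\(B\). But in our situation gauge-invariant ideals are exactly those induced from~\(A\), so ``every ideal of~\(B\) is gauge-invariant'' means ``every ideal of~\(B\) is induced from~\(A\)'', which by Proposition~\ref{pro:separate_induced} is precisely ``\(A\) separates ideals in~\(B\)''. This handles the first part of the corollary.

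Finally, for the liminal case I would specialise the equivalence \ref{thm:Cuntz-Pimsner_algebras3}\(\Leftrightarrow\)\ref{thm:Cuntz-Pimsner_algebras4} of Theorem~\ref{thm:Cuntz-Pimsner_algebras}. Residual topological freedom of~\(X\) on~\(J\) requires that for every \((I,I')\in\Pair^X_J(A)\) the correspondence~\(X_I\) be topologically free on~\(q_I(I')\); under our isomorphism every \(J\)\nb-pair has the form \((I,I+J)\) with \(I\in\Ideals^X_J(A)\), so the condition reduces to testing topological freedom of~\(X_I\) on \(q_I(I+J)=q_I(J)\). The Dauns--Hofmann type identification \(\widehat{A/I}\cong\widehat{A}\setminus\widehat{I}\) (valid for liminal~\(A\), since then \(\widehat{A}=\Prime(A)\)) sends the ideal \(q_I(J)\vartriangleleft A/I\) to the open subset \(\widehat{J+I}\setminus\widehat{I}=\widehat{J}\setminus\widehat{I}\) of \(\widehat{A}\setminus\widehat{I}\). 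The only point requiring a moment of care is the identification \(\widehat{q_I(J)}\cong\widehat{J}\setminus\widehat{I}\), which is essentially routine but uses that liminality passes to quotients and subquotients, so that both sides are genuine primitive ideal spaces. With this in place, the second equivalence is immediate.
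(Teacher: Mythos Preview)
Your proposal is correct and follows essentially the same approach as the paper's own proof, which is a one-line remark that under \(J+\ker(\varphi_X)=A\) one may replace \(\Pair^X_J(A)\) by \(\Ideals^X_J(A)\). You have simply spelled out in detail what that replacement entails, including the translation of the homeomorphism criterion from ``\(\widetilde{A}\) separates ideals'' to ``\(A\) separates ideals'' via Proposition~\ref{pro:separate_induced} and the specialisation of residual topological freedom to \(J\)\nb-invariant ideals.
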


\begin{proof}
  If \(J+\ker(\varphi_X)=A\)
  we may replace \(\Pair^X_J(A)\) by
  \(\Ideals^X_J(A)\).
\end{proof}

\subsection{Groupoid actions}
\label{sec:groupoid}

Throughout this section, let~\(G\)
be a locally compact groupoid with Haar system.  Let~\(G^0\)
be its object space.  We are going to generalise
Proposition~\ref{pro:restricted_group_crossed} and
Theorem~\ref{thm:group_restricted_ideals_nice} to actions of~\(G\)
(see~\cite{Muhly-Williams:Renaults_equivalence}).  Such an action
requires a \(\Cont_0(G^0)\)-\(\Cst\)-algebra~\(A\).
Let~\(A_x\)
for \(x\in G^0\)
be its fibres.  The arrows in~\(G\)
act by isomorphisms \(\alpha_g\colon A_{s(g)} \congto A_{r(g)}\)
for all \(g\in G\),
which satisfy the usual algebraic condition
\(\alpha_g \alpha_h = \alpha_{g h}\)
for composable \(g,h\in G\)
and which are continuous in a suitable sense.  Namely, if
\(U\subseteq G\)
is a Hausdorff, open subset, then applying~\(\alpha_g\)
for \(g\in G\)
pointwise gives an isomorphism of \(\Cont_0(U)\)-\(\Cst\)-algebras
\(s|_U^*(A) \congto r|_U^*(A)\).

We fix a \(\Cont_0(G^0)\)-\(\Cst\)-algebra~\(A\)
with a continuous action~\(\alpha\) of~\(G\).
The full crossed product \(A\rtimes G\)
is defined in~\cite{Muhly-Williams:Renaults_equivalence},
following~\cite{Renault:Representations}.  The reduced crossed
product \(A\rtimes_\lambda G\)
is easier to define, using a particular family of ``regular''
representations of the convolution algebra that defines \(A\rtimes G\).
The morphism \(A\to \Mult(A\rtimes_\lambda G)\)
is well known to be injective.  A \emph{crossed product} for
\((A,G,\alpha)\)
is a \(\Cst\)\nb-algebra~\(B\)
with surjections \(A\rtimes G \onto B \onto A\rtimes_\lambda G\)
that compose to the regular representation.
The following results apply to any crossed product.

Let
\(I\in\Ideals(A)\).  Let \(I_x\in\Ideals(A_x)\) for \(x\in G^0\) be
the image of~\(I\) in the fibre~\(A_x\).  These ideals
determine~\(I\) uniquely.
We call~\(I\) \emph{\(\alpha\)\nb-invariant} if
\(\alpha_g(I_{s(g)}) = I_{r(g)}\) for all \(g\in G\).  The
ideal~\(I\) inherits a \(\Cont_0(G^0)\)-\(\Cst\)-algebra structure
through the canonical morphism \(\Mult(A) \to \Mult(I)\).  Its
fibres are canonically isomorphic to the ideals
\(I_x\in\Ideals(A_x)\).  Being invariant means that the action
on~\(A\) restricts to an action~\(\alpha|_I\) on~\(I\).  Moreover,
there is an induced action on the quotient~\(A/I\).  It inherits a
\(\Cont_0(G^0)\)-\(\Cst\)-algebra structure through the canonical
morphism \(\Mult(A) \to \Mult(A/I)\).  The fibre of~\(A/I\) at
\(x\in G^0\) is the quotient \(A_x/I_x\), and~\(\alpha_g\) for
\(g\in G\) indeed induces an isomorphism \(\dot\alpha_g\colon
A_{s(g)}/I_{s(g)} \congto A_{r(g)}/I_{r(g)}\) if~\(I\) is invariant.
The \(\Cont_0(G^0)\)-\(\Cst\)-algebra structure on~\(A\) is
equivalent to a continuous map \(p\colon \check{A} \to G^0\).
The isomorphism~\(\alpha_g\) above induces a homeomorphism
\(\alpha_g^*\colon p^{-1}(s(g)) \congto p^{-1}(r(g))\).  This
defines an action of the groupoid~\(G\) on~\(\check{A}\).  This
action is continuous, that is, the maps \(\alpha_g^*\) above piece
together to a homeomorphism \(G\times_{s,G^0,p} \check{A} \congto
G\times_{r,G^0,p} \check{A}\), regardless of whether~\(\check{A}\)
is Hausdorff or not.
An ideal~\(I\) is
\(G\)\nb-invariant if and only if the corresponding open subset
of~\(\check{A}\) is \(G\)\nb-invariant.

\begin{lemma}
  \label{lem:invariant_restricted_groupoid}
  Let \(I\in\Ideals(A)\) be invariant.  Then~\(I\) is symmetric and
  restricted, and the induced ideal \(i(I)\) in~\(B\)
  is the image in~\(B\) of the ideal \(I\rtimes G\idealin A\rtimes G\).
\end{lemma}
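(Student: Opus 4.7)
The plan is to mirror the proof of Proposition~\ref{pro:restricted_group_Fell} for Fell bundles over groups. The relevant dense convolution algebra will be \(\Contc(G,\rg^*A)\), which sits inside \(A\rtimes G\) and hence, via the canonical surjections, inside both~\(B\) and \(A\rtimes_\lambda G\). The central observation is that invariance of~\(I\) forces both left and right multiplication by \(\varphi(I)\) on \(\Contc(G,\rg^*A)\) to land in the subspace \(\Contc(G,\rg^*I)\), which densely spans the crossed-product ideal \(I\rtimes G\).

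First I would establish that \(\varphi(I)\cdot B = B\cdot \varphi(I)\) equals the image of \(I\rtimes G\) in~\(B\); this simultaneously gives the symmetry of~\(I\) and the identification of \(i(I)\) with the image of \(I\rtimes G\). For \(a\in I\) and \(f\in \Contc(G,\rg^*A)\), the pointwise formulas \((a\cdot f)(g) = a(\rg(g))\,f(g)\) and \((f\cdot a)(g) = f(g)\,\alpha_g(a(\s(g)))\) land in \(I_{\rg(g)}\), since \(\alpha_g(I_{\s(g)}) = I_{\rg(g)}\) by invariance. The reverse inclusions follow from an approximate unit argument in~\(I\). Taking closures in~\(B\) yields the desired equalities, so \(I\) is symmetric and \(i(I) = \varphi(I)\cdot B\) is precisely the image of \(I\rtimes G\) in~\(B\).

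It remains to prove \(r\circ i(I)\subseteq I\). Let \(p\colon B \onto A\rtimes_\lambda G\) denote the canonical surjection. Repeating the previous step inside \(A\rtimes_\lambda G\) identifies \(p(i(I))\) with the closed ideal \(J_\lambda \defeq \varphi(I)\cdot(A\rtimes_\lambda G)\), which is also the closure of the image of \(\Contc(G,\rg^*I)\) in \(A\rtimes_\lambda G\). If \(a\in r\circ i(I)\), then \(\varphi(a)\cdot B\subseteq i(I)\), and applying the multiplier extension of~\(p\) yields \(\varphi(a)\cdot(A\rtimes_\lambda G)\subseteq J_\lambda\). Now invoke the canonical faithful conditional expectation \(E\colon A\rtimes_\lambda G \to A\) obtained by restricting sections to \(G^0\subseteq G\): it is an \(A\)\nb-bimodule map that restricts to the identity on~\(A\), and sends \(\Contc(G,\rg^*I)\) into \(I\), so \(E(J_\lambda)\subseteq I\) by continuity. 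For an approximate unit \((e_\mu)\) in~\(A\), the element \(\varphi(a)\cdot e_\mu\) lies in \(A\cap J_\lambda\), so \(a e_\mu = E(\varphi(a)e_\mu)\in I\); letting \(\mu\to\infty\) gives \(a\in I\). The main obstacle is marshalling the standard facts about Renault's reduced crossed product---in particular the conditional expectation~\(E\) and its compatibility with the invariant ideal~\(I\)---with appropriate care when~\(G\) is non-Hausdorff, where compactly supported continuous sections must be interpreted in the algebraic sense.
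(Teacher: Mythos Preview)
Your argument for symmetry and for the identification \(i(I)=\) image of \(I\rtimes G\) is essentially the paper's: both compute the left and right products of \(\varphi(I)\) with the dense convolution algebra and see that invariance forces them to coincide with \(\Contc(G,\rg^*I)\) (the paper writes this as \(\mathfrak{S}(G,I)\) and uses the \Star{}algebra structure rather than the explicit formula for right multiplication, but the content is the same).

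The gap is in your proof that \(r\circ i(I)\subseteq I\). You invoke a ``canonical faithful conditional expectation \(E\colon A\rtimes_\lambda G\to A\) obtained by restricting sections to \(G^0\subseteq G\).'' Such a map exists when~\(G\) is \emph{\'etale}, because then \(G^0\) is open in~\(G\) and restriction to~\(G^0\) is continuous for the reduced norm. For a general locally compact groupoid with Haar system---the setting of this section---\(G^0\) is merely closed, and the restriction map \(f\mapsto f|_{G^0}\) on \(\Contc(G,\rg^*A)\) is \emph{not} bounded for the reduced norm. (Already for a non-discrete locally compact group, evaluation at the identity does not extend to \(A\rtimes_\lambda G\).) So the expectation you need simply does not exist here, and the final step of your argument collapses.

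The paper circumvents this by passing to the quotient rather than using an expectation: there is always a \Star{}homomorphism \(B/i(I)\onto (A\rtimes_\lambda G)/i(I)\onto (A/I)\rtimes_\lambda G\), and the canonical map \(A/I\to\Mult\bigl((A/I)\rtimes_\lambda G\bigr)\) is injective. Since \(r\circ i(I)\) is by definition the kernel of \(A\to\Mult(B/i(I))\), composing with the map above shows this kernel is contained in~\(I\). This route requires no boundedness of restriction to~\(G^0\) and works for arbitrary locally compact groupoids with Haar system.
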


\begin{proof}
  Due to the surjective map \(A\rtimes G\onto B\),
  it suffices to prove that~\(I\)
  is symmetric for the inclusion \(A\subseteq \Mult(A\rtimes G)\).
  By definition, \(A\rtimes G\)
  is the \(\Cst\)\nb-completion
  of a certain convolution \Star{}algebra \(\mathfrak{S}(G,A)\)
  of compactly supported functions \(G\to A\).
  Namely, \(\mathfrak{S}(G,A)\)
  is the linear span of the spaces of compactly supported continuous
  sections of the \(\Cont_0(U)\)-\(\Cst\)-algebra
  \(r|_U^*(A)\)
  for Hausdorff, open, relatively compact subsets \(U\subseteq G\).
  The left multiplication with elements of~\(A\)
  is simply pointwise multiplication.  Therefore,
  \(I\cdot\mathfrak{S}(G,A) = \mathfrak{S}(G,I)\).
  This is a \Star{}subalgebra of~\(\mathfrak{S}(G,A)\)
  because~\(I\)
  is invariant.  So~\(\mathfrak{S}(G,I)\)
  carries its own \Star{}algebra structure, defined by the same
  formulas.  Thus
  \[
  \mathfrak{S}(G,A) \cdot I
  = (I^* \cdot \mathfrak{S}(G,A)^*)^*
  = (I \cdot \mathfrak{S}(G,A))^*
  = \mathfrak{S}(G,I)^*
  = \mathfrak{S}(G,I).
  \]
  Thus~\(I\) is symmetric.  Moreover, the ideal in
  \(A\rtimes G\) induced by~\(I\)
  is the closure of \(\mathfrak{S}(G,I)\).
  Passing to a quotient~\(B\) of \(A\rtimes G\), the ideal~\(i(I)\)
  becomes the image of~\(I\rtimes G\) in~\(B\) because it makes no
  difference whether we first close \(\mathfrak{S}(G,I)\) in
  \(A\rtimes G\) and then project to~\(B\) or the other way around.

  We are going to prove that \(r(i(I)) = I\).  We use the
  homomorphism \(B/i(I) \onto (A\rtimes_\lambda G)/i(I) \onto A/I
  \rtimes_\lambda G\), where we use the canonical map \(A
  \rtimes_\lambda G\to A/I \rtimes_\lambda G\), which clearly
  vanishes on \(I\rtimes_\lambda G\), the image of \(I\rtimes G\) in
  \(A\rtimes_\lambda G\).  The canonical map \(A/I \to \Mult(A/I
  \rtimes_\lambda G)\) is injective.  Hence the map \(A/I \to
  \Mult(B/i(I))\) is injective as well.  Since \(r(i(I))\) is the
  kernel of the map \(A \to \Mult(B/i(I))\), this implies \(I =
  r(i(I))\).
\end{proof}

\begin{lemma}
  \label{lem:invariant_ideals_properties_groupoid}
  Joins of invariant ideals are again invariant.
\end{lemma}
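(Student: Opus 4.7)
The plan is to leverage either the fibrewise description of invariant ideals or the equivalent description in terms of $G$-invariant open subsets of $\check{A}$, both of which were discussed just before the lemma. Either route gives a very short argument; the only thing to verify is that joins commute with the relevant structure (restriction to fibres, or passage to open subsets of the primitive ideal space).

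First I would take a family $(I_\lambda)_{\lambda \in \Lambda}$ of $\alpha$\nb-invariant ideals of~$A$ and set $I \defeq \bigvee_{\lambda} I_\lambda$, which is simply the closed linear span of the $I_\lambda$ in~$A$. The key observation is that for each $x \in G^0$ the fibre of~$I$ equals $I_x = \bigvee_\lambda (I_\lambda)_x$, since the quotient map $A \onto A_x$ preserves joins of ideals. Combined with the fact that each $\alpha_g \colon A_{s(g)} \to A_{r(g)}$ is a $\Cst$\nb-isomorphism, and therefore a lattice isomorphism on ideals, one obtains
\[
\alpha_g(I_{s(g)})
= \alpha_g\Bigl(\bigvee_\lambda (I_\lambda)_{s(g)}\Bigr)
= \bigvee_\lambda \alpha_g\bigl((I_\lambda)_{s(g)}\bigr)
= \bigvee_\lambda (I_\lambda)_{r(g)}
= I_{r(g)}
\]
for every $g \in G$, so that~$I$ is $\alpha$\nb-invariant.

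An alternative, perhaps cleaner, route is to use the bijection between $\Ideals(A)$ and $\Open(\check{A})$. The text has already noted that an ideal~$I$ is $\alpha$\nb-invariant if and only if the corresponding open subset $U_I \subseteq \check{A}$ is $G$\nb-invariant for the dual continuous action of~$G$ on~$\check{A}$. Since joins of ideals correspond to unions of open subsets, and a union of $G$\nb-invariant open subsets is manifestly $G$\nb-invariant, the conclusion is immediate. There is no real obstacle in this lemma; the only point worth flagging in a detailed write-up is the standard fact that the quotient $A \onto A_x$ preserves joins of ideals, which ensures that taking fibres is a locale morphism.
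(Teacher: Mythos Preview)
Your proposal is correct, and your second route---identifying invariant ideals with $G$\nb-invariant open subsets of~$\check{A}$ and noting that unions of $G$\nb-invariant subsets are $G$\nb-invariant---is exactly the paper's proof. The fibrewise argument you give first is also valid but slightly more laborious; the paper opts directly for the open-set version.
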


\begin{proof}
  The lattice \(\Ideals(A)\) is isomorphic to \(\Open(\check{A})\),
  where joins are simply unions.  Here invariant ideals correspond
  to \(G\)\nb-invariant open subsets of~\(\check{A}\).  A union of
  \(G\)\nb-invariant subsets of~\(\check{A}\) is again
  \(G\)\nb-invariant.
\end{proof}

\begin{proposition}
  \label{pro:restricted_ideal_groupoid}
  Let~\(\alpha\)
  be an action of a second countable groupoid~\(G\)
  on a separable \(\Cst\)\nb-algebra~\(A\).
  Let \(\varphi\colon A\to \Mult(B)\)
  be the canonical generalised \(\Cst\)\nb-inclusion
  of~\(A\)
  into a crossed product algebra~\(B\)
  of~\(\alpha\).
  \begin{enumerate}
  \item \label{pro:restricted_ideal_groupoid1}%
    \(\Ideals^B(A)\)
    is the subset \(\Ideals^\alpha(A)\) of
    \(\alpha\)\nb-invariant ideals in~\(A\);
  \item \label{pro:restricted_ideal_groupoid2}%
    an ideal \(J\idealin B\)
    is induced if and only if it is the image of
    \(I\rtimes G\)
    in~\(B\) for some \(\alpha\)\nb-invariant ideal \(I\idealin A\).
  \end{enumerate}
  The \(\Cst\)\nb-inclusion
  \(\varphi\colon A\to \Mult(B)\)
  is symmetric, and \eqref{cond:C1}, \eqref{cond:C2} and \eqref{cond:C3} hold.
\end{proposition}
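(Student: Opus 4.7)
The plan is as follows. The converse inclusion $\Ideals^\alpha(A)\subseteq \Ideals^B(A)$ of statement~\ref{pro:restricted_ideal_groupoid1} and the ``if'' direction of statement~\ref{pro:restricted_ideal_groupoid2} are already provided by Lemma~\ref{lem:invariant_restricted_groupoid}, as is symmetry of any invariant ideal. Granted the reverse inclusion $\Ideals^B(A)\subseteq \Ideals^\alpha(A)$, every restricted ideal is invariant hence symmetric, so the inclusion $\varphi\colon A\to \Mult(B)$ is symmetric. The remaining direction of~\ref{pro:restricted_ideal_groupoid2} then follows because any induced ideal equals $i(r(i(I')))$ with $r(i(I'))$ invariant by part~\ref{pro:restricted_ideal_groupoid1}, and Lemma~\ref{lem:invariant_restricted_groupoid} identifies such ideals with the images of $I\rtimes G$. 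Conditions \ref{cond:C2} and~\ref{cond:C3} are then immediate from Corollary~\ref{cor:symmetric_nice}, and \ref{cond:C1} follows from Lemma~\ref{lem:invariant_ideals_properties_groupoid}.

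Thus the entire burden is to show $r(J)\in\Ideals^\alpha(A)$ for every $J\in\Ideals(B)$. Since the canonical quotient map $q\colon A\rtimes G\onto B$ satisfies $\varphi = \Mult(q)\circ \varphi_0$ with $\varphi_0\colon A\to \Mult(A\rtimes G)$, we have $r(J) = r_0(q^{-1}(J))$ where~$r_0$ is restriction for~$\varphi_0$. So we may and do assume $B=A\rtimes G$. Because~$r$ is an upper Galois adjoint it preserves arbitrary meets, and $\Ideals^\alpha(A)$ is closed under meets (invariant ideals correspond to invariant open subsets of $\check{A}$, whose intersections are again invariant). Since~$A\rtimes G$ is separable and second countable, Corollary~\ref{cor:first_countable_implies_sober} gives $\check{B}=\Prime(B)$, and $J$ is the intersection of the primitive ideals containing it. It therefore suffices to prove that $r(\prid)$ is $\alpha$\nb-invariant for every $\prid\in\check{B}$.

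For such a~$\prid$ choose an irreducible representation $\pi\colon B\to \Bound(\Hils)$ with $\ker\pi=\prid$; then $\ker(\pi\circ\varphi)=r(\prid)$. The separability and second countability hypotheses let us invoke Renault's Disintegration Theorem \cites{Muhly-Williams:Renaults_equivalence, Renault:Representations} to realise~$\pi$ as the integrated form of a covariant representation of $(A,G,\alpha)$: there is a quasi-invariant measure~$\mu$ on~$G^0$, a measurable field of Hilbert spaces $(\Hils_x)_{x\in G^0}$ with $\Hils \cong \int^\oplus \Hils_x \,d\mu(x)$, a measurable field of representations $\pi_x\colon A_x\to\Bound(\Hils_x)$, and a measurable unitary cocycle $L_g\colon \Hils_{s(g)}\congto \Hils_{r(g)}$ satisfying $\pi_{r(g)}\circ\alpha_g = \Ad(L_g)\circ\pi_{s(g)}$ for all $g$ in a conull set. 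Viewing the corresponding open subset of~$\check{A}$, the condition $a\in r(\prid)$ translates to $\pi_x(a_x)=0$ for $\mu$\nb-almost every~$x\in G^0$; the intertwining relation together with quasi-invariance of~$\mu$ then propagates this vanishing along orbits, yielding the desired $\alpha$\nb-invariance of $r(\prid)$.

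The main obstacle is this last step: turning a ``$\mu$\nb-a.e.'' statement on fibres into the pointwise identity $\alpha_g(r(\prid)_{s(g)}) = r(\prid)_{r(g)}$ for every $g\in G$. The argument must choose a genuinely $\mu$\nb-conull, saturated subset of~$G^0$ on which the cocycle relation holds, use the fact that $r(\prid)$ is determined by its values on any conull set of fibres (since $A$ is a $\Cont_0(G^0)$\nb-algebra with separable fibres), and transport the kernel using $L_g$; this is where the countability assumptions are essential, through their use in the Disintegration Theorem.
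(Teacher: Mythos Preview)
Your overall strategy coincides with the paper's: reduce via Lemmas~\ref{lem:invariant_restricted_groupoid} and~\ref{lem:invariant_ideals_properties_groupoid} and Corollary~\ref{cor:symmetric_nice} to the single claim that every restricted ideal is \(\alpha\)\nb-invariant, then invoke Renault's Disintegration Theorem. Your reduction to primitive ideals is correct but unnecessary; the paper simply picks a faithful representation of \(B/J\) on a separable Hilbert space and disintegrates that.

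The gap you flag in the last step is real, but your proposed way to close it does not work. An ideal in a \(\Cont_0(G^0)\)\nb-\(\Cst\)\nb-algebra is \emph{not} determined by its fibres on a \(\mu\)\nb-conull set: already for \(A=\Cont_0([0,1])\) with Lebesgue measure, the ideals \(0\) and \(\Cont_0([0,1]\setminus\{1/2\})\) agree on every fibre outside the null set \(\{1/2\}\). More fundamentally, the fibrewise kernels \(\ker\pi_x\) need not coincide with the fibres \(r(\prid)_x\) of the ideal \(r(\prid)\), so ``transporting the kernel using \(L_g\)'' on a saturated conull set does not obviously say anything about \(r(\prid)_x\) for \emph{all} \(x\).

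The paper avoids fibrewise reasoning altogether. Write \(U\subseteq\check{A}\) for the open set corresponding to \(r(J)=\ker\pi^A\), and let \(\tilde{U}\defeq\bigsqcup_{x\in G^0}U_x\) with \(U_x\subseteq\check{A}_x\) the open set corresponding to \(\ker\pi_x\); covariance makes \(\tilde{U}\) a (possibly non-open) \(G\)\nb-invariant subset of~\(\check{A}\). The disintegration gives the characterisation: \(U\) is the \emph{largest} open \(V\subseteq\check{A}\) with \(\nu\bigl(p(V\setminus\tilde{U})\bigr)=0\). Pulling back to the arrow space \(G\times_{r,G^0,p}\check{A}\) via the range map, \(r^*(U)\) is the largest open subset whose projection to~\(G\) off \(r^*(\tilde{U})\) is \(\nu\circ\mu\)\nb-null. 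Quasi-invariance of~\(\nu\) says inversion preserves \(\nu\circ\mu\)\nb-null sets, so \(s^*(U)\) satisfies the same maximality property; hence \(r^*(U)=s^*(U)\), which is exactly \(G\)\nb-invariance of~\(U\). The point is that maximality is a global, symmetric condition that survives the a.e.\ ambiguity, whereas your fibrewise argument tries to pin down individual fibres and cannot.
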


\begin{proof}
  In view of Lemmas \ref{lem:invariant_restricted_groupoid}
  and~\ref{lem:invariant_ideals_properties_groupoid}
  and Corollary \ref{cor:symmetric_nice}, it only remains to show that
  every restricted ideal is \(\alpha\)\nb-invariant.
  To this end, let \(J\in\Ideals(B)\).
  We will prove that \(r(J)\in\Ideals(A)\) is \(\alpha\)\nb-invariant.

  The assumptions imply that~\(B\) is separable.  So there is a
  faithful representation \(B/J \hookrightarrow \Bound(\Hils)\) on a
  separable Hilbert space~\(\Hils\).  We use the quotient map
  \(A\rtimes G\onto B\) to view it as a representation \(\pi\colon
  A\rtimes G \to \Bound(\Hils)\).  There are canonical morphisms
  \(A\to \Mult(A\rtimes G)\) and \(\Cst(G) \to \Mult(A\rtimes G)\),
  which give us representations \(\pi^A\) and \(\pi^G\) of \(A\)
  and~\(\Cst(G)\) from~\(\pi\).  The kernel of the morphism \(A\to
  \Mult(B/J)\) is the kernel of~\(\pi^A\) because the extension of a
  faithful representation to the multiplier algebra remains
  faithful.  We must show that the ideal \(\ker \pi^A\) in~\(A\) is
  \(\alpha\)\nb-invariant.

  Our assumptions ensure that Renault's Disintegration Theorem
  applies to~\(\pi\)
  (see \cites{Muhly-Williams:Renaults_equivalence,
    Renault:Representations}).  This gives us the
  following structure: first, a quasi-invariant measure~\(\nu\)
  on~\(G^0\) and a Borel field of Hilbert spaces~\((\Hils_x)_{x\in
    G^0}\) over~\(G^0\) such that~\(\Hils\) is isomorphic to the
  Hilbert space \(L^2(G^0,\nu,(\Hils_x)_{x\in G^0})\) of
  square-integrable sections of the field~\((\Hils_x)_{x\in G^0}\)
  with respect to the measure~\(\nu\); secondly, a Borel
  representation~\(U\) of~\(G\) by unitary operators \(U_g\colon
  \Hils_{s(g)} \congto \Hils_{r(g)}\) for \(g\in G\), such that the
  representation~\(\pi^G\)
  is obtained by integrating the representation~\(U\)
  of~\(G\); thirdly, a Borel family of
  representations~\(\pi^A_x\) of~\(A_x\) on~\(\Hils_x\) for all
  \(x\in G^0\), which is covariant with respect to the
  representation~\(U\), that is, \(\pi^A_{r(g)}(\alpha_g(a)) = U_g
  \pi^A_{s(g)}(a) U_g^*\) for all \(a\in A\), \(g\in G\).  The
  representation~\(\pi^A\) of~\(A\) on~\(\Hils\) is the pointwise
  application of~\(\pi^A_x\).  So \(\pi^A(a)=0\) for \(a\in A\) if
  and only if the set of \(x\in G^0\) with \(\pi^A_x(a)\neq0\) is a
  \(\nu\)\nb-null set.

  We identify \(\ker(\pi^A)\) with an open subset~\(U\)
  of~\(\check{A}\) and \(\ker(\pi^A_x)\) with an open subset~\(U_x\)
  of \(\check{A}_x \defeq p^{-1}(x)\subseteq \check{A}\) for each
  \(x\in G^0\).  Since~\((\pi^A_x)\) is covariant with respect
  to~\(U\), the subset \(\tilde{U}\defeq \bigsqcup_{x\in G^0} U_x
  \subseteq \check{A}\) is \(G\)\nb-invariant.  Given \(a\in A\),
  let \(V_a \defeq \setgiven{\prid\in\check{A}}{a\notin\prid}\).
  Then \(\pi^A_x(a)=0\) if and only if \(V_a\cap
  \check{A}_x\subseteq U_x\).  So \(\pi^A(a)=0\) if and only if
  \(p(V_a\setminus \tilde{U})\subseteq G^0\) is a \(\nu\)\nb-null
  set.  The subsets \(V_a\subseteq \check{A}\) are open.  We have
  \(V_a\subseteq U\) if \(a\in\ker\pi^A\), and \(U=V_a\) if \(a\in
  \ker(\pi^A)\) is, say, strictly positive.  Thus~\(U\) is the
  largest open subset~\(V\) of~\(\check{A}\) with the property that
  \(\nu(p(V\setminus \tilde{U}))=0\).

  Let~\(\mu\) be the Haar system on~\(G\) and let
  \(\nu\circ\mu\) be the measure on~\(G\) that first integrates
  along~\(\mu\) and then along~\(\nu\).  Given a subset~\(T\)
  of~\(\check{A}\), let \(r^*(T)\) and \(s^*(T)\) be their
  pre-images in the arrow space \(G\times_{r,G^0,p} \check{A}\) of
  the transformation groupoid \(G\ltimes \check{A}\) under the range
  and source maps, respectively.  Since the range map of~\(G\) is
  open and each fibre of~\(\mu\) has full support, the largest
  open subset \(V\subseteq G\ltimes\check{A}\) such that
  \(\nu\circ\mu\bigl((\Id\times_{r,G^0} p)(V \setminus
  r^*(\tilde{U}))\bigr) =0\) is \(r^*(U)\).  Since~\(\nu\) is
  quasi-invariant, the inversion in~\(G\) preserves the property of
  being a \(\nu\circ\mu\)-null set.  Therefore, \(r^*(U)\) and
  \(s^*(U)\) are both the largest open subsets in
  \(G\ltimes\check{A}\) with the same property.  Hence they are
  equal.  And this says that~\(U\) is \(G\)\nb-invariant.
\end{proof}

\begin{theorem}
  \label{thm:groupoid_quasi_orbit}
  Let~\(\alpha\)
  be an action of a second countable groupoid~\(G\)
  on a separable \(\Cst\)\nb-algebra~\(A\).
  Let \(\varphi\colon A\to \Mult(B)\)
  be the canonical generalised \(\Cst\)\nb-inclusion
  of~\(A\)
  into a  crossed product algebra~\(B\)
  of \(\alpha\).
  The quasi-orbit space~\(\check{A}/{\sim}\)
  and the quasi-orbit map \(\varrho\)
  for~\(\varphi\) exist, and
  \begin{enumerate}
  \item \(\check{A}/{\sim}\)
    is a quotient of~\(\check{A}\)
    by the open equivalence relation where \(\prid_1\sim \prid_2\)
    if and only if the orbit closures \(\overline{G\cdot \prid_i}\)
    for \(i=1,2\)
    are equal.
  \item \(\varrho\colon \check{B} \to \check{A}/{\sim}\)
    is given by \(\prid\mapsto \widetilde{\pi}^{-1}(r(\prid))\),
    where \(\widetilde{\pi}\colon
    \check{A}/{\sim}\to\Prime(\Ideals^\alpha(A))\)
    is the homeomorphism induced by the continuous, open and
    surjective map
    \(\pi\colon \check{A}\to\Prime(\Ideals^\alpha(A))\),
    \(\prid\mapsto \bigcap_{g\in s^{-1}( p(\prid))} \alpha_g^*(\prid)\).
  \end{enumerate}
\end{theorem}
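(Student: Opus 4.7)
The plan is to combine Proposition~\ref{pro:restricted_ideal_groupoid} with the general machinery of Section~\ref{sec:quasi-orbit}. Since~\(A\) is separable, \(\check{A}\) is second countable, so the countability hypothesis of Theorem~\ref{the:prime_primitive} is automatically satisfied. Proposition~\ref{pro:restricted_ideal_groupoid} supplies the conditions \ref{cond:C1}, \ref{cond:C2} and~\ref{cond:C3} together with the identification \(\Ideals^B(A) = \Ideals^\alpha(A)\). Feeding these into Theorem~\ref{the:prime_primitive} yields a continuous, open and surjective map \(\pi\colon \check{A} \to \Prime(\Ideals^\alpha(A))\) that sends~\(\prid\) to the largest \(\alpha\)\nb-invariant ideal contained in~\(\prid\), together with the descended homeomorphism \(\widetilde{\pi}\colon \check{A}/{\sim} \to \Prime(\Ideals^\alpha(A))\).

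I would next recast the equivalence relation in terms of orbit closures. Under Stone duality, the closed subset of~\(\check{A}\) dual to \(\pi(\prid)\) is the smallest \(G\)\nb-invariant closed subset containing~\(\prid\). Using the continuity of the groupoid action on~\(\check{A}\) (recorded just before the theorem via the homeomorphism \(G\times_{s,G^0,p}\check{A} \cong G\times_{r,G^0,p}\check{A}\)), I would verify that \(G\)\nb-orbit closures are themselves \(G\)\nb-invariant; this identifies that smallest invariant closed set with \(\overline{G\cdot\prid}\). Consequently, \(\prid_1\sim\prid_2\) if and only if \(\overline{G\cdot\prid_1}=\overline{G\cdot\prid_2}\). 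I expect the invariance of orbit closures to be the most delicate step: it reduces to showing that the orbit equivalence relation on~\(\check{A}\) is open, which in turn follows by combining the continuity of the action with the openness of the range map of the transformation groupoid~\(G\ltimes\check{A}\).

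For the explicit formula \(\pi(\prid) = \bigcap_{g\in s^{-1}(p(\prid))}\alpha_g^*(\prid)\), I would unwind the correspondence between ideals and closed subsets: the right hand side is the intersection of the primitive ideals in the orbit \(G\cdot\prid\), and because the set of primitive ideals containing a fixed ideal is closed in~\(\check{A}\), this intersection agrees with the intersection taken over \(\overline{G\cdot\prid}\); under \(\Ideals(A)\cong\Open(\check{A})\) the latter is precisely \(\pi(\prid)\).

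Finally, condition~\ref{cond:C3} and Proposition~\ref{pro:induced_map_rho} provide a continuous map \(r\colon \check{B}\subseteq \Prime(B) \to \Prime^B(A)\); composition with the homeomorphism~\(\widetilde{\pi}^{-1}\) produces the quasi-orbit map \(\varrho\colon \check{B}\to\check{A}/{\sim}\), \(\prid\mapsto \widetilde{\pi}^{-1}(r(\prid))\). This completes the verification of both statements of the theorem.
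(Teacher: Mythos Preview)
Your proposal is correct and follows essentially the same route as the paper: invoke Proposition~\ref{pro:restricted_ideal_groupoid} for \ref{cond:C1}--\ref{cond:C3} and \(\Ideals^B(A)=\Ideals^\alpha(A)\), apply Theorem~\ref{the:prime_primitive} and Proposition~\ref{pro:induced_map_rho}, and then identify \(\pi(\prid)\) with the intersection over the orbit (equivalently, the orbit closure). The paper dispatches the last identification in one line ``as in the group case''; your singling out of the \(G\)\nb-invariance of \(\overline{G\cdot\prid}\) as the point needing care, and your reduction via openness of the range map of \(G\ltimes\check{A}\), is a legitimate and slightly more explicit way to fill that in.
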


\begin{proof}
  By Proposition~\ref{pro:restricted_ideal_groupoid}, we may apply
  Theorem~\ref{the:prime_primitive}, and the quasi-oribt map
  \(\varrho\colon \check{B} \to \check{A}/{\sim}\)
  exists.  As in the group case, the
  map \(\pi\colon \Prime(A) \to \Prime(\Ideals^B(A))\)
  maps \(\prid\in\Prime(A)\)
  to the intersection
  \(\bigcap_{\prid'\in \overline{G\cdot \prid}} \prid'
  = \bigcap_{g\in s^{-1}(p(\prid))} \alpha_g^*(\prid)\).
  Hence \(\prid_1 \sim \prid_2\)
  if and only if \(\overline{G\cdot \prid_1}=\overline{G\cdot \prid_2}\).
\end{proof}

It is quite easy to generalise the result above to saturated Fell
bundles over locally compact Hausdorff groupoids.  The
Packer--Raeburn Stabilisation Trick shows that any such Fell
bundle is equivariantly Morita equivalent to a groupoid action in
the usual sense.  This equivariant Morita equivalence preserves all
structure that we are interested in, that is, the reduced and full
crossed products, the ideal lattices of the \(\Cst\)\nb-algebras
involved, and the restriction and induction maps between them (compare
Proposition~\ref{pro:equivalent_Fell_bundles} about the group case).
Thus all our results generalise to this situation.  Invariant ideals for
Fell bundles over groupoids are described as in the group case: an
ideal \(I\in\Ideals(A)\) is invariant with respect to a Fell bundle
\((A_g)_{g\in G}\) over~\(G\) if and only if \(I_{r(g)}\cdot A_g =
A_g \cdot I_{s(g)}\) for all \(g\in G\).

The case of Fell bundles over non-Hausdorff locally compact
groupoids is a different matter: the Packer--Raeburn Stabilisation
Trick fails in this case, even for rather important Fell bundles
(see~\cite{Buss-Meyer:Actions_groupoids}).  We cannot treat this
case in general because the Disintegration Theorem has not yet been
shown for Fell bundles over non-Hausdorff groupoids.  Muhly and
Williams~\cite{Muhly-Williams:Equivalence.FellBundles} only treat
Fell bundles over Hausdorff locally compact groupoids, which is the
case when the Packer--Raeburn Stabilisation Trick allows to replace
them by ordinary actions.  Renault's original proof of the
Disintegration Theorem in~\cite{Renault:Representations} covers
Green twisted actions of non-Hausdorff groupoids on continuous
fields of \(\Cst\)\nb-algebras
over~\(G^0\).
So the results above also hold in this case.  The results also hold
for arbitrary Fell bundles over \'etale groupoids because these may
be rewritten as Fell bundles over inverse semigroups.

\subsection{Quantum group crossed products}
\label{sec:quantum_group_crossed}

Now we consider crossed products for \(\Cst\)\nb-quantum groups.
Here our general results are incomplete because all implications
that we proved in our study of group and groupoid crossed products
now require some technical assumptions.

Let~\((C,\Delta)\)
be a \(\Cst\)\nb-quantum
group as in \cites{Woronowicz:Mult_unit_to_Qgrp,
  Soltan-Woronowicz:Multiplicative_unitaries}, that is, it is
generated by a manageable multiplicative unitary
\(W\in\U(\Hils\otimes\Hils)\)
on some separable Hilbert space~\(\Hils\).
We are going to study restricted and induced ideals for reduced
crossed products with~\((C,\Delta)\).
Throughout this section, a \emph{morphism} from~\(A\)
to~\(B\)
is a non-degenerate \Star{}homomorphism from~\(A\)
to the multiplier algebra~\(\Mult(B)\).
These extend uniquely to strictly continuous unital homomorphisms
\(\Mult(A)\to\Mult(B)\) and thus form a category.

A (right) \emph{coaction} of~\(C\)
on a \(\Cst\)\nb-algebra~\(A\)
(this is also called an action) is a faithful morphism
\(\alpha\colon A\to\Mult(A\otimes C)\)
such that \(\alpha(A)\cdot (1\otimes C) = A\otimes C\)
and the following coassociativity diagram commutes:
\[
\begin{tikzcd}
  A \arrow[r,"\alpha"] \arrow[d,"\alpha"]&
  \Mult(A\otimes C) \arrow[d,"\alpha\otimes \Id_C"] \\
  \Mult(A\otimes C)  \arrow[r,"\Id_A \otimes \Delta"] &
  \Mult( A\otimes C \otimes C)
\end{tikzcd}
\]
Any \(\Cst\)\nb-quantum
group has a dual quantum group \((\hat{C},\hat{\Delta})\).
The multiplicative unitary~\(W\)
generates faithful representations of \(C\)
and~\(\hat{C}\)
on~\(\Hils\), which we write down as morphisms to \(\Comp(\Hils)\).

The \emph{reduced crossed product}
\(B\defeq A\rtimes_{\alpha,\red} \hat{C}\)
is defined as the \(\Cst\)\nb-subalgebra
of \(\Mult(A\otimes\Comp(\Hils))\)
generated by \(\alpha(A)\cdot (1\otimes \hat{C})\),
where we view \(A\otimes C\)
and \(1\otimes\hat{C}\)
as non-degenerate \(\Cst\)\nb-subalgebras
of \(\Mult(A\otimes\Comp(\Hils))\).
In fact, the product \(\alpha(A)\cdot (1\otimes\hat{C})\)
is already a \(\Cst\)\nb-algebra.  This follows if
\begin{equation}
  \label{eq:crossed_product_identity}
  \alpha(A)\cdot (1\otimes\hat{C}) = (1\otimes\hat{C}) \cdot \alpha(A).
\end{equation}
It implies that there are canonical morphisms
\(A\to A\rtimes_{\alpha,\red} \hat{C} \leftarrow \hat{C}\).
In the generality in which we are working,
\eqref{eq:crossed_product_identity} is proved
in~\cite{Meyer-Roy-Woronowicz:Twisted_tensor}: the crossed product is
an instance of the twisted tensor product \(A\boxtimes \hat{C}\)
introduced there using right coactions of \(C\)
and~\(\hat{C}\)
on \(A\)
and~\(\hat{C}\).
The assumption that \(\alpha(A)\cdot (1\otimes C) = A\otimes C\),
not just \(\alpha(A)\cdot (1\otimes C) \subseteq A\otimes C\),
is crucial for this proof.

\begin{definition}
  \label{def:invariant_qg}
  An ideal \(I\idealin A\)
  is called \emph{\(\alpha\)\nb-invariant}
  if \(\alpha(I)\cdot (1\otimes C) \subseteq I\otimes C\).
  Let~\(I\) be \(\alpha\)\nb-invariant.
  We say that the action~\(\alpha\)
  \emph{restricts to an action on~\(I\)}
  if \(\alpha(I)\cdot (1\otimes C) = I\otimes C\).
  It \emph{descends to an action on~\(A/I\)}
  if the map \(\dot\alpha\colon A/I \to \Mult(A/I \otimes C)\)
  induced by \(\alpha\colon A \to \Mult(A \otimes C)\)
  is injective.
\end{definition}

The conditions written in Definition~\ref{def:invariant_qg} are
exactly what is needed to get induced coactions on~\(I\)
and~\(A/I\),
respectively.  Namely, \(I\)
is in the kernel of the composite map
\[
A\xrightarrow{\alpha} \Mult(A\otimes C) \to \Mult(A/I \otimes C),
\]
so that~\(\dot\alpha\) makes sense.  And
\(\dot\alpha(A/I)\cdot (1\otimes C) = A/I\otimes C\)
and the coassociativity of \(\alpha|_I\) and~\(\dot\alpha\)
follow from the corresponding properties of~\(\alpha\).
If \(I\idealin A\) is an \(\alpha\)\nb-invariant ideal, then it is,
in general, unclear whether~\(\alpha\)
restricts to~\(I\)
or descends to~\(A/I\).
Our general theory only works well when this is the case for all
invariant ideals.

\begin{lemma}
  \label{lem:invariant_symmetric}
  Let \(I\idealin A\) be an invariant ideal.
  \begin{enumerate}
  \item If~\(\alpha\) restricts to~\(I\), then~\(I\) is symmetric
    and \(i(I) = I\rtimes_{\alpha,\red} \hat{C}\).
  \item If~\(\alpha\)
    descends to~\(A/I\)
    or if~\(\hat{C}\) is discrete and~\(\alpha\)
    restricts to~\(I\),
    then~\(I\) is restricted.
  \end{enumerate}
\end{lemma}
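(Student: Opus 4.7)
My plan is to exploit the twisted tensor product description of $B=A\rtimes_{\alpha,\red}\hat{C}$ from \cite{Meyer-Roy-Woronowicz:Twisted_tensor}, which identifies~$B$ with the closed linear span $\alpha(A)(1\otimes\hat{C})=(1\otimes\hat{C})\alpha(A)$ inside $\Mult(A\otimes\Comp(\Hils))$.

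For~(1), I would apply the same twisted tensor product construction to the coaction~$\alpha|_I$ on~$I$ (whose existence is guaranteed by the assumption that $\alpha$ restricts to~$I$) to obtain a $\Cst$-subalgebra
\[
I\rtimes_{\alpha|_I,\red}\hat{C}
=\overline{\alpha(I)(1\otimes\hat{C})}
=\overline{(1\otimes\hat{C})\alpha(I)}
\subseteq B.
\]
Using $\alpha(I)\cdot\alpha(A)=\alpha(I\cdot A)=\alpha(I)$ and the identity~\eqref{eq:crossed_product_identity}, this gives
\[
\varphi(I)\cdot B
=\alpha(I)\alpha(A)(1\otimes\hat{C})
=\alpha(I)(1\otimes\hat{C})
=(1\otimes\hat{C})\alpha(I)
=B\cdot\varphi(I),
\]
so~$I$ is symmetric, and Lemma~\ref{lem:symmetric_induced_ideal}.\ref{lem:symmetric_induced_ideal3} yields $i(I)=\varphi(I)B=I\rtimes_{\alpha|_I,\red}\hat{C}$.

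For~(2) in the descending case, I would form $B'\defeq(A/I)\rtimes_{\dot\alpha,\red}\hat{C}$ and produce a surjective $*$-homomorphism $\pi\colon B\twoheadrightarrow B'$ with $\pi\circ\alpha=\dot\alpha\circ q$ and $\pi|_{1\otimes\hat{C}}=\Id$, where $q\colon A\twoheadrightarrow A/I$ is the quotient map. The construction goes through $q\otimes\Id_{\Comp(\Hils)}$: invariance of~$I$ together with~\eqref{eq:crossed_product_identity} implies that each generator $\alpha(a)(1\otimes\hat{x})\in B$ preserves the ideal $I\otimes\Comp(\Hils)\idealin A\otimes\Comp(\Hils)$, so it descends to a multiplier of $(A/I)\otimes\Comp(\Hils)$; the images then generate $B'$ by construction. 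Setting $J\defeq\ker\pi$, the composite $A\to\Mult(B/J)$ factors as $A\xrightarrow{q}A/I\hookrightarrow\Mult(B')\subseteq\Mult(B/J)$, and the middle embedding is injective because~$\dot\alpha$ is. Hence $r(J)=I$, so~$I$ is restricted.

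For~(2) in the discrete case, part~(1) supplies $i(I)=I\rtimes_{\alpha|_I,\red}\hat{C}$. Discreteness of $\hat{C}$ makes~$C$ a compact quantum group with bounded Haar state $h\colon C\to\C$, and $\hat{C}$ carries a bounded counit $\epsilon$. Pairing the dual $C$-coaction $\hat\alpha$ on~$B$ with~$h$ produces a faithful conditional expectation $E=(\Id_B\otimes h)\circ\hat\alpha\colon B\to\alpha(A)\cong A$ with $E(\alpha(a)(1\otimes\hat{x}))=\epsilon(\hat{x})\,a$, which restricts to $i(I)\to I$. If $a\in r(i(I))$, then for every $a'\in A$ and $\hat{x}\in\hat{C}$ the element $\alpha(aa')(1\otimes\hat{x})=\alpha(a)\cdot\alpha(a')(1\otimes\hat{x})$ lies in~$i(I)$; applying~$E$ gives $\epsilon(\hat{x})aa'\in I$. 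Choosing~$\hat{x}$ with $\epsilon(\hat{x})\neq0$ and running~$a'$ through an approximate unit of~$A$ forces $a\in I$, so $r(i(I))\subseteq I$. Combined with Proposition~\ref{pro:restrict_induce_ideals}.\ref{pro:restrict_induce_ideals2}, this gives $r(i(I))=I$.

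The main obstacle is the construction of $\pi\colon B\twoheadrightarrow B'$ in the descending case. While formally this is the reduced-level version of a map that exists by universality for full crossed products, verifying it directly requires tracking how the multiplicative unitary $W$ enters the twisted tensor product and checking that the ideal-preservation property survives passage to the quotient of the first leg; this is the place where all the assumptions on $\alpha$ and~$I$ combine, and it is the step one must handle with care rather than by pure formalism.
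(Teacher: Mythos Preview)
Your arguments for part~(1) and the descending case of part~(2) match the paper's proof: it computes $I\cdot B=\alpha(I)\alpha(A)(1\otimes\hat{C})=\alpha(I)(1\otimes\hat{C})=(1\otimes\hat{C})\alpha(I)=B\cdot I$ directly, and in the descending case it takes the canonical surjection $p\colon B\to(A/I)\rtimes_{\dot\alpha,\red}\hat{C}$ and reads off $r(\ker p)=I$ from the injectivity of $A/I\hookrightarrow\Mult\bigl((A/I)\rtimes_{\dot\alpha,\red}\hat{C}\bigr)$, without dwelling on how~$p$ is built.

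For the discrete case of~(2) the paper has a one-line argument you missed: when~$\hat{C}$ is discrete, $\varphi(A)\subseteq B$, so $A\subseteq B$ is an ordinary inclusion, and then the last statement of Lemma~\ref{lem:symmetric_induced_ideal} immediately gives that the symmetric ideal~$I$ from part~(1) is restricted. Your conditional-expectation route is a legitimate alternative in spirit, but the construction of~$E$ is flawed as written. The dual coaction on~$B$ is a coaction of~$\hat{C}$, not of~$C$ (see the paragraph preceding Lemma~\ref{lem:qg_induced_invariant}), so slicing it with the Haar state~$h$ of~$C$ is not meaningful; and if you instead slice the inner map $\beta(b)=W(b\otimes1)W^*$ from the proof of Lemma~\ref{lem:qg_restricted_invariant} with~$h$, then on~$\alpha(a)$ you obtain $\alpha\bigl((\Id\otimes h)\alpha(a)\bigr)$, the expectation of~$a$ onto the fixed-point algebra~$A^\alpha$, not~$\alpha(a)$ itself---so your formula $E(\alpha(a)(1\otimes\hat{x}))=\epsilon(\hat{x})\,a$ does not follow from that construction. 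A correct~$E$ can be produced from the bounded counit~$\epsilon$ of~$\hat{C}$, after which your deduction $r(i(I))\subseteq I$ goes through, but the paper's argument bypasses the whole detour.
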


\begin{proof}
  Assume that~\(\alpha\)
  restricts to~\(I\).  Then
  \begin{align*}
    I\cdot B
    &= \alpha(I) \cdot \alpha(A) \cdot (1\otimes \hat{C})
    = \alpha(I) \cdot (1\otimes \hat{C})
    = I\rtimes_{\alpha,\red} \hat{C},\\
    B\cdot I
    &= (1\otimes \hat{C}) \cdot \alpha(A) \cdot \alpha(I)
    =  (1\otimes \hat{C}) \cdot \alpha(I)
    = I\rtimes_{\alpha,\red} \hat{C}.
  \end{align*}
  Thus
  \(I\cdot B = I\rtimes_{\alpha,\red} \hat{C} = I\cdot B\),
  that is, the ideal~\(I\)
  is symmetric and \(i(I) = I\rtimes_{\alpha,\red} \hat{C}\).
  If~\(\hat{C}\)
  is discrete, then \(A\subseteq B\).
  Then symmetric ideals are restricted by
  Lemma~\ref{lem:symmetric_induced_ideal}.

  Now assume that~\(\alpha\)
  descends to~\(A/I\).
  Let \(p\colon B \to
  A/I\rtimes_{\dot\alpha,\red} \hat{C}\)
  be the canonical quotient map and let \(J\defeq \ker p\).
  We claim that \(r(J) = I\).
  The inclusion \(I\subseteq r(J)\)
  follows because \(p|_{i(I)}=0\).
  The canonical morphism
  \(A/I \to \Mult(A/I\rtimes_{\dot\alpha,\red} \hat{C})\)
  is injective.  Hence the kernel of the map
  \(A\to \Mult(B/J)\)
  is contained in~\(I\).  That is,
  \(r(J)\subseteq I\).
\end{proof}

\begin{lemma}
  \label{lem:qg_restricted_invariant}
  If~\(C\)
  is an exact \(\Cst\)\nb-algebra,
  then restricted ideals are invariant.
\end{lemma}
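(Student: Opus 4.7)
The plan is to realise every restricted ideal $I\defeq r(J)$ as the kernel of a non-degenerate representation of $A$ that is covariant for the coaction $\alpha$, and then exploit exactness of $C$ to transfer invariance from the representation to the ideal. Let $J\idealin B$ and set $D\defeq B/J$. The quotient map $\pi\colon B\to D$ is a surjective, hence non-degenerate, morphism; extend it strictly to $\tilde\pi\colon\Mult(B)\to\Mult(D)$. Define $\rho\defeq \tilde\pi\circ\alpha\colon A\to\Mult(D)$ and $\hat\rho\defeq \tilde\pi|_{1\otimes\hat C}\colon \hat C\to\Mult(D)$. By the very definition of $r(J)$, we have $\ker \rho = I$, and $\rho$ is non-degenerate because $\alpha(A)\cdot B = B$ (which follows from $\alpha(A)\cdot (1\otimes\hat C) = B$).

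The crucial point is that $(\rho,\hat\rho)$ is a covariant representation of the coaction $\alpha$. By the construction of the reduced crossed product as a twisted tensor product \cite{Meyer-Roy-Woronowicz:Twisted_tensor} (in the tradition of \cite{Baaj-Skandalis:Hopf_KK}), any non-degenerate representation of $B = \alpha(A)\cdot(1\otimes\hat C)$ produces a right corepresentation $V\defeq (\hat\rho\otimes\Id_C)(W)\in\Mult(D\otimes C)$, where $W\in\Mult(\hat C\otimes C)$ is the reduced bicharacter of $(C,\Delta)$, such that
\begin{equation*}
  (\rho\otimes\Id_C)^{\sim}(\alpha(a)) = V^*(\rho(a)\otimes 1)V\qquad\text{for all }a\in A,
\end{equation*}
in $\Mult(D\otimes C)$; here $(\rho\otimes\Id_C)^{\sim}\colon \Mult(A\otimes C)\to\Mult(D\otimes C)$ is the strict extension of the non-degenerate morphism $\rho\otimes\Id_C\colon A\otimes C\to D\otimes C$. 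For $a\in I = \ker\rho$ the right-hand side vanishes, so $(\rho\otimes\Id_C)^{\sim}(\alpha(a)) = 0$. Multiplying on the right by $1\otimes c$ for $c\in C$ gives an element $\alpha(a)(1\otimes c)$, which lies in $A\otimes C$ by the Podleś condition $\alpha(A)(1\otimes C) = A\otimes C$ and simultaneously in $\ker(\rho\otimes\Id_C)$. Since $C$ is exact, this kernel equals $\ker\rho\otimes C = I\otimes C$, and we conclude that $\alpha(I)(1\otimes C)\subseteq I\otimes C$, which is the invariance of $I$.

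The main obstacle is pinning down the covariance identity for a general $\Cst$\nb-quantum group generated by a manageable multiplicative unitary, rather than a locally compact group. In the group case this is the textbook bijection between non-degenerate representations of $A\rtimes_{\alpha,\red} G$ and covariant pairs $(\rho,U)$, and $V$ reduces to integrating $U$ against the bicharacter $W_G\in\Mult(\Cred(G)\otimes \Cont_0(G))$. The general statement is essentially built into the definition of $B = A\boxtimes\hat C$ via the pentagon equation for $W$. The remaining steps -- extending $\rho\otimes\Id_C$ to multipliers using non-degeneracy, and invoking exactness of $C$ to identify $\ker(\rho\otimes\Id_C)\cap (A\otimes C)$ with $I\otimes C$ -- are standard.
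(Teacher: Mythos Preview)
Your proof is correct and follows essentially the same route as the paper. Both arguments hinge on the same key identity: the coaction~\(\alpha\), transported along \(A\hookrightarrow\Mult(B)\), becomes inner---conjugation by (the image of)~\(W\)---and therefore descends to the quotient~\(B/J\). The paper packages this by writing the inner map \(\beta\colon B\to\Mult(B\otimes C)\), \(b\mapsto W(b\otimes 1)W^*\), and observing that the ideal \(J\otimes C\) is fixed by~\(\Ad_W\); you package the same fact as the covariance relation \((\rho\otimes\Id_C)^\sim(\alpha(a)) = V^{\mp}(\rho(a)\otimes 1)V^{\pm}\) for the pair \((\rho,V)\) obtained from~\(\pi\). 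Either way, for \(a\in r(J)=\ker\rho\) one gets \((\rho\otimes\Id_C)(\alpha(a)(1\otimes c))=0\), and exactness of~\(C\) together with injectivity of \((A/I)\otimes C\hookrightarrow\Mult(D\otimes C)\) forces \(\alpha(a)(1\otimes c)\in I\otimes C\). (A small remark: with the conventions used in the paper, where \(W(c\otimes 1)W^*=\Delta(c)\), the covariance identity comes out as \(V(\rho(a)\otimes 1)V^*\) rather than \(V^*(\rho(a)\otimes 1)V\); this is immaterial for your argument since only the vanishing for \(a\in\ker\rho\) is used.)
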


\begin{proof}
  Let \(J\in\Ideals(B)\).
  We want to argue as in the group case, using that the coaction
  on~\(A\)
  extends to an inner coaction on~\(B\).
  The coaction~\(\Delta\)
  on~\(C\)
  is implemented by the multiplicative unitary~\(W\),
  that is, \(W(c\otimes 1) W^* = \Delta(c)\)
  holds as an operator on \(\Hils\otimes\Hils\)
  for all \(c\in C\).  Hence the inner automorphism
  \[
  \Ad_{1_A\otimes W}\colon
  A\otimes \Comp(\Hils) \otimes C \to A\otimes \Comp(\Hils) \otimes C
  \]
  maps \(\alpha(a)_{12} \in \Mult(A\otimes C\otimes C)\)
  to
  \((\Id_A\otimes \Delta)(\alpha(a)) = (\alpha\otimes
  \Id_C)(\alpha(a))\)
  by the coassociativity of~\(\alpha\).
  Since
  \(W\in\U\Mult(\hat{C}\otimes C) \subseteq
  \U\Mult(B \otimes C)\),
  the map~\(\Ad_W\)
  is an inner automorphism
  of \(B \otimes C\).
  Hence we get a morphism
  \[
  \beta\colon B\to \Mult(B\otimes C),\qquad
  b\mapsto W(b\otimes 1)W^*.
  \]
  The computations above show that the diagram
  \[
  \begin{tikzcd}
    A \arrow[d,"\alpha"] \arrow[r,hookrightarrow]&
    \Mult(B) \arrow[d,"\beta"] \\
    \Mult(A\otimes C)  \arrow[r,hookrightarrow] &
    \Mult(B \otimes C)
  \end{tikzcd}
  \]
  commutes.  (We do not claim and do not need that~\(\beta\)
  is a coaction as defined above.  It is indeed an injective and
  coassociative map with
  \(\beta(B)\cdot (1\otimes C) \subseteq B\otimes C\),
  but equality here is unclear.)
  Since ideals are invariant under inner automorphisms,
  \(\beta(J\otimes C) \subseteq J\otimes C\).
  Hence we get an induced morphism
  \[
  \dot\beta\colon B/J\to \Mult(B/J\otimes C),\qquad
  [b]\mapsto W([b]\otimes 1)W^*.
  \]
  Let \(\varphi'\colon A\to \Mult(B)\to \Mult(B/J)\) be the
  canonical map.  Its kernel is \(r(J)\),
  compare~\eqref{eq:induced_hom_on_quotient}.
  Let \(D \subseteq \Mult(B/J)\)
  be its image.  So there is a \(\Cst\)\nb-algebra
  extension \(r(J)\into A \onto D\).
  It induces another extension
  \(r(J) \otimes C\into A\otimes C \onto D \otimes C\)
  because~\(C\)
  is exact.

  We are going to prove
  \(\alpha(a)\cdot (1\otimes c) \in r(J) \otimes C\)
  for all \(a\in r(J)\), \(c\in C\).
  This means that the ideal \(r(J)\) in~\(A\) is invariant.
  By the exact sequence above, it suffices to show that
  \(\alpha(a)\cdot (1\otimes c)\)
  is mapped to~\(0\)
  in \(D\otimes C\).
  The canonical map \(D\otimes C \to \Mult(B/J\otimes C)\)
  is injective because \(D\subseteq \Mult(B/J)\):
  this is a known property of the minimal tensor
  product~\(\otimes\).
  Finally, the commuting diagram
  \[
  \begin{tikzcd}
    A \arrow[d,"\alpha"] \arrow[r,hookrightarrow]&
    \Mult(B) \arrow[d,"\beta"] \arrow[r]&
    \Mult(B/J) \arrow[d,"\dot\beta"] \\
    \Mult(A\otimes C)  \arrow[r,hookrightarrow] &
    \Mult(B \otimes C) \arrow[r]&
    \Mult(B/J\otimes C)
  \end{tikzcd}
  \]
  shows that
  \[
  (\varphi'\otimes \Id_C)(\alpha(a)\cdot (1\otimes c))
  = \dot\beta\circ\varphi'(a)\cdot (1\otimes c)
  = 0
  \]
  because \(a\in r(J) = \ker \varphi'\).
\end{proof}

\begin{theorem}
  \label{the:qg_ideals}
  Let~\((C,\Delta_C)\)
  be a \(\Cst\)\nb-quantum
  group.  Let \((A,\alpha)\)
  be a \(\Cst\)\nb-algebra
  with a coaction of~\(C\),
  and let \(B\defeq A\rtimes_{\red,\alpha} \hat{C}\)
  be the reduced crossed product.  Assume~\(\Prime^B(A)\)
  to be first countable, \(C\)
  to be an exact \(\Cst\)\nb-algebra, and one of the following:
  \begin{enumerate}
  \item \label{the:qg_ideals1}%
    the quantum group~\(\hat{C}\)
    is discrete and~\(\alpha\)
    restricts to all invariant ideals;
  \item \label{the:qg_ideals2}%
    for any invariant ideal \(I\idealin A\),
    \(\alpha\) restricts to~\(I\) and descends to~\(A/I\).
  \end{enumerate}
  Then an ideal in~\(A\)
  is restricted if and only if it is invariant, and all restricted
  ideals are symmetric.  There is a continuous, open surjection
  \(\pi\colon \check{A} \onto \Prime^B(A)\),
  mapping \(\prid\in\check{A}\)
  to the largest invariant ideal contained in~\(\prid\).
  Define the equivalence relation~\(\sim\)
  by \(\prid_1 \sim \prid_2\)
  if and only if the largest invariant ideals contained in
  \(\prid_1\)
  and~\(\prid_2\)
  are equal.  Then~\(\pi\)
  induces a homeomorphism
  \(\check{A}/{\sim} \cong \Prime^B(A)\).
\end{theorem}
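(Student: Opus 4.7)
The plan is to combine the lemmas already in place with the general machinery of Sections~\ref{sec:Stone} and~\ref{sec:quasi-orbit}. The proof factors into identifying $\Ideals^B(A)$ with the invariant ideals, verifying symmetry, checking \ref{cond:C1} directly, and then invoking Theorem~\ref{the:prime_primitive}.

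First I would identify restricted ideals with invariant ones. Lemma~\ref{lem:qg_restricted_invariant} gives the inclusion $\Ideals^B(A) \subseteq \Ideals^\alpha(A)$ using exactness of~$C$. For the reverse inclusion, let $I \in \Ideals^\alpha(A)$. In case~\ref{the:qg_ideals1}, $\alpha$ restricts to~$I$, so by the second part of Lemma~\ref{lem:invariant_symmetric} (the discrete/restricts branch) $I$~is restricted. In case~\ref{the:qg_ideals2}, $\alpha$ descends to~$A/I$, and again Lemma~\ref{lem:invariant_symmetric} gives $I \in \Ideals^B(A)$. Simultaneously, the first part of Lemma~\ref{lem:invariant_symmetric} shows in both cases (since $\alpha$ restricts to~$I$) that every such~$I$ is symmetric, so every restricted ideal is symmetric.

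Next I would verify the locale-theoretic conditions. Conditions \ref{cond:C2} and~\ref{cond:C3} follow from Corollary~\ref{cor:symmetric_nice} applied to the symmetric \Star{}homomorphism~$\varphi$. For~\ref{cond:C1}, I need joins of restricted ideals to remain restricted; by the identification above this becomes a statement about invariant ideals. Given a family $(I_s)_{s\in S}$ of invariant ideals with join $I = \bigvee I_s$, the set $\alpha(I)\cdot(1\otimes C)$ is the closed linear span of $\alpha(I_s)\cdot(1\otimes C) \subseteq I_s \otimes C \subseteq I\otimes C$, so $I$~is invariant. Equivalently, one could appeal to the last clause of Lemma~\ref{lem:symmetric_restricted_intersection}: joins of symmetric ideals are symmetric, and symmetric ideals are restricted here.

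Finally I would apply Theorem~\ref{the:prime_primitive}: with \ref{cond:C1} and~\ref{cond:C2} in hand and $\Prime^B(A)$ first countable, the map $\pi\colon \check{A} \to \Prime^B(A)$ of Lemma~\ref{lem:induced_map_pi} is continuous, open, and surjective, and it descends to a homeomorphism $\check{A}/{\sim} \cong \Prime^B(A)$. The description of~$\pi$ as sending $\prid$ to the largest invariant ideal contained in~$\prid$ is exactly~\eqref{eq:definition_of_pi} after rewriting ``restricted'' as ``invariant.'' The characterisation of~$\sim$ is then immediate from the definition. The main subtlety, already absorbed into the preceding lemmas, is that without either hypothesis~\ref{the:qg_ideals1} or~\ref{the:qg_ideals2} an invariant ideal need not be restricted and the restriction-along-$\varphi$ of an ideal in~$B$ need not be invariant unless $C$~is exact; these two technical restrictions are precisely what make the bookkeeping go through.
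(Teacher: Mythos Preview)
Your proof is correct and follows essentially the same route as the paper's own proof: identify restricted with invariant via Lemmas~\ref{lem:qg_restricted_invariant} and~\ref{lem:invariant_symmetric}, get symmetry from Lemma~\ref{lem:invariant_symmetric}, deduce \ref{cond:C2} from Corollary~\ref{cor:symmetric_nice}, verify \ref{cond:C1} directly on invariant ideals, and invoke Theorem~\ref{the:prime_primitive}. One small caveat: your alternative route for \ref{cond:C1} (``symmetric ideals are restricted here'') only works in case~\ref{the:qg_ideals1}, since Lemma~\ref{lem:symmetric_induced_ideal} needs an ordinary inclusion $A\subseteq B$; in case~\ref{the:qg_ideals2} you should stick with the direct argument that joins of invariant ideals are invariant.
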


\begin{proof}
  Since~\(C\)
  is exact by assumption, Lemma~\ref{lem:qg_restricted_invariant}
  shows that restricted ideals are invariant.  The converse holds by
  Lemma~\ref{lem:invariant_symmetric}, which also shows that
  invariant ideals are symmetric.  Since intersections of invariant
  ideals are again invariant, condition~\eqref{cond:C1} holds here.
  Since all restricted ideals are symmetric,
  Corollary~\ref{cor:symmetric_nice} implies
  condition~\eqref{cond:C2}.  Now Theorem~\ref{the:prime_primitive}
  gives most of the remaining assertions.  The description
  of~\(\pi\)
  is equivalent to~\eqref{eq:definition_of_pi} because an ideal is
  restricted if and only if it is invariant.
\end{proof}

Next we show that induced ideals in the crossed product
\(B\defeq A\rtimes_{\alpha,\red} \hat{C}\)
are invariant for the dual coaction of \((\hat{C},\hat{\Delta})\).
In the generality of \(\Cst\)\nb-quantum groups
generated by manageable multiplicative unitaries, the dual coaction
is defined in~\cite{Meyer-Roy-Woronowicz:Twisted_tensor} through the
functoriality of~\(\boxtimes\)
for covariant homomorphisms.
It is the unique left coaction
\(\gamma\colon B \to \hat{C} \otimes B\)
with the following property.
Let \(a\in A\), \(c\in \hat{C}\).
Then
\(\gamma(\alpha(a)\cdot (1\otimes c)) = \alpha(a)_{23}
\hat{\Delta}(c)_{13}\)
in \(\Mult(\hat{C} \otimes B) \subseteq
\Mult(\hat{C} \otimes A\otimes \Comp(\Hils))\),
where the subscripts are the leg numbering notation.

\begin{lemma}
  \label{lem:qg_induced_invariant}
  Let \(I\in\Ideals(A)\).
  Then the induced ideal \(i(I)\)
  in~\(B\)
  is invariant under the dual coaction, and the dual coaction
  restricts to~\(I\).
\end{lemma}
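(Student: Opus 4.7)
The plan rests on computing the dual coaction on concrete generators of $i(I)$. First, I would give a convenient description of the induced ideal. Using the crossed product identity~\eqref{eq:crossed_product_identity} together with $\alpha(A)\cdot\alpha(I) = \alpha(AI) = \alpha(I)$, one checks that
\[
i(I) = \overline{B\cdot\alpha(I)\cdot B} = \overline{\alpha(I)\cdot(1\otimes\hat{C})} = \overline{(1\otimes\hat{C})\cdot\alpha(I)}.
\]

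Next I would apply the defining formula $\gamma(\alpha(a)\cdot(1\otimes c)) = \alpha(a)_{23}\cdot\hat\Delta(c)_{13}$ for $a\in I$ and $c\in\hat{C}$, and compute the closed linear span of $(\hat{C}\otimes 1_B)\cdot\gamma(i(I))$ inside $\Mult(\hat{C}\otimes A\otimes\Comp(\Hils))$. Since the leg-1 factor $\hat{C}\otimes 1\otimes 1$ commutes with $\alpha(a)_{23}$, one obtains
\[
(\hat{C}\otimes 1_B)\cdot\gamma(i(I))
= \overline{\alpha(I)_{23}\cdot\bigl((\hat{C}\otimes 1)\cdot\hat\Delta(\hat{C})\bigr)_{13}}.
\]
By the Podleś condition for the $\Cst$-quantum group $\hat{C}$, one has $(\hat{C}\otimes 1)\cdot\hat\Delta(\hat{C}) = \hat{C}\otimes\hat{C}$, so the right-hand side collapses to $\overline{\alpha(I)_{23}\cdot(\hat{C}\otimes 1\otimes\hat{C})}$. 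Factoring out the free leg~1 and recognising $\overline{\alpha(I)\cdot(1\otimes\hat{C})} = i(I)$, this equals $\hat{C}\otimes i(I)$.

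Hence $(\hat{C}\otimes 1_B)\cdot\gamma(i(I)) = \hat{C}\otimes i(I)$, and this single identity yields both statements of the lemma: it forces $\gamma(i(I))\subseteq\Mult(\hat{C}\otimes i(I))$, so $i(I)$ is invariant under the dual coaction, and it is precisely the nondegeneracy condition needed for $\gamma|_{i(I)}\colon i(I)\to\Mult(\hat{C}\otimes i(I))$ to be a coaction, coassociativity being inherited from $\gamma$. The only delicate point is the leg-numbering bookkeeping; once that is set up correctly, the Podleś condition for $\hat{C}$ does all the work, and no further analytic input is needed.
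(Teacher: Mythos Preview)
Your computation has a genuine gap in the very first step. You claim
\[
i(I) = \overline{\alpha(I)\cdot(1\otimes\hat{C})} = \overline{(1\otimes\hat{C})\cdot\alpha(I)},
\]
but the crossed product identity~\eqref{eq:crossed_product_identity} only asserts \(\alpha(A)\cdot(1\otimes\hat{C}) = (1\otimes\hat{C})\cdot\alpha(A)\); it does not let you commute \((1\otimes\hat{C})\) past \(\alpha(I)\) while staying inside \(\alpha(I)\). What you can legitimately conclude from \(\alpha(A)\alpha(I)=\alpha(I)=\alpha(I)\alpha(A)\) and the crossed product identity is only the \emph{two-sided} description
\[
i(I) = \overline{B\,\alpha(I)\,B} = \overline{(1\otimes\hat{C})\cdot\alpha(I)\cdot(1\otimes\hat{C})}.
\]
Your one-sided formula says precisely that \(\varphi(I)\cdot B = B\cdot\varphi(I)\cdot B\), i.e.\ that \(I\) is symmetric in the sense of Definition~\ref{def:symmetric_ideal_inclusion}, and this fails for general \(I\in\Ideals(A)\). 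For a concrete counterexample, take \(G=\Z/2\) acting on \(A=\C\oplus\C\) by the flip (so \(C=\Cont(G)\), \(\hat{C}=\Cst(G)\)); then \(B\cong\Mat_2(\C)\), and for \(I=\C\oplus 0\) one has \(\overline{\alpha(I)\cdot(1\otimes\hat{C})} = \varphi(I)\cdot B\) equal to the right ideal of first-row matrices, whereas \(i(I)=B\).

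The fix is short: work with the two-sided description and apply the Podle\'s condition for~\(\hat{C}\) on \emph{both} sides. From \(i(I)=\hat{C}_2\cdot\alpha(I)\cdot\hat{C}_2\) one gets \(\gamma(i(I)) = \hat\Delta(\hat{C})_{13}\cdot\alpha(I)_{23}\cdot\hat\Delta(\hat{C})_{13}\), and then
\[
\gamma(i(I))\cdot\hat{C}_1
= \hat\Delta(\hat{C})_{13}\cdot\alpha(I)_{23}\cdot\hat{C}_1\cdot\hat{C}_3
= \hat{C}_1\cdot\hat{C}_3\cdot\alpha(I)_{23}\cdot\hat{C}_3
= \hat{C}_1\otimes i(I),
\]
using bisimplifiability of~\(\hat{C}\) once on the right and once on the left. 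This is exactly the paper's argument; your leg-numbering bookkeeping and your conclusion from the identity \((\hat{C}\otimes 1)\cdot\gamma(i(I))=\hat{C}\otimes i(I)\) are fine, only the initial simplification of \(i(I)\) needs to be replaced.
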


\begin{proof}
  In leg numbering notation,
  \[
  i(I)
  = \hat{C}_2 \cdot \alpha(A) \cdot \alpha(I) \cdot \alpha(A) \cdot
  \hat{C}_2
  = \hat{C}_2 \cdot \alpha(I) \cdot \hat{C}_2.
  \]
  The dual coaction maps this to
  \(\gamma(i(I)) = \hat{\Delta}(\hat{C})_{13}\cdot \alpha(I)_{23}
  \cdot \hat{\Delta}(\hat{C})_{13}\).  We must prove
  \[
  \gamma(i(I)) \cdot \hat{C}_1 = \hat{C} \otimes B.
  \]
  Notice that we claim
  equality here, not just an inclusion.  The proof uses that all
  \(\Cst\)\nb-quantum groups are bisimplifiable, that is,
  \[
  \hat{\Delta}(\hat{C}) \cdot (\hat{C}\otimes 1)
  = \hat{C} \otimes \hat{C} =\hat{\Delta}(\hat{C}) \cdot (1\otimes\hat{C}).
  \]
  So
  \begin{multline*}
    \gamma(i(I)) \cdot \hat{C}_1
    = \hat{\Delta}(\hat{C})_{13}\cdot \alpha(I)_{23} \cdot
    \hat{\Delta}(\hat{C})_{13} \cdot \hat{C}_1
    = \hat{\Delta}(\hat{C})_{13}\cdot \alpha(I)_{23} \cdot \hat{C}_1 \cdot \hat{C}_3
    \\= \hat{\Delta}(\hat{C})_{13} \cdot \hat{C}_1\cdot \alpha(I)_{23} \cdot \hat{C}_3
    = \hat{C}_1 \cdot \hat{C}_3 \cdot \alpha(I)_{23} \cdot \hat{C}_3
    = \hat{C}_1 \otimes i(I).
  \end{multline*}
  This says that~\(i(I)\) is an invariant ideal and that the dual
  coaction restricts to it.
\end{proof}

\begin{example}
  \label{ex:group_coaction}
  Let \(C=\Cred(G)\)
  for a locally compact group~\(G\)
  with the usual comultiplication
  \(\Delta(\lambda_g) \defeq \lambda_g \otimes \lambda_g\).
  This is a quantum group.
  If~\(G\) is amenable, then any injective coaction
  \(\delta\colon A\to A\otimes C\)
  of~\((C,\Delta)\)
  satisfies \(\delta(A)\cdot (1\otimes C) = A\otimes C\)
  by \cite{Katayama:Takesaki_Duality}*{Proposition~6}.
  Thus~\(\delta\)
  restricts to any \(\delta\)\nb-invariant
  ideal \(I\idealin A\).
  Moreover, \(\delta\) descends to an action on~\(A/I\)
  by
  \cite{GootmanLazar.DualityCrossedProduct}*{Proposition~3.14.(iii)}.
  (Note that \(\operatorname{Ind}^{\wedge}(I)\)
  in~\cite{GootmanLazar.DualityCrossedProduct}
  coincides with the kernel of
  \(A\xrightarrow{\alpha} \Mult(A\otimes C) \to \Mult(A/I \otimes
  C)\).)
  Then we are in the situation of
  Theorem~\ref{the:qg_ideals}.\ref{the:qg_ideals2}.
  Its conclusions imply
  \cite{GootmanLazar.DualityCrossedProduct}*{Proposition~4.6}.
\end{example}

\begin{bibdiv}
  \begin{biblist}
   \bib{Abadie-Abadie:Ideals}{article}{
  author={Abadie, Beatriz},
  author={Abadie, Fernando},
  title={Ideals in cross sectional \(\textup C^*\)\nobreakdash -algebras of Fell bundles},
  journal={Rocky Mountain J. Math.},
  volume={47},
  date={2017},
  number={2},
  pages={351--381},
  issn={0035-7596},
  doi={10.1216/RMJ-2017-47-2-351},
  review={\MR {3635363}},
}

\bib{Abadie:Enveloping}{article}{
  author={Abadie, Fernando},
  title={Enveloping actions and Takai duality for partial actions},
  journal={J. Funct. Anal.},
  volume={197},
  date={2003},
  number={1},
  pages={14--67},
  issn={0022-1236},
  review={\MR {1957674}},
  doi={10.1016/S0022-1236(02)00032-0},
}

\bib{Abadie-Buss-Ferraro:Morita_Fell}{article}{
  author={Abadie, Fernando},
  author={Buss, Alcides},
  author={Ferraro, Dami\'an},
  title={Morita enveloping Fell bundles},
  journal={Bull. Braz. Math. Soc. (N.S.)},
  volume={50},
  date={2019},
  number={1},
  pages={3--35},
  issn={1678-7544},
  review={\MR {3935056}},
  doi={10.1007/s00574-018-0088-6},
}

\bib{Abadie-Ferraro:Equivalence_of_Fell_Bundles}{article}{
  author={Abadie, Fernando},
  author={Ferraro, Dami\'an},
  title={Equivalence of Fell bundles over groups},
  journal={J. Operator Theory},
  volume={81},
  date={2019},
  number={2},
  pages={273--319},
  issn={0379-4024},
  review={\MR {3959060}},
  doi={10.7900/jot.2018feb02.2211},
}

\bib{Baaj-Skandalis:Hopf_KK}{article}{
  author={Baaj, Saad},
  author={Skandalis, Georges},
  title={\(C^*\)\nobreakdash -alg\`ebres de Hopf et th\'eorie de Kasparov \'equivariante},
  journal={\(K\)-Theory},
  volume={2},
  date={1989},
  number={6},
  pages={683--721},
  issn={0920-3036},
  review={\MR {1010978}},
  doi={10.1007/BF00538428},
}

\bib{Boenicke-Li:Ideal}{article}{
  author={B\"onicke, {Ch}ristian},
  author={Li, Kang},
  title={Ideal structure and pure infiniteness of ample groupoid \(\textup {C}^*\)\nobreakdash -algebras},
  journal={Ergodic Theory Dynam. Systems},
  issn={0143-3857},
  volume={40},
  number={1},
  date={2020},
  pages={34--63},
  doi={10.1017/etds.2018.39},
}

\bib{BussExel:Regular.Fell.Bundle}{article}{
  author={Buss, Alcides},
  author={Exel, Ruy},
  title={Twisted actions and regular Fell bundles over inverse semigroups},
  journal={Proc. Lond. Math. Soc. (3)},
  volume={103},
  date={2011},
  number={2},
  pages={235--270},
  issn={0024-6115},
  review={\MR {2821242}},
  doi={10.1112/plms/pdr006},
}

\bib{BussExel:Fell.Bundle.and.Twisted.Groupoids}{article}{
  author={Buss, Alcides},
  author={Exel, Ruy},
  title={Fell bundles over inverse semigroups and twisted \'etale groupoids},
  journal={J. Operator Theory},
  volume={67},
  date={2012},
  number={1},
  pages={153--205},
  issn={0379-4024},
  review={\MR {2881538}},
  eprint={http://www.theta.ro/jot/archive/2012-067-001/2012-067-001-007.html},
}

\bib{Buss-Exel-Meyer:Reduced}{article}{
  author={Buss, Alcides},
  author={Exel, Ruy},
  author={Meyer, Ralf},
  title={Reduced \(C^*\)\nobreakdash -algebras of Fell bundles over inverse semigroups},
  journal={Israel J. Math.},
  date={2017},
  volume={220},
  number={1},
  pages={225--274},
  issn={0021-2172},
  review={\MR {3666825}},
  doi={10.1007/s11856-017-1516-9},
}

\bib{Buss-Meyer:Actions_groupoids}{article}{
  author={Buss, Alcides},
  author={Meyer, Ralf},
  title={Inverse semigroup actions on groupoids},
  journal={Rocky Mountain J. Math.},
  issn={0035-7596},
  date={2017},
  volume={47},
  number={1},
  pages={53--159},
  doi={10.1216/RMJ-2017-47-1-53},
  review={\MR {3619758}},
}

\bib{Carlsen-Kwasniewski-Ortega:Free_correspondence}{article}{
  author={Carlsen, Toke Meier},
  author={Kwa\'sniewski, Bartosz Kosma},
  author={Ortega, Eduard},
  title={Topological freeness for $C^*$\nobreakdash -correspondences},
  journal={J. Math. Anal. Appl.},
  volume={473},
  date={2019},
  number={2},
  pages={749--785},
  issn={0022-247X},
  review={\MR {3912850}},
  doi={10.1016/j.jmaa.2018.12.069},
}

\bib{Davey-Priestley:Lattices_order}{book}{
  author={Davey, B. A.},
  author={Priestley, H. A.},
  title={Introduction to lattices and order},
  edition={2},
  publisher={Cambridge University Press, New York},
  date={2002},
  pages={xii+298},
  isbn={0-521-78451-4},
  review={\MR {1902334}},
  doi={10.1017/CBO9780511809088},
}

\bib{Dixmier:Sur_Cstar-algebres}{article}{
  author={Dixmier, Jacques},
  title={Sur les $C^*$-alg\`ebres},
  language={French},
  journal={Bull. Soc. Math. France},
  volume={88},
  date={1960},
  pages={95--112},
  issn={0037-9484},
  review={\MR {0121674}},
  doi={10.24033/bsmf.1545},
}

\bib{Dixmier:Cstar-algebras}{book}{
  author={Dixmier, Jacques},
  title={\(C^*\)\nobreakdash -Algebras},
  note={Translated from the French by Francis Jellett; North-Holland Mathematical Library, Vol. 15},
  publisher={North-Holland Publishing Co.},
  place={Amsterdam},
  date={1977},
  pages={xiii+492},
  isbn={0-7204-0762-1},
  review={\MR {0458185}},
}

\bib{Doran-Fell:Representations}{book}{
  author={Doran, Robert S.},
  author={Fell, James M. G.},
  title={Representations of $^*$\nobreakdash -algebras, locally compact groups, and Banach $^*$\nobreakdash -algebraic bundles. Vol. 1},
  series={Pure and Applied Mathematics},
  volume={125},
  publisher={Academic Press Inc.},
  place={Boston, MA},
  date={1988},
  pages={xviii+746},
  isbn={0-12-252721-6},
  review={\MR {936628}},
}

\bib{Doran-Fell:Representations_2}{book}{
  author={Doran, Robert S.},
  author={Fell, James M. G.},
  title={Representations of $^*$\nobreakdash -algebras, locally compact groups, and Banach $^*$\nobreakdash -algebraic bundles. Vol. 2},
  series={Pure and Applied Mathematics},
  volume={126},
  publisher={Academic Press Inc.},
  place={Boston, MA},
  date={1988},
  pages={i--viii and 747--1486},
  isbn={0-12-252722-4},
  review={\MR {936629}},
}

\bib{Effros-Hahn:Transformation_groups}{article}{
  author={Effros, Edward G.},
  author={Hahn, Frank},
  title={Locally compact transformation groups and $C^*$\nobreakdash -algebras},
  journal={Bull. Amer. Math. Soc.},
  volume={73},
  date={1967},
  pages={222--226},
  issn={0002-9904},
  review={\MR {0233213}},
  doi={10.1090/S0002-9904-1967-11690-2},
}

\bib{Exel:TwistedPartialActions}{article}{
  author={Exel, Ruy},
  title={Twisted partial actions: a classification of regular $C^*$\nobreakdash -algebraic bundles},
  journal={Proc. Lond. Math. Soc. (3)},
  volume={74},
  date={1997},
  number={2},
  pages={417--443},
  issn={0024-6115},
  review={\MR {1425329}},
  doi={10.1112/S0024611597000154},
}

\bib{Exel:Inverse_combinatorial}{article}{
  author={Exel, Ruy},
  title={Inverse semigroups and combinatorial $C^*$\nobreakdash -algebras},
  journal={Bull. Braz. Math. Soc. (N.S.)},
  volume={39},
  date={2008},
  number={2},
  pages={191--313},
  issn={1678-7544},
  review={\MR {2419901}},
  doi={10.1007/s00574-008-0080-7},
}

\bib{Exel:noncomm.cartan}{article}{
  author={Exel, Ruy},
  title={Noncommutative Cartan subalgebras of $C^*$\nobreakdash -algebras},
  journal={New York J. Math.},
  issn={1076-9803},
  volume={17},
  date={2011},
  pages={331--382},
  eprint={http://nyjm.albany.edu/j/2011/17-17.html},
  review={\MR {2811068}},
}

\bib{Exel:Partial_dynamical}{book}{
  author={Exel, Ruy},
  title={Partial dynamical systems, Fell bundles and applications},
  series={Mathematical Surveys and Monographs},
  volume={224},
  date={2017},
  pages={321},
  isbn={978-1-4704-3785-5},
  isbn={978-1-4704-4236-1},
  publisher={Amer. Math. Soc.},
  place={Providence, RI},
  review={\MR {3699795}},
}

\bib{Exel-Laca:Continuous_Fell}{article}{
  author={Exel, Ruy},
  author={Laca, Marcelo},
  title={Continuous Fell bundles associated to measurable twisted actions},
  journal={Proc. Amer. Math. Soc.},
  volume={125},
  date={1997},
  number={3},
  pages={795--799},
  issn={0002-9939},
  review={\MR {1353382}},
  doi={10.1090/S0002-9939-97-03618-6},
}

\bib{ExelNg:ApproximationProperty}{article}{
  author={Exel, Ruy},
  author={Ng, {Ch}i-Keung},
  title={Approximation property of $C^*$\nobreakdash -algebraic bundles},
  journal={Math. Proc. Cambridge Philos. Soc.},
  volume={132},
  date={2002},
  number={3},
  pages={509--522},
  issn={0305-0041},
  doi={10.1017/S0305004101005837},
  review={\MR {1891686}},
}

\bib{Fell:Extension_Mackey}{book}{
  author={Fell, James M. G.},
  title={An extension of Mackey's method to Banach $^*$-algebraic bundles},
  journal={Mem. Amer. Math. Soc.},
  volume={90},
  date={1969},
  pages={168},
  review={\MR {0259619}},
  doi={10.1090/memo/0090},
}

\bib{Giordano-Sierakowski:Purely_infinite}{article}{
  author={Giordano, Thierry},
  author={Sierakowski, Adam},
  title={Purely infinite partial crossed products},
  journal={J. Funct. Anal.},
  volume={266},
  date={2014},
  number={9},
  pages={5733--5764},
  issn={0022-1236},
  review={\MR {3182957}},
  doi={10.1016/j.jfa.2014.02.025},
}

\bib{GootmanLazar.DualityCrossedProduct}{article}{
  author={Gootman, Elliot C.},
  author={Lazar, Aldo J.},
  title={Applications of noncommutative duality to crossed product $C^*$\nobreakdash -algebras determined by an action or coaction},
  journal={Proc. Lond. Math. Soc. (3)},
  volume={59},
  date={1989},
  number={3},
  pages={593--624},
  review={\MR {1014872}},
  doi={10.1112/plms/s3-59.3.593},
}

\bib{GootmanRosenberg.StructureOfCrossedProducts}{article}{
  author={Gootman, Elliot C.},
  author={Rosenberg, Jonathan},
  title={The structure of crossed product $C^*$\nobreakdash -algebras: a proof of the generalized Effros--Hahn conjecture},
  journal={Invent. Math.},
  volume={52},
  date={1979},
  number={3},
  pages={283--298},
  issn={0020-9910},
  review={\MR {537063}},
  doi={10.1007/BF01389885},
}

\bib{Green:Local_twisted}{article}{
  author={Green, Philip},
  title={The local structure of twisted covariance algebras},
  journal={Acta Math.},
  volume={140},
  date={1978},
  number={3-4},
  pages={191--250},
  issn={0001-5962},
  review={\MR {0493349}},
  doi={10.1007/BF02392308},
}

\bib{Hofmann:Baire_lattices}{article}{
  author={Hofmann, Karl Heinrich},
  title={A note on Baire spaces and continuous lattices},
  journal={Bull. Austral. Math. Soc.},
  volume={21},
  date={1980},
  number={2},
  pages={265--279},
  issn={0004-9727},
  review={\MR {574845}},
  doi={10.1017/S0004972700006080},
}

\bib{Imai-Takai:Duality}{article}{
  author={Imai, Sh\=o},
  author={Takai, Hiroshi},
  title={On a duality for $C^*$\nobreakdash -crossed products by a locally compact group},
  journal={J. Math. Soc. Japan},
  volume={30},
  date={1978},
  number={3},
  pages={495--504},
  review={\MR {500719}},
  doi={10.2969/jmsj/03030495},
}

\bib{Johnstone:Stone_spaces}{book}{
  author={Johnstone, Peter T.},
  title={Stone spaces},
  series={Cambridge Studies in Advanced Mathematics},
  volume={3},
  publisher={Cambridge University Press},
  place={Cambridge},
  date={1982},
  pages={xxi+370},
  isbn={0-521-23893-5},
  review={\MR {698074}},
}

\bib{Joyal-Tierney:Galois_Grothendieck}{article}{
  author={Joyal, Andr\'e},
  author={Tierney, Myles},
  title={An extension of the Galois theory of Grothendieck},
  journal={Mem. Amer. Math. Soc.},
  volume={51},
  date={1984},
  number={309},
  pages={vii+71},
  issn={0065-9266},
  review={\MR {756176}},
  doi={10.1090/memo/0309},
}

\bib{Kasparov:Operator_K}{article}{
  author={Kasparov, Gennadi G.},
  title={The operator \(K\)\nobreakdash -functor and extensions of \(C^*\)\nobreakdash -algebras},
  journal={Izv. Akad. Nauk SSSR Ser. Mat.},
  volume={44},
  date={1980},
  number={3},
  pages={571--636, 719},
  issn={0373-2436},
  translation={ journal={Math. USSR-Izv.}, volume={16}, date={1981}, number={3}, pages={513--572}, doi={10.1070/IM1981v016n03ABEH001320}, },
  review={\MR {582160}},
  eprint={http://mi.mathnet.ru/izv1739},
}

\bib{Katayama:Takesaki_Duality}{article}{
  author={Katayama, Yoshikazu},
  title={Takesaki's duality for a nondegenerate co-action},
  journal={Math. Scand.},
  volume={55},
  date={1984},
  number={1},
  pages={141--151},
  issn={0025-5521},
  review={\MR {769030}},
  doi={10.7146/math.scand.a-12072},
}

\bib{Katsura:Ideal_structure_correspondences}{article}{
  author={Katsura, Takeshi},
  title={Ideal structure of $C^*$\nobreakdash -algebras associated with $C^*$\nobreakdash -correspondences},
  journal={Pacific J. Math.},
  volume={230},
  date={2007},
  number={1},
  pages={107--145},
  issn={0030-8730},
  review={\MR {2413377}},
  doi={10.2140/pjm.2007.230.107},
}

\bib{Kirchberg:Fubini_exact}{article}{
  author={Kirchberg, Eberhard},
  title={The Fubini theorem for exact \(C^*\)\nobreakdash -algebras},
  journal={J. Operator Theory},
  volume={10},
  date={1983},
  number={1},
  pages={3--8},
  issn={0379-4024},
  review={\MR {715549}},
  eprint={http://www.theta.ro/jot/archive/1983-010-001/1983-010-001-001.html},
}

\bib{Kirchberg-Wassermann:Operations}{article}{
  author={Kirchberg, Eberhard},
  author={Wassermann, Simon},
  title={Operations on continuous bundles of $C^*$\nobreakdash -algebras},
  journal={Math. Ann.},
  volume={303},
  date={1995},
  number={4},
  pages={677--697},
  issn={0025-5831},
  review={\MR {1359955}},
  doi={10.1007/BF01461011},
}

\bib{Kumjian:Diagonals}{article}{
  author={Kumjian, Alexander},
  title={On $C^*$\nobreakdash -diagonals},
  journal={Canad. J. Math.},
  volume={38},
  date={1986},
  number={4},
  pages={969--1008},
  issn={0008-414X},
  review={\MR {854149}},
  doi={10.4153/CJM-1986-048-0},
}

\bib{Kwasniewski:Cuntz-Pimsner-Doplicher}{article}{
  author={Kwa\'sniewski, Bartosz Kosma},
  title={$C^*$\nobreakdash -algebras generalizing both relative Cuntz--Pimsner and Doplicher--Roberts algebras},
  journal={Trans. Amer. Math. Soc.},
  volume={365},
  date={2013},
  number={4},
  pages={1809--1873},
  issn={0002-9947},
  review={\MR {3009646}},
  doi={10.1090/S0002-9947-2012-05748-5},
}

\bib{Kwasniewski-Szymanski:Pure_infinite}{article}{
  author={Kwa\'sniewski, Bartosz Kosma},
  author={Szyma\'nski, Wojciech},
  title={Pure infiniteness and ideal structure of \(\textup {C}^*\)\nobreakdash -algebras associated to Fell bundles},
  journal={J. Math. Anal. Appl.},
  volume={445},
  date={2017},
  number={1},
  pages={898--943},
  issn={0022-247X},
  doi={10.1016/j.jmaa.2013.10.078},
  review={\MR {3543802}},
}

\bib{Leung-Ng:Invariant_twisted}{article}{
  author={Leung, {Ch}i-Wai},
  author={Ng, {Ch}i-Keung},
  title={Invariant ideals of twisted crossed products},
  journal={Math. Z.},
  volume={243},
  date={2003},
  number={2},
  pages={409--421},
  issn={0025-5874},
  review={\MR {1961872}},
  doi={10.1007/s00209-002-0487-6},
}

\bib{Meyer-Nest:Bootstrap}{article}{
  author={Meyer, Ralf},
  author={Nest, Ryszard},
  title={\(C^*\)\nobreakdash -Algebras over topological spaces: the bootstrap class},
  journal={M\"unster J. Math.},
  volume={2},
  date={2009},
  pages={215--252},
  issn={1867-5778},
  review={\MR {2545613}},
  eprint={http://nbn-resolving.de/urn:nbn:de:hbz:6-10569452982},
}

\bib{Meyer-Roy-Woronowicz:Twisted_tensor}{article}{
  author={Meyer, Ralf},
  author={Roy, Sutanu},
  author={Woronowicz, Stanis\l aw Lech},
  title={Quantum group-twisted tensor products of \(\textup C^*\)\nobreakdash -algebras},
  journal={Internat. J. Math.},
  volume={25},
  date={2014},
  number={2},
  pages={1450019, 37},
  issn={0129-167X},
  review={\MR {3189775}},
  doi={10.1142/S0129167X14500190},
}

\bib{Muhly-Williams:Equivalence.FellBundles}{article}{
  author={Muhly, Paul S.},
  author={Williams, Dana P.},
  title={Equivalence and disintegration theorems for Fell bundles and their \(C^*\)\nobreakdash -algebras},
  journal={Dissertationes Math. (Rozprawy Mat.)},
  volume={456},
  date={2008},
  pages={1--57},
  issn={0012-3862},
  review={\MR {2446021}},
  doi={10.4064/dm456-0-1},
}

\bib{Muhly-Williams:Renaults_equivalence}{book}{
  author={Muhly, Paul S.},
  author={Williams, Dana P.},
  title={Renault's equivalence theorem for groupoid crossed products},
  series={NYJM Monographs},
  volume={3},
  publisher={State University of New York University at Albany},
  place={Albany, NY},
  date={2008},
  pages={87},
  review={\MR {2547343}},
  eprint={http://nyjm.albany.edu/m/2008/3.htm},
}

\bib{Nilsen:Bundles}{article}{
  author={Nilsen, May},
  title={\(C^*\)\nobreakdash -bundles and \(C_0(X)\)-algebras},
  journal={Indiana Univ. Math. J.},
  volume={45},
  date={1996},
  number={2},
  pages={463--477},
  issn={0022-2518},
  review={\MR {1414338}},
  doi={10.1512/iumj.1996.45.1086},
}

\bib{Nilsen:DualityCrossedProducts}{article}{
  author={Nilsen, May},
  title={Duality for full crossed products of $C^*$\nobreakdash -algebras by non-amenable groups},
  journal={Proc. Amer. Math. Soc.},
  volume={126},
  date={1998},
  number={10},
  pages={2969--2978},
  review={\MR {1469427}},
  doi={10.1090/S0002-9939-98-04598-5},
}

\bib{Pedersen:Cstar_automorphisms}{book}{
  author={Pedersen, Gert K.},
  title={$C^*$\nobreakdash -algebras and their automorphism groups},
  series={London Mathematical Society Monographs},
  volume={14},
  publisher={Academic Press},
  place={London},
  date={1979},
  pages={ix+416},
  isbn={0-12-549450-5},
  review={\MR {548006}},
}

\bib{Renault:Groupoid_Cstar}{book}{
  author={Renault, Jean},
  title={A groupoid approach to $\textup C^*$\nobreakdash -algebras},
  series={Lecture Notes in Mathematics},
  volume={793},
  publisher={Springer},
  place={Berlin},
  date={1980},
  pages={ii+160},
  isbn={3-540-09977-8},
  review={\MR {584266}},
  doi={10.1007/BFb0091072},
}

\bib{Renault:Representations}{article}{
  author={Renault, Jean},
  title={Repr\'esentation des produits crois\'es d'alg\`ebres de groupo\"\i des},
  journal={J. Operator Theory},
  volume={18},
  date={1987},
  number={1},
  pages={67--97},
  issn={0379-4024},
  review={\MR {912813}},
  eprint={http://www.theta.ro/jot/archive/1987-018-001/1987-018-001-005.html},
}

\bib{Renault:Ideal_structure}{article}{
  author={Renault, Jean},
  title={The ideal structure of groupoid crossed product $C^*$\nobreakdash -algebras},
  note={With an appendix by Georges Skandalis},
  journal={J. Operator Theory},
  volume={25},
  date={1991},
  number={1},
  pages={3--36},
  issn={0379-4024},
  review={\MR {1191252}},
  eprint={http://www.theta.ro/jot/archive/1991-025-001/1991-025-001-001.html},
}

\bib{Renault:Cartan.Subalgebras}{article}{
  author={Renault, Jean},
  title={Cartan subalgebras in $C^*$\nobreakdash -algebras},
  journal={Irish Math. Soc. Bull.},
  number={61},
  date={2008},
  pages={29--63},
  issn={0791-5578},
  review={\MR {2460017}},
  eprint={http://www.maths.tcd.ie/pub/ims/bull61/S6101.pdf},
}

\bib{Sierakowski:IdealStructureCrossedProducts}{article}{
  author={Sierakowski, Adam},
  title={The ideal structure of reduced crossed products},
  journal={M\"unster J. Math.},
  volume={3},
  date={2010},
  pages={237--261},
  issn={1867-5778},
  review={\MR {2775364}},
  eprint={http://nbn-resolving.de/urn:nbn:de:hbz:6-16409474751},
}

\bib{Soltan-Woronowicz:Multiplicative_unitaries}{article}{
  author={So\l tan, Piotr Miko\l aj},
  author={Woronowicz, Stanis\l aw Lech},
  title={From multiplicative unitaries to quantum groups. II},
  journal={J. Funct. Anal.},
  volume={252},
  date={2007},
  number={1},
  pages={42--67},
  issn={0022-1236},
  review={\MR {2357350}},
  doi={10.1016/j.jfa.2007.07.006},
}

\bib{Takai:Quasi-orbit_continuous}{article}{
  author={Takai, Hiroshi},
  title={The quasi-orbit space of continuous $C^*$-dynamical systems},
  journal={Trans. Amer. Math. Soc.},
  volume={216},
  date={1976},
  pages={105--113},
  issn={0002-9947},
  review={\MR {0385583}},
  doi={10.2307/1997688},
}

\bib{Vickers:Topology_logic}{book}{
  author={Vickers, Steven},
  title={Topology via logic},
  series={Cambridge Tracts in Theoretical Computer Science},
  volume={5},
  publisher={Cambridge University Press},
  place={Cambridge},
  date={1989},
  pages={xvi+200},
  isbn={0-521-36062-5},
  review={\MR {1002193}},
}

\bib{Weaver:Prime_not_primitive}{article}{
  author={Weaver, Nik},
  title={A prime $C^*$-algebra that is not primitive},
  journal={J. Funct. Anal.},
  volume={203},
  date={2003},
  number={2},
  pages={356--361},
  issn={0022-1236},
  review={\MR {2003352}},
  doi={10.1016/S0022-1236(03)00196-4},
}

\bib{Williams:Topology_primitive}{article}{
  author={Williams, Dana P.},
  title={The topology on the primitive ideal space of transformation group $C^*$\nobreakdash -algebras and C.C.R. transformation group $C^*$\nobreakdash -algebras},
  journal={Trans. Amer. Math. Soc.},
  volume={266},
  date={1981},
  number={2},
  pages={335--359},
  issn={0002-9947},
  review={\MR {617538}},
  doi={10.2307/1998427},
}

\bib{Woronowicz:Mult_unit_to_Qgrp}{article}{
  author={Woronowicz, Stanis\l aw Lech},
  title={From multiplicative unitaries to quantum groups},
  journal={Internat. J. Math.},
  volume={7},
  date={1996},
  number={1},
  pages={127--149},
  issn={0129-167X},
  review={\MR {1369908}},
  doi={10.1142/S0129167X96000086},
}

\bib{Zeller-Meier:Produits_croises}{article}{
  author={Zeller-Meier, Georges},
  title={Produits crois\'es d'une $C^*$-alg\`ebre par un groupe d'automorphismes},
  language={French},
  journal={J. Math. Pures Appl. (9)},
  volume={47},
  date={1968},
  pages={101--239},
  issn={0021-7824},
  review={\MR {0241994}},
}

  \end{biblist}
\end{bibdiv}

\end{document}